\documentclass[french]{amsart}
\usepackage[T1]{fontenc}
\usepackage[applemac]{inputenc}
\usepackage{geometry}
\geometry{verbose,lmargin=4.1cm,rmargin=4cm}
\usepackage{babel}
\makeatletter
\addto\extrasfrench{%
   \providecommand{\og}{\leavevmode\flqq~}%
   \providecommand{\fg}{\ifdim\lastskip>\z@\unskip\fi~\frqq}%
}

\makeatother
\usepackage{longtable}
\usepackage{units}
\usepackage{mathrsfs}
\usepackage{mathtools}
\usepackage{amstext}
\usepackage{amsthm}
\usepackage{amssymb}
\usepackage[unicode=true]
 {hyperref}

\makeatletter

\providecommand{\tabularnewline}{\\}

\numberwithin{equation}{section}
\numberwithin{figure}{section}
\theoremstyle{plain}
\newtheorem{thm}{\protect\theoremname}
  \theoremstyle{plain}
  \newtheorem{prop}[thm]{\protect\propositionname}
  \theoremstyle{definition}
  \newtheorem{defn}[thm]{\protect\definitionname}
  \theoremstyle{remark}
  \newtheorem{rem}[thm]{\protect\remarkname}
  \theoremstyle{plain}
  \newtheorem{cor}[thm]{\protect\corollaryname}
  \theoremstyle{definition}
  \newtheorem{example}[thm]{\protect\examplename}
  \theoremstyle{plain}
  \newtheorem{lem}[thm]{\protect\lemmaname}


\usepackage{amscd}


\makeatother

\usepackage{babel}\addto\extrasfrench{\providecommand{\og}{\leavevmode\flqq~}\providecommand{\fg}{\ifdim\lastskip>\z@\unskip\fi~\frqq}}
\usepackage{tikz}

\usepackage{dsfont}

\usepackage[T1]{fontenc}
\usepackage{frcursive}
\usepackage{diagbox}

\allowdisplaybreaks[4]

\makeatother

  \providecommand{\corollaryname}{Corollaire}
  \providecommand{\definitionname}{Définition}
  \providecommand{\examplename}{Exemple}
  \providecommand{\lemmaname}{Lemme}
  \providecommand{\propositionname}{Proposition}
  \providecommand{\remarkname}{Remarque}
\providecommand{\theoremname}{Théorème}

\begin{document}
\begin{center}
\textsf{\textbf{\LARGE{}Identités pondérées Peirce-évanescentes}}\medskip{}
\par\end{center}

\begin{center}
\textbf{Richard Varro} \medskip{}
 
\par\end{center}

{\footnotesize{}Institut Montpelliérain Alexander Grothendieck,
Université de Montpellier, CNRS, Place Eugène Bataillon - 35095
Montpellier, France.}{\footnotesize \par}

\emph{\footnotesize{}E-mail}{\footnotesize{}: richard.varro@umontpellier.fr}{\footnotesize \par}

{\Large{}\medskip{}
}{\Large \par}

\textbf{Abstract} : {\small{}Peirce-evanescent baric identities
are polynomial identities verified by baric algebras such that
their Peirce polynomials are the null polynomial. In this paper
procedures for constructing such homogeneous and non homogeneous
identities are given. For this we define an algebraic system
structure on the free commutative nonassociative algebra generated
by a set $\mathcal{T}$ which provides for classes of baric algebras
satisfying a given set of identities similar properties to those
of the varieties of algebras. Rooted binary trees with labeled
leaves are used to explain the Peirce polynomials. It is shown
that the mutation algebras satisfy all Peirce-evanescent identities,
it results from this that any part of the field $K$ can be the
Peirce spectrum of a $K$-algebra satisfying a Peirce-evanescent
identity. We end by giving methods to obtain generators of homogeneous
and non-homogeneous Peirce-evanescent identities that are applied
in several univariate and multivariate cases. }{\small \par}

\medskip{}

\emph{Key words} : {\small{}Baric algebras, polynomial identities,
algebraic systems, variety of algebraic systems, $T$-ideal,
labeled rooted binary trees, altitude of a polynomial, Peirce
polynomial, mutation algebras.}{\small \par}

\emph{\small{}2010 MSC}{\small{} : Primary : 17D92, Secondary
: 17A30}.\medskip{}

\section{Introduction}

Les algèbres non associatives sont des algèbres non nécessairement
associatives dans lesquelles l'identité d'associativé $\left(xy\right)z-x\left(yz\right)=0$
est remplacée par une ou plusieurs identités polynomiales\footnote{\og Without associativity, rings and algebras are not in general
well enough behaved to have much of a structure theory. For this
reason, the nonassociative algebraists normally studies the class
of rings which satisfy some particular identity or set of identities. \fg{}
M. Osborn \cite{Osborn-86}.}. Ces identités polynomiales sont à une ou plusieurs indéterminées,
à coefficients constants ou variables. Dans les cas où ces algèbres
admettent un idempotent, un outil fondamental pour leur étude
est la décomposition de Peirce obtenue à partir du polynôme de
Peirce qui est un polynôme annulateur de l'opérateur de multiplication
à gauche $L_{e}:x\mapsto ex$, où $e\neq0$ est un idempotent.\medskip{}

Cependant il existe des algèbres définies par des identités polynomiales
pour lesquelles le polynôme de Peirce est nul. Illustrons cette
situation par un exemple, soit $A$ une $K$-algèbre commutative
sur un corps $K$ de caractéristique $\neq2$ vérifiant l'identité
\begin{equation}
x^{2}x^{2}-\alpha\omega\left(x\right)x^{3}-\left(1-\alpha\right)\omega\left(x\right)^{2}x^{2}=0\label{eq:Ret_alpha}
\end{equation}
 où $\omega:A\rightarrow K$ est un morphisme d'algèbre non nul.
On suppose qu'il existe dans $A$ un élément idempotent $e\neq0$,
de $e^{2}=e$ on déduit que $\omega\left(e\right)^{2}=\omega\left(e\right)$
d'où $\omega\left(x\right)\in\left\{ 0,1\right\} $, on a $\omega\left(e\right)\neq0$
sinon en posant $x=e$ dans l'identité \ref{eq:Ret_alpha} on
aurait $e=0$. La première linéarisation de l'identité \ref{eq:Ret_alpha}
est: 
\[
4x^{2}\left(xy\right)-\alpha\left[\omega\left(y\right)x^{3}+\omega\left(x\right)\left(x^{2}y+2x\left(xy\right)\right)\right]-2\left(1-\alpha\right)\left[\omega\left(x\right)\omega\left(y\right)x^{2}+\omega\left(x\right)^{2}xy\right]=0
\]
pour $y\in\ker\omega$ cette relation devient 
\[
4x^{2}\left(xy\right)-\alpha\omega\left(x\right)\left(x^{2}y+2x\left(xy\right)\right)-2\left(1-\alpha\right)\omega\left(x\right)^{2}xy=0
\]
en spécialisant cette identité pour $x=e$ on obtient
\[
\left(2-\alpha\right)\left(2e\left(ey\right)-ey\right)=0.
\]

Par conséquent, si $\alpha\neq2$ le polynôme de Peirce est $P\left(X\right)=2X^{2}-X$,
le spectre de $L_{e}$ est $\left\{ 0,\frac{1}{2}\right\} $
et on a $A=Ke\oplus A\left(0\right)\oplus A\left(\frac{1}{2}\right)$
où $A\left(\lambda\right)=\ker\left(L_{e}-\lambda id\right)$.
La seconde linéarisation de l'identité \ref{eq:Ret_alpha} aboutit
à $4\left(ey\right)\left(ez\right)+\left(2-\alpha\right)e\left(yz\right)-\alpha\left(\left(ey\right)z+y\left(ez\right)\right)-\left(1-\alpha\right)yz=0$
avec $y,z\in\ker\omega$ ce qui permet d'établir que $A\left(\frac{1}{2}\right)^{2}\subset A\left(0\right)$,
$A\left(0\right)A\left(\frac{1}{2}\right)\subset A\left(\frac{1}{2}\right)$
et $A\left(0\right)^{2}=\left\{ 0\right\} $ si $\alpha\neq0,1$
ou $A\left(0\right)^{2}\subset A\left(\frac{1}{2}\right)$ si
$\alpha=0$ et $A\left(0\right)^{2}\subset A\left(0\right)$
si $\alpha=1$. \medskip{}

En revanche si $\alpha=2$, l'identité \ref{eq:Ret_alpha} s'écrit
\begin{equation}
x^{2}x^{2}-2\omega\left(x\right)x^{3}+\omega\left(x\right)^{2}x^{2}=0\label{eq:Ret_id}
\end{equation}

Les algèbres vérifiant l'identité \ref{eq:Ret_id} sont dites
de rétrocroisement (backcrossing algebras) à cause de leur interprétation
génétique (cf. \cite{M-V-05}), elles sont apparues pour la première
fois dans \cite{M-V-94} et par la suite dans plusieurs autres
articles (voir les références dans \cite{M-V-??}). La linéarisation
et la spécialisation pour $x=e$ de \ref{eq:Ret_id} aboutit
à un polynôme de Peirce nul et de ce fait ne fournit aucune information
sur le spectre de $L_{e}$.

\medskip{}

Dans \cite{Tka-18}, V. Tkachev a appelé dégénérées ces identités
dont le polynôme de Peirce est nul, dans ce travail on préfère
les nommer évanescentes\footnote{Evanescent vient du participe présent \emph{evanescens} du verbe
latin \emph{evanescere} qui signifie “disparaître''. En effet
on observe que les termes du polynôme de Peirce disparaissent
au fur et à mesure du calcul.}\medskip{}

Ce papier est organisé comme suit. A la section 2 on munit le
groupoide commutatif $\mathfrak{M}\left(\mathcal{T}\right)$
engendré par un ensemble au plus dénombrable $\mathcal{T}$ d'une
structure de système algébrique, alors l'algèbre libre commutative
non associative engendrée par $\mathcal{T}$ qui en découle permet
d'obtenir pour les classes d'algèbres pondérées satisfaisant
un ensemble donné d'identités des propriétés analogues à celles
des variétés d'algèbres. A la section 3 on définit les linéarisées
des identités définies à partir des éléments de l'algèbre libre
obtenue à la section 2 et les polynômes de Peirce de ces identités,
on montre comment calculer les polynômes de Peirce à l'aide des
arbres binaires enracinés à feuilles étiquetées. On définit les
notions de polynômes et d'identités évanescents, on montre que
les algèbres de mutation vérifient toutes les identités évanescentes
et on en tire des conséquences sur le spectre de Peirce. On termine
à la section 4 en exposant des méthodes pour obtenir les générateurs
des polynômes évanescents homogènes et non homogènes, on applique
ces méthodes à plusieurs cas, on obtient ainsi un peu plus de
250 identités évanescentes.

\section{Variétés pour les algèbres pondérées.}

Dans tout ce travail, $K$ est un corps commutatif de caractéristique
$\neq2$ et les $K$-algèbres sont supposées commutatives.\medskip{}

Soit $\mathcal{T}=\left\{ t_{n};n\geq1\right\} $ un ensemble
dénombrable de symboles, on note $\mathfrak{M}\left(\mathcal{T}\right)$
le groupoïde commutatif engendré par $\mathscr{\mathcal{T}}$
muni de l'opération binaire, notée $\cdot$, et vérifiant pour
tout $t_{i},t_{j}\in\mathcal{T}$ et $u,v\in\mathfrak{M}\left(\mathcal{T}\right)$:
\begin{align*}
t_{i}\cdot t_{j} & =t_{i}t_{j}, & t_{i}\cdot u & =t_{i}\left(u\right), & v\cdot t_{i} & =\left(v\right)t_{i}, & u\cdot v & =\left(u\right)\left(v\right).
\end{align*}

Les éléments de $\mathfrak{M}\left(\mathcal{T}\right)$ sont
appelés des mots (ou monômes) non associatifs.

Pour $w\in\mathfrak{M}\left(\mathcal{T}\right)$, le degré de
$w$ en $t_{i}\in\mathscr{T}$, noté $\left|w\right|_{t_{i}}$
ou $\left|w\right|_{i}$, est le nombre d'occurrence de $t_{i}$
dans le monôme $w$, le degré de $w$ noté $\left|w\right|$
est la longueur du monôme $w$ autrement dit $\left|w\right|=\sum_{i\geq1}\left|w\right|_{i}$
et le type de $w$ est $\left[\left|w\right|_{1},\ldots,\left|w\right|_{n},\ldots\right]$.
Soient $\mathfrak{M}\left(\mathcal{T}\right)_{d}$ l'ensemble
des monômes de degré $d$ et $\mathfrak{M}\left(\mathcal{T}\right)_{\left[n_{1},\ldots,n_{m},\ldots\right]}$
l'ensemble des monômes de type $\left[n_{1},\ldots,n_{m},\ldots\right]$,
on a: 
\begin{align}
\mathfrak{M}\left(\mathcal{T}\right) & =\coprod_{d\geq1}\mathfrak{M}\left(\mathcal{T}\right)_{d},\label{eq:dec_M(T)}
\end{align}

et 
\begin{equation}
\mathfrak{M}\left(\mathcal{T}\right)_{d}=\coprod_{n_{1}+\cdots+n_{m}+\cdots=d}\mathfrak{M}\left(\mathcal{T}\right)_{\left[n_{1},\ldots,n_{m},\ldots\right]}.\label{eq: dec_M(T)_d}
\end{equation}

On a aussi le résultat suivant qui sera utilisé par la suite.
\begin{prop}
\emph{\label{prop:dec_ds_M(t)}{[}Proposition 2 in }\cite{Zh-Sl-Sh-82}\emph{{]}}
Every nonassociative word $w$ with $\left|w\right|\geq2$ has
a unique representation in the form of a product of two nonassociative
words of lesser length.
\end{prop}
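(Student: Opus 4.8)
The plan is to separate the existence of a decomposition, which is essentially formal, from its uniqueness, which forces one to exhibit a concrete model of $\mathfrak{M}(\mathcal{T})$. For existence I would first record that $\cdot$ is additive for the length, $\left|u\cdot v\right|=\left|u\right|+\left|v\right|$ for all $u,v\in\mathfrak{M}(\mathcal{T})$: this is an immediate induction on $\left|u\right|+\left|v\right|$ from the four defining relations, which only prescribe where parentheses are written and leave the multiset of occurring symbols untouched; in particular the length grading is compatible with $\cdot$, in the sense that $\mathfrak{M}(\mathcal{T})_{p}\cdot\mathfrak{M}(\mathcal{T})_{q}\subseteq\mathfrak{M}(\mathcal{T})_{p+q}$. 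Then, if $\left|w\right|\geq 2$, the word $w$ is not a generator, so by the very definition of the commutative groupoid generated by $\mathcal{T}$ it is a product $w=u\cdot v$ of two elements built earlier; additivity forces $\left|u\right|,\left|v\right|\geq 1$ with $\left|u\right|+\left|v\right|=\left|w\right|$, so both factors are strictly shorter than $w$.

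The real content is uniqueness, and it cannot be extracted from the presentation alone: one must rule out accidental coincidences $u\cdot v=u'\cdot v'$, which calls for a faithful model. I would take $\mathcal{B}$ to be the set of finite rooted binary trees whose leaves are labeled by elements of $\mathcal{T}$, considered up to the symmetry that, at each internal node, exchanges the two subtrees hanging from it --- this identification encoding commutativity --- equipped with the grafting operation joining two such trees under a new common root. Then $\mathcal{B}$ is a commutative groupoid, the one-leaf trees identify $\mathcal{T}$ with a generating set, the number of leaves plays the role of the length, and in $\mathcal{B}$ the decomposition is unique \emph{by construction}: a tree with at least two leaves is not a single leaf, its root carries exactly two subtrees, the unordered pair they form is intrinsic, and each of them has strictly fewer leaves.

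It remains to transfer this to $\mathfrak{M}(\mathcal{T})$. For that I would verify that $\mathcal{B}$, together with the inclusion $\mathcal{T}\hookrightarrow\mathcal{B}$, enjoys the universal property that characterizes $\mathfrak{M}(\mathcal{T})$: any map from $\mathcal{T}$ into the underlying set of a commutative groupoid extends uniquely to a groupoid morphism on $\mathcal{B}$ --- uniqueness because $\mathcal{T}$ generates $\mathcal{B}$, existence by a straightforward induction on the number of leaves using the unique decomposition just described. The canonical morphism $\mathfrak{M}(\mathcal{T})\to\mathcal{B}$ fixing $\mathcal{T}$ and the one going the other way then compose to morphisms fixing the generators, hence to identities, so $\mathfrak{M}(\mathcal{T})\cong\mathcal{B}$; pushing the decomposition property of $\mathcal{B}$ across this isomorphism yields the statement, with ``unique'' understood --- as one must in the commutative setting --- as uniqueness of the unordered pair $\{u,v\}$ of factors.

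The only step that demands genuine care is the construction of $\mathcal{B}$ and the verification of its universal property, that is, the \emph{unique readability} of parenthesized words; the rest is bookkeeping. One could instead argue combinatorially on a fixed system of canonical parenthesized representatives, recovering the left factor by a standard parenthesis-counting argument --- it corresponds to the shortest nonempty prefix over which the numbers of opening and closing parentheses balance --- but this is a little more awkward to phrase cleanly because of the commutative identifications, and the tree model has the extra merit of being in keeping with the labeled rooted binary trees used later in the paper.
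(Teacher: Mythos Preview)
Your argument is correct. The paper does not supply its own proof of this proposition: it merely quotes the statement from \cite{Zh-Sl-Sh-82} and uses it as a black box, so there is nothing to compare against on the paper's side. The route you take --- building the concrete model $\mathcal{B}$ of commutative rooted binary labeled trees, checking its universal property, and transporting the obvious unique decomposition at the root across the resulting isomorphism $\mathfrak{M}(\mathcal{T})\cong\mathcal{B}$ --- is a clean and standard way to establish unique readability, and your remark that uniqueness must be read as uniqueness of the unordered pair $\{u,v\}$ is exactly right in the commutative setting. The alternative you sketch at the end, via parenthesis counting on canonical representatives, is closer in spirit to how the cited reference handles the noncommutative free magma, but your tree-based version has the advantage of dovetailing with the paper's own later use of $\mathscr{T}_{\mathcal{T}}$ and the isomorphism $\Psi$ in the discussion preceding Proposition~\ref{prop:Diw=000026arbre}.
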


\medskip{}

Pour ce qui suit on munit le groupoide commutatif $\left(\mathfrak{M}\left(\mathcal{T}\right),\cdot\right)$
d'une structure de système algébrique (\cite{Cohn-91}, chap
1) en définissant sur $\mathfrak{M}\left(\mathcal{T}\right)$
une loi de multiplication non commutative notée $\star$ (cette
loi est utilisée dans \cite{BGM-97} pour définir les algèbres
pondérées généralisées) et vérifiant pour tout $u,v,u',v',w\in\mathfrak{M}\left(\mathcal{T}\right)$,
les relations:
\begin{align}
\left(u\star v\right)\star w & =u\star\left(v\star w\right)=\left(u\cdot v\right)\star w;\label{eq:Ax_1}\\
\left(u\star v\right)\cdot w & =u\star\left(v\cdot w\right)=v\cdot\left(u\star w\right);\label{eq:Ax_2}\\
\left(u\star v\right)\cdot\left(u'\star v'\right) & =\left(u\cdot u'\right)\star\left(v\cdot v'\right);\label{eq:Ax_3}\\
\left(u\star v\right)\star\left(u'\star v'\right) & =\left(u\cdot v'\right)\star\left(u'\star v'\right).\label{eq:Ax_4}
\end{align}

On notera $\left(\mathfrak{M}\left(\mathcal{T}\right),\cdot,\star\right)$
ce système algébrique et pour allèger les notations on écrira
$\mathfrak{M}\left(\mathcal{T}\right)$ pour $\left(\mathfrak{M}\left(\mathcal{T}\right),\cdot\right)$
quand il n'y a pas de risque de confusion. Pour tout $u,v\in\left(\mathfrak{M}\left(\mathcal{T}\right),\cdot,\star\right)$
et tout $i\geq1$ on définit récursivement les degrés des éléments
de $\left(\mathfrak{M}\left(\mathcal{T}\right),\cdot,\star\right)$
par $\left|u\cdot v\right|_{i}=\left|u\star v\right|_{i}=\left|u\right|_{i}+\left|v\right|_{i}$.

\medskip{}

On a l'analogue de la proposition \ref{prop:dec_ds_M(t)} pour
les éléments de $\left(\mathfrak{M}\left(\mathcal{T}\right),\cdot,\star\right)$.
\begin{prop}
\label{prop:Dec_ds_Mt*}Tout élément $w$ de $\left(\mathfrak{M}\left(\mathcal{T}\right),\cdot,\star\right)$
se décompose de manière unique sous la forme $w=w_{1}w_{2}$
ou $w=w_{1}\star w_{2}$ avec $w_{1},w_{2}\in\left(\mathfrak{M}\left(\mathcal{T}\right),\cdot,\star\right)$
tels que $\left|w_{1}\right|,\left|w_{2}\right|<\left|w\right|$.
\end{prop}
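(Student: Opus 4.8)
The plan is to argue by induction on the total degree $|w|$, exploiting that $(\mathfrak{M}(\mathcal{T}),\cdot,\star)$ is generated by $\mathcal{T}$ under the two operations $\cdot$ and $\star$, these being subject only to \eqref{eq:Ax_1}--\eqref{eq:Ax_4}; so every element is represented by a nonassociative term whose internal nodes are labelled by $\cdot$ or $\star$ and whose leaves lie in $\mathcal{T}$, and an element of degree $\ge 2$ is, by definition, of the form $a\cdot b$ or $a\star b$. Since $|t_i|=1$ and $|u\cdot v|_i=|u\star v|_i=|u|_i+|v|_i$, one has $|w|=|w_1|+|w_2|$ in any such factorisation, with $|w_1|,|w_2|\ge 1$; hence the bounds $|w_1|,|w_2|<|w|$ come for free (just as in Proposition \ref{prop:dec_ds_M(t)}) and only the \emph{existence}, which is immediate, and the \emph{uniqueness} of the top-level splitting have to be established. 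The real content is thus that the congruence generated by \eqref{eq:Ax_1}--\eqref{eq:Ax_4} never identifies two genuinely different root decompositions.

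I would prove this via a normal-form theorem. Orient the axioms, together with the commutativity of $\cdot$ made deterministic by a total order on $\mathfrak{M}(\mathcal{T})$ built inductively on the degree, into a rewriting system on $\{\cdot,\star\}$-terms: \eqref{eq:Ax_1} un-nests $\star$ on both sides, so each $\star$-subterm can be brought to the shape $p\star q$ with $p$ a $\cdot$-term and $q$ not a $\star$-term; \eqref{eq:Ax_2} and \eqref{eq:Ax_3} push $\star$ towards the root through $\cdot$; and \eqref{eq:Ax_4} together with \eqref{eq:Ax_1} pins down the left factor of a $\star$-product. One then shows the system terminates --- using a well-founded measure that combines the number of $\star$-nodes lying below a $\cdot$-node or below another $\star$-node with the size of the term --- and is locally confluent, by checking the finitely many critical pairs arising from overlaps of the left-hand sides; Newman's lemma gives a unique normal form for each element, and its root operation together with its two immediate subterms are the desired $w_1,w_2$. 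A more self-contained alternative, perhaps closer to the spirit of the construction, would be to build $(\mathfrak{M}(\mathcal{T}),\cdot,\star)$ explicitly degree by degree, as a disjoint union of a ``$\cdot$-part'' and a ``$\star$-part'', defining $\cdot$ and $\star$ on it by hand so that \eqref{eq:Ax_1}--\eqref{eq:Ax_4} hold and unique readability is manifest, and then verifying the required universal property.

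The hard part is precisely this uniqueness: neither the total degree nor the full type separates a $\cdot$-product from a $\star$-product, since both invariants are preserved by \eqref{eq:Ax_1}--\eqref{eq:Ax_4}, so one cannot avoid genuine bookkeeping on the congruence. The most delicate overlaps to resolve are those between the two rules coming from \eqref{eq:Ax_1} and the rule from \eqref{eq:Ax_2} on patterns of shape $(u\star v)\star w$ against $(u\star v)\cdot w$, and the self-overlap inside \eqref{eq:Ax_4}; equivalently, in the model-theoretic route, the obstacle is to produce a faithful realisation in which $u\star v$ is mapped to a concrete word of $\mathfrak{M}(\mathcal{T})$ built from $v$ and from data attached to $u$ that is invariant under \eqref{eq:Ax_1}--\eqref{eq:Ax_4}, after which uniqueness follows from Proposition \ref{prop:dec_ds_M(t)} applied to the image. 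Once a normal form is in hand, the statement follows and the induction closes without further work.
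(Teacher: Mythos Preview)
The paper's proof is two sentences: every element of $(\mathfrak{M}(\mathcal{T}),\cdot,\star)$ is obtained from a word of $\mathfrak{M}(\mathcal{T})$ by relabelling some of the internal operations from $\cdot$ to $\star$, and one then invokes Proposition~\ref{prop:dec_ds_M(t)}. In effect the paper reads the statement at the level of \emph{terms} (binary trees whose nodes carry a label in $\{\cdot,\star\}$), where unique readability of the root is immediate; the relations \eqref{eq:Ax_1}--\eqref{eq:Ax_4} play no role whatsoever in the argument.

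Your proposal takes the opposite stance: you treat $(\mathfrak{M}(\mathcal{T}),\cdot,\star)$ as the \emph{quotient} of the free two-operation magma by the congruence generated by \eqref{eq:Ax_1}--\eqref{eq:Ax_4}, and you set as your goal that this congruence ``never identifies two genuinely different root decompositions''. That goal is unattainable as stated: already \eqref{eq:Ax_2} gives $(u\star v)\cdot w = u\star(v\cdot w)$, the same element with one factorisation rooted at $\cdot$ and another rooted at $\star$. Your rewriting machinery, if made terminating and confluent, would yield unique \emph{normal forms} --- hence a canonical representative for each class, which is essentially the content of the next proposition in the paper --- but it cannot deliver the claim you formulate, that every top-level splitting of every representative agrees. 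So either you are proving something different from what you announce (canonical representatives rather than uniqueness of arbitrary splittings), or something false; in either case the confluence analysis and critical-pair bookkeeping are out of proportion with the paper's argument, which simply declines to pass to the quotient here.
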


\begin{proof}
C'est une conséquence de la proposition \ref{prop:dec_ds_M(t)}.
En effet, on peut considérer que tout élément de $\left(\mathfrak{M}\left(\mathcal{T}\right),\cdot,\star\right)$
s'obtient à partir d'un élément de $\mathfrak{M}\left(\mathcal{T}\right)$
en remplaçant dans celui-ci certaines opérations $\cdot$ par
des opérations $\star$.
\end{proof}
\begin{prop}
On a $\left(\mathfrak{M}\left(\mathcal{T}\right),\cdot,\star\right)=\mathfrak{M}\left(\mathcal{T}\right)\cup\left(\mathfrak{M}\left(\mathcal{T}\right)\star\mathfrak{M}\left(\mathcal{T}\right)\right)$.
\end{prop}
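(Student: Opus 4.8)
The plan is to prove the two inclusions separately. The inclusion $\mathfrak{M}\left(\mathcal{T}\right)\cup\left(\mathfrak{M}\left(\mathcal{T}\right)\star\mathfrak{M}\left(\mathcal{T}\right)\right)\subseteq\left(\mathfrak{M}\left(\mathcal{T}\right),\cdot,\star\right)$ is immediate from the construction: $\mathfrak{M}\left(\mathcal{T}\right)$ sits inside the algebraic system and the latter is closed under $\star$. For the reverse inclusion I would argue by induction on the degree $\left|w\right|$. If $\left|w\right|=1$ then $w\in\mathcal{T}\subseteq\mathfrak{M}\left(\mathcal{T}\right)$. If $\left|w\right|\geq2$, Proposition~\ref{prop:Dec_ds_Mt*} writes $w=w_{1}\cdot w_{2}$ or $w=w_{1}\star w_{2}$ with $\left|w_{1}\right|,\left|w_{2}\right|<\left|w\right|$, so by the induction hypothesis each $w_{i}$ is either an element $a_{i}\in\mathfrak{M}\left(\mathcal{T}\right)$ or is of the form $a_{i}\star b_{i}$ with $a_{i},b_{i}\in\mathfrak{M}\left(\mathcal{T}\right)$.

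The core of the argument is then a short case analysis showing that the axioms \eqref{eq:Ax_1}--\eqref{eq:Ax_3} let one push the (at most one) $\star$ to the outermost position. If $w=w_{1}\cdot w_{2}$: when $w_{1},w_{2}\in\mathfrak{M}\left(\mathcal{T}\right)$ then $w\in\mathfrak{M}\left(\mathcal{T}\right)$; when exactly one factor carries a $\star$, axiom \eqref{eq:Ax_2} rewrites $w$ as $a\star c$ with $c\in\mathfrak{M}\left(\mathcal{T}\right)$ (for instance $\left(a_{1}\star b_{1}\right)\cdot w_{2}=a_{1}\star\left(b_{1}w_{2}\right)$, and symmetrically via the second equality of \eqref{eq:Ax_2}); when both factors carry a $\star$, axiom \eqref{eq:Ax_3} gives $w=\left(a_{1}a_{2}\right)\star\left(b_{1}b_{2}\right)$. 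If $w=w_{1}\star w_{2}$: the case $w_{1},w_{2}\in\mathfrak{M}\left(\mathcal{T}\right)$ already lies in the target set; if $w_{1}=a_{1}\star b_{1}$ then \eqref{eq:Ax_1} collapses $w$ to $\left(a_{1}b_{1}\right)\star w_{2}$; if $w_{1}\in\mathfrak{M}\left(\mathcal{T}\right)$ and $w_{2}=a_{2}\star b_{2}$ then \eqref{eq:Ax_1} gives $w=\left(w_{1}a_{2}\right)\star b_{2}$; and if both $w_{1}$ and $w_{2}$ carry a $\star$ one applies \eqref{eq:Ax_1} twice. In every branch $w\in\mathfrak{M}\left(\mathcal{T}\right)\cup\left(\mathfrak{M}\left(\mathcal{T}\right)\star\mathfrak{M}\left(\mathcal{T}\right)\right)$, which closes the induction.

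I do not anticipate a genuine obstacle here: all the content is carried by the four rewriting rules, which are set up precisely so that a single $\star$ can always be moved outward. The only points deserving attention are the subcase of $w=w_{1}\star w_{2}$ where both factors carry a $\star$, in which \eqref{eq:Ax_1} must be invoked twice, and the bookkeeping observation that once each $w_{i}$ has been replaced by its normal form $a_{i}$ or $a_{i}\star b_{i}$ the rewriting terminates in one step, since the resulting expression only involves elements of $\mathfrak{M}\left(\mathcal{T}\right)$, the product $\cdot$, and one outer $\star$; hence no further recursion on the rewritten terms is required.
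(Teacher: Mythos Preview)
Your proof is correct and follows essentially the same route as the paper: induction on the degree, decomposition via Proposition~\ref{prop:Dec_ds_Mt*}, and a case analysis using the rewriting axioms to push the single $\star$ to the outermost position. The only cosmetic difference is that the paper invokes axiom~\eqref{eq:Ax_4} in the subcase where both $w_1$ and $w_2$ carry a $\star$, whereas you observe that two applications of~\eqref{eq:Ax_1} suffice; both arguments lead to the same normal form.
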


\begin{proof}
Montrons que pour tout $w\in\left(\mathfrak{M}\left(\mathcal{T}\right),\cdot,\star\right)$
tel que $w\notin\mathfrak{M}\left(\mathcal{T}\right)$ avec $\left|w\right|\geq2$,
il existe $u,v\in\mathfrak{M}\left(\mathcal{T}\right)$ tels
que $w=u\star v$. Par récurrence sur $\left|w\right|$. Si $\left|w\right|=2$,
il existe $t_{i},t_{j}\in\mathcal{T}$ tels que $w=t_{i}\star t_{j}$.
Si $\left|w\right|\geq3$, on suppose la propriété vraie pour
tout monôme de longueur $<\left|w\right|$. Soit $w\in\left(\mathfrak{M}\left(\mathcal{T}\right),\cdot,\star\right)\setminus\mathfrak{M}\left(\mathcal{T}\right)$
de degré $n$, d'après la proposition \ref{prop:Dec_ds_Mt*},
$w$ se décompose de façon unique sous la forme $w=w_{1}w_{2}$
ou $w=w_{1}\star w_{2}$ avec $w_{1}$ ou $w_{2}$ dans $\left(\mathfrak{M}\left(\mathcal{T}\right),\cdot,\star\right)$. 

Dans le cas $w=w_{1}w_{2}$ on a trois situations possibles:\smallskip{}

a) $w_{1}\in\mathfrak{M}\left(\mathcal{T}\right)$ et $w_{2}\in\left(\mathfrak{M}\left(\mathcal{T}\right),\cdot,\star\right)\setminus\mathfrak{M}\left(\mathcal{T}\right)$,
par hypothèse il existe $u_{2},v_{2}\in\mathfrak{M}\left(\mathcal{T}\right)$
avec $w_{2}=u_{2}\star v_{2}$, alors d'après la relation (\ref{eq:Ax_2})
on a $w=w_{1}\left(u_{2}\star v_{2}\right)=u_{2}\star\left(w_{1}v_{2}\right)$
où $w_{1}v_{2}\in\mathfrak{M}\left(\mathcal{T}\right)$;

b) $w_{1}\in\left(\mathfrak{M}\left(\mathcal{T}\right),\cdot,\star\right)\setminus\mathfrak{M}\left(\mathcal{T}\right)$
et $w_{2}\in\mathfrak{M}\left(\mathcal{T}\right)$, avec $w_{1}w_{2}=w_{2}w_{1}$
on est ramené au cas a);

c) $w_{1},w_{2}\in\left(\mathfrak{M}\left(\mathcal{T}\right),\cdot,\star\right)\setminus\mathfrak{M}\left(\mathcal{T}\right)$,
par hypothèse il existe $u_{1},u_{2},v_{1},v_{2}\in\mathfrak{M}\left(\mathcal{T}\right)$
tels que $w_{1}=u_{1}\star v_{1}$ et $w_{2}=u_{2}\star v_{2}$,
alors avec la relation (\ref{eq:Ax_3}) on obtient $w=\left(u_{1}\star v_{1}\right)\left(u_{2}\star v_{2}\right)=\left(u_{1}u_{2}\right)\star\left(v_{1}v_{2}\right)$
où $u_{1}u_{2},v_{1}v_{2}\in\mathfrak{M}\left(\mathcal{T}\right)$.\medskip{}

Dans le cas $w=w_{1}\star w_{2}$ on a quatre situations possibles:\smallskip{}

a) $w_{1},w_{2}\in\mathfrak{M}\left(\mathcal{T}\right)$, le
résultat est immédiat;

b) $w_{1}\in\mathfrak{M}\left(\mathcal{T}\right)$ et $w_{2}\in\left(\mathfrak{M}\left(\mathcal{T}\right),\cdot,\star\right)\setminus\mathfrak{M}\left(\mathcal{T}\right)$,
on a $w_{2}=u_{2}\star v_{2}$ avec $u_{2},v_{2}\in\mathfrak{M}\left(\mathcal{T}\right)$,
en utilisant les relations (\ref{eq:Ax_1}) on trouve $w=w_{1}\star w_{2}=w_{1}\star\left(u_{2}\star v_{2}\right)=\left(w_{1}u_{2}\right)\star v_{2}$;

c) $w_{1}\in\left(\mathfrak{M}\left(\mathcal{T}\right),\cdot,\star\right)\setminus\mathfrak{M}\left(\mathcal{T}\right)$
et $w_{2}\in\mathfrak{M}\left(\mathcal{T}\right)$, on a $w_{1}=u_{1}\star v_{1}$
où $u_{1},v_{1}\in\mathfrak{M}\left(\mathcal{T}\right)$ alors
il résulte aussitôt de (\ref{eq:Ax_1}) que $w=\left(u_{1}\star v_{1}\right)\star w_{2}=\left(u_{1}v_{1}\right)\star w_{2}$;

d) $w_{1},w_{2}\in\left(\mathfrak{M}\left(\mathcal{T}\right),\cdot,\star\right)\setminus\mathfrak{M}\left(\mathcal{T}\right)$,
on a par hypothèse $w_{1}=u_{1}\star v_{1}$ et $w_{2}=u_{2}\star v_{2}$
avec $u_{1},u_{2},v_{1},v_{2}\in\mathfrak{M}\left(\mathcal{T}\right)$
, alors en appliquant successivement les relations (\ref{eq:Ax_4})
et (\ref{eq:Ax_2}) on a $w=\left(u_{1}\star v_{1}\right)\star\left(u_{2}\star v_{2}\right)=\left(u_{1}v_{1}\right)\star\left(u_{2}\star v_{2}\right)=\left(\left(u_{1}v_{1}\right)\star u_{2}\right)\star v_{2}=\left(\left(u_{1}v_{1}\right)u_{2}\right)\star v_{2}$.
\end{proof}
Soit $K\left(\mathscr{\mathcal{T}}\right)$ la $K$-algèbre libre
commutative et non associative engendrée par $\mathfrak{M}\left(\mathcal{T}\right)$
(voir \cite{Zh-Sl-Sh-82}). Les éléments de $K\left\langle \mathcal{T}\right\rangle $
sont les polynômes non associatifs, ils sont de la forme $f=\sum_{k\geq1}\alpha_{k}w_{k}$
avec $w_{k}\in\mathfrak{M}\left(\mathcal{T}\right)$, $\alpha_{k}\in K$,
alors le degré de $f$, noté $\left|f\right|$, est $\left|f\right|=\max\left\{ \left|w_{k}\right|;\alpha_{k}\neq0\right\} $
et pour tout $t_{i}\in\mathcal{T}$, le degré de $f$ en $t_{i}$
est défini par $\left|f\right|_{i}=\max\left\{ \left|w_{k}\right|_{i};\alpha_{k}\neq0\right\} $.
On dit que $f$ est homogène si pour tout $t_{i}\in\mathscr{T}$
et tout $k\geq1$ on a $\left|w_{k}\right|_{i}=\left|f\right|_{i}$
autrement dit, si tous les monômes qui composent $f$ sont de
même type. 

On note $\left(K\left(\mathcal{T}\right),\cdot,\star\right)$
(resp. $K\left(\mathcal{T}\right)^{\star}$) l'algèbre libre
engendrée par $\left(\mathfrak{M}\left(\mathcal{T}\right),\cdot,\star\right)$
(resp. $\left(\mathfrak{M}\left(\mathcal{T}\right),\cdot\right)\star\left(\mathfrak{M}\left(\mathcal{T}\right),\cdot\right)$).
Il résulte de la proposition précédente que l'on a: 
\[
\left(K\left(\mathcal{T}\right),\cdot,\star\right)=K\left(\mathcal{T}\right)\oplus K\left(\mathcal{T}\right)^{\star}
\]
 et donc les éléments de $\left(K\left(\mathcal{T}\right),\cdot,\star\right)$
sont de la forme: 
\[
\sum_{i}\alpha_{i}w_{i}+\sum_{j}\beta_{j}u_{j}\star v_{j},\quad\left(\alpha_{i},\beta_{j}\in K;w_{i},u_{j},v_{j}\in\mathfrak{M}\left(\mathcal{T}\right)\right).
\]

Nous allons appliquer le système algèbrique $\left(K\left(\mathcal{T}\right),\cdot,\star\right)$
aux identités vérifiées par les algèbres pondérées.

Une $K$-algèbre $A$ est pondérée s'il existe un morphisme d'algèbres
non nul $\omega:A\rightarrow K$ appelé une pondération de $A$,
on note ceci $\left(A,\omega\right)$, l'image $\omega\left(x\right)$
d'un élément $x$ de $A$ est appelé le poids de $x$, on note
$H_{\left(A,\omega\right)}$ ou plus simplement $H_{\omega}$,
l'hyperplan affine $\left\{ x\in A:\omega\left(x\right)=1\right\} $
(cf. \cite{Ether-39}, \cite{WB-80}). Pour les algèbres pondérées
on a l'analogue de l'opération de substitution des symboles de
$\mathscr{T}$ par des éléments de l'algèbre (\cite{Osborn-72},
prop. 1.1). 
\begin{prop}
Soient $\left(A,\omega\right)$ une $K$-algèbre pondérée et
$\varrho:\mathcal{T}\rightarrow A$ une application. Il existe
un unique homomorphisme d'algèbres $\widehat{\varrho}$ de $\left(K\left(\mathcal{T}\right),\cdot,\star\right)$
dans $\left(A,\omega\right)$ tel que 
\[
\widehat{\varrho}\left(t_{i}\right)=\varrho\left(t_{i}\right),\quad\widehat{\varrho}\left(t_{i}t_{j}\right)=\varrho\left(t_{i}\right)\varrho\left(t_{j}\right),\quad\widehat{\varrho}\left(t_{i}\star t_{j}\right)=\omega\left(\varrho\left(t_{i}\right)\right)\varrho\left(t_{j}\right).
\]
\end{prop}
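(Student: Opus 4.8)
The plan is to establish uniqueness (which is purely formal) and existence (which reduces to a short finite case check) separately. Throughout, $(A,\omega)$ is viewed as an algebraic system of the same signature as $\left(K\left(\mathcal{T}\right),\cdot,\star\right)$, its first law being the multiplication of $A$ and its second law the $\omega$-twisted law $x\star y:=\omega\left(x\right)y$ used in \cite{BGM-97}, so that the map $\widehat{\varrho}$ sought is a $K$-linear map commuting with $\cdot$ and with $\star$; observe that the third prescribed equality is exactly the requirement $\widehat{\varrho}\left(t_{i}\star t_{j}\right)=\widehat{\varrho}\left(t_{i}\right)\star\widehat{\varrho}\left(t_{j}\right)$ read inside $(A,\omega)$.

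\emph{Uniqueness.} Since $\mathfrak{M}\left(\mathcal{T}\right)\cup\left(\mathfrak{M}\left(\mathcal{T}\right)\star\mathfrak{M}\left(\mathcal{T}\right)\right)$ is a $K$-basis of $\left(K\left(\mathcal{T}\right),\cdot,\star\right)=K\left(\mathcal{T}\right)\oplus K\left(\mathcal{T}\right)^{\star}$, a homomorphism is determined once its values on the non-associative words and on the monomials $u\star v$ are known. On a word $w$ one proceeds by induction on $\left|w\right|$: if $\left|w\right|=1$ then $w=t_{i}$ and $\widehat{\varrho}\left(w\right)=\varrho\left(t_{i}\right)$; if $\left|w\right|\geq2$, Proposition \ref{prop:dec_ds_M(t)} yields the unique factorization $w=w_{1}w_{2}$ with $\left|w_{1}\right|,\left|w_{2}\right|<\left|w\right|$, whence $\widehat{\varrho}\left(w\right)=\widehat{\varrho}\left(w_{1}\right)\widehat{\varrho}\left(w_{2}\right)$ is forced. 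On $u\star v$ with $u,v\in\mathfrak{M}\left(\mathcal{T}\right)$ one has $\widehat{\varrho}\left(u\star v\right)=\widehat{\varrho}\left(u\right)\star\widehat{\varrho}\left(v\right)=\omega\left(\widehat{\varrho}\left(u\right)\right)\widehat{\varrho}\left(v\right)$, which the preceding step determines. Hence $\widehat{\varrho}$ is unique, and in particular the equalities $\widehat{\varrho}\left(t_{i}t_{j}\right)=\varrho\left(t_{i}\right)\varrho\left(t_{j}\right)$ and $\widehat{\varrho}\left(t_{i}\star t_{j}\right)=\omega\left(\varrho\left(t_{i}\right)\right)\varrho\left(t_{j}\right)$ already follow from the homomorphism property and $\widehat{\varrho}\left(t_{i}\right)=\varrho\left(t_{i}\right)$.

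\emph{Existence.} We turn those formulas into a definition. The classical universal property of the free commutative non-associative algebra $K\left(\mathcal{T}\right)$ (see \cite{Zh-Sl-Sh-82}) produces a unique algebra homomorphism $\widehat{\varrho}\colon K\left(\mathcal{T}\right)\to A$ with $\widehat{\varrho}\left(t_{i}\right)=\varrho\left(t_{i}\right)$, namely $\widehat{\varrho}\left(w_{1}w_{2}\right)=\widehat{\varrho}\left(w_{1}\right)\widehat{\varrho}\left(w_{2}\right)$ along the unique factorization of Proposition \ref{prop:dec_ds_M(t)}; we then set $\widehat{\varrho}\left(u\star v\right):=\omega\left(\widehat{\varrho}\left(u\right)\right)\widehat{\varrho}\left(v\right)$ for $u,v\in\mathfrak{M}\left(\mathcal{T}\right)$ — unambiguous because the $u\star v$ form a $K$-basis of $K\left(\mathcal{T}\right)^{\star}$ — and extend $\widehat{\varrho}$ linearly to $\left(K\left(\mathcal{T}\right),\cdot,\star\right)$. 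By bilinearity of $\cdot$ and $\star$ on both sides it now suffices to verify, for basis monomials $m,m'$ (each either a word or a $\star$-product of two words), that $\widehat{\varrho}\left(m\cdot m'\right)=\widehat{\varrho}\left(m\right)\widehat{\varrho}\left(m'\right)$ and $\widehat{\varrho}\left(m\star m'\right)=\omega\left(\widehat{\varrho}\left(m\right)\right)\widehat{\varrho}\left(m'\right)$. For each of the finitely many cases, the proof of the structural proposition $\left(\mathfrak{M}\left(\mathcal{T}\right),\cdot,\star\right)=\mathfrak{M}\left(\mathcal{T}\right)\cup\left(\mathfrak{M}\left(\mathcal{T}\right)\star\mathfrak{M}\left(\mathcal{T}\right)\right)$ already rewrites $m\cdot m'$, resp.\ $m\star m'$, in its canonical form (a word, or $u\star v$ with $u,v$ words) by means of the relations (\ref{eq:Ax_1})--(\ref{eq:Ax_3}); typically $w_{1}\cdot\left(u_{2}\star v_{2}\right)=u_{2}\star\left(w_{1}v_{2}\right)$, $\left(u_{1}\star v_{1}\right)\cdot\left(u_{2}\star v_{2}\right)=\left(u_{1}u_{2}\right)\star\left(v_{1}v_{2}\right)$, $w_{1}\star\left(u_{2}\star v_{2}\right)=\left(w_{1}u_{2}\right)\star v_{2}$, $\left(u_{1}\star v_{1}\right)\star\left(u_{2}\star v_{2}\right)=\left(\left(u_{1}v_{1}\right)u_{2}\right)\star v_{2}$. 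Applying $\widehat{\varrho}$ to the canonical form and unfolding the two defining formulas, each required identity reduces to the fact that $\omega$ is a morphism of $K$-algebras and that the laws of $A$ are $K$-bilinear; for instance
\begin{align*}
\widehat{\varrho}\left(\left(u_{1}\star v_{1}\right)\star\left(u_{2}\star v_{2}\right)\right) & =\widehat{\varrho}\left(\left(\left(u_{1}v_{1}\right)u_{2}\right)\star v_{2}\right)=\omega\left(\widehat{\varrho}\left(u_{1}\right)\right)\omega\left(\widehat{\varrho}\left(v_{1}\right)\right)\omega\left(\widehat{\varrho}\left(u_{2}\right)\right)\widehat{\varrho}\left(v_{2}\right)\\
 & =\omega\left(\widehat{\varrho}\left(u_{1}\star v_{1}\right)\right)\widehat{\varrho}\left(u_{2}\star v_{2}\right).
\end{align*}
The remaining equalities $\widehat{\varrho}\left(t_{i}\right)=\varrho\left(t_{i}\right)$, $\widehat{\varrho}\left(t_{i}t_{j}\right)=\varrho\left(t_{i}\right)\varrho\left(t_{j}\right)$ and $\widehat{\varrho}\left(t_{i}\star t_{j}\right)=\omega\left(\varrho\left(t_{i}\right)\right)\varrho\left(t_{j}\right)$ then hold by construction.

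The main obstacle, such as it is, is this final bookkeeping, and I do not expect a conceptual difficulty in it: the underlying point is only that the two defining formulas for $\widehat{\varrho}$ are compatible with the rewriting rules (\ref{eq:Ax_1})--(\ref{eq:Ax_3}), each of which, after composition with $\omega\circ\widehat{\varrho}$, collapses to the multiplicativity of $\omega$. The one mildly delicate point is that the canonical form must be well defined — distinct chains of rewritings of one monomial must reach the same canonical representative — but that is precisely the uniqueness content of Propositions \ref{prop:dec_ds_M(t)} and \ref{prop:Dec_ds_Mt*} together with the structural proposition above, so no new argument is needed.
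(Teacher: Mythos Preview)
Your argument is correct, but it is organised differently from the paper's. The paper does not split into uniqueness and existence, nor does it pass through the normal-form basis $\mathfrak{M}(\mathcal{T})\cup\bigl(\mathfrak{M}(\mathcal{T})\star\mathfrak{M}(\mathcal{T})\bigr)$: it works directly in $(\mathfrak{M}(\mathcal{T}),\cdot,\star)$ and defines $\widehat{\varrho}$ by a single induction on the degree, using Proposition~\ref{prop:Dec_ds_Mt*} to write each $w$ of degree $n$ uniquely as $w_{1}w_{2}$ or $w_{1}\star w_{2}$ with $|w_{1}|,|w_{2}|<n$, then setting $\widehat{\varrho}(w)=\widehat{\varrho}(w_{1})\widehat{\varrho}(w_{2})$ or $\omega(\widehat{\varrho}(w_{1}))\widehat{\varrho}(w_{2})$ accordingly and extending linearly. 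What your route buys is explicitness: by first landing on the canonical form via the structural proposition and then checking, case by case, that the two defining formulas for $\widehat{\varrho}$ are compatible with the rewriting rules (\ref{eq:Ax_1})--(\ref{eq:Ax_4}), you make visible the step that the paper leaves implicit, namely that $\widehat{\varrho}$ really respects \emph{every} product $m\cdot m'$ and $m\star m'$, not merely the one singled out by the unique decomposition. The paper's route is shorter and hides this verification inside the appeal to Proposition~\ref{prop:Dec_ds_Mt*}; yours trades brevity for a transparent finite case analysis, and your closing remark correctly identifies that the only substantive input is the well-definedness of the canonical form, already supplied by Propositions~\ref{prop:dec_ds_M(t)} and~\ref{prop:Dec_ds_Mt*}.
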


\begin{proof}
D'après les hypothèses l'application $\widehat{\varrho}$ est
définie pour tous les monômes de degré 2 de $\left(\mathfrak{M}\left(\mathcal{T}\right),\cdot,\star\right)$.
On suppose qu'elle est définie pour tous les monômes de $\left(\mathfrak{M}\left(\mathcal{T}\right),\cdot,\star\right)$
de degré $<n$, soit $w\in\left(\mathfrak{M}\left(\mathcal{T}\right),\cdot,\star\right)$
de degré $n$, on a $w=w_{1}w_{2}$ ou $w=w_{1}\star w_{2}$
avec $w_{1},w_{2}\in\mathfrak{M}\left(\mathcal{T}\right)$ de
degré $<n$, par hypothèse $\widehat{\varrho}\left(w_{1}\right)$
et $\widehat{\varrho}\left(w_{2}\right)$ sont définies et on
pose $\widehat{\varrho}\left(w\right)=\widehat{\varrho}\left(w_{1}\right)\widehat{\varrho}\left(w_{2}\right)$
si $w=w_{1}w_{2}$ et $\widehat{\varrho}\left(w\right)=\omega\left(\widehat{\varrho}\left(w_{1}\right)\right)\widehat{\varrho}\left(w_{2}\right)$
si $w=w_{1}\star w_{2}$, avec ceci et par unicité de la décomposition
de $w$, l'application $\widehat{\varrho}$ est bien définie
sur $\left(\mathfrak{M}\left(\mathcal{T}\right),\cdot,\star\right)$,
elle se prolonge par linéarité sur $\left(K\left(\mathcal{T}\right),\cdot,\star\right)$
en posant: $\widehat{\varrho}\left(\sum_{k\geq1}\alpha_{k}w_{k}\right)=\sum_{k\geq1}\alpha_{k}\widehat{\varrho}\left(w_{k}\right)$.
\end{proof}
\begin{defn}
\label{def:A_verifie_f}Étant donné $f$ un élément de $\left(K\left(\mathcal{T}\right),\cdot,\star\right)$
tel que $f\neq0$. On dit qu'une $K$-algèbre pondérée $\left(A,\omega\right)$
vérifie l'identité $f$ si on a: 
\begin{equation}
\widehat{\varrho}\left(f\right)=0,\label{eq:bar_id}
\end{equation}
pour toute application de substitution $\varrho:\mathcal{T}\rightarrow A$. 
\end{defn}

\begin{rem}
Plus généralement, le système algébrique $\left(K\left(\mathcal{T}\right),\cdot,\star\right)$
permet de définir la notion de weighted identity introduite dans
\cite{Tka-18}. Soient $A$ une $K$-algèbre commutative et $\varrho:\mathcal{T}\rightarrow A$
une application de substitution. Une application $\phi:A\rightarrow K$
est dite polynomiale si pour tout élément $a$ et $b$ de $A$
l'application $t\mapsto\phi\left(a+tb\right)$ est un polynôme.
Soit $\left\{ \phi_{w};w\in\mathfrak{M}\left(\mathcal{T}\right)\right\} $
une famille d'applications polynomiales donnée, il existe un
unique morphisme d'algèbres $\widehat{\varrho}$ de $\left(K\left(\mathcal{T}\right),\cdot,\star\right)$
dans $A$ tel que $\widehat{\varrho}\left(u\cdot v\right)=\widehat{\varrho}\left(u\right)\widehat{\varrho}\left(v\right)$
et $\widehat{\varrho}\left(u\star v\right)=\phi_{u}\left(\widehat{\varrho}\left(u\right)\right)\widehat{\varrho}\left(v\right)$
pour tout $u,v\in\mathfrak{M}\left(\mathcal{T}\right)$, alors
on dit que l'algèbre $A$ vérifie une weighted identity $f\in\left(K\left(\mathcal{T}\right),\cdot,\star\right)$
si $\widehat{\varrho}\left(f\right)=0$.
\end{rem}

Sous certaine condition, pour montrer qu'une algèbre pondérée
$\left(A,\omega\right)$ vérifie une identité $f$ il suffit
de montrer que $A$ vérifie $f$ pour les éléments de poids 1.
\begin{prop}
\label{prop:Bar-Id_pond1}Soit $f\in\left(K\left(\mathcal{T}\right),\cdot,\star\right)$.
Si le corps $K$ vérifie $\text{\emph{card}}K^{*}>\max\left\{ \left|f\right|_{i};i\geq1\right\} $,
alors une $K$-algèbre pondérée $\left(A,\omega\right)$ vérifie
l'identité $f$ si et seulement si on a:
\[
\widehat{\varrho}\left(f\right)=0,
\]
 pour toute application de substitution $\varrho:\mathcal{T}\rightarrow H_{\omega}$.
\end{prop}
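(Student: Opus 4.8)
The direct implication is immediate since $H_{\omega}\subseteq A$, so the plan is to prove the converse: assuming $\widehat{\varrho}(f)=0$ for every $\varrho\colon\mathcal{T}\to H_{\omega}$, deduce the same for every $\varrho\colon\mathcal{T}\to A$. I would first isolate two ingredients. The first is the \emph{homogeneity of the substitution map on monomials}: if $\varrho,\varrho'\colon\mathcal{T}\to A$ satisfy $\varrho'(t_{i})=c_{i}\varrho(t_{i})$ with $c_{i}\in K$, then $\widehat{\varrho'}(w)=\bigl(\prod_{i}c_{i}^{\left|w\right|_{i}}\bigr)\widehat{\varrho}(w)$ for every $w\in\left(\mathfrak{M}\left(\mathcal{T}\right),\cdot,\star\right)$; this is proved by induction on $\left|w\right|$ using Proposition~\ref{prop:Dec_ds_Mt*} to write $w=w_{1}w_{2}$ or $w=w_{1}\star w_{2}$, the multiplicativity of $\widehat{\varrho}$ for $\cdot$, the defining formula $\widehat{\varrho}(u\star v)=\omega(\widehat{\varrho}(u))\widehat{\varrho}(v)$, and the $K$-linearity of $\omega$. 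The second is the elementary \emph{grid-vanishing lemma}: a polynomial map $P\colon K^{m}\to A$ with $\deg_{x_{i}}P\leq d_{i}$ that vanishes on a product $S_{1}\times\cdots\times S_{m}$ with $\operatorname{card}S_{i}>d_{i}$ is identically zero, by the usual Vandermonde argument carried out inside $A$ regarded as a $K$-vector space.

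Next I would decompose $f=\sum_{[\mathbf{n}]}f_{[\mathbf{n}]}$ into its type-homogeneous components — there are finitely many, and $\left|f_{[\mathbf{n}]}\right|_{i}=n_{i}\leq\left|f\right|_{i}$ — and reduce the statement to each $f_{[\mathbf{n}]}$ separately; the hypothesis $\operatorname{card}K^{*}>\max_{i}\left|f\right|_{i}$ is inherited. I may therefore assume $f$ is type-homogeneous of type $[\mathbf{n}]$, and it is this homogeneous case that carries the real content.

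For the homogeneous case, fix once and for all $u_{0}\in H_{\omega}$ (possible since $\omega\neq0$), let $\varrho\colon\mathcal{T}\to A$ be arbitrary, write $a_{i}=\varrho(t_{i})$, $\alpha_{i}=\omega(a_{i})$, and consider the substitutions $\sigma_{\lambda}(t_{i})=a_{i}+\lambda_{i}u_{0}$ for $\lambda=(\lambda_{i})\in K^{m}$, where $t_{1},\dots,t_{m}$ are the indeterminates occurring in $f$. Since each $\sigma_{\lambda}(t_{i})$ is affine-linear in $\lambda_{i}$, the map $\lambda\mapsto\widehat{\sigma_{\lambda}}(f)$ is polynomial with $\deg_{\lambda_{i}}\leq\left|f\right|_{i}=n_{i}$. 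For $\lambda$ in the cofinite set $G=\prod_{i}\bigl(K\setminus\{-\alpha_{i}\}\bigr)$ we have $\omega(\sigma_{\lambda}(t_{i}))=\alpha_{i}+\lambda_{i}\neq0$, hence $\sigma_{\lambda}(t_{i})=(\alpha_{i}+\lambda_{i})\,w_{i}$ with $w_{i}=(\alpha_{i}+\lambda_{i})^{-1}(a_{i}+\lambda_{i}u_{0})\in H_{\omega}$; applying the first ingredient together with the homogeneity of $f$ gives
\[
\widehat{\sigma_{\lambda}}(f)=\Bigl(\prod_{i}(\alpha_{i}+\lambda_{i})^{n_{i}}\Bigr)\,\widehat{\varrho_{w}}(f)=0,
\]
where $\varrho_{w}\colon\mathcal{T}\to H_{\omega}$ and the last equality is the hypothesis. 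Since $\operatorname{card}\bigl(K\setminus\{-\alpha_{i}\}\bigr)\geq\operatorname{card}K^{*}>n_{i}$, the grid-vanishing lemma forces $\widehat{\sigma_{\lambda}}(f)$ to be the zero polynomial in $\lambda$; evaluating at $\lambda=0$ yields $\widehat{\varrho}(f)=0$, which is what we wanted.

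The step I expect to be the main obstacle is the reduction to the homogeneous case, i.e.\ passing from ``$f$ vanishes on $H_{\omega}$'' to ``each $f_{[\mathbf{n}]}$ vanishes on $H_{\omega}$''. For ordinary polynomial identities one separates multidegrees by a Vandermonde in a scaling parameter, but here $H_{\omega}$ is an affine hyperplane, not a cone, so scaling leaves $H_{\omega}$ and that argument is not available; this is precisely where the algebraic-system structure $\left(K\left(\mathcal{T}\right),\cdot,\star\right)$ — and the way the operation $\star$ absorbs weights, so that the identities of interest are in fact type-homogeneous once every $\star$ is counted toward the degree — must be brought in, and making this reduction rigorous (together with checking the degree bounds against $\operatorname{card}K^{*}$) is the delicate point of the proof.
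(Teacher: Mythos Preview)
Your handling of the type-homogeneous case is correct and cleaner than the paper's: the scaling lemma for monomials together with the grid-vanishing argument for $\lambda\mapsto\widehat{\sigma_\lambda}(f)$ dispatches it in one stroke, with all variables treated simultaneously.

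The gap is exactly where you locate it, and it is real rather than merely ``delicate''. From ``$\widehat{\varrho}(f)=0$ for every $\varrho:\mathcal{T}\to H_\omega$'' one \emph{cannot} deduce the same for each type-homogeneous component $f_{[\mathbf n]}$: already the element $t_1\star t_2 - t_2$ vanishes identically on $H_\omega$ (since $\omega(\varrho(t_1))=1$ there) while neither of its two type pieces does. So your intuition that the $\star$-structure makes ``the identities of interest type-homogeneous'' is not enough to justify splitting $f$, and the reduction as you state it fails.

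The paper circumvents this by working one variable at a time rather than globally. Inductively, assuming the identity holds whenever $t_1,\dots,t_{j-1}$ range over $A$ and $t_j,t_{j+1},\dots$ over $H_\omega$, it extends the range of $t_j$ to all of $A$ in two moves: for $a_j$ of nonzero weight it rescales to $\omega(a_j)^{-1}a_j\in H_\omega$ and uses the scaling of monomials to rewrite the hypothesis at $a_j$; for $z\in\ker\omega$ it substitutes $t_j\mapsto x_j+\lambda z$ (which stays in $H_\omega$), expands in powers of $\lambda$, and isolates the top-degree contribution by a \emph{single-variable} Vandermonde. Your perturbation $\sigma_\lambda(t_i)=a_i+\lambda_i u_0$ is morally this same translation trick applied to all variables at once --- and that simultaneous version is precisely what forces you into the unavailable reduction-to-homogeneous step. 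Reorganizing your argument inductively, moving one variable at a time from $H_\omega$ to $A$, is the way to close the gap.
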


\begin{proof}
La condition nécessaire est immédiate. Montrons que la condition
est suffisante. Soit $f\in K\left(\mathcal{T}\right)$ avec $f=\sum_{r\geq1}\alpha_{r}w_{r}+\sum_{s\geq1}\beta_{s}u_{s}\star v_{s}$
où $\alpha_{r},\beta_{s}\in K$ et $w_{r},u_{s},v_{s}\in\mathfrak{M}\left(\mathcal{T}\right)$
vérifiant $\widehat{\varrho}\left(f\right)=0$ pour toute application
$\varrho:\mathcal{T}\rightarrow H_{\omega}$. 

Pour $1\leq i\leq n$ on pose
\[
R_{i}=\left\{ r;\left|w_{r}\right|_{i}=\left|f\right|_{i}\right\} \;\text{et}\;S_{i}=\left\{ s;\left|u_{s}\star v_{s}\right|_{i}=\left|f\right|_{i}\right\} .
\]

Pour tout $a_{1}\in A$, $\omega\left(a_{1}\right)\neq0$ on
a $\omega\left(a_{1}\right)^{-1}a_{1}\in H_{\omega}$ , soit
$\left(x_{n}\right)_{n\geq2}$ où $x_{n}\in H_{\omega}$, en
prenant $\varrho\left(t_{1}\right)=\omega\left(a_{1}\right)^{-1}a_{1}$
et $\varrho\left(t_{i}\right)=x_{i}$ pour $i\geq2$, la condition
$\widehat{\varrho}\left(f\right)=0$ s'écrit: 
\begin{align}
\lefteqn{\sum_{r\in R_{1}}\alpha_{r}w_{r}\left(a_{1},x_{2},\ldots\right)+\sum_{r\notin R_{1}}\alpha_{r}\omega\left(a_{1}\right)^{\left|f\right|_{1}-\left|w_{r}\right|_{1}}w_{r}\left(a_{1},x_{2},\ldots\right)+}\nonumber \\
 & \qquad\sum_{s\in S_{1}}\beta_{s}v_{s}\left(a_{1},x_{2},\ldots\right)+\sum_{s\notin S_{1}}\beta_{s}\omega\left(a_{1}\right)^{\left|f\right|_{1}-\left|u_{s}\star v_{s}\right|_{1}}v_{s}\left(a_{1},x_{2},\ldots\right)=0.\label{eq:poids1_eq1}
\end{align}
Posons $f_{1}=\sum_{r\in R_{1}}\alpha_{r}w_{r}+\sum_{s\in S_{1}}\beta_{s}v_{s}$,
on a $\left|f_{1}\right|_{1}=\left|f\right|_{1}$ et (\ref{eq:poids1_eq1})
s'écrit: 
\begin{align}
f_{1}\left(a_{1},x_{2},\ldots\right)+\sum_{r\notin R_{1}}\alpha_{r}\omega\left(a_{1}\right)^{\left|f\right|_{1}-\left|w_{r}\right|_{1}}w_{r}\left(a_{1},x_{2},\ldots\right)+\qquad\nonumber \\
\sum_{s\notin S_{1}}\beta_{s}\omega\left(a_{1}\right)^{\left|f\right|_{1}-\left|u_{s}\star v_{s}\right|_{1}}v_{s}\left(a_{1},x_{2},\ldots\right) & =0,\label{eq:poids1_eq2}
\end{align}
autrement dit, on a obtenu $\widehat{\varrho}\left(f_{1}+\sum_{r\notin R_{1}}\alpha_{r}w_{r}+\sum_{s\notin S_{1}}\beta_{s}u_{s}\star v_{s}\right)=0$
ou $\widehat{\varrho}\left(f\right)=0$ pour $\varrho\left(t_{1}\right)=a_{1}$
et $\varrho\left(t_{i}\right)=x_{i}$, ($2\leq i$).

Ensuite pour tout $\left(x_{n}\right)_{n\geq1}$ où $x_{n}\in H_{\omega}$,
tout $z\in\ker\omega$ et $\lambda\in K$, comme $x_{1}+\lambda z\in H_{\omega}$
en prenant $\varrho\left(t_{1}\right)=x_{1}+\lambda z$ et $\varrho\left(t_{i}\right)=x_{i}$
pour $i\geq2$, d'après (\ref{eq:poids1_eq2}) l'identité $\widehat{\varrho}\left(f\right)=0$
s'écrit:
\[
\lambda^{\left|f\right|_{1}}f_{1}\left(z,x_{2},\ldots\right)+\sum_{k=0}^{\left|f\right|_{1}-1}\lambda^{k}g_{1,k}\left(z,x_{1},\ldots\right)=0
\]
où $g_{1,k}\in K\left(\mathcal{T}\right)$ avec $\left|g_{1,k}\right|_{1}=k$.
Par hypothèse on a $\text{card}K>\left|f\right|_{1}$, alors
en remplaçant $\lambda$ par des éléments $\lambda_{0},\ldots,\lambda_{\left|f\right|_{1}}$
de $K$ deux à deux distincts on obtient un système linéaire
homogène de $\left|f\right|_{1}+1$ équations d'inconnues $f_{1},g_{1,0},\ldots,g_{1,\left|f\right|_{1}-1}$
dont le déterminant est non nul, il en résulte que $f_{1}\left(z,x_{2},\ldots\right)=0$
ce qui d'après (\ref{eq:poids1_eq1}) ou (\ref{eq:poids1_eq2})
équivaut à $\widehat{\varrho}\left(f\right)=0$ pour $\varrho\left(t_{1}\right)=z$
et $\varrho\left(t_{i}\right)=x_{i}$ ($2\leq i$). On a donc
établit que $\widehat{\varrho}\left(f\right)=0$ pour $\varrho\left(t_{1}\right)=a_{1}$
et $\varrho\left(t_{i}\right)=x_{i}$ ($2\leq i$) quel que soit
$a_{1}\in A$ et $x_{2},\text{…},x_{n}\in H_{\omega}$.\medskip{}

En prenant $\varrho\left(t_{1}\right)=a_{1}$, $\varrho\left(t_{2}\right)=\omega\left(a_{2}\right)^{-1}a_{2}$
et $\varrho\left(t_{i}\right)=x_{i}$ ($3\leq i$) où $a_{1}\in A$,
$a_{2}\in A$, $\omega\left(a_{2}\right)\neq0$, $x_{n}\in H_{\omega}$
pour $n\geq3$, la condition $\widehat{\varrho}\left(f\right)=0$
conduit à 
\begin{align}
\sum_{r\in R_{2}}\alpha_{r}\omega\left(a_{1}\right)^{\left|f\right|_{1}-\left|w_{r}\right|_{1}}w_{r}\left(\mathbf{x}\right)+{\displaystyle \sum_{r\notin R_{2}}}\alpha_{r}\omega\left(a_{1}\right)^{\left|f\right|_{1}-\left|w_{r}\right|_{1}}\omega\left(a_{2}\right)^{\left|f\right|_{2}-\left|w_{r}\right|_{2}}w_{r}\left(\mathbf{x}\right)+\label{eq:poids1_eq3}\\
\sum_{s\in S_{2}}\beta_{s}\omega\left(a_{1}\right)^{\left|f\right|_{1}-\left|u_{s}\star v_{s}\right|_{1}}v_{s}\left(\mathbf{x}\right)+{\displaystyle \sum_{s\notin S_{2}}}\beta_{s}\omega\left(a_{1}\right)^{\left|f\right|_{1}-\left|u_{s}\star v_{s}\right|_{1}}\omega\left(a_{2}\right)^{\left|f\right|_{2}-\left|u_{s}\star v_{s}\right|_{2}}v_{s}\left(\mathbf{x}\right) & =0\nonumber 
\end{align}
où on a posé $\mathbf{x}=\left(a_{1},a_{2},x_{3},\ldots,x_{n},\ldots\right)$.

Ensuite avec $\varrho\left(t_{1}\right)=a_{1}$, $\varrho\left(t_{2}\right)=x_{2}+\lambda_{j}z$
($0\leq j\leq\left|f\right|_{2}$) et $\varrho\left(t_{i}\right)=x_{i}$
où $a_{1}\in A$, $x_{i}\in H_{\omega}$ ($i\geq3$), $z\in\ker\omega$
et $\lambda_{0},\ldots,\lambda_{\left|f\right|_{2}}\in K$ deux
à deux différents, on obtient 
\begin{equation}
\sum_{r\in R_{2}}\alpha_{i}\omega\left(a_{1}\right)^{\left|f\right|_{1}-\left|w_{r}\right|_{1}}w_{r}\left(a_{1},z,x_{3},\ldots\right)+\sum_{s\in S_{2}}\beta_{s}\omega\left(a_{1}\right)^{\left|f\right|_{1}-\left|u_{s}\star v_{s}\right|_{1}}v_{s}\left(a_{1},z,x_{3},\ldots\right)=0.\label{eq:poids1_eq4}
\end{equation}
 De (\ref{eq:poids1_eq3}) et (\ref{eq:poids1_eq4}) on déduit
que $\widehat{\varrho}\left(f\right)=0$ pour $\varrho\left(t_{1}\right)=a_{1}$,
$\varrho\left(t_{2}\right)=a_{2}$ et $\varrho\left(t_{i}\right)=x_{i}$
($3\leq i$), pour tout $a_{1},a_{2}\in A$, $x_{i}\in H_{\omega}$.

En poursuivant ainsi on obtient par récurrence (\ref{eq:bar_id}).
\end{proof}
L'introduction de la multiplication $\star$ dans $\mathfrak{M}\left(\mathcal{T}\right)$
permet d'obtenir pour les identités vérifiées par les algèbres
pondérées un résultat connu pour les variétés d'algèbres (cf.
\cite{Zh-Sl-Sh-82} Theorem 3).
\begin{prop}
\label{prop:Comp_homog}Soit $f\in\left(K\left(\mathcal{T}\right),\cdot,\star\right)$
une identité vérifiée par une $K$-algèbre pondérée $\left(A,\omega\right)$.
Si le corps $K$ vérifie $\text{\emph{card}}K^{*}>\max\left\{ \left|f\right|_{i};i\geq1\right\} $
alors chaque composante homogène de $f$ est une identité vérifiée
par $A$. 
\end{prop}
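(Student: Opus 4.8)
The plan is to separate variables degree by degree, exactly as in the classical proof for varieties of nonassociative algebras, but adapted to the algebraic system $\left(K\left(\mathcal{T}\right),\cdot,\star\right)$. Write $f=\sum_{k}f_{k}$ where $f_{k}$ collects all monomials of $f$ having a fixed type $\left[n_{1},\ldots,n_{m},\ldots\right]$; since $f$ has finitely many monomials there are finitely many such components, and by the definitions of degree in $\left(K\left(\mathcal{T}\right),\cdot,\star\right)$ each $f_{k}$ is homogeneous in the sense defined above. We must show that each $f_{k}$ is again an identity of $\left(A,\omega\right)$.

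First I would argue one indeterminate at a time. Fix $i\ge 1$ and group the monomials of $f$ according to their degree $d_i$ in $t_i$; write $f=\sum_{d=0}^{\left|f\right|_{i}} h_{d}$ where $h_d$ gathers the monomials with $\left|\cdot\right|_{i}=d$. Given any substitution $\varrho:\mathcal{T}\to A$, replace $\varrho\left(t_{i}\right)$ by $\lambda\,\varrho\left(t_{i}\right)$ for a scalar $\lambda\in K^{*}$. Because the degree in $t_i$ is additive under both $\cdot$ and $\star$ (and $\omega$ is linear, so a factor $t_i$ sitting in the left slot of a $\star$-product still contributes its scalar weight $\lambda$ times), we get $\widehat{\varrho_\lambda}(f)=\sum_{d=0}^{\left|f\right|_{i}}\lambda^{d}\,\widehat{\varrho}(h_{d})=0$. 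Since $\mathrm{card}\,K^{*}>\max\{\left|f\right|_{i};i\ge1\}\ge\left|f\right|_{i}$, we may pick $\left|f\right|_{i}+1$ pairwise distinct scalars $\lambda_{0},\ldots,\lambda_{\left|f\right|_{i}}$; the resulting linear system in the unknowns $\widehat{\varrho}(h_{0}),\ldots,\widehat{\varrho}(h_{\left|f\right|_{i}})$ has a Vandermonde determinant, which is nonzero, hence $\widehat{\varrho}(h_{d})=0$ for every $d$ and every $\varrho$. Thus each $h_{d}$ is an identity of $\left(A,\omega\right)$.

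Now iterate: having split $f$ by the degree in $t_{1}$, split each piece by the degree in $t_{2}$ (the hypothesis $\mathrm{card}\,K^{*}>\left|f\right|_{2}$ still applies, noting that $\left|h_{d}\right|_{2}\le\left|f\right|_{2}$), then by the degree in $t_{3}$, and so on. Only finitely many indeterminates actually occur in $f$, so after finitely many steps we have decomposed $f$ as a sum of type-homogeneous pieces, each of which is an identity of $\left(A,\omega\right)$; these pieces are precisely the homogeneous components of $f$. The one point that needs care — and the only real obstacle — is verifying that the substitution $\varrho(t_i)\mapsto\lambda\,\varrho(t_i)$ scales $\widehat{\varrho}(w)$ by exactly $\lambda^{\left|w\right|_{i}}$ for every monomial $w\in\left(\mathfrak{M}(\mathcal{T}),\cdot,\star\right)$, including the $\star$-monomials $u\star v$, where the homomorphism rule gives $\widehat{\varrho}(u\star v)=\omega(\widehat{\varrho}(u))\,\widehat{\varrho}(v)$; this follows by an easy induction on $\left|w\right|$ using the recursive definition $\left|u\cdot v\right|_{i}=\left|u\star v\right|_{i}=\left|u\right|_{i}+\left|v\right|_{i}$ and the linearity of $\omega$, together with the unique decomposition of Proposition~\ref{prop:Dec_ds_Mt*}. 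Everything else is the standard Vandermonde argument.
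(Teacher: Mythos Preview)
Your proof is correct and follows essentially the same route as the paper: split $f$ by degree in $t_{1}$, scale $\varrho(t_{1})$ by $\lambda\in K^{*}$, use the Vandermonde determinant to isolate each degree-component, then iterate over $t_{2},t_{3},\ldots$. The only cosmetic difference is that the paper takes the base substitution $\varrho$ with values in $H_{\omega}$ and then tacitly invokes Proposition~\ref{prop:Bar-Id_pond1} to conclude, whereas you work directly with arbitrary $\varrho:\mathcal{T}\to A$ and verify the scaling property $\widehat{\varrho_{\lambda}}(w)=\lambda^{\left|w\right|_{i}}\widehat{\varrho}(w)$ for $\star$-monomials as well; your version is marginally more self-contained.
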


\begin{proof}
Soit $f\in\left(K\left(\mathcal{T}\right),\cdot,\star\right)$
une identité vérifiée par $\left(A,\omega\right)$. Pour tout
$d\geq0$ on note $f_{1,d}$ la somme des monômes de degré $d$
en $t_{1}$ de $f$, on a donc $f=\sum_{d=0}^{\left|f\right|_{1}}f_{1,d}$.
A toute application $\varrho:\mathcal{T}\rightarrow H_{\omega}$
on associe $\varrho_{1}:\mathcal{T}\rightarrow A$ telle que
$\varrho_{1}\left(t_{j}\right)=\varrho\left(t_{j}\right)$ si
$j\neq1$ et $\varrho_{1}\left(t_{1}\right)=\lambda\varrho\left(t_{1}\right)$
où $\lambda\in K$, $\lambda\neq0$. Alors de $\widehat{\varrho}_{1}\left(f\right)=0$
il résulte $\sum_{d=0}^{\left|f\right|_{1}}\lambda^{d}\widehat{\varrho}\left(f_{1,d}\right)=0$
pour tout $\lambda\in K$, en particulier en prenant pour $\lambda$
des valeurs non nulles $\lambda_{0},\ldots\lambda_{\left|f\right|_{1}}$
deux à deux distinctes on obtient un système de $\left|f\right|_{1}+1$
équations linéaires d'inconnues $\widehat{\varrho}\left(f_{1,d}\right)$
dont le déterminant de Vandermonde n'est pas nul, par conséquent
on a $\widehat{\varrho}\left(f_{1,d}\right)=0$ pour tout $d\geq0$,
autrement dit les polynômes $f_{1,0},\ldots,f_{1,\left|f\right|_{1}}$
sont des identités vérifiées par $A$.\medskip{}

En appliquant la même procédure pour l'indéterminée $t_{2}$
aux polynômes $f_{1,0},\ldots,f_{1,\left|f\right|_{1}}$ on obtient
des polynômes homogènes en $t_{1}$ et $t_{2}$ qui sont des
identités de $A$. Et en poursuivant ainsi pour toutes les variables
$t_{3},\ldots,t_{n},\ldots$ on établit le résultat. 
\end{proof}
\begin{rem}
\label{rem:Card(K*)}Compte tenu de l'importance des résultats
obtenus dans les propositions \ref{prop:Bar-Id_pond1} et \ref{prop:Comp_homog},
on supposera désormais que le corps $K$ vérifie la condition
énoncée dans ces propositions.
\end{rem}

\medskip{}

Soit $\left(K\left(\mathcal{T}\right),\cdot,\star\right)_{\left[n_{1},\ldots,n_{m},\ldots\right]}$
le sous espace des polynômes homogènes de type $\left[n_{1},\ldots,n_{m},\ldots\right]$,
il résulte de la proposition \ref{prop:Comp_homog} que
\[
\left(K\left(\mathscr{\mathcal{T}}\right),\cdot,\star\right)=\bigoplus_{\left(n_{1},\ldots,n_{m},\ldots\right)}\left(K\left(\mathcal{T}\right),\cdot,\star\right)_{\left[n_{1},\ldots,n_{m},\ldots\right]}.
\]

De la proposition \ref{prop:Comp_homog} on déduit immédiatement
la forme des identités vérifiées par les algèbres pondérées.
\begin{cor}
Les identités vérifiées par une algèbre pondérée $\left(A,\omega\right)$
sont de la forme:
\[
\sum_{k=1}^{m}\alpha_{k}\omega\left(a_{1}\right)^{\left|f\right|_{1}-\left|w_{k}\right|_{1}}\text{…}\;\omega\left(a_{n}\right)^{\left|f\right|_{n}-\left|w_{k}\right|_{n}}w_{k}\left(a_{1},\text{…},a_{n}\right)=0,\quad\forall a_{1},\text{…},a_{n}\in A;
\]
où $\alpha_{k}\in K$, $\alpha_{k}\neq0$ et $w_{k}\in\mathfrak{M}\left(\mathcal{T}\right)$
pour tout $1\leq k\leq m$. 
\end{cor}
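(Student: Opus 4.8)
The plan is to reduce first to the homogeneous case via Proposition~\ref{prop:Comp_homog}, and then to read off the asserted shape by evaluating $\widehat{\varrho}$ explicitly on a homogeneous element. Let $f\in\left(K\left(\mathcal{T}\right),\cdot,\star\right)$ be an identity of $\left(A,\omega\right)$. By Remark~\ref{rem:Card(K*)} the field $K$ satisfies $\mathrm{card}\,K^{*}>\max\left\{\left|f\right|_{i};i\geq1\right\}$, so Proposition~\ref{prop:Comp_homog} applies and every homogeneous component of $f$ is again an identity of $A$; hence it suffices to treat an $f$ that is homogeneous, say of type $\left[n_{1},\ldots,n_{m},\ldots\right]$, so that $\left|f\right|_{i}=n_{i}$ for every $i$ (and $\left|f\right|_{i}=0$ for $i$ larger than the finite number $n$ of variables occurring in $f$).

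Next I would use the direct sum decomposition $\left(K\left(\mathcal{T}\right),\cdot,\star\right)=K\left(\mathcal{T}\right)\oplus K\left(\mathcal{T}\right)^{\star}$ to write $f=\sum_{i}\alpha_{i}w_{i}+\sum_{j}\beta_{j}\,u_{j}\star v_{j}$ with $\alpha_{i},\beta_{j}\in K$ and $w_{i},u_{j},v_{j}\in\mathfrak{M}\left(\mathcal{T}\right)$. Fix an arbitrary substitution $\varrho:\mathcal{T}\rightarrow A$, $\varrho\left(t_{\ell}\right)=a_{\ell}$. By the defining formulas for $\widehat{\varrho}$ one has $\widehat{\varrho}\left(w_{i}\right)=w_{i}\left(a_{1},\ldots,a_{n}\right)$ (the evaluation of the monomial in $A$) and $\widehat{\varrho}\left(u_{j}\star v_{j}\right)=\omega\bigl(\widehat{\varrho}\left(u_{j}\right)\bigr)\,\widehat{\varrho}\left(v_{j}\right)=\omega\bigl(u_{j}\left(a_{1},\ldots,a_{n}\right)\bigr)\,v_{j}\left(a_{1},\ldots,a_{n}\right)$. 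Since $\omega$ is an algebra morphism, $\omega\bigl(u\left(a_{1},\ldots,a_{n}\right)\bigr)=\prod_{\ell}\omega\left(a_{\ell}\right)^{\left|u\right|_{\ell}}$ for every monomial $u$, so each $\star$-term becomes a product of powers of the weights $\omega\left(a_{\ell}\right)$ times an ordinary monomial evaluation.

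The remaining step is the bookkeeping of the exponents forced by homogeneity of $f$. For the monomials $w_{i}$ of $f$ one has $\left|w_{i}\right|_{\ell}=\left|f\right|_{\ell}$ for all $\ell$, hence $\omega\left(a_{\ell}\right)^{\left|f\right|_{\ell}-\left|w_{i}\right|_{\ell}}=1$ and $\alpha_{i}w_{i}\left(a\right)=\alpha_{i}\prod_{\ell}\omega\left(a_{\ell}\right)^{\left|f\right|_{\ell}-\left|w_{i}\right|_{\ell}}w_{i}\left(a\right)$. For the terms $u_{j}\star v_{j}$ homogeneity gives $\left|u_{j}\right|_{\ell}+\left|v_{j}\right|_{\ell}=\left|f\right|_{\ell}$, so $\left|u_{j}\right|_{\ell}=\left|f\right|_{\ell}-\left|v_{j}\right|_{\ell}\geq0$ and $\omega\bigl(u_{j}\left(a\right)\bigr)=\prod_{\ell}\omega\left(a_{\ell}\right)^{\left|f\right|_{\ell}-\left|v_{j}\right|_{\ell}}$. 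Consequently $\widehat{\varrho}\left(f\right)=0$ reads
\[
\sum_{i}\alpha_{i}\Bigl(\textstyle\prod_{\ell}\omega\left(a_{\ell}\right)^{\left|f\right|_{\ell}-\left|w_{i}\right|_{\ell}}\Bigr)w_{i}\left(a_{1},\ldots,a_{n}\right)+\sum_{j}\beta_{j}\Bigl(\textstyle\prod_{\ell}\omega\left(a_{\ell}\right)^{\left|f\right|_{\ell}-\left|v_{j}\right|_{\ell}}\Bigr)v_{j}\left(a_{1},\ldots,a_{n}\right)=0,
\]
and merging the family $\left(w_{i}\right)$ with the family $\left(v_{j}\right)$ into a single list of monomials $w_{k}\in\mathfrak{M}\left(\mathcal{T}\right)$, $1\leq k\leq m$, with coefficients $\alpha_{k}\in K$ (discarding those that become zero after collecting repetitions, so that $\alpha_{k}\neq0$), gives exactly the stated form. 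I expect the only genuinely delicate point to be the initial reduction to the homogeneous case, which relies entirely on Proposition~\ref{prop:Comp_homog}; everything after that is the multiplicativity of $\omega$ together with exponent bookkeeping.
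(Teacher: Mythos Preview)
Your proof is correct and follows essentially the same route as the paper: reduce to the homogeneous case via Proposition~\ref{prop:Comp_homog}, then evaluate $\widehat{\varrho}$ on each $u_j\star v_j$ using the multiplicativity of $\omega$ and the relation $\left|u_j\right|_\ell=\left|f\right|_\ell-\left|v_j\right|_\ell$ forced by homogeneity. The paper's version is terser (it also begins by noting that only finitely many $t_i$ occur in $f$), but the argument is the same.
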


\begin{proof}
Soit $f\in\left(K\left(\mathcal{T}\right),\cdot,\star\right)$
une identité de $\left(A,\omega\right)$. L'ensemble $I=\left\{ i;\left|f\right|_{i}\neq0\right\} $
est fini, en effet pour $i>\left|f\right|$ on a $\left|f\right|_{i}=0$.
Soit $n$ le cardinal de l'ensemble $I$, on reindexe les éléments
de $\mathscr{\mathcal{T}}$ pour que $I=\left\{ 1,\ldots,n\right\} $.
Ensuite il suffit de remarquer que pour chaque monôme de $f$
du type $u\star v$ avec $u,v\in\mathfrak{M}\left(\mathscr{\mathcal{T}}\right)$
tels que $\left|u\star v\right|_{i}=\left|f\right|_{i}$, si
$\varrho\left(t_{k}\right)=a_{k}$ ($1\leq k\leq n$) on a $\widehat{\varrho}\left(u\star v\right)=\omega\left(\widehat{\varrho}\left(u\right)\right)\widehat{\varrho}\left(v\right)$
avec $\omega\left(\widehat{\varrho}\left(u\right)\right)=\omega\left(a_{1}\right)^{\left|u\right|_{1}}\cdots\omega\left(a_{n}\right)^{\left|u\right|_{n}}$,
or on a $\left|u\star v\right|_{i}=\left|u\right|_{i}+\left|v\right|_{i}$
d'où $\left|u\right|_{i}=\left|f\right|_{i}-\left|v\right|_{i}$.
\end{proof}
Le point de vue des identités considérées dans le système algébrique
$\left(K\left(\mathcal{T}\right),\cdot,\star\right)$ permet
d'obtenir pour la classe des algèbres pondérées des résultats
analogues à ceux des variétés d'algèbres, ce que ne permet pas
de faire le point de vue restreint à la seule algèbre $K\left(\mathcal{T}\right)$,
par exemple la proposition \ref{prop:Comp_homog} n'est pas vraie
dans $K\left(\mathcal{T}\right)$. Néanmoins, l'utilisation de
l'algèbre $K\left(\mathcal{T}\right)$ est très utile pour écrire
de manière plus commode et manipuler les identités. En effet,
dans $\left(K\left(\mathcal{T}\right),\cdot,\star\right)$ l'écriture
d'une identité vérifiée par une algèbre $\left(A,\omega\right)$
n'est pas unique, par exemple, les polynômes $\left(t_{1}t_{2}\right)\left(t_{1}t_{2}\right)-\left(t_{1}t_{2}\right)t_{1}\star t_{2}$
et $\left(t_{1}t_{2}\right)\left(t_{1}t_{2}\right)-\left(t_{1}t_{1}\right)t_{2}\star t_{2}$
correspondent dans une algèbre $\left(A,\omega\right)$ à l'identité
$\left(xy\right)^{2}-\omega\left(x\right)^{2}\omega\left(y\right)y=0$,
($x,y\in A$), dont l'écriture dans $K\left(\mathcal{T}\right)$
est $\left(t_{1}t_{2}\right)\left(t_{1}t_{2}\right)-t_{2}$. 
\begin{defn}
Soit $f\in K\left(\mathcal{T}\right)$, $f=\sum_{k\geq1}\alpha_{k}w_{k}$,
on appelle homogénéisation de $f$, un polynôme $f^{\star}\in\left(K\left(\mathcal{T}\right),\cdot,\star\right)$
défini par:

\[
f^{\star}=\sum_{k\geq1}\alpha_{k}\widetilde{w}_{k}\star w_{k},
\]
où pour tout $k,i\geq1$ on a $\widetilde{w}_{k}\in\mathfrak{M}\left(\mathcal{T}\right)$
avec $\left|\widetilde{w}_{k}\right|_{i}=\left|f\right|_{i}-\left|w_{k}\right|_{i}$
.
\end{defn}

\begin{prop}
\label{prop:f*_et_f}Soient $\left(A,\omega\right)$ une $K$-algèbre
et $f\in K\left(\mathcal{T}\right)$, les énoncés suivants sont
équivalents

i) $A$ vérifie toutes les homogénéisations $f^{\star}$ de $f$,

ii) A vérifie une homogénéisation $f^{\star}$ de $f$,

iii) on a $\widehat{\varrho}\left(f\right)=0$ pour toute application
$\varrho:\mathcal{T}\rightarrow H_{\omega}$.
\end{prop}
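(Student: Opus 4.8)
The plan is to establish the three implications as a cycle $(i)\Rightarrow(ii)\Rightarrow(iii)\Rightarrow(i)$. Write $f=\sum_{k\geq1}\alpha_{k}w_{k}$. First I would record that $f$ does admit homogenizations: since $\left|f\right|_{i}\geq\left|w_{k}\right|_{i}$ for every $k$ and every $i$, and $\mathcal{T}$ is infinite, one may always choose monomials $\widetilde{w}_{k}\in\mathfrak{M}\left(\mathcal{T}\right)$ with $\left|\widetilde{w}_{k}\right|_{i}=\left|f\right|_{i}-\left|w_{k}\right|_{i}$ (with the understanding that a term $\widetilde{w}_{k}\star w_{k}$ is read as $\alpha_{k}w_{k}$ when all these differences vanish, as in the examples preceding the statement). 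With this, $(i)\Rightarrow(ii)$ is trivial.

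The crux is the following computation, which I would isolate as a preliminary observation. Fix a substitution $\varrho:\mathcal{T}\rightarrow H_{\omega}$. Since $\omega$ is an algebra morphism, for any monomial $u\in\mathfrak{M}\left(\mathcal{T}\right)$ one has $\omega\left(\widehat{\varrho}\left(u\right)\right)=\prod_{i\geq1}\omega\left(\varrho\left(t_{i}\right)\right)^{\left|u\right|_{i}}=1$. Therefore, for any homogenization $f^{\star}=\sum_{k}\alpha_{k}\widetilde{w}_{k}\star w_{k}$ of $f$,
\[
\widehat{\varrho}\left(f^{\star}\right)\;=\;\sum_{k}\alpha_{k}\,\omega\bigl(\widehat{\varrho}\left(\widetilde{w}_{k}\right)\bigr)\,\widehat{\varrho}\left(w_{k}\right)\;=\;\sum_{k}\alpha_{k}\,\widehat{\varrho}\left(w_{k}\right)\;=\;\widehat{\varrho}\left(f\right).
\]
In other words, $f$ and each of its homogenizations take the same value under every weight-$1$ substitution.

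Now $(ii)\Rightarrow(iii)$ follows at once: if $A$ verifies some homogenization $f^{\star}$, then in particular $\widehat{\varrho}\left(f^{\star}\right)=0$ for every $\varrho:\mathcal{T}\rightarrow H_{\omega}$, hence $\widehat{\varrho}\left(f\right)=0$ by the displayed identity. For $(iii)\Rightarrow(i)$, let $f^{\star}$ be an arbitrary homogenization. Each of its monomials has degree $\left|\widetilde{w}_{k}\right|_{i}+\left|w_{k}\right|_{i}=\left|f\right|_{i}$ in $t_{i}$, so $\left|f^{\star}\right|_{i}=\left|f\right|_{i}$ for all $i$, and by Remark \ref{rem:Card(K*)} the field $K$ meets the cardinality hypothesis of Proposition \ref{prop:Bar-Id_pond1} relative to $f^{\star}$. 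It therefore suffices to check $\widehat{\varrho}\left(f^{\star}\right)=0$ for substitutions $\varrho:\mathcal{T}\rightarrow H_{\omega}$, which is exactly the hypothesis $\widehat{\varrho}\left(f\right)=0$ via the displayed identity. Hence $A$ verifies $f^{\star}$, and since $f^{\star}$ was arbitrary this yields $(i)$.

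I do not expect a genuine obstacle here: the whole argument rests on the single observation that, on weight-$1$ substitutions, the operation $\star$ collapses to "evaluate the right-hand factor". The only points requiring a little care are the degree bookkeeping that allows us to invoke Proposition \ref{prop:Bar-Id_pond1} for $f^{\star}$, and the purely notational handling of those homogenization terms whose left factor $\widetilde{w}_{k}$ has degree $0$.
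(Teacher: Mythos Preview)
Your proof is correct and follows essentially the same approach as the paper: both arguments hinge on the single observation that $\widehat{\varrho}(f^{\star})=\widehat{\varrho}(f)$ for every substitution $\varrho:\mathcal{T}\to H_{\omega}$ (because $\omega(\widehat{\varrho}(u))=1$ for any monomial $u$), and both close the cycle by invoking Proposition~\ref{prop:Bar-Id_pond1} for $(iii)\Rightarrow(i)$. Your version is slightly more explicit about the existence of homogenizations and the verification of the degree hypothesis needed for Proposition~\ref{prop:Bar-Id_pond1}, but the substance is identical.
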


\begin{proof}
L'implication $i)\Rightarrow ii)$ est immédiate. 

Pour la suite on remarque que pour toute application $\varrho:\mathcal{T}\rightarrow H_{\omega}$
et tout $w\in\mathfrak{M}\left(\mathcal{T}\right)$ on a $\widehat{\varrho}\left(w\right)=1$,
on en déduit que $\widehat{\varrho}\left(f^{\star}\right)=\widehat{\varrho}\left(f\right)$
pour tout $f\in K\left(\mathcal{T}\right)$ et tout $\varrho:\mathcal{T}\rightarrow H_{\omega}$. 

$ii)\Rightarrow iii)$ Par conséquent si $f^{\star}$ est une
identité de $A$, on a $\widehat{\varrho}\left(f^{\star}\right)=0$
et donc $\widehat{\varrho}\left(f\right)=\widehat{\varrho}\left(f^{\star}\right)=0$
quel que soit $\varrho:\mathcal{T}\rightarrow H_{\omega}$. . 

$iii)\Rightarrow i)$ Réciproquement si on a $\widehat{\varrho}\left(f\right)=0$
pour toute application $\varrho:\mathcal{T}\rightarrow H_{\omega}$
alors pour toute homogénéisation $f^{\star}$ de $f$ on a $\widehat{\varrho}\left(f^{\star}\right)=\widehat{\varrho}\left(f\right)=0$
ce qui entraîne d'après la proposition \ref{prop:Bar-Id_pond1}
que l'algèbre $A$ vérifie l'identité $f^{\star}$.
\end{proof}
Ce résultat conduit naturellement à poser la définition suivante.
\begin{defn}
\label{def:Id ds K(T)}Soient $\left(A,\omega\right)$ une $K$-algèbre
et $f\in K\left(\mathcal{T}\right)$, on dit que $f$ est une
identité vérifiée par $A$ si l'algèbre $A$ vérifie toute homogénéisation
de $f$.
\end{defn}

Les algèbres pondérées ne vérifient pas nécessairement une identité,
cependant dans certains cas l'existence d'une identité est assurée. 
\begin{prop}
Si $\left(A,\omega\right)$ est de dimension finie alors l'algèbre
$A$ vérifie une identité.
\end{prop}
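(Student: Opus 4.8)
The plan is to produce a nonzero multilinear element $f\in K(\mathcal{T})$ that vanishes under every substitution into $A$, and then to invoke Proposition~\ref{prop:f*_et_f} to conclude that $f$ is an identity verified by $A$ in the sense of Definition~\ref{def:Id ds K(T)}. The mechanism is a dimension count: for a fixed finite-dimensional $A$, the number of multilinear nonassociative monomials in $m$ indeterminates eventually overtakes the dimension of the space of $m$-linear maps $A^{m}\to A$, which forces a linear relation among the corresponding monomial operations.

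Concretely, set $n=\dim_{K}A$. For $m\ge 2$ let $M_{m}\subset\mathfrak{M}(\mathcal{T})$ be the set of monomials of type $[1,\dots,1]$ in $t_{1},\dots,t_{m}$; these are in bijection with the unordered rooted binary trees with $m$ labelled leaves, so $\#M_{m}=(2m-3)!!$. Each $w\in M_{m}$ gives an $m$-linear map $\bar w\colon A^{m}\to A$ by $\bar w(a_{1},\dots,a_{m})=\widehat{\varrho}(w)$ with $\varrho(t_{i})=a_{i}$, and all the $\bar w$ lie in $\mathrm{Hom}_{K}(A^{\otimes m},A)$, a space of dimension $n^{m+1}$. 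First I would record the crude lower bound $(2m-3)!!\ge m!/2$ — immediate from the recursion that chooses at the root a single leaf together with a tree on the remaining $m-1$ leaves — so that $(2m-3)!!>n^{m+1}$ for all sufficiently large $m$; fix one such $m$. Then the family $\{\bar w : w\in M_{m}\}$ is linearly dependent over $K$, yielding scalars $(\alpha_{w})_{w\in M_{m}}$, not all zero, with $\sum_{w\in M_{m}}\alpha_{w}\bar w=0$.

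Put $f=\sum_{w\in M_{m}}\alpha_{w}w\in K(\mathcal{T})$. Distinct nonassociative monomials are linearly independent in the free algebra, so $f\ne 0$, while by construction $\widehat{\varrho}(f)=0$ for every $\varrho\colon\mathcal{T}\to A$, in particular for every $\varrho\colon\mathcal{T}\to H_{\omega}$. By Proposition~\ref{prop:f*_et_f}, $A$ then verifies every homogeneization $f^{\star}$ of $f$; hence, by Definition~\ref{def:Id ds K(T)}, $f$ is an identity verified by $A$, which is what we wanted.

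The only genuine step is the counting, and its obstacle is mild: all that is needed is that the number of labelled binary trees grows faster than the fixed exponential $n^{m+1}$, for which $(2m-3)!!\ge m!/2$ is ample and no fine asymptotics are required. One could instead work with a single indeterminate, comparing the Wedderburn--Etherington number of nonassociative monomials of degree $k$ in $t_{1}$ with the dimension $n\binom{n+k-1}{k}$ of the space of homogeneous degree-$k$ polynomial maps $A\to A$, which is only polynomial in $k$; the multilinear version chosen here is cleaner and fits the labelled-tree formalism used later in the paper.
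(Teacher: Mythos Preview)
Your proof is correct and takes a genuinely different route from the paper's. You use a pure dimension count: the number $(2m-3)!!$ of commutative multilinear monomials in $m$ variables eventually exceeds $\dim\mathrm{Hom}_K(A^{\otimes m},A)=n^{m+1}$, forcing a nontrivial linear relation among the associated $m$-linear operations. This argument is entirely general --- it makes no use of the pond\'eration $\omega$ and would show that \emph{any} finite-dimensional commutative algebra satisfies a nontrivial polynomial identity --- and it is self-contained, requiring no external lemma.

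The paper instead exploits the baric structure. With $d=\dim\ker\omega$, the left-multiplication operators $L_z$ for $z\in\ker\omega$ span a subspace of $\mathrm{End}(\ker\omega)$ of dimension at most $d$, hence (by a lemma from \cite{Zh-Sl-Sh-82}) satisfy the standard alternating polynomial $\sum_{\sigma}(-1)^{\mathrm{sgn}\,\sigma}t_{\sigma(1)}\cdots t_{\sigma(d)}$. Substituting $z_i=a_i^{2}-a_i$ with $a_i\in H_\omega$ produces the explicit identity
\[
\sum_{\sigma\in S_d}(-1)^{\mathrm{sgn}\,\sigma}\bigl(t_{\sigma(1)}^2-t_{\sigma(1)}\bigr)\cdots\bigl(t_{\sigma(d)}^2-t_{\sigma(d)}\bigr)\,t_{d+1}.
\]
This yields a concrete identity of controlled degree (roughly $2d+1$), whereas your counting argument produces one only once $(2m-3)!!>n^{m+1}$, hence of much larger degree. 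On the other hand your approach is cleaner and more elementary. One small remark: your appeal to Proposition~\ref{prop:f*_et_f} is not even needed, since your $f$ is already homogeneous (so $f^{\star}=f$) and you have shown $\widehat{\varrho}(f)=0$ for \emph{every} $\varrho:\mathcal{T}\to A$, not merely those with values in $H_\omega$; thus $f\in Id(A)$ directly by Definition~\ref{def:A_verifie_f}.
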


\begin{proof}
Soit $d+1$ la dimension de $\left(A,\omega\right)$. Le résultat
est vrai pour $d=0$ car dans ce cas $A\simeq Ke$ avec $e^{2}=e$
et $\omega\left(e\right)=1$. Supposons $d\geq1$, soit $\left(e_{1},\ldots,e_{d}\right)$
une base de $\ker\omega$. Pour $z\in\ker\omega$ soit $L_{z}:x\mapsto zx$,
l'application $L_{z}$ est un endomorphisme de $\ker\omega$
et l'ensemble $L=\left\{ L_{z};z\in\ker\omega\right\} $ est
sous-espace de $\text{End}\left(\ker\omega\right)$ engendré
par $\left\{ L_{e_{1}},\ldots,L_{e_{d}}\right\} $ donc (\cite{Zh-Sl-Sh-82},
lemma 5, p. 103) vérifie l'identité
\[
P\left(t_{1},\ldots,t_{d}\right)=\sum_{s\in S_{d}}\left(-1\right)^{\text{sgn}\sigma}t_{\sigma\left(1\right)}\cdots t_{\sigma\left(d\right)}.
\]
De $P\left(L_{z_{1}},\ldots,L_{z_{d}}\right)y=0$ pour tout $z_{1},\ldots,z_{d}\in\ker\omega$
et $y\in A$ on déduit que pour tout $a_{1},\ldots,a_{d}\in H_{\omega}$
on a $P\left(L_{a_{1}^{2}-a_{1}},\ldots,L_{a_{d}^{2}-a_{d}}\right)y=0$
quel que soit $y\in A$, autrement dit $A$ vérifie l'identité:
\[
\sum_{s\in S_{d}}\left(-1\right)^{\text{sgn}\sigma}\left(t_{\sigma\left(1\right)}^{2}-t_{\sigma\left(1\right)}\right)\cdots\left(t_{\sigma\left(d\right)}^{2}-t_{\sigma\left(d\right)}\right)t_{d+1},
\]
d'où le résultat.
\end{proof}
L'exemple qui suit montre qu'en général ce résultat n'est pas
vérifié en dimension infinie.
\begin{example}
Soient $\mathcal{T}=\left\{ t_{n};n\geq1\right\} $ et $\mathfrak{M}\left(\mathcal{T}\right)_{n}$
l'ensemble des monômes de degré $n$. On a $\mathfrak{M}\left(\mathcal{T}\right)=\bigcup_{n\geq1}\mathfrak{M}\left(\mathcal{T}\right)_{n}$
et $\mathfrak{M}\left(\mathcal{T}\right)_{n}\subset\bigcup_{p+q=n}\mathfrak{M}\left(\mathcal{T}\right)_{p}\times\mathfrak{M}\left(\mathcal{T}\right)_{q}$
donc l'ensemble $\mathfrak{M}\left(\mathcal{T}\right)$ est dénombrable.
Soit $\varphi:\mathfrak{M}\left(\mathcal{T}\right)\rightarrow\mathbb{N}$
une énumération bijective. On considère l'algèbre $A$ de base
$\left(e_{\varphi\left(w\right)}\right)_{w\in\mathfrak{M}\left(\mathcal{T}\right)}$
définie par $e_{\varphi\left(u\right)}e_{\varphi\left(v\right)}=e_{\varphi\left(v\right)}e_{\varphi\left(u\right)}=e_{\varphi\left(uv\right)}$
où $u,v\in\mathfrak{M}\left(\mathcal{T}\right)$. L'algèbre $A$
est commutative, non associative car $\left(e_{\varphi\left(t_{1}\right)}e_{\varphi\left(t_{2}\right)}\right)e_{\varphi\left(t_{3}\right)}=e_{\varphi\left(\left(t_{1}t_{2}\right)t_{3}\right)}$
et $e_{\varphi\left(t_{1}\right)}\left(e_{\varphi\left(t_{2}\right)}e_{\varphi\left(t_{3}\right)}\right)=e_{\varphi\left(t_{1}\left(t_{2}t_{3}\right)\right)}$
avec $\left(t_{1}t_{2}\right)t_{3}\neq t_{1}\left(t_{2}t_{3}\right)$,
elle est pondérée par $\omega\left(e_{\varphi\left(w\right)}\right)=1$.
Supposons que $A$ vérifie une identité $f\in K\left(\mathcal{T}\right)$
avec $f=\sum_{k\geq1}\alpha_{k}w_{k}$ alors pour $\varrho:\mathcal{T}\rightarrow A$
telle que $\varrho\left(t_{i}\right)=e_{\varphi\left(t_{i}\right)}$
on a $\widehat{\varrho}\left(f\right)=\sum_{k\geq1}\alpha_{k}e_{\varphi\left(w_{k}\right)}$
donc $\widehat{\varrho}\left(f\right)\neq0$, contradiction.
Ainsi l'algèbre $\left(A,\omega\right)$ ne vérifie aucune identité.
\end{example}

Soit $\left(A,\omega\right)$ une $K$-algèbre pondérée, on note
$Id\left(A\right)$ (resp. $\left(Id\left(A\right),\cdot,\star\right)$)
l'ensemble des éléments de $K\left(\mathcal{T}\right)$ (resp.
$\left(K\left(\mathcal{T}\right),\cdot,\star\right)$) qui sont
des identités vérifiées par $A$. D'après la proposition \ref{prop:f*_et_f}
l'ensemble $Id\left(A\right)$ est non vide si et seulement si
il en est de même de l'ensemble $\left(Id\left(A\right),\cdot,\star\right)$.
Il est clair que $Id\left(A\right)$ (resp. $\left(Id\left(A\right),\cdot,\star\right)$)
est une $K$-algèbre et un idéal bilatère de $K\left(\mathcal{T}\right)$
(resp. $\left(K\left(\mathcal{T}\right),\cdot,\star\right)$).
On a vu à la proposition \ref{prop:Comp_homog} que les éléments
de $\left(Id\left(A\right),\cdot,\star\right)$ sont homogènes.
En revanche d'après la définition \ref{def:Id ds K(T)}, l'idéal
$Id\left(A\right)$ peut contenir à la fois des polynômes homogènes
et non homogènes, mais contrairement à ce qu'on a montré pour
l'idéal $\left(Id\left(A\right),\cdot,\star\right)$, les composantes
homogènes d'un élément non homogène de $Id\left(A\right)$ ne
sont pas toujours des identités de $A$. Si $\mathscr{H}\left(\mathcal{T}\right)$
et $\overline{\mathscr{H}}\left(\mathcal{T}\right)$ dénotent
respectivement l'ensemble des polynômes homogènes et non homogènes,
la partition $K\left(\mathcal{T}\right)=\mathscr{H}\left(\mathcal{T}\right)\sqcup\overline{\mathscr{H}}\left(\mathcal{T}\right)$
induit la partition de $Id\left(A\right)$ en deux sous-ensembles:
$Id\left(A\right)=\mathscr{H}\left(A\right)\sqcup\overline{\mathscr{H}}\left(A\right)$
où $\mathcal{\mathscr{H}}\left(A\right)=Id\left(A\right)\cap\mathscr{H}\left(\mathcal{T}\right)$
et $\overline{\mathscr{H}}\left(A\right)=Id\left(A\right)\cap\overline{\mathscr{H}}\left(\mathcal{T}\right)$.
Dans ce qui suit on étudie les propriétés de $Id\left(A\right)$
et $\left(Id\left(A\right),\cdot,\star\right)$. 
\begin{prop}
\label{prop:H(A)}Soit $\left(A,\omega\right)$ une $K$-algèbre. 

a) Pour tout $f\in Id\left(A\right)$ et tout $f\in\left(Id\left(A\right),\cdot,\star\right)$
on a: $f\left(\mathds{1}\right)=0$, où $\mathds{1}=\left(1,\ldots,1,\ldots\right)$.

b) Si $Id\left(A\right)\neq\textrm{Ø }$ alors $\overline{\mathscr{H}}\left(A\right)\neq\textrm{Ø}$. 

c) Soit $f\in\overline{\mathcal{H}}\left(A\right)$ alors pour
tout $i\geq1$ tel que $\left|f\right|_{i}\neq0$ il existe $g,h\in K\left(\mathscr{\mathcal{T}}\right)$
vérifiant les conditions : $f=g-h$, $\left|g\right|_{i}=\left|f\right|_{i}$,
$\left|g\right|_{i}>\left|h\right|_{i}$ et $g\left(\mathds{1}\right)=h\left(\mathds{1}\right)$.
\end{prop}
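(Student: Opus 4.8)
The three parts are of increasing difficulty, so I would dispatch them in order. For part a), the plan is to observe that the evaluation map sending each $t_i$ to an element of weight $1$ is exactly the substitution $\varrho:\mathcal{T}\to H_\omega$, and by Proposition \ref{prop:f*_et_f} (together with the remark in its proof that $\widehat{\varrho}(w)=1$ for every $w\in\mathfrak{M}(\mathcal{T})$) one has $\widehat{\varrho}(f)=f(\mathds{1})$ for such $\varrho$. Since $f\in Id(A)$ means $A$ verifies every homogenization $f^{\star}$, Proposition \ref{prop:f*_et_f} gives $\widehat{\varrho}(f)=0$ for all $\varrho:\mathcal{T}\to H_\omega$; picking any one $\varrho$ with $\varrho(t_i)$ of weight $1$ (which exists because $\omega\neq0$ forces $H_\omega\neq\emptyset$) yields $f(\mathds{1})=0$. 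The case $f\in(Id(A),\cdot,\star)$ is identical using $\widehat{\varrho}(f^{\star})=\widehat{\varrho}(f)$.

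For part b), I would argue by contradiction: suppose $Id(A)\neq\emptyset$ but $\overline{\mathscr{H}}(A)=\emptyset$, i.e. every identity of $A$ is homogeneous. Take any $f\in Id(A)$, $f=\sum_k\alpha_k w_k$ with all $w_k$ of the same type $[n_1,\dots,n_m,\dots]$. By part a), $f(\mathds{1})=\sum_k\alpha_k=0$, so there are at least two monomials, say $w_1\neq w_2$, with $\alpha_1,\alpha_2\neq0$. Now I would produce from $f$ a genuinely non-homogeneous identity: for instance, from the homogeneous identity one can extract a non-homogeneous one by ``lowering the degree'' of one monomial — concretely, replace $t_1$ by $t_1 t_1$ in $f$ to get $f'$, which is still an identity (it is $\widehat{\varrho}(f)$ for the substitution $\varrho(t_1)=a_1^2$, $\varrho(t_j)=a_j$ otherwise, and weight-$1$ elements have weight-$1$ squares so this respects $H_\omega$), then subtract a scalar multiple of a suitable monomial so that the result is non-homogeneous yet still vanishes on $H_\omega$; explicitly $g := f' - (\text{one monomial of }f' )+ \omega\text{-adjustment}$ will do, and by Definition \ref{def:Id ds K(T)} it lies in $Id(A)$. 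Making the monomials land in distinct types forces $g\in\overline{\mathscr{H}}(\mathcal{T})$, contradicting the hypothesis. The main care here is to exhibit \emph{one} explicit construction that is transparently an identity and transparently non-homogeneous; I expect this bookkeeping to be the only slightly delicate point of part b).

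For part c), let $f\in\overline{\mathscr{H}}(A)$ and fix $i$ with $\left|f\right|_i\neq0$. Write $f=\sum_k\alpha_k w_k$ and split the index set according to the degree in $t_i$: let $f=\sum_{d=0}^{\left|f\right|_i} f_{i,d}$ where $f_{i,d}$ collects the monomials $w_k$ with $\left|w_k\right|_i=d$. Set $g:=f_{i,\left|f\right|_i}$ (the top part) and $h:=-\sum_{d<\left|f\right|_i}f_{i,d}$, so that $f=g-h$ and by construction $\left|g\right|_i=\left|f\right|_i$ and $\left|h\right|_i<\left|f\right|_i$ (note $g\neq0$ and, since $f$ is not homogeneous in $t_i$ or in some other variable, $h\neq0$ as well after possibly choosing $i$ appropriately; if $f$ happens to be homogeneous in this particular $t_i$ one instead picks another variable witnessing non-homogeneity and argues there, or takes $h=0$, still consistent with $\left|g\right|_i>\left|h\right|_i$ once we read $\left|0\right|_i$ as $-\infty$ — I would phrase this cleanly). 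It remains to check $g(\mathds{1})=h(\mathds{1})$: by part a), $f(\mathds{1})=0$, i.e. $g(\mathds{1})-h(\mathds{1})=0$. This gives the claim. The one genuine obstacle in part c) is the degenerate possibility that $f$ is homogeneous with respect to the chosen $t_i$ but non-homogeneous overall; I would handle this by first replacing $t_i$ by whichever variable actually witnesses the non-homogeneity of $f$ (such a variable exists since $f\in\overline{\mathscr{H}}(\mathcal{T})$), and noting the statement is symmetric in how we name indices, so proving it for that variable suffices — or, more simply, by allowing $h=0$ when $f$ is $t_i$-homogeneous, in which case $g=f$, $g(\mathds{1})=0=h(\mathds{1})$, and $\left|g\right|_i>\left|h\right|_i$ trivially.
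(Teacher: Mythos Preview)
Your argument for part c) is essentially the paper's: split $f$ by degree in $t_i$, take $g$ as the top-degree part and $h = g - f$; the equality $g(\mathds{1}) = h(\mathds{1})$ follows from $f(\mathds{1}) = 0$. The paper does not fuss over the degenerate case (it implicitly allows $h = 0$), so your extended discussion there is unnecessary but harmless.

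For part a), your claim ``$\widehat{\varrho}(f) = f(\mathds{1})$'' is a type error: $\widehat{\varrho}(f)$ lies in $A$, while $f(\mathds{1})$ is a scalar. The remark you cite from the proof of Proposition~\ref{prop:f*_et_f} should be read as $\omega(\widehat{\varrho}(w)) = 1$, not $\widehat{\varrho}(w) = 1$. The fix is immediate and is exactly what the paper does: apply $\omega$ to the relation $\widehat{\varrho}(f) = 0$ to obtain $\sum_k \alpha_k = f(\mathds{1}) = 0$ (and similarly for the $\star$-terms, using $\omega(\widehat{\varrho}(u\star v)) = \omega(\widehat{\varrho}(u))\omega(\widehat{\varrho}(v)) = 1$).

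Part b) is where you have a genuine gap. You correctly observe that substituting $t_i \mapsto t_i^2$ in a homogeneous identity $f$ yields another identity $f'$ (since $x \in H_\omega$ implies $x^2 \in H_\omega$), but $f'$ is still \emph{homogeneous}, now of type $[\ldots, 2n_i, \ldots]$. Your proposed correction --- subtracting one monomial of $f'$ plus an unspecified ``$\omega$-adjustment'' --- does not work: a nonzero scalar multiple of a single monomial never vanishes on $H_\omega$ (its weight is the scalar), so subtracting one from an identity cannot yield an identity. The paper's completion is much simpler than what you are attempting: since both $f'$ and $f$ lie in $Id(A)$ (which is a subspace), so does $f' - f$; and since $\left|f'\right|_i = 2n_i > n_i = \left|f\right|_i$ with $n_i\geq 1$, the difference $f' - f$ contains monomials of two distinct types, hence is non-homogeneous. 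No further bookkeeping is needed.
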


\begin{proof}
a) Soit $f\in Id\left(A\right)$, $f=\sum_{k\geq1}\alpha_{k}w_{k}$
où $\alpha_{k}\in K$ et $w_{k}\in\mathfrak{M}\left(\mathcal{T}\right)$,
de $w_{k}\left(\mathds{1}\right)=1$ il vient $f\left(\mathds{1}\right)=\sum_{k\geq1}\alpha_{k}w_{k}\left(\mathds{1}\right)=\sum_{k\geq1}\alpha_{k}$.
Mais d'après la proposition \ref{prop:Bar-Id_pond1} on a $\sum_{k\geq1}\alpha_{k}\widehat{\varrho}\left(w_{k}\right)=0$
pour tout $\varrho:\mathscr{\mathscr{\mathcal{T}}}\rightarrow H_{\omega}$,
or $\omega\left(\widehat{\varrho}\left(w_{k}\right)\right)=1$
et en appliquant la pondération $\omega$ à la relation $\widehat{\varrho}\left(f\right)=0$
on obtient $\sum_{k\geq1}\alpha_{k}=0$. 

Soit $f\in\left(Id\left(A\right),\cdot,\star\right)$, on a $f=\sum_{k\geq1}\alpha_{k}w_{k}+\sum_{l\geq1}\beta_{l}u_{l}\star v_{l}$
où $\alpha_{k},\beta_{l}\in K$ et $w_{k},u_{l},v_{l}\in\mathfrak{M}\left(\mathcal{T}\right)$,
alors $f\left(\mathds{1}\right)=\sum_{k\geq1}\alpha_{k}+\sum_{l\geq1}\beta_{l}$.
Pour toute application $\varrho:\mathcal{T}\rightarrow H_{\omega}$
on a $\omega\left(\widehat{\varrho}\left(w_{k}\right)\right)=1$
et $\omega\left(\widehat{\varrho}\left(u_{l}\star v_{l}\right)\right)=\omega\left(\widehat{\varrho}\left(u_{l}\right)\right)\omega\left(\widehat{\varrho}\left(v_{l}\right)\right)=1$,
alors en appliquant la pondération $\omega$ à la relation $\widehat{\varrho}\left(f\right)=0$
on obtient $\sum_{k\geq1}\alpha_{k}+\sum_{l\geq1}\beta_{l}=0$.

b) Le résultat est immédiat si $\mathcal{H}\left(A\right)=\textrm{Ø}$.
Si $\mathcal{H}\left(A\right)\neq\textrm{Ø}$, soit $f\in\mathcal{H}\left(A\right)$
et $i\geq1$ tel que $\left|f\right|_{i}\neq0$, on a $f\left(t_{1},\ldots,t_{i}^{2}\ldots,t_{n},\ldots\right)\in Id\left(A\right)$
avec $\left|f\left(t_{1},\ldots,t_{i}^{2}\ldots,t_{n},\ldots\right)\right|_{1}>\left|f\left(t_{1},\ldots,t_{i}\ldots,t_{n},\ldots\right)\right|$,
donc $f\left(t_{1},\ldots,t_{i}^{2}\ldots,t_{n},\ldots\right)-f\left(t_{1},\ldots,t_{i}\ldots,t_{n},\ldots\right)\in\overline{\mathcal{H}}\left(A\right)$.

c) \'{E}tant donné $f\in\overline{\mathcal{H}}\left(A\right)$,
$f=\sum_{k\geq1}\alpha_{k}w_{k}$ où $\alpha_{k}\in K$, $w_{k}\in\mathfrak{M}\left(\mathcal{T}\right)$.
Soit $i\geq1$ tel que $\left|f\right|_{i}\neq0$ on pose $I_{i}=\left\{ k;\left|w_{k}\right|_{i}=\left|f\right|_{i}\right\} $
alors $f=\sum_{k\in I_{i}}\alpha_{k}w_{k}+\sum_{k\notin I_{i}}\alpha_{k}w_{k}$
donc en prenant $g=\sum_{k\in I_{i}}\alpha_{k}w_{k}$ et $h=g-f$
on a $\left|g\right|_{i}=\left|f\right|_{i}$, $\left|g\right|_{i}>\left|h\right|_{i}$
et de $f\left(\mathds{1}\right)=0$ il vient $g\left(\mathds{1}\right)-h\left(\mathds{1}\right)=0$.
\end{proof}
Une autre différence entre les les idéaux $\left(Id\left(A\right),\cdot,\star\right)$
et $Id\left(A\right)$ concerne la propriété de \emph{$T$}-idéal.
Un $T$-idéal de $K\left(\mathcal{T}\right)$ est un idéal bilatère
de $K\left(\mathcal{T}\right)$ qui est stable par substitution
des indéterminées $t_{i}$ par tout élément de $K\left(\mathcal{T}\right)$
ou, ce qui est équivalent, stable par tout endomorphisme de $K\left(\mathcal{T}\right)$. 
\begin{prop}
Soit $A$ une algèbre pondérée telle que $Id\left(A\right)\neq\left\{ 0\right\} $,
alors

a) L'idéal $\left(Id\left(A\right),\cdot,\star\right)$ est un
$T$-idéal de $\left(K\left(\mathcal{T}\right),\cdot,\star\right)$.

b) Si $\text{card}K>2\mu$ où $\mu=\min\left(\left\{ \left|f\right|_{i};f\in Id\left(A\right),i\geq1\right\} \setminus\left\{ 0\right\} \right)$,
alors l'idéal $Id\left(A\right)$ n'est pas un $T$-idéal de
$K\left(\mathcal{T}\right)$. 
\end{prop}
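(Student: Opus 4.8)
The overall plan is to treat a) by transport of structure along the universal property of $\left(K\left(\mathcal{T}\right),\cdot,\star\right)$, and b) by a reduction‑of‑degree argument that contradicts the minimality of $\mu$. For part a), recall that $\left(Id\left(A\right),\cdot,\star\right)$ is already known to be a two‑sided ideal, so it suffices to show it is stable under every endomorphism $\psi$ of the algebraic system $\left(K\left(\mathcal{T}\right),\cdot,\star\right)$ (such a $\psi$ being $K$‑linear and commuting with both $\cdot$ and $\star$). Let $f\in\left(Id\left(A\right),\cdot,\star\right)$, fix a substitution $\varrho:\mathcal{T}\rightarrow A$, and put $\varrho'\left(t_{i}\right)=\widehat{\varrho}\left(\psi\left(t_{i}\right)\right)$. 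The key remark is that $\widehat{\varrho}\circ\psi$ is again a homomorphism of algebraic systems $\left(K\left(\mathcal{T}\right),\cdot,\star\right)\rightarrow\left(A,\omega\right)$: since $\psi$ and $\widehat{\varrho}$ respect $\cdot$ one has $\left(\widehat{\varrho}\circ\psi\right)\left(u\cdot v\right)=\widehat{\varrho}\left(\psi\left(u\right)\right)\widehat{\varrho}\left(\psi\left(v\right)\right)$, and since $\psi$ respects $\star$ while $\widehat{\varrho}$ sends $u\star v$ to $\omega\left(\widehat{\varrho}\left(u\right)\right)\widehat{\varrho}\left(v\right)$ one has $\left(\widehat{\varrho}\circ\psi\right)\left(u\star v\right)=\omega\left(\widehat{\varrho}\left(\psi\left(u\right)\right)\right)\widehat{\varrho}\left(\psi\left(v\right)\right)$. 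Hence $\widehat{\varrho}\circ\psi$ and $\widehat{\varrho'}$ are homomorphisms of algebraic systems agreeing on the generators $t_{i}$, so they coincide by the uniqueness in the universal property, and therefore $\widehat{\varrho}\left(\psi\left(f\right)\right)=\widehat{\varrho'}\left(f\right)=0$. I expect no difficulty in this part.

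For part b), I would argue by contradiction, assuming $Id\left(A\right)$ is a $T$‑ideal of $K\left(\mathcal{T}\right)$. Exactly as in the proof of Proposition~\ref{prop:Comp_homog} (substitute $t_{j}\mapsto\lambda t_{j}$ and invert a Vandermonde determinant, legitimate whenever $\operatorname{card}K$ exceeds the $t_{j}$‑degree), such a $T$‑ideal is stable under extraction of multihomogeneous components. Choose $f\in Id\left(A\right)\setminus\left\{ 0\right\}$ and $i$ with $\left|f\right|_{i}=\mu$; after reindexing $\left|f\right|_{1}=\mu$, and since $\operatorname{card}K>2\mu>\mu$ the $t_{1}$‑degree‑$\mu$ component of $f$ lies in $Id\left(A\right)$, so iterating over the remaining variables one obtains a nonzero multihomogeneous $g\in Id\left(A\right)$ with $\left|g\right|_{1}=\mu$. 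Being homogeneous, $g$ holds for all arguments of $A$ (by the corollary describing the identities of weighted algebras), whence $g\left(t_{1}+\lambda t_{m},t_{2},\ldots\right)\in Id\left(A\right)$ for a fresh variable $t_{m}$ and all $\lambda\in K$; one more Vandermonde inversion ($\operatorname{card}K>\mu$) puts the $t_{m}$‑degree‑$1$ component of $g\left(t_{1}+t_{m},t_{2},\ldots\right)$, i.e. the partial polarization $g'$ of $g$ in $t_{1}$, into $Id\left(A\right)$. Now $\left|g'\right|_{1}=\mu-1$ and $\left|g'\right|_{m}=1$, and the crucial fact is $g'\neq0$: the polarization of a single $t_{1}$‑monomial $w$ equals $\sum_{O}\left|O\right|\,w_{O}$, the sum over the orbits $O$ of the leaves of the labeled rooted binary tree of $w$ under its automorphism group, and that group is a $2$‑group (an iterated wreath product of copies of $\mathbb{Z}/2$), so every $\left|O\right|$ is a power of $2$ and hence nonzero in $K$ because $\operatorname{char}K\neq2$; moreover polarizations of distinct monomials have disjoint supports, so no cancellation occurs. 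Thus $g'\in Id\left(A\right)\setminus\left\{ 0\right\}$ with $\left|g'\right|_{m}=1$, which forces $\mu\leq1$.

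It remains to rule out $\mu=1$, where the polarization trick leaves no degree to remove; this is, I expect, the main obstacle. Here one should instead exploit Proposition~\ref{prop:H(A)}: since $Id\left(A\right)\neq\left\{ 0\right\}$ there is a genuinely non‑homogeneous identity, and — applying the degree‑doubling substitution $t_{i}\mapsto t_{i}^{2}$ to a minimal identity if necessary, which is where the margin $\operatorname{card}K>2\mu$ is consumed — one may take it of the form $f=g-h$ with $\left|g\right|_{i}>\left|h\right|_{i}$ and $g\left(\mathds{1}\right)=h\left(\mathds{1}\right)$ as in part c of that proposition. A substitution $t_{i}\mapsto t_{i}+t_{j}$ with $t_{j}$ another variable of $f$ then sends $f$, were $Id\left(A\right)$ a $T$‑ideal, to an element of $Id\left(A\right)$ whose coefficient‑sum is the value at $\lambda=2$ of a non‑constant polynomial $\sum_{d}c_{d}\lambda^{d}$ in the $t_{i}$‑graded pieces of $f$; since $\operatorname{char}K\neq2$ and $\operatorname{card}K>2$ there is room for that value to be nonzero, contradicting part a of Proposition~\ref{prop:H(A)}. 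Combining the two cases shows that $Id\left(A\right)$ cannot be a $T$‑ideal of $K\left(\mathcal{T}\right)$.
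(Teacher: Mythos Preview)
For part a) your argument coincides with the paper's: both compose an arbitrary substitution $\widehat\varrho$ into $A$ with an endomorphism of $\left(K(\mathcal T),\cdot,\star\right)$ (equivalently, a substitution of the $t_i$ by elements $f_i\in\left(K(\mathcal T),\cdot,\star\right)$) and invoke uniqueness in the universal property to identify the composite with some $\widehat{\varrho'}$.

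For part b) the paper takes a far shorter and entirely different route. It picks $f=\sum_k\alpha_kw_k\in\overline{\mathscr H}(A)$ with $|f|_i=\mu$ and performs the \emph{single} substitution $t_i\mapsto\alpha t_i^2$; were $Id(A)$ a $T$-ideal the result would lie in $Id(A)$, but applying $\omega\circ\widehat\varrho$ with $\varrho:\mathcal T\to H_\omega$ yields $\sum_k\alpha_k\alpha^{2|w_k|_i}$, a polynomial in $\alpha$ of degree at most $2\mu$, and $\operatorname{card}K>2\mu$ supplies an $\alpha$ making it nonzero --- contradicting Proposition~\ref{prop:H(A)}\,a). The whole point is that a substitution outside $\Delta K(\mathcal T)$ can destroy the constraint $g(\mathds 1)=0$; this is exactly what Remark~\ref{rem:T-ideal-stoch} is hinting at, and no polarization, degree reduction, or case split on $\mu$ is needed.

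Your polarization manoeuvre to force $\mu\le 1$ is correct and the $2$-group observation for non-vanishing is pleasant, but it is heavy machinery for what the paper dispatches in three lines. More seriously, your treatment of the residual case $\mu=1$ has a genuine gap. The substitution $t_i\mapsto t_i+t_j$ produces a polynomial whose value at $\mathds 1$ is $\sum_d c_d\,2^d$ with $c_d=\sum_{|w_k|_i=d}\alpha_k$, and you assert this is the value at $\lambda=2$ of a \emph{non-constant} polynomial in $\lambda$; but nothing in your hypotheses forces any $c_d\ne 0$. A non-homogeneous polynomial can perfectly well have all its $t_i$-graded coefficient sums vanish --- for instance $f=x^5-(x^2x^2)x+x^4-x^2x^2$ is non-homogeneous with $c_5=c_4=0$ --- so the argument does not close. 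The ``degree-doubling'' $t_i\mapsto t_i^2$ does not help either, since it fixes $\mathds 1$. The paper's direct approach bypasses this entirely by exhibiting an explicit non-stochastic substitution rather than arguing by minimality of $\mu$.
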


\begin{proof}
a) Soit $f\in\left(Id\left(A\right),\cdot,\star\right)$, à toute
famille $\left(f_{n}\right)_{n\geq1}$ d'éléments de $\left(K\left(\mathcal{T}\right),\cdot,\star\right)$
et $\left(a_{n}\right)_{n\geq1}$ d'éléments de $A$ on associe
l'application $\varrho:\mathcal{T}\rightarrow A$, $\varrho\left(t_{i}\right)=f_{i}\left(a_{1},\ldots,a_{n},\ldots\right)$
alors de $\widehat{\varrho}$$\left(f\right)=0$ on déduit que
$f\left(f_{1},\ldots,f_{n},\ldots\right)\in\left(Id\left(A\right),\cdot,\star\right)$.\medskip{}

b) Supposons par l'absurde que $Id\left(A\right)$ est un $T$-idéal.
Soient $f\text{\ensuremath{\in}}\overline{\mathcal{H}}\left(A\right)$,
$f=\sum_{k\geq1}\alpha_{k}w_{k}$ et $i\geq1$ tel que $\left|f\right|_{i}=\mu$.
On pose $I=\left\{ k;\left|w_{k}\right|_{i}=\left|f\right|_{i}\right\} $,
on a donc $\left|w_{k}\right|_{i}<\left|f\right|_{i}$ si $k\notin I$
et $f=\sum_{k\in I}\alpha_{k}w_{k}+\sum_{k\notin I}\alpha_{k}w_{k}$.
De $\text{card}K>2\mu$ on déduit qu'il existe $\alpha\in K$
tel que $\sum_{k\in I}\alpha_{k}\alpha^{2\mu}+\sum_{k\notin I}\alpha_{k}\alpha^{2\left|w_{k}\right|_{i}}\neq0$,
alors en prenant une famille $\left(x_{n}\right)_{n\geq1}$ d'éléments
de $H_{\omega}$ et $\varrho:\mathcal{T}\rightarrow A$ vérifant
$\varrho\left(t_{k}\right)=x_{k}$ si $k\neq i$ et $\varrho\left(t_{i}\right)=\alpha x_{i}$
on a par hypothèse $\widehat{\varrho}\left(f\right)=0$, or $\omega\circ\widehat{\varrho}\left(f\right)=\sum_{k\in I}\alpha_{k}\alpha^{2\mu}+\sum_{k\notin I}\alpha_{k}\alpha^{2\left|w_{k}\right|_{i}}$
d'où une contradiction, on a montré que $f\left(t_{1},\ldots,\alpha t_{i}^{2},\ldots,t_{n},\ldots\right)\notin Id\left(A\right)$.
\end{proof}
L'idéal $Id\left(A\right)$ vérifie une notion affaiblie de $T$-idéal.
\begin{rem}
\label{rem:T-ideal-stoch}Notons $\Delta K\left(\mathcal{T}\right)$
l'ensemble des polynômes $h\in K\left(\mathcal{T}\right)$ tels
que $h\left(\mathds{1}\right)=1$. Un idéal $I$ de $K\left(\mathcal{T}\right)$
est un $T$-idéal stochastique (cf. \cite{BGM-97}, p. 388) si
$I$ est invariant par remplacement des symboles $t_{i}$ par
tout élément $h$ de $\Delta K\left(\mathcal{T}\right)$. Soit
$\left(A,\omega\right)$ une $K$-algèbre, comme pour tout $\left(x_{n}\right)_{n\geq1}$
tel que $x_{n}\in H_{\omega}$ et tout $h\in\Delta K\left(\mathcal{T}\right)$
on a $\omega\bigl(h\left(x_{1},\ldots,x_{n},\ldots\right)\bigr)=h\left(\mathds{1}\right)=1$,
il en résulte que $\widehat{\varrho}h\left(t_{1},\ldots,t_{n},\ldots\right)\in H_{\omega}$
par conséquent si $f\in Id\left(A\right)$ d'après la définition
\ref{def:Id ds K(T)} et la proposition \ref{prop:f*_et_f},
pour toute famille $\left(h_{n}\right)_{n\geq1}$ d'éléments
de $\Delta K\left(\mathcal{T}\right)$ on a $\widehat{\varrho}f\left(h_{1},\ldots,h_{n},\ldots\right)=0$
autrement dit $f\left(h_{1},\ldots,h_{n},\ldots\right)\in Id\left(A\right)$
et $Id\left(A\right)$ est un $T$-idéal stochastique.
\end{rem}

\section{Polynômes de Peirce - Identités Peirce-évanescentes}

Dans toute la suite de ce travail le symbole $t$ est une lettre
n'appartenant pas à l'ensemble $\mathcal{T}$. \medskip{}

Soit $f\in\left(K\left(\mathcal{T}\right),\cdot,\star\right)$,
pour tout $i\geq1$, on a $f\left(t_{1},\ldots,t_{i}+t,\ldots\right)\in\left(K\left(\mathcal{T}\cup\left\{ t\right\} \right),\cdot,\star\right)$
et le développement du polynôme $f\left(t_{1},\ldots,t_{i}+t,\ldots\right)$
peut s'écrire sous la forme 
\[
f\left(t_{1},\ldots,t_{i}+t,\ldots\right)=\sum_{0\leq k\leq\left|f\right|_{i}}\mathscr{L}_{i,k}\left(f\right)\left(t,t_{1},\ldots,t_{i},\ldots\right)
\]
 où pour tout $0\leq k\leq\left|f\right|_{i}$ on a $\mathscr{L}_{i,k}\left(f\right)\in\left(K\left(\mathcal{T}\cup\left\{ t\right\} \right),\cdot,\star\right)$
et $\left|\mathscr{L}_{i,k}\left(f\right)\right|_{t}=k$. 

Le polynôme $\mathscr{L}_{i,k}\left(f\right)$ est appelé la
linéarisation de $f$ de degré $k$ en $t_{i}$. En particulier,
on a $\mathscr{L}_{i,0}\left(f\right)=f$.
\begin{prop}
\label{prop:Lin=000026Identity}Soit $\left(A,\omega\right)$
une $K$-algèbre vérifiant une identité $f\in\left(Id\left(A\right),\cdot,\star\right)$,
alors pour tout $i\geq1$, les linéarisations de $f$ en $t_{i}$
sont des identités vérifiées par $A$.
\end{prop}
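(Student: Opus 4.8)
The statement asserts that if $(A,\omega)$ satisfies $f \in (\mathrm{Id}(A),\cdot,\star)$, then each linearization $\mathscr{L}_{i,k}(f)$ (in the variable $t$, which is not in $\mathcal{T}$) is again an identity of $A$. The natural approach is to exploit the expansion
\[
f(t_1,\ldots,t_i + t,\ldots) = \sum_{0 \le k \le |f|_i} \mathscr{L}_{i,k}(f)(t,t_1,\ldots,t_i,\ldots)
\]
together with Proposition~\ref{prop:Comp_homog}, applied now over the larger index set $\mathcal{T} \cup \{t\}$. First I would fix $i$ and an arbitrary substitution $\varrho\colon \mathcal{T} \cup \{t\} \to A$. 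Since $f \in \mathrm{Id}(A)$, applying $f$ to the substituted values $\varrho(t_1),\ldots,\varrho(t_i) + \varrho(t),\ldots$ (that is, evaluating $f(t_1,\ldots,t_i+t,\ldots)$ under $\widehat{\varrho}$) gives $0$, because $t_i \mapsto \varrho(t_i)+\varrho(t)$ is itself a legitimate substitution of $t_i$ by an element of $A$. Hence $f(t_1,\ldots,t_i+t,\ldots)$ is an identity of $A$ in the variables $\mathcal{T} \cup \{t\}$.

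Next I would observe that the summand $\mathscr{L}_{i,k}(f)$ is exactly the homogeneous component of $f(t_1,\ldots,t_i+t,\ldots)$ of degree $k$ in the new variable $t$ — this is the defining property $|\mathscr{L}_{i,k}(f)|_t = k$. By Proposition~\ref{prop:Comp_homog} (valid here because the hypothesis $\mathrm{card}\,K^* > \max\{|g|_j\}$ is preserved: each $|f(t_1,\ldots,t_i+t,\ldots)|_j$ is either $|f|_j$ for $j \ne i, t$, or $|f|_i$ for $j = i$ or $j = t$, so the bound is unchanged, cf. Remark~\ref{rem:Card(K*)}), each homogeneous component of an identity of $A$ is again an identity of $A$. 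More precisely, extracting the component homogeneous in $t$ requires grouping monomials of $f(t_1,\ldots,t_i+t,\ldots)$ by their $t$-degree, which is precisely the first step in the inductive procedure in the proof of Proposition~\ref{prop:Comp_homog} (substitute $t \mapsto \lambda\,\varrho(t)$, use a Vandermonde argument on $|f|_i + 1$ distinct values $\lambda_0,\ldots,\lambda_{|f|_i}$). Therefore each $\mathscr{L}_{i,k}(f)$ is an identity of $A$, which is the claim.

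I expect no genuine obstacle here; the proof is essentially a one-line reduction to Proposition~\ref{prop:Comp_homog}. The only point that needs a word of care is that $\mathscr{L}_{i,k}(f)$ lives in $(K(\mathcal{T}\cup\{t\}),\cdot,\star)$ rather than in $(K(\mathcal{T}),\cdot,\star)$, so one should state the conclusion as: $\mathscr{L}_{i,k}(f)$ is an identity of $A$ in the sense of Definition~\ref{def:A_verifie_f} applied to the index set $\mathcal{T}\cup\{t\}$ — equivalently, $\widehat{\varrho}(\mathscr{L}_{i,k}(f)) = 0$ for every $\varrho\colon \mathcal{T}\cup\{t\} \to A$. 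The cardinality hypothesis on $K$ (Remark~\ref{rem:Card(K*)}) must be invoked to license the use of Proposition~\ref{prop:Comp_homog}, but as noted it transfers verbatim since linearization does not increase any partial degree.
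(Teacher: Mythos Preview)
Your proposal is correct and follows essentially the same approach as the paper: both arguments observe that $f(t_1,\ldots,t_i+t,\ldots)$ is an identity of $A$ and then separate its $t$-homogeneous components via the Vandermonde trick (scaling $t\mapsto\lambda t$ for $|f|_i+1$ distinct nonzero values of $\lambda$). The only cosmetic difference is that you package this step as an invocation of Proposition~\ref{prop:Comp_homog}, whereas the paper carries out the Vandermonde argument directly for arbitrary substitutions $\varrho:\mathcal{T}\cup\{t\}\to A$ without passing through $H_\omega$.
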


\begin{proof}
Soient $t\notin\mathcal{T}$ et $f\in\left(Id\left(A\right),\cdot,\star\right)$.
Pour tout $i\geq1$, tout $\left(a_{k}\right)_{k\geq1}$, $a$
éléments de $A$ et $\lambda\in K^{*}$, on considère les applications
$\varrho,\varrho_{\lambda}:\mathcal{T}\cup\left\{ t\right\} \rightarrow A$
telles que $\varrho\left(t_{i}\right)=\varrho_{\lambda}\left(t_{i}\right)=a_{i}$,
$\varrho\left(t\right)=a$ et $\varrho_{\lambda}\left(t\right)=\lambda a$,
en remarquant que $\widehat{\varrho_{\lambda}}\left(\mathscr{L}_{i,k}\left(f\right)\right)=\lambda^{k}\widehat{\varrho}\left(\mathscr{L}_{i,k}\left(f\right)\right)$,
de $\widehat{\varrho_{\lambda}}\left(f\left(t_{1},\ldots,t_{i}+t,\ldots\right)\right)=0$
on déduit $\sum_{0\leq k\leq\left|f\right|_{i}}\lambda^{k}\widehat{\varrho}\left(\mathscr{L}_{i,k}\left(f\right)\right)=0$.
Alors compte tenu de l'hypothèse faite sur le corps $K$ (cf.
remarque \ref{rem:Card(K*)}), en donnant à $\lambda$ des valeurs
non nulles $\lambda_{0},\ldots\lambda_{\left|f\right|_{i}}$
deux à deux distinctes, on obtient ainsi un système linéaire
composé de $\left|f\right|_{i}+1$ équations d'inconnues $\widehat{\varrho}\left(\mathscr{L}_{i,k}\left(f\right)\right)$
dont le déterminant n'est pas nul, par conséquent on a $\widehat{\varrho}\left(\mathscr{L}_{i,k}\left(f\right)\right)=0$,
autrement dit les polynômes $\mathscr{L}_{i,0}\left(f\right),\ldots,\mathscr{L}_{i,\left|f\right|_{i}}\left(f\right)$
sont des identités vérifiées par $A$.
\end{proof}
\medskip{}

Pour tout $i\geq1$ et tout $h\in\left(K\left(\mathcal{T}\right),\cdot,\star\right)$,
on introduit les analogues des opérateurs de dérivation (\cite{G-K-69},
\cite{Guzzo-00}, \cite{Zh-Sl-Sh-82}) $\Delta_{i,h}:\left(K\left(\mathcal{T}\right),\cdot,\star\right)\rightarrow\left(K\left(\mathcal{T}\right),\cdot,\star\right)$
qui sont les applications linéaires définies par: 
\begin{align}
\Delta_{i,h}\left(t_{j}\right) & =\begin{cases}
0 & \text{si }t_{j}\neq t_{i},\\
h & \text{si }t_{j}=t_{i},
\end{cases}\nonumber \\
\Delta_{i,h}\left(u\cdot v\right) & =\Delta_{i,h}\left(u\right)\cdot v+u\cdot\Delta_{i,h}\left(v\right),\label{eq:Delta_prod_uv}\\
\Delta_{i,h}\left(u\star v\right) & =\Delta_{i,h}\left(u\right)\star v+u\star\Delta_{i,h}\left(v\right),\quad\left(u,v\in\mathfrak{M}\left(\mathcal{T}\right)\right).\label{eq:Delta_prod_u*v}
\end{align}

\medskip{}
\begin{prop}
\label{prop:Lin_de_f}Etant donné $f\in\left(K\left(\mathcal{T}\right),\cdot,\star\right)$,
$f=\sum_{k\geq1}\alpha_{k}w_{k}$ où $w_{k}\in\left(\mathfrak{M}\left(\mathcal{T}\right),\cdot,\star\right)$,
la linéarisée de $f$ de degré $d$ en $t_{i}$ est obtenue par
\begin{align*}
\mathscr{L}_{i,d}\left(f\right)=\sum_{k\geq1}\alpha_{k}\Delta_{i,t}^{d}\left(w_{k}\right).
\end{align*}
\end{prop}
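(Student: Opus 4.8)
The plan is to reduce, using the $K$-linearity in $f$ of both members, to the case of a single monomial $w\in(\mathfrak{M}(\mathcal{T}),\cdot,\star)$, and then to argue by induction on the length $|w|$ by means of the unique factorisation $w=w_{1}\bullet w_{2}$ with $\bullet\in\{\cdot,\star\}$ and $|w_{1}|,|w_{2}|<|w|$ provided by Proposition \ref{prop:Dec_ds_Mt*}. Throughout, $\Delta_{i,t}$ is read as the operator of the preceding paragraph, now acting on $(K(\mathcal{T}\cup\{t\}),\cdot,\star)$, the fresh letter $t$ being treated as one more generator, so that $\Delta_{i,t}(t)=0$ and $\Delta_{i,t}^{\,d}(w)=0$ as soon as $d>|w|_{i}$; all the sums below are thus finite.

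Concretely, the heart of the matter is the ``Taylor-type'' identity
\[
f(t_{1},\ldots,t_{i}+t,\ldots)=\sum_{d\ge 0}\frac{1}{d!}\,\Delta_{i,t}^{\,d}(f),
\]
which I would prove first; the proposition then follows by comparing the pieces homogeneous of degree $d$ in $t$, for each application of $\Delta_{i,t}$ raises the $t$-degree of a monomial by exactly one (and kills it when no $t_{i}$ remains), so $\Delta_{i,t}^{\,d}(f)=\sum_{k}\alpha_{k}\Delta_{i,t}^{\,d}(w_{k})$ is $t$-homogeneous of degree $d$ and agrees, up to the constant $d!$, with the degree-$d$ term $\mathscr{L}_{i,d}(f)$ of the decomposition $f(t_{1},\ldots,t_{i}+t,\ldots)=\sum_{k}\mathscr{L}_{i,k}(f)$. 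For the Taylor identity itself, linearity again reduces to a monomial $w$ and I induct on $|w|$. The base case $|w|=1$ is immediate: for $w=t_{j}$ both sides equal $t_{j}$ when $j\neq i$ and equal $t_{i}+t$ when $j=i$. For $|w|>1$, write $w=w_{1}\bullet w_{2}$ as above; since $t_{i}\mapsto t_{i}+t$ extends to an endomorphism of $(K(\mathcal{T}\cup\{t\}),\cdot,\star)$, the left member equals the $\bullet$-product of $w_{1}(t_{1},\ldots,t_{i}+t,\ldots)$ and $w_{2}(t_{1},\ldots,t_{i}+t,\ldots)$, which by the induction hypothesis is $\bigl(\sum_{a}\tfrac{1}{a!}\Delta_{i,t}^{\,a}(w_{1})\bigr)\bullet\bigl(\sum_{b}\tfrac{1}{b!}\Delta_{i,t}^{\,b}(w_{2})\bigr)$. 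On the other side, $\Delta_{i,t}$ being a derivation for $\cdot$ and for $\star$ by \eqref{eq:Delta_prod_uv} and \eqref{eq:Delta_prod_u*v}, an auxiliary induction on $d$ gives the Leibniz formula $\Delta_{i,t}^{\,d}(w_{1}\bullet w_{2})=\sum_{a+b=d}\binom{d}{a}\Delta_{i,t}^{\,a}(w_{1})\bullet\Delta_{i,t}^{\,b}(w_{2})$; dividing by $d!$, summing over $d$ and invoking the bilinearity of $\bullet$, the expression $\sum_{d}\tfrac{1}{d!}\Delta_{i,t}^{\,d}(w_{1}\bullet w_{2})$ becomes exactly that same double $\bullet$-product, which closes the induction.

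The step I expect to cause the most trouble is this matching of coefficients: the iterated Leibniz rule carries genuine binomials $\binom{d}{a}$ while the plain expansion of a $\bullet$-product carries none, so the two descriptions of the degree-$d$ part agree only through the normalisation by $d!$ --- equivalently, one should picture $\mathscr{L}_{i,d}$ as the full multilinear linearisation in $d$ new copies of $t_{i}$ afterwards identified with the single letter $t$, a passage responsible for precisely the factor $d!$. This is also exactly where the standing hypothesis of Remark~\ref{rem:Card(K*)} enters: it makes $1,2,\ldots,|f|_{i}$ invertible in $K$, so the $d!$ appearing for $0\le d\le|f|_{i}$ are meaningful and all the divisions are legitimate; one could instead carry the whole argument with the ``divided-power'' operators sending a monomial to the sum of the monomials obtained from it by replacing some $d$ of its occurrences of $t_{i}$ by $t$ --- these coincide with $\tfrac{1}{d!}\Delta_{i,t}^{\,d}$ and sidestep any division --- at the price of redoing their Leibniz rule by hand.
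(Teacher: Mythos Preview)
Your overall strategy coincides with the paper's: reduce by $K$-linearity to a single monomial, then induct on its length via the unique factorisation $w=w_{1}\bullet w_{2}$ with $\bullet\in\{\cdot,\star\}$ and apply the derivation rules \eqref{eq:Delta_prod_uv}--\eqref{eq:Delta_prod_u*v} for $\Delta_{i,t}$.

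The one substantive difference is that you have caught a slip in the paper. The paper asserts, ``par r\'ecurrence'', that $\Delta_{i,t}^{\,d}(u\cdot v)=\sum_{p+q=d}\Delta_{i,t}^{\,p}(u)\,\Delta_{i,t}^{\,q}(v)$ (and similarly for $\star$), i.e.\ the iterated Leibniz rule \emph{without} binomial coefficients; from this it concludes $\mathscr{L}_{i,d}(w)=\Delta_{i,t}^{\,d}(w)$. But the iterated Leibniz rule for a derivation carries genuine binomials, as you write: $\Delta_{i,t}^{\,d}(w_{1}\bullet w_{2})=\sum_{a+b=d}\binom{d}{a}\Delta_{i,t}^{\,a}(w_{1})\bullet\Delta_{i,t}^{\,b}(w_{2})$. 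A one-line check exposes the discrepancy: for $w=t_{i}\cdot t_{i}$ one has $\mathscr{L}_{i,2}(w)=t\cdot t$ whereas $\Delta_{i,t}^{\,2}(w)=2\,t\cdot t$. So the identity that actually holds is your Taylor formula $w(t_{1},\ldots,t_{i}+t,\ldots)=\sum_{d\ge 0}\tfrac{1}{d!}\Delta_{i,t}^{\,d}(w)$, hence $\mathscr{L}_{i,d}(f)=\tfrac{1}{d!}\sum_{k}\alpha_{k}\Delta_{i,t}^{\,d}(w_{k})$, not the version stated in the proposition. Your remark on divided powers is the right way to avoid the characteristic issue. In the paper the proposition is only invoked for $d=1$ (in the next proposition and its corollary), where $1!=1$ and there is no harm; but for general $d$ your corrected formulation is the one that is true.
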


\begin{proof}
Il est clair que $\mathscr{L}_{i,d}\left(f\right)=\sum_{k\geq1}\alpha_{k}\mathscr{L}_{i,d}\left(w_{k}\right)$,
il suffit donc de montrer que $\mathscr{L}_{i,d}\left(w_{k}\right)=\Delta_{i,t}^{d}\left(w_{k}\right)$,
ce que l'on va faire par récurrence sur le degré du monôme $w_{k}$.
Au degré 1 le résultat découle de la définition de l'application
$\Delta_{i,t}$. Supposons le résultat vrai pour tous les monômes
de degré $\leq n$. Soit $w$ un monôme de degré $n+1$, d'après
la proposition \ref{prop:Dec_ds_Mt*} on a $w=u\cdot v$ ou $w=u\star v$.
\smallskip{}

Dans le cas $w=u\cdot v$ on a $\mathscr{L}_{i,d}\left(w\right)=\sum_{p+q=d}\mathscr{L}_{i,p}\left(u\right)\mathscr{L}_{i,q}\left(v\right)$
et avec l'hypothèse de récurrence $\mathscr{L}_{i,d}\left(w\right)=\sum_{p+q=d}\Delta_{i,t}^{p}\left(u\right)\Delta_{i,t}^{q}\left(v\right)$,
or de la relation (\ref{eq:Delta_prod_uv}) on déduit par récurrence
que pour tout entier $d\geq1$ on a $\Delta_{i,t}^{d}\left(u\cdot v\right)=\sum_{p+q=d}\Delta_{i,h}^{p}\left(u\right)\Delta_{i,h}^{q}\left(v\right)$
par conséquent on a obtenu $\mathscr{L}_{i,d}\left(w\right)=\Delta_{i,t}^{d}\left(u\cdot v\right)=\Delta_{i,t}^{d}\left(w\right)$.\smallskip{}

Pour le cas $w=u\star v$ on a $\mathscr{L}_{i,d}\left(w\right)=\sum_{p+q=d}\mathscr{L}_{i,p}\left(u\right)\star\mathscr{L}_{i,q}\left(v\right)$
on en déduit avec l'hypothèse de récurrence que $\mathscr{L}_{i,d}\left(w\right)=\sum_{p+q=d}\Delta_{i,t}^{p}\left(u\right)\star\Delta_{i,t}^{q}\left(v\right)$,
mais de la relation (\ref{eq:Delta_prod_u*v}) on déduit récursivement
que $\Delta_{i,t}^{d}\left(u\star v\right)=\sum_{p+q=d}\Delta_{i,t}^{p}\left(u\right)\star\Delta_{i,t}^{q}\left(v\right)$
et donc $\mathscr{L}_{i,d}\left(u\star v\right)=\Delta_{i,t}^{d}\left(u\star v\right)$.
\end{proof}
\begin{prop}
Soient $\left(A,\omega\right)$ une $K$-algèbre et $f\in\left(K\left(\mathcal{T}\right),\cdot,\star\right)$
un polynôme homogène tel que $f=\sum_{p\geq1}\alpha_{p}w_{p}+\sum_{q\geq1}\beta_{q}u_{q}\star v_{q}$
où $\alpha_{k},\beta_{l}\in K$ et $w_{p},u_{q},v_{q}\in\mathfrak{M}\left(\mathcal{T}\right)$
avec $\left|w_{p}\right|_{i}=\left|u_{q}\star v_{q}\right|_{i}=\left|f\right|_{i}$
pour tout $i\geq1$. Alors pour $t\notin\mathcal{T}$, pour tout
entier $i\geq1$ et toute application $\varrho:\mathcal{T}\cup\left\{ t\right\} \rightarrow A$
on a: 
\begin{equation}
\widehat{\varrho}\mathscr{L}_{i,1}\left(f\right)=\sum_{p\geq1}\alpha_{p}\widehat{\varrho}\Delta_{i,t}\left(w_{p}\right)+\sum_{q\geq1}\beta_{q}\widehat{\varrho}\Delta_{i,t}\left(u_{q}\star v_{q}\right),\label{eq:rho_L(f)}
\end{equation}
où pour tout $q\geq1$ on a:

\begin{align}
\widehat{\varrho}\Delta_{i,t}\left(u_{q}\star v_{q}\right) & =\Bigl(\prod_{r\geq1,r\neq i}\omega\left(\varrho\left(t_{r}\right)\right)^{\left|u_{q}\right|_{r}}\Bigr)\omega\left(\varrho\left(t_{i}\right)\right)^{\left|u_{q}\right|_{i}-1}\times\label{eq:rho_delta_u*v}\\
 & \hspace{2cm}\Bigl[\left|u_{q}\right|_{i}\omega\left(\varrho\left(t\right)\right)\widehat{\varrho}\left(v_{q}\right)+\omega\left(\varrho\left(t_{i}\right)\right)\widehat{\varrho}\Delta_{i,t}\left(v_{q}\right)\Bigr].\nonumber 
\end{align}
\end{prop}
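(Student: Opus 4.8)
The plan is to read off~(\ref{eq:rho_L(f)}) from Proposition~\ref{prop:Lin_de_f} and to obtain~(\ref{eq:rho_delta_u*v}) by a short computation combining the Leibniz rule for $\Delta_{i,t}$ with the multiplicativity of $\omega$. For the first identity: applying Proposition~\ref{prop:Lin_de_f} with $d=1$ to $f=\sum_{p\geq1}\alpha_p w_p+\sum_{q\geq1}\beta_q\,u_q\star v_q$ gives $\mathscr{L}_{i,1}(f)=\sum_{p\geq1}\alpha_p\Delta_{i,t}(w_p)+\sum_{q\geq1}\beta_q\Delta_{i,t}(u_q\star v_q)$, and applying the linear map $\widehat{\varrho}$ term by term yields~(\ref{eq:rho_L(f)}) at once. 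Neither the homogeneity of $f$ nor the standing hypothesis on $K$ is used at this point; these enter only later, when the $\mathscr{L}_{i,k}(f)$ are assembled into the Peirce polynomial.

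For~(\ref{eq:rho_delta_u*v}) I would fix $q\geq1$, abbreviate $u=u_q$ and $v=v_q$, and start from the Leibniz rule~(\ref{eq:Delta_prod_u*v}): $\Delta_{i,t}(u\star v)=\Delta_{i,t}(u)\star v+u\star\Delta_{i,t}(v)$. Three elementary ingredients are needed. (1)~For all polynomials $p,p'$ in $(K(\mathcal{T}\cup\{t\}),\cdot,\star)$ one has $\widehat{\varrho}(p\star p')=\omega\bigl(\widehat{\varrho}(p)\bigr)\,\widehat{\varrho}(p')$: this is the defining property of $\widehat{\varrho}$ on monomials, extended bilinearly using the linearity of $\omega$ and of $\widehat{\varrho}$. (2)~For a monomial $w\in\mathfrak{M}(\mathcal{T}\cup\{t\})$, multiplicativity of $\omega$ gives $\omega\bigl(\widehat{\varrho}(w)\bigr)=\omega(\varrho(t))^{|w|_t}\prod_{r\geq1}\omega(\varrho(t_r))^{|w|_r}$, so $\omega\circ\widehat{\varrho}$ depends on $w$ only through its type. (3)~Written as a sum of monomials counted with multiplicity, $\Delta_{i,t}(u)$ has exactly $|u|_i$ terms, each of $t$-degree $1$, of $t_i$-degree $|u|_i-1$, and of $t_r$-degree $|u|_r$ for $r\neq i$; this is an immediate induction on $|u|$ from~(\ref{eq:Delta_prod_uv}) and~(\ref{eq:Delta_prod_u*v}) together with the base cases $\Delta_{i,t}(t_i)=t$ and $\Delta_{i,t}(t_j)=0$ for $j\neq i$.

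Combining these: by (1), $\widehat{\varrho}\bigl(\Delta_{i,t}(u)\star v\bigr)=\omega\bigl(\widehat{\varrho}(\Delta_{i,t}(u))\bigr)\,\widehat{\varrho}(v)$, and by (2) and (3) the scalar $\omega\bigl(\widehat{\varrho}(\Delta_{i,t}(u))\bigr)$ equals $|u|_i$ times the common value $\omega(\varrho(t))\,\omega(\varrho(t_i))^{|u|_i-1}\prod_{r\neq i}\omega(\varrho(t_r))^{|u|_r}$; likewise $\widehat{\varrho}\bigl(u\star\Delta_{i,t}(v)\bigr)=\omega\bigl(\widehat{\varrho}(u)\bigr)\,\widehat{\varrho}\Delta_{i,t}(v)=\omega(\varrho(t_i))^{|u|_i}\bigl(\prod_{r\neq i}\omega(\varrho(t_r))^{|u|_r}\bigr)\widehat{\varrho}\Delta_{i,t}(v)$. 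Adding the two contributions and factoring out $\bigl(\prod_{r\neq i}\omega(\varrho(t_r))^{|u|_r}\bigr)\omega(\varrho(t_i))^{|u|_i-1}$ gives precisely~(\ref{eq:rho_delta_u*v}).

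I do not expect a genuine obstacle: the computation is bookkeeping once the three ingredients are in hand. The single point requiring attention is the degenerate case $|u_q|_i=0$, where the exponent $|u_q|_i-1$ equals $-1$ and the factored right-hand side of~(\ref{eq:rho_delta_u*v}) is merely formal. There $\Delta_{i,t}(u_q)=0$, so the summand carrying the factor $|u_q|_i$ drops out, and $\widehat{\varrho}\Delta_{i,t}(u_q\star v_q)=\widehat{\varrho}(u_q\star\Delta_{i,t}(v_q))=\omega(\widehat{\varrho}(u_q))\,\widehat{\varrho}\Delta_{i,t}(v_q)=\prod_{r\neq i}\omega(\varrho(t_r))^{|u_q|_r}\,\widehat{\varrho}\Delta_{i,t}(v_q)$, which is exactly what the right-hand side returns once $\omega(\varrho(t_i))^{-1}\omega(\varrho(t_i))=1$ has been cancelled; thus~(\ref{eq:rho_delta_u*v}) is to be read with that cancellation performed before evaluation.
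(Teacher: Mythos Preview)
Your proposal is correct and follows essentially the same route as the paper: both derive~(\ref{eq:rho_L(f)}) from Proposition~\ref{prop:Lin_de_f} plus linearity, then obtain~(\ref{eq:rho_delta_u*v}) by applying the Leibniz rule~(\ref{eq:Delta_prod_u*v}) and computing $\omega\bigl(\widehat{\varrho}\Delta_{i,t}(u)\bigr)$ via an induction on $|u|$ (the paper inducts directly on this quantity, whereas you first identify the type of each monomial in $\Delta_{i,t}(u)$ and then apply multiplicativity of $\omega\circ\widehat{\varrho}$, which amounts to the same thing). Your explicit treatment of the degenerate case $|u_q|_i=0$ is a useful addition that the paper leaves implicit.
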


\begin{proof}
D'après la proposition \ref{prop:Lin_de_f} et par linéarité
de l'application $\widehat{\varrho}$ on a:
\[
\widehat{\rho}\mathscr{L}_{i,1}\left(f\right)=\sum_{p\geq1}\alpha_{p}\widehat{\varrho}\Delta_{i,t}\left(w_{p}\right)+\sum_{q\geq1}\beta_{q}\widehat{\varrho}\Delta_{i,t}\left(u_{q}\star v_{q}\right).
\]
Ensuite par définition des applications $\widehat{\varrho}$
et $\Delta_{i,t}$ on a pour tout entier $q\geq1$: 
\[
\widehat{\varrho}\Delta_{i,t}\left(u_{q}\star v_{q}\right)=\omega\left(\widehat{\varrho}\Delta_{i,t}\left(u_{q}\right)\right)\widehat{\varrho}\left(v_{q}\right)+\omega\left(\widehat{\varrho}\left(u_{q}\right)\right)\widehat{\varrho}\Delta_{i,t}\left(v_{q}\right).
\]
Les applications $\omega$ et $\widehat{\varrho}$ étant des
morphismes on a: 
\[
\omega\left(\widehat{\varrho}\left(u_{q}\right)\right)=\prod_{r\geq1}\omega\left(\varrho\left(t_{r}\right)\right)^{\left|u_{q}\right|_{r}}.
\]
Montrons que pour tout monôme $u$ on a:
\[
\omega\left(\widehat{\varrho}\Delta_{i,t}\left(u\right)\right)=\left|u\right|_{i}\Bigl(\prod_{r\geq1,r\neq i}\omega\left(\varrho\left(t_{r}\right)\right)^{\left|u\right|_{r}}\Bigr)\omega\left(\varrho\left(t_{i}\right)\right)^{\left|u\right|_{i}-1}\omega\left(\varrho\left(t\right)\right).
\]
Ce résultat est vrai pour tout monôme $u$ de degré 1, car $\Delta_{i,t}\left(t_{j}\right)=t$
si $j=i$ et $0$ sinon, donc $\omega\left(\widehat{\varrho}\Delta_{i,t}\left(t_{j}\right)\right)=\left|t_{j}\right|_{i}\omega\left(\varrho\left(t\right)\right)$.
Si on suppose cette propriété vérifiée pour tout monôme de degré
$\leq n$, soit $u$ un monôme de degré $n+1$, on a $u=u_{1}u_{2}$
avec $\left|u_{1}\right|,\left|u_{2}\right|\leq n$, 
\begin{align*}
\omega\left(\widehat{\varrho}\Delta_{i,t}\left(u\right)\right) & =\omega\left(\widehat{\varrho}\Delta_{i,t}\left(u_{1}\right)\right)\omega\left(\widehat{\varrho}\left(u_{2}\right)\right)+\omega\left(\widehat{\varrho}\left(u_{1}\right)\right)\omega\left(\widehat{\varrho}\Delta_{i,t}\left(u_{2}\right)\right)\\
 & =\left|u_{1}\right|_{i}\Bigl(\prod_{r\neq i}\omega\left(\varrho\left(t_{r}\right)\right)^{\left|u_{1}\right|_{r}}\Bigr)\omega\left(\varrho\left(t_{i}\right)\right)^{\left|u_{1}\right|_{i}-1}\omega\left(\varrho\left(t\right)\right)\times\prod_{r\geq1,}\omega\left(\varrho\left(t_{r}\right)\right)^{\left|u_{2}\right|_{r}}+\\
 & \qquad\qquad\prod_{r\geq1}\omega\left(\varrho\left(t_{r}\right)\right)^{\left|u_{1}\right|_{r}}\times\left|u_{2}\right|_{i}\Bigl(\prod_{r\neq i}\omega\left(\varrho\left(t_{r}\right)\right)^{\left|u_{2}\right|_{r}}\Bigr)\omega\left(\varrho\left(t_{i}\right)\right)^{\left|u_{2}\right|_{i}-1}\omega\left(\varrho\left(t\right)\right)\\
 & =\Bigl(\prod_{r\neq i}\omega\left(\varrho\left(t_{r}\right)\right)^{\left|u_{1}\right|_{r}+\left|u_{2}\right|_{r}}\Bigr)\left(\left|u_{1}\right|_{i}+\left|u_{2}\right|_{i}\right)\omega\left(\varrho\left(t_{i}\right)\right)^{\left|u_{1}\right|_{i}+\left|u_{2}\right|_{i}-1}\omega\left(\varrho\left(t\right)\right)
\end{align*}
et le résultat découle de $\left|u_{1}\right|_{i}+\left|u_{2}\right|_{i}=\left|u\right|_{i}$.
\end{proof}
\medskip{}

Un élément $e$ d'une $K$-algèbre $A$ est un idempotent si
$e\neq0$ et $e^{2}=e$. Si l'algèbre $A$ est pondérée par $\omega$,
de $e^{2}=e$ on déduit que $\omega\left(e\right)\left(\omega\left(e\right)-1\right)=0$
donc $\omega\left(e\right)=0$ ou $1$. 

Soit $\left(A,\omega\right)$ une $K$-algèbre admettant un idempotent
$e$ tel que $\omega\left(e\right)=1$, en prenant dans (\ref{eq:rho_delta_u*v})
les applications $\varrho_{\left(e,y\right)}$ définies pour
tout $y\in\ker\omega$ par
\begin{align*}
\varrho_{\left(e,y\right)} & :\mathscr{\mathcal{T}}\cup\left\{ t\right\} \rightarrow A,\\
\varrho_{\left(e,y\right)}\left(t_{i}\right) & =e\;\left(i\geq1\right),\\
\varrho_{\left(e,y\right)}\left(t\right) & =y.
\end{align*}
on obtient $\widehat{\varrho}_{\left(e,y\right)}\left(u_{q}\star v_{q}\right)=\widehat{\varrho}_{\left(e,y\right)}\Delta_{i,t}\left(v_{q}\right)$
et avec ceci (\ref{eq:rho_L(f)}) devient: 
\begin{equation}
\widehat{\varrho}_{\left(e,y\right)}\mathscr{L}_{i,1}\left(f\right)=\sum_{p\geq1}\alpha_{p}\widehat{\varrho}_{\left(e,y\right)}\Delta_{i,t}\left(w_{p}\right)+\sum_{q\geq1}\beta_{q}\widehat{\varrho}_{\left(e,y\right)}\Delta_{i,t}\left(v_{q}\right).\label{eq:rho(e,y)_yds_kerw}
\end{equation}

On en déduit
\begin{cor}
Soient $\left(A,\omega\right)$ une $K$-algèbre admettant un
idempotent $e$ de poids 1 et $f\in K\left(\mathcal{T}\right)$.
Pour toute homogénéisation $f^{*}$ de $f$ on a:
\[
\widehat{\varrho}_{\left(e,y\right)}\mathscr{L}_{i,1}\left(f^{*}\right)=\widehat{\varrho}_{\left(e,y\right)}\Delta_{i,t}\left(f\right),\quad\left(\forall y\in\ker\omega\right).
\]
\end{cor}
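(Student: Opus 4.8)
The plan is to deduce this directly from the identity (\ref{eq:rho(e,y)_yds_kerw}) obtained just above, applied with the homogeneous polynomial $f^{*}$ playing the role of $f$. Write $f=\sum_{k\geq1}\alpha_{k}w_{k}$ with $\alpha_{k}\in K$ and $w_{k}\in\mathfrak{M}(\mathcal{T})$; by definition a homog\'en\'eisation is $f^{*}=\sum_{k\geq1}\alpha_{k}\widetilde{w}_{k}\star w_{k}$ with $|\widetilde{w}_{k}|_{i}=|f|_{i}-|w_{k}|_{i}$ for every $i\geq1$, so that $|\widetilde{w}_{k}\star w_{k}|_{i}=|f|_{i}$ for all $i$ and $f^{*}$ is homogeneous; thus (\ref{eq:rho(e,y)_yds_kerw}) does apply to $f^{*}$.

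First I would observe that, in the notation of (\ref{eq:rho(e,y)_yds_kerw}) for a homogeneous polynomial $\sum_{p}\alpha_{p}w_{p}+\sum_{q}\beta_{q}u_{q}\star v_{q}$, the polynomial $f^{*}$ has empty non-starred part and starred monomials exactly the $\widetilde{w}_{k}\star w_{k}$, i.e.\ $(u_{q},v_{q},\beta_{q})=(\widetilde{w}_{k},w_{k},\alpha_{k})$. Hence (\ref{eq:rho(e,y)_yds_kerw}) reads
\[
\widehat{\varrho}_{(e,y)}\mathscr{L}_{i,1}(f^{*})=\sum_{k\geq1}\alpha_{k}\,\widehat{\varrho}_{(e,y)}\Delta_{i,t}(w_{k})\qquad(y\in\ker\omega).
\]
Since $\Delta_{i,t}$ is $K$-linear and $\widehat{\varrho}_{(e,y)}$ is an algebra homomorphism, hence $K$-linear, the composite $\widehat{\varrho}_{(e,y)}\circ\Delta_{i,t}$ is linear, so the right-hand side equals $\widehat{\varrho}_{(e,y)}\Delta_{i,t}\bigl(\sum_{k\geq1}\alpha_{k}w_{k}\bigr)=\widehat{\varrho}_{(e,y)}\Delta_{i,t}(f)$, which is the asserted equality.

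I expect no genuine obstacle here: the substance sits in the preceding proposition and in the passage to (\ref{eq:rho(e,y)_yds_kerw}), where choosing $\varrho_{(e,y)}$ kills the summand $|u_{q}|_{i}\,\omega(\varrho(t))\widehat{\varrho}(v_{q})$ of (\ref{eq:rho_delta_u*v}) (because $\omega(e)=1$ and $\omega(y)=0$) and turns every power of $\omega(e)$ into $1$, collapsing $\widehat{\varrho}_{(e,y)}\Delta_{i,t}(\widetilde{w}_{k}\star w_{k})$ to $\widehat{\varrho}_{(e,y)}\Delta_{i,t}(w_{k})$. The only point meriting a line of care is the degenerate case of a monomial $w_{k}$ with $|w_{k}|_{i}=|f|_{i}$ for all $i$, for which there is no admissible $\widetilde{w}_{k}$ and the term is kept as $\alpha_{k}w_{k}$; there $\mathscr{L}_{i,1}(w_{k})=\Delta_{i,t}(w_{k})$ by Proposition \ref{prop:Lin_de_f}, so this term again contributes $\alpha_{k}\widehat{\varrho}_{(e,y)}\Delta_{i,t}(w_{k})$ and the argument is unchanged.
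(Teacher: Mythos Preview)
Your proof is correct and follows essentially the same approach as the paper: both apply (\ref{eq:rho(e,y)_yds_kerw}) to the homogeneous polynomial $f^{*}$, use that starred terms $u\star v$ contribute $\widehat{\varrho}_{(e,y)}\Delta_{i,t}(v)$, and then invoke linearity of $\widehat{\varrho}_{(e,y)}\circ\Delta_{i,t}$ to recover $\widehat{\varrho}_{(e,y)}\Delta_{i,t}(f)$. Your explicit handling of the degenerate case $|w_{k}|_{j}=|f|_{j}$ for all $j$ (where no $\widetilde{w}_{k}$ exists) is in fact a bit more careful than the paper's own write-up, which splits only by the degree in the fixed variable $t_{i}$.
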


\begin{proof}
Soit $i$ un entier, on écrit $f\in K\left(\mathcal{T}\right)$
sous la forme $f=\sum_{p\geq1}\alpha_{p}w_{p}+\sum_{q\geq1}\beta_{q}v_{q}$
avec $w_{p},v_{q}\in\mathfrak{M}\left(\mathcal{T}\right)$ tels
que $\left|w_{p}\right|_{i}=\left|f\right|_{i}$ pour tout $p\geq1$
et $\left|v_{q}\right|_{i}<\left|f\right|_{i}$ pour tout $q\geq1$.
Soit $f^{*}=\sum_{p\geq1}\alpha_{p}w_{p}+\sum_{q\geq1}\beta_{q}u_{q}\star v_{q}$
une homogénéisation de $f$, où pour tout $q\geq1$ tel que $\beta_{q}\neq0$
on a $u_{q}\in\mathfrak{M}\left(\mathscr{\mathcal{T}}\right)$
tel que $\left|u_{q}\right|_{i}=\left|f\right|_{i}-\left|v_{q}\right|_{i}$.
D'après (\ref{eq:rho(e,y)_yds_kerw}) on a
\[
\widehat{\varrho}_{\left(e,y\right)}\Delta_{i,t}\left(f\right)=\sum_{p\geq1}\alpha_{p}\widehat{\varrho}_{\left(e,y\right)}\Delta_{i,t}\left(w_{p}\right)+\sum_{q\geq1}\beta_{q}\widehat{\varrho}_{\left(e,y\right)}\Delta_{i,t}\left(v_{q}\right)=\widehat{\varrho}_{\left(e,y\right)}\mathscr{L}_{i,1}\left(f^{*}\right),
\]
quod erat demonstrandum.
\end{proof}
Dans la suite on note $K\left\langle t\right\rangle $ la sous-algèbre
de $K\left(\left\{ t\right\} \right)$ engendrée par l'ensemble
$\left\{ t^{n};n\geq1\right\} $ où pour tout entier $n\geq1$
on a: $t^{n+1}=tt^{n}=t^{n}t$ avec $t^{1}=t$. Pour chaque $i\geq1$
on définit l'application linéaire $\partial_{i}$ par
\begin{align}
\partial_{i} & :K\left(\mathcal{T}\right)\rightarrow K\left\langle t\right\rangle \nonumber \\
\partial_{i}\left(t_{j}\right) & =\begin{cases}
1 & \text{si }j=i\\
0 & \text{si }j\neq i,
\end{cases}\label{eq:Def_de_di}\\
\partial_{i}\left(uv\right) & =t\left(\partial_{i}\left(u\right)+\partial_{i}\left(v\right)\right),\quad\left(\forall u,v\in\mathfrak{M}\left(\mathscr{\mathcal{T}}\right)\right).\nonumber 
\end{align}

Pour simplifier les notations on écrira $\partial_{i}f$ au lieu
de $\partial_{i}\left(f\right)$ pour $f\in K\left(\mathcal{T}\right)$.
\begin{example}
\label{exa:Exemple1}Soit $w=\left(\left(\left(t_{1}t_{2}\right)t_{2}\right)t_{3}^{2}\right)\left(\left(t_{1}^{2}t_{3}\right)t_{1}\right)$
on a: 
\begin{align*}
\partial_{1}w & =t\left(\partial_{1}\left(\left(\left(t_{1}t_{2}\right)t_{2}\right)t_{3}^{2}\right)+\partial_{1}\left(\left(t_{1}^{2}t_{3}\right)t_{1}\right)\right)=t\left(t^{3}+t\left(\partial_{1}\left(t_{1}^{2}t_{3}\right)+1\right)\right)\\
 & =t\left(t^{3}+t^{2}\partial_{1}\left(t_{1}^{2}\right)+t\right)=3t^{4}+t^{2}.\\
\partial_{2}w & =t\left(t\partial_{2}\left(\left(t_{1}t_{2}\right)t_{2}\right)\right)=t\left(t^{2}\left(\partial_{2}\left(t_{1}t_{2}\right)+1\right)\right)=t^{4}+t^{3}.\\
\partial_{3}w & =t\left(t\left(\partial_{3}\left(\left(t_{1}t_{2}\right)t_{2}\right)+\partial_{3}\left(t_{3}^{2}\right)\right)+t\partial_{3}\left(t_{1}^{2}t_{3}\right)\right)=t^{2}\left(2t+t\right)=3t^{3}.
\end{align*}
\end{example}

Une façon plus commode de calculer les polynômes $\partial_{i}f$
utilise la représentation des éléments de $\mathfrak{M}\left(\mathcal{T}\right)$
par des arbres binaires enracinées à feuilles étiquetées. 

\medskip{}

Un arbre est un graphe $T=\left(T^{0},T^{1}\right)$ non orienté,
connexe, sans cycle, où $T^{0}\neq\textrm{Ø}$ (resp. $T^{1}$)
est l'ensemble des sommets (resp. des arêtes). 

Un arbre $T$ est dit enraciné si un sommet, noté $\rho_{_{T}}$
et appelé la racine, est distingué. 

Deux sommets $s_{1},s_{2}\in T^{0}$ sont incidents si $s_{1}$
et $s_{2}$ sont les sommets d'une même arête. La valence d'un
sommet $s$ est le nombre de sommets incidents à $s$. Un arbre
$T$ est binaire si la valence de $\rho_{_{T}}$ vaut $0$ ou
$2$ et si la valence de $s\in T^{0}$, $s\neq\rho_{_{T}}$ vaut
$1$ ou $3$. 

Les sommets univalents d'un arbre binaire enraciné $T$ sont
appelés feuilles, on note $L\left(T\right)$ l'ensemble des feuilles
de $T$. 

Un arbre binaire enraciné $T$ est dit $\mathcal{T}$-étiqueté
s'il existe une application $\Lambda:L\left(T\right)\rightarrow\mathscr{T}$,
on note $\left(T,\Lambda\right)$ un tel arbre. 

Deux arbres binaires enracinés $T_{1}$ et $T_{2}$ sont isomorphes
s'il existe un isomorphisme de graphes $\varphi:T_{1}\rightarrow T_{2}$
tel que $\varphi\left(\rho_{_{T_{1}}}\right)=\rho_{_{T_{2}}}$.
L'isomorphisme d'arbres enracinés $\varphi:T_{1}\rightarrow T_{2}$
conserve les feuilles: $\varphi\left(L\left(T_{1}\right)\right)=L\left(T_{2}\right)$. 

Deux arbres binaires enracinés $\mathscr{T}$-étiquetés $\left(T_{1},\Lambda_{1}\right)$
et $\left(T_{2},\Lambda_{2}\right)$ sont isomorphes s'il existe
un isomorphisme $\varphi:T_{1}\rightarrow T_{2}$ d'arbres enracinés
tel que $\Lambda_{2}\circ\varphi_{\left|L\left(T_{1}\right)\right.}=\Lambda_{1}$. 

On note $\mathscr{T}_{\mathscr{\mathcal{T}}}$ l'ensemble des
classes d'isomorphisme des arbres binaires enracinés $\mathscr{\mathcal{T}}$-étiquetés,
on munit $\mathscr{T}_{\mathcal{T}}$ de la loi de greffage:
soient $T_{1},T_{2}\in\mathscr{T}_{\mathcal{T}}$, on associe
à $\left(T_{1},T_{2}\right)$ l'arbre $T_{1}\cdot T_{2}$ tel
que le graphe de $T_{1}\cdot T_{2}$ privé de sa racine $\rho_{_{T_{1}\cdot T_{2}}}$
et des deux arêtes adjacentes à $\rho_{_{T_{1}\cdot T_{2}}}$
a deux composantes connexes $T_{1}$ et $T_{2}$. Muni de la
loi de greffage $\mathscr{T}_{\mathscr{\mathcal{T}}}$ est un
magma isomorphe au magma non commutatif $\text{Mag}\left(\mathscr{\mathcal{T}}\right)$,
par cet isomorphisme $\Psi:\text{Mag}\left(\mathscr{\mathcal{T}}\right)\rightarrow\mathscr{T}_{\mathcal{T}}$,
le degré de $w\in\text{Mag}\left(\mathscr{\mathcal{T}}\right)$
en $t_{i}\in\mathscr{\mathcal{T}}$ est égal au nombre de feuilles
étiquetées $t_{i}$ de l'arbre $\Psi\left(w\right)$. 

\medskip{}

Muni de ces notions sur les arbres binaires enracinés et étiquetés
on a le résultat suivant qui fournit un moyen pratique et rapide
pour calculer les polynômes $\partial_{i}w$.

Soit $\left(T,\Lambda\right)$ un arbre binaire enraciné $\mathcal{T}$-étiqueté,
la hauteur d'un sommet $s\in T^{0}$, notée $\hslash\left(s\right)$,
est le nombre minimum d'arêtes joignant $s$ à la racine $\rho_{_{T}}$
(cf. \cite{Ether-60}). 
\begin{prop}
\label{prop:Diw=000026arbre}Pour tout $w\in\mathfrak{M}\left(\mathscr{\mathcal{T}}\right)$
et tout $i\geq1$, on a: 
\begin{align}
\partial_{i}w\left(t\right) & =\quad\smashoperator{\sum_{\begin{subarray}{c}
s\in\Lambda_{w}^{-1}\left(t_{i}\right)\end{subarray}}}\;t^{\hslash\left(s\right)},\label{eq:diw=000026arbre}
\end{align}
où $\Lambda_{w}^{-1}\left(t_{i}\right)$ est l'ensemble des feuilles
étiquetées $t_{i}$ dans l'arbre $\Psi\left(w\right)$, autrement
dit, $\Lambda_{w}^{-1}\left(t_{i}\right)=\left\{ s\in L\left(\Psi\left(w\right)\right);\Lambda\left(s\right)=t_{i}\right\} $. 
\end{prop}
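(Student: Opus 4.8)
The plan is to induct on the degree $\left|w\right|$, matching the recursive definition~(\ref{eq:Def_de_di}) of $\partial_{i}$ term by term against the recursive behaviour, under the grafting law of $\mathscr{T}_{\mathcal{T}}$, of the right-hand side of~(\ref{eq:diw=000026arbre}). For a monomial $u$ write $\hslash_{\Psi(u)}$ for the height function of the tree $\Psi(u)$. Note first that the statement is well posed: $\partial_{i}(uv)$ is symmetric in $u$ and $v$, and the multiset of leaf--height pairs of $\Psi(u)\cdot\Psi(v)$ is left unchanged when the two subtrees are swapped, so both sides of~(\ref{eq:diw=000026arbre}) depend only on the class of $w$ in $\mathfrak{M}\left(\mathcal{T}\right)$.

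\emph{Base case.} If $\left|w\right|=1$ then $w=t_{j}$ and $\Psi(w)$ is the tree with a single vertex $s$, which is simultaneously its root and its only leaf, so $\hslash_{\Psi(w)}(s)=0$. When $j=i$ the right-hand side of~(\ref{eq:diw=000026arbre}) is $t^{0}=1=\partial_{i}(t_{i})$, and when $j\neq i$ it is the empty sum $0=\partial_{i}(t_{j})$; in both cases this is~(\ref{eq:Def_de_di}).

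\emph{Inductive step.} Assume the identity for all monomials of degree $<n$ and let $\left|w\right|=n\geq 2$. By Proposition~\ref{prop:dec_ds_M(t)} we may write $w=w_{1}w_{2}$ with $\left|w_{1}\right|,\left|w_{2}\right|<n$, and since $\Psi$ is a magma isomorphism, $\Psi(w)=\Psi(w_{1})\cdot\Psi(w_{2})$. Two features of the grafting law are decisive. First, in $\Psi(w)$ the new root $\rho_{\Psi(w)}$ is joined by exactly one edge to the root of $\Psi(w_{1})$ and by exactly one edge to the root of $\Psi(w_{2})$; hence the path in $\Psi(w)$ from a vertex $s$ of $\Psi(w_{1})$ (resp. of $\Psi(w_{2})$) to $\rho_{\Psi(w)}$ runs through the root of that subtree, so $\hslash_{\Psi(w)}(s)=\hslash_{\Psi(w_{1})}(s)+1$ (resp. $\hslash_{\Psi(w)}(s)=\hslash_{\Psi(w_{2})}(s)+1$). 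Second, the univalent vertices of $\Psi(w)$ are exactly those of $\Psi(w_{1})$ together with those of $\Psi(w_{2})$: the two former roots acquire valence $1$ or $3$, but no leaf is created or destroyed, so $L(\Psi(w))=L(\Psi(w_{1}))\sqcup L(\Psi(w_{2}))$ with the labelling inherited, and therefore $\Lambda_{w}^{-1}(t_{i})=\Lambda_{w_{1}}^{-1}(t_{i})\sqcup\Lambda_{w_{2}}^{-1}(t_{i})$. Combining these with the induction hypothesis applied to $w_{1},w_{2}$ and with~(\ref{eq:Def_de_di}),
\begin{align*}
\sum_{s\in\Lambda_{w}^{-1}(t_{i})}t^{\hslash_{\Psi(w)}(s)}
&=\sum_{s\in\Lambda_{w_{1}}^{-1}(t_{i})}t^{\hslash_{\Psi(w_{1})}(s)+1}
 +\sum_{s\in\Lambda_{w_{2}}^{-1}(t_{i})}t^{\hslash_{\Psi(w_{2})}(s)+1}\\
&=t\bigl(\partial_{i}w_{1}+\partial_{i}w_{2}\bigr)=\partial_{i}w,
\end{align*}
which completes the induction.

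The only point that requires care is establishing the two features of grafting used above --- that grafting raises every height by exactly one and leaves the labelled leaf set unchanged. Both are immediate from the definition of the grafting law on $\mathscr{T}_{\mathcal{T}}$, but they are the crux of the argument: once they are in hand, the recursion for $\partial_{i}$ and the recursion satisfied by $\sum_{s\in\Lambda_{w}^{-1}(t_{i})}t^{\hslash(s)}$ are literally the same.
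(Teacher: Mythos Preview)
Your proof is correct and follows essentially the same approach as the paper: induction on the degree of $w$, with the same base case and the same inductive step exploiting the two key features of grafting (leaves are preserved as a disjoint union, and all heights increase by exactly one). Your additional remark on well-posedness under the commutativity of $\mathfrak{M}(\mathcal{T})$ is a nice touch not made explicit in the paper, but otherwise the arguments coincide.
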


\begin{proof}
Montrons le résultat par récurrence sur le degré de $w$. La
propriété est vraie si $w$ est de degré 1, en effet si $w=t_{i}$
sur l'arbre $\Psi\left(w\right)$ la feuille étiquetée $t_{i}$
est à la hauteur $0$ car elle est confondue avec la racine donc
$\partial_{i}w=1=t^{0}$, si $w=t_{j}$ avec $j\neq i$ alors
on a $\Lambda^{-1}\left(t_{i}\right)=\textrm{Ø}$ et par convention
la somme est nulle. Supposons la propriété (\ref{eq:diw=000026arbre})
vraie pour tout monôme de degré $n$, soit $w$ de degré $n+1$,
on a $w=uv$ avec $u,v\in\mathfrak{M}\left(\mathscr{\mathcal{T}}\right)$
de degré au moins 1. D'après (\ref{eq:Def_de_di}) on a 
\[
\partial_{i}w=t\left(\partial_{i}u+\partial_{i}v\right)\;=\;\smashoperator{\sum_{\begin{subarray}{c}
s\in\Lambda_{u}^{-1}\left(t_{i}\right)\end{subarray}}}\;t^{\hslash\left(s\right)+1}+\smashoperator{\sum_{\begin{subarray}{c}
s\in\Lambda_{v}^{-1}\left(t_{i}\right)\end{subarray}}}\;t^{\hslash\left(s\right)+1}\quad=\quad\smashoperator{\sum_{\begin{subarray}{c}
s\in\Lambda_{u}^{-1}\left(t_{i}\right)\cup\Lambda_{v}^{-1}\left(t_{i}\right)\end{subarray}}}\;t^{\hslash\left(s\right)+1},
\]
où $\Lambda_{u}^{-1}\left(t_{i}\right)$ et $\Lambda_{v}^{-1}\left(t_{i}\right)$
dénotent respectivement l'ensemble des feuilles des arbres $\Psi\left(u\right)$
et $\Psi\left(v\right)$ étiquetés $t_{i}$. Or, l'arbre $\Psi\left(w\right)$
étant le résultat du greffage des arbres $\Psi\left(u\right)$
et $\Psi\left(v\right)$, il en résulte que l'ensemble des feuilles
de l'arbre $\Psi\left(w\right)$ étiquetées $t_{i}$ est la réunion
des ensembles $\Lambda_{u}^{-1}\left(t_{i}\right)$ et $\Lambda_{v}^{-1}\left(t_{i}\right)$,
et par définition du greffage les hauteurs des feuilles de $\Lambda_{u}^{-1}\left(t_{i}\right)$
et $\Lambda_{v}^{-1}\left(t_{i}\right)$ dans l'arbre $\Psi\left(w\right)$
sont augmentées d'une unité par rapport à leurs valeurs dans
les arbres $\Psi\left(u\right)$ et $\Psi\left(v\right)$, on
déduit de tout ceci que $\sum_{\begin{subarray}{c}
s\in\Lambda_{u}^{-1}\left(t_{i}\right)\cup\Lambda_{v}^{-1}\left(t_{i}\right)\end{subarray}}t^{\hslash\left(s\right)+1}=\sum_{\begin{subarray}{c}
s\in\Lambda_{w}^{-1}\left(t_{i}\right)\end{subarray}}t^{\hslash\left(s\right)}$.
\end{proof}
\begin{example}
Pour illustrer ce résultat, on reprend l'exemple \ref{exa:Exemple1}
avec le monôme $w=\left(\left(\left(t_{1}t_{2}\right)t_{2}\right)t_{3}^{2}\right)\left(\left(t_{1}^{2}t_{3}\right)t_{1}\right)$.
L'arbre binaire enraciné étiqueté associé à $w$ est donné ci-dessous
(on a figuré seulement les indices des étiquettes). 

\hspace{35mm}\begin{tikzpicture}[line cap=round,line join=round,x=1.0cm,y=1.0cm] \clip(-1.011943716791299,-0.08808417134596927) rectangle (4.0,3.0); 
\draw [line width=1.2pt] (-0.64,2.42)-- (2.0820383479266718,0.03688264448086892); 
\draw [line width=1.2pt] (2.0820383479266718,0.03688264448086892)-- (3.761591286833347,1.7424751638357057); 
\draw [line width=1.2pt] (3.4333831495460143,1.4091785282958569)-- (3.084562195181044,1.7034157931634575); 
\draw [line width=1.2pt] (3.012395996234126,0.9816644423744829)-- (2.6679289080103956,1.2737627157687277); 
\draw [line width=1.2pt] (2.5797627958595797,0.5423237505212626)-- (2.225256040391582,0.8441096383739978); 
\draw [line width=1.2pt] (1.0075265630716985,0.977606967908625)-- (1.3919894660502856,1.338861666889141); 
\draw [line width=1.2pt] (1.3919894660502856,1.338861666889141)-- (1.0795145006722997,1.6122772615948784); 
\draw [line width=1.2pt] (1.3919894660502856,1.338861666889141)-- (1.691444641204189,1.6122772615948784); 
\draw [line width=1.2pt] (0.14906778245492602,1.7291790464738943)-- (0.48060415036449294,2.054950129213691); 
\draw [line width=1.2pt] (-0.2860214600455177,2.1100953277570893)-- (0.02491149252159656,2.419504255488007); 
\draw (-0.8834096276564463,2.925326140593348) node[anchor=north west] {1}; 
\draw (-0.15504978922561466,2.9110445751339205) node[anchor=north west] {2}; 
\draw (0.34480500185436785,2.554005438648219) node[anchor=north west] {2}; 
\draw (3.7438175811982495,2.18268473670309) node[anchor=north west] {1}; 
\draw (2.715544868119428,2.154121605784234) node[anchor=north west] {1}; 
\draw (0.8303782274749223,2.11127690940595) node[anchor=north west] {3}; 
\draw (1.5301749349868978,2.11127690940595) node[anchor=north west] {3}; 
\draw (1.8158062441754592,1.254382981840267) node[anchor=north west] {1}; 
\draw (2.2585347734177295,1.7542377729202485) node[anchor=north west] {3}; \begin{scriptsize} \draw [fill=black] (-0.64,2.42) circle (1.5pt); 
\draw [fill=black] (3.761591286833347,1.7424751638357057) circle (1.5pt); 
\draw [fill=black] (3.084562195181044,1.7034157931634575) circle (1.5pt); 
\draw [fill=black] (2.6679289080103956,1.2737627157687277) circle (1.5pt); 
\draw [fill=black] (2.225256040391582,0.8441096383739978) circle (1.5pt); 
\draw [fill=black] (1.0795145006722997,1.6122772615948784) circle (1.5pt); 
\draw [fill=black] (1.691444641204189,1.6122772615948784) circle (1.5pt); 
\draw [fill=black] (0.48060415036449294,2.054950129213691) circle (1.5pt); 
\draw [fill=black] (0.02491149252159656,2.419504255488007) circle (1.5pt); 
\end{scriptsize} 
\end{tikzpicture}

Cet arbre a 4 feuilles étiquetées $t_{1}$ dont trois de hauteur
4 et une de hauteur 2 donc d'après (\ref{eq:diw=000026arbre})
on a $\partial_{1}w=3t^{4}+t^{2}$. Il a 2 feuilles avec l'étiquette
$t_{2}$, l'une de hauteur 4, l'autre de hauteur 3 donc $\partial_{2}w=t^{4}+t^{3}$.
Enfin l'étiquette $t_{3}$ est portée par trois feuilles toutes
situées à la hauteur 3 par conséquent $\partial_{3}w=3t^{3}$.
\end{example}

\begin{cor}
\label{cor:Prop_de_Di}Pour tout $w\in\mathfrak{M}\left(\mathscr{\mathcal{T}}\right)$
et tout $i\geq1$, concernant $\partial_{i}w$ on a:

a) Les coefficients du polynôme $\partial_{i}w$ sont des entiers
naturels.

b) Le degré du polynôme $\partial_{i}w$ est égal à la hauteur
maximale des feuilles étiquetées $t_{i}$ dans l'arbre $\Psi\left(w\right)$,
autrement dit, 
\[
\deg\left(\partial_{i}w\right)=\max\left\{ \hslash\left(s\right);s\in L\left(\Psi\left(w\right)\right),\Lambda\left(s\right)=t_{i}\right\} .
\]

c) $\left|\partial_{i}w\right|\leq\left|w\right|-1.$

d) $\partial_{i}w\left(1\right)=\left|w\right|_{i}$.

e) Si $w=uv$ avec $u,v\in\mathfrak{M}\left(\mathscr{\mathcal{T}}\right)$
tels que $\left|u\right|_{i},\left|v\right|_{i}\geq1$, la valuation
de $\partial_{i}w$ est: 
\[
\text{val}\left(\partial_{i}w\right)=\begin{cases}
\min\left\{ \text{val}\left(\partial_{i}u\right),\text{val}\left(\partial_{i}v\right)\right\} +1 & \text{si }\left|u\right|_{i}\left|v\right|_{i}\neq\text{0},\\
\max\left\{ \text{val}\left(\partial_{i}u\right),\text{val}\left(\partial_{i}v\right)\right\} +1 & \text{si }\left|u\right|_{i}\left|v\right|_{i}=\text{0}.
\end{cases}
\]
\end{cor}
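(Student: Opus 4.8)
I would deduce parts (a), (b), (d) directly from Proposition~\ref{prop:Diw=000026arbre}, which expresses $\partial_i w(t)=\sum_{s\in\Lambda_w^{-1}(t_i)}t^{\hslash(s)}$ as a sum of monomials $t^{k}$ each carrying coefficient $1$. Collecting like terms, the coefficient of $t^{k}$ in $\partial_i w$ is precisely the number of leaves of $\Psi(w)$ that are labelled $t_i$ and sit at height $k$; this is a nonnegative integer, which is (a). (Alternatively, a short induction on $|w|$ shows that, from $\partial_i(t_j)\in\{0,1\}$ and $\partial_i(uv)=t(\partial_i u+\partial_i v)$, the operator $\partial_i$ only ever performs additions and multiplications by $t$, so it cannot produce a negative coefficient.)

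For (b), the largest power of $t$ occurring in the sum above is $\max\{\hslash(s):s\in L(\Psi(w)),\ \Lambda(s)=t_i\}$ (with the convention $\deg 0=-\infty$ covering the case $|w|_i=0$), which is the asserted value of $\deg(\partial_i w)$. For (d), evaluate the same formula at $t=1$: $\partial_i w(1)=\sum_{s\in\Lambda_w^{-1}(t_i)}1=\operatorname{card}\Lambda_w^{-1}(t_i)$, which equals $|w|_i$ because, under the isomorphism $\Psi$, the number of occurrences of $t_i$ in $w$ is exactly the number of leaves of $\Psi(w)$ labelled $t_i$.

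For (c), I would first establish by induction on $|w|$ that every leaf of the binary tree $\Psi(w)$ lies at height at most $|w|-1$. This is clear when $|w|=1$, the root being then the unique leaf at height $0$; and if $w=uv$ (Proposition~\ref{prop:dec_ds_M(t)}), then $\Psi(w)$ is the grafting of $\Psi(u)$ and $\Psi(v)$, so each leaf of $\Psi(w)$ comes from one of these two subtrees with its height raised by one, hence lies at height at most $\max\{|u|-1,|v|-1\}+1=\max\{|u|,|v|\}\le|u|+|v|-1=|w|-1$, where we use $|u|,|v|\ge1$. Since $|\partial_i w|$ is the ordinary degree of $\partial_i w\in K\langle t\rangle$, combining this bound with (b) gives $|\partial_i w|=\deg(\partial_i w)\le|w|-1$.

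For (e), the recursive definition of $\partial_i$ gives $\partial_i w=t(\partial_i u+\partial_i v)$, hence $\text{val}(\partial_i w)=1+\text{val}(\partial_i u+\partial_i v)$. If $|u|_i|v|_i\ne0$, then $\partial_i u$ and $\partial_i v$ are both nonzero and, by (a), have nonnegative coefficients; so in $\partial_i u+\partial_i v$ every coefficient below degree $m:=\min\{\text{val}(\partial_i u),\text{val}(\partial_i v)\}$ vanishes while the coefficient of $t^{m}$ is a sum of nonnegative numbers at least one of which is positive, hence nonzero — no cancellation can occur — so $\text{val}(\partial_i u+\partial_i v)=m$, giving the first case. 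If $|u|_i|v|_i=0$, then exactly one of $\partial_i u,\partial_i v$ is zero, the other being nonzero since $t_i$ occurs in $w$; say $\partial_i v=0$, so that $\partial_i w=t\,\partial_i u$ and $\text{val}(\partial_i w)=1+\text{val}(\partial_i u)=1+\max\{\text{val}(\partial_i u),\text{val}(\partial_i v)\}$ with the convention $\text{val}(0)=0$, which is the second case. I do not anticipate a genuine obstacle: it is part (a), ruling out cancellation, that does the real work in (e), and the only care needed is to fix the conventions $\deg 0=-\infty$ and $\text{val}(0)=0$ and to state explicitly the tree-height lemma used in (c).
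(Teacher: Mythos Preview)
Your proof is correct and follows essentially the same approach as the paper: parts (a), (b), (d) are read off directly from the tree formula $\partial_i w(t)=\sum_{s\in\Lambda_w^{-1}(t_i)}t^{\hslash(s)}$, and (c) is the same induction (the paper runs it directly on $|\partial_i w|$ via $\partial_i w=t(\partial_i u+\partial_i v)$ rather than first bounding leaf heights, but the content is identical). For (e) you spell out the no-cancellation argument using (a), while the paper simply invokes the tree formula and the grafting law; your version is more explicit but not different in substance.
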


\begin{proof}
\emph{a}) et \emph{b}) sont des conséquences immédiates de (\ref{eq:diw=000026arbre}).

\emph{c}) Par récurrence sur le degré de $w$. Si $\left|w\right|=1$
le résultat est immédiat car on a $\partial_{i}w=0,1$. Si le
résultat est vrai pour tout monôme de degré $\leq n$, soit $w\in\mathfrak{M}\left(\mathscr{\mathcal{T}}\right)$
de degré $n+1$, il existe $u,v\in\mathfrak{M}\left(\mathscr{\mathcal{T}}\right)$
tels que $w=uv$, on a $\partial_{i}w=t\left(\partial_{i}u+\partial_{i}v\right)$,
compte tenu de \emph{a}) on a $\left|\partial_{i}w\right|=\max\left\{ \left|\partial_{i}u\right|,\left|\partial_{i}v\right|\right\} +1$,
on en déduit avec l'hypothèse de récurrence que $\left|\partial_{i}w\right|\leq\max\left\{ \left|u\right|,\left|v\right|\right\} $,
comme $u\neq w$ et $v\neq w$ on a $\left|u\right|<\left|w\right|$
et $\left|v\right|<\left|w\right|$ donc $\left|\partial_{i}w\right|<\left|w\right|$.

\emph{d}) D'après (\ref{eq:diw=000026arbre}) on a $\partial_{i}w\left(1\right)=\text{card}\left(\Lambda_{w}^{-1}\left(t_{i}\right)\right)$
et par l'isomorphisme de magmas $\Psi:\text{Mag}\left(\mathscr{\mathcal{T}}\right)\rightarrow\mathscr{T}_{\mathscr{\mathcal{T}}}$,
le nombre de feuilles de l'arbre $\Psi\left(w\right)$ étiquetées
$t_{i}$ est égal au degré de $w$ en $t_{i}$.

\emph{e}) C'est une conséquence immédiate de (\ref{eq:diw=000026arbre})
et de la loi de greffage des arbres binaires enracinés.
\end{proof}
Soit $e\in A$, on note $L_{e}$ l'endomorphisme d'une $K$-algèbre
$A$ défini par $L_{e}:x\mapsto ex$.
\begin{prop}
Soient $\left(A,\omega\right)$ une $K$-algèbre admettant un
idempotent $e\in H_{\omega}$ et $f\in K\left(\mathscr{\mathcal{T}}\right)$.
Pour tout entier $i\geq1$ on a 
\[
\left(\partial_{i}f\right)\left(L_{e}\right)\left(y\right)=\widehat{\varrho}_{\left(e,y\right)}\Delta_{i,t}\left(f\right)\quad\left(\forall y\in\ker\omega\right).
\]

De plus, si $f\in K\left(\mathscr{\mathcal{T}}\right)$ est une
identité vérifiée par $A$ on a: $\left(\partial_{i}f\right)\left(L_{e}\right)\left(y\right)=0$
pour tout $y\in\ker\omega$. 
\end{prop}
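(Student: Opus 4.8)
The plan is to prove the displayed identity first for a monomial $w\in\mathfrak{M}(\mathcal{T})$ and then to extend it by linearity, after which the vanishing statement will be a short consequence.

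The first step is to record the combinatorial form of $\Delta_{i,t}$ on monomials. Iterating the Leibniz rule (\ref{eq:Delta_prod_uv}) — equivalently, applying Proposition \ref{prop:Lin_de_f} with $d=1$, so that $\Delta_{i,t}(w)=\mathscr{L}_{i,1}(w)$ — one obtains $\Delta_{i,t}(w)=\sum_{s}w^{(s)}$, the sum running over the leaves $s$ of $\Psi(w)$ labelled $t_{i}$, where $w^{(s)}\in\mathfrak{M}(\mathcal{T}\cup\{t\})$ is the word obtained from $w$ by relabelling that single leaf from $t_{i}$ to $t$. This is an immediate induction on $|w|$: for $|w|=1$ it is the definition of $\Delta_{i,t}$, and for $w=uv$ the $t_{i}$-labelled leaves of $\Psi(w)$ are exactly those of $\Psi(u)$ together with those of $\Psi(v)$, matching the two summands in (\ref{eq:Delta_prod_uv}).

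The heart of the argument is the evaluation $\widehat{\varrho}_{(e,y)}\bigl(w^{(s)}\bigr)=L_{e}^{\hslash(s)}(y)$. The elementary ingredient is that every nonassociative product of copies of $e$ evaluates to $e$ (a one-line induction from $e^{2}=e$). Granting this, I would argue by induction on $|w|$: if $|w|=1$ then $s$ is the root of $\Psi(w)$, so $\hslash(s)=0$, $w^{(s)}=t$ and $\widehat{\varrho}_{(e,y)}(t)=y=L_{e}^{0}(y)$; if $w=uv$ then, by commutativity, we may assume $s$ lies in $u$, whence $w^{(s)}=u^{(s)}v$; the factor $v$ has all its leaves in $\mathcal{T}$, so $\widehat{\varrho}_{(e,y)}(v)=e$ by the elementary fact, while $\widehat{\varrho}_{(e,y)}(u^{(s)})=L_{e}^{\hslash_{\Psi(u)}(s)}(y)$ by the induction hypothesis; since grafting raises the height of $s$ by one, $\widehat{\varrho}_{(e,y)}(w^{(s)})=e\cdot L_{e}^{\hslash_{\Psi(u)}(s)}(y)=L_{e}^{\hslash_{\Psi(u)}(s)+1}(y)=L_{e}^{\hslash(s)}(y)$. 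Summing over the $t_{i}$-labelled leaves of $\Psi(w)$ and using Proposition \ref{prop:Diw=000026arbre}, which gives $\partial_{i}w(t)=\sum_{s}t^{\hslash(s)}$ (here specialised at $t=L_{e}$), yields $\widehat{\varrho}_{(e,y)}\Delta_{i,t}(w)=\sum_{s}L_{e}^{\hslash(s)}(y)=(\partial_{i}w)(L_{e})(y)$. Writing $f=\sum_{k}\alpha_{k}w_{k}$ and using linearity of $\partial_{i}$, of $\Delta_{i,t}$ and of $\widehat{\varrho}_{(e,y)}$ gives the first identity for all $y\in\ker\omega$.

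For the second assertion, suppose $f$ is an identity verified by $A$ and choose any homogenization $f^{*}$ of $f$. By Definition \ref{def:Id ds K(T)} we have $f^{*}\in(Id(A),\cdot,\star)$, so Proposition \ref{prop:Lin=000026Identity} shows that the linearization $\mathscr{L}_{i,1}(f^{*})$ is an identity verified by $A$; in particular $\widehat{\varrho}_{(e,y)}\mathscr{L}_{i,1}(f^{*})=0$. By the corollary asserting $\widehat{\varrho}_{(e,y)}\mathscr{L}_{i,1}(f^{*})=\widehat{\varrho}_{(e,y)}\Delta_{i,t}(f)$, this gives $\widehat{\varrho}_{(e,y)}\Delta_{i,t}(f)=0$, whence $(\partial_{i}f)(L_{e})(y)=0$ for every $y\in\ker\omega$ by the first part. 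The only place where care is needed is the tree induction — keeping track of which subtree contains the distinguished leaf $s$ and of the unit height shift produced by grafting — but this is routine once $\Delta_{i,t}$ has been described leaf by leaf.
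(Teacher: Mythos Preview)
Your argument is correct, but the route differs from the paper's. The paper proves $\widehat{\varrho}_{(e,y)}\Delta_{i,t}(w)=(\partial_{i}w)(L_{e})(y)$ by a direct induction on $|w|$ that stays entirely at the level of the recursive definitions: for $w=uv$ one uses (\ref{eq:Delta_prod_uv}) and the fact that $\widehat{\varrho}_{(e,y)}(u)=\widehat{\varrho}_{(e,y)}(v)=e$ to get $\widehat{\varrho}_{(e,y)}\Delta_{i,t}(uv)=L_{e}\bigl((\partial_{i}u)(L_{e})+( \partial_{i}v)(L_{e})\bigr)(y)$, which is $(\partial_{i}(uv))(L_{e})(y)$ by the very definition (\ref{eq:Def_de_di}) of $\partial_{i}$. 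Your proof instead unpacks $\Delta_{i,t}(w)$ leaf by leaf, evaluates each summand $w^{(s)}$ as $L_{e}^{\hslash(s)}(y)$, and then reassembles the answer via the tree formula of Proposition \ref{prop:Diw=000026arbre}. The paper's version is shorter and avoids invoking the tree description altogether; yours is more combinatorially explicit and makes transparent why the heights $\hslash(s)$ appear, at the cost of an extra intermediate lemma (the evaluation of $w^{(s)}$). For the second assertion the paper is terse, simply citing Proposition \ref{prop:Lin=000026Identity}; your handling --- passing through a homogenization $f^{*}$ and the preceding corollary to bridge $\mathscr{L}_{i,1}(f^{*})$ and $\Delta_{i,t}(f)$ --- spells out exactly the steps that justification requires.
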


\begin{proof}
Par linéarité des applications $\partial_{i}f$ et $\widehat{\varrho}_{\left(e,y\right)}\Delta_{i,t}$
il suffit de montrer que l'on a $\widehat{\varrho}_{\left(e,y\right)}\Delta_{i,t}\left(w\right)=\left(\partial_{i}w\right)\left(L_{e}\right)\left(y\right)$
pour tout $w\in\mathfrak{M}\left(\mathscr{\mathcal{T}}\right)$.
Montrons cela par récurrence sur le degré de $w$. Si $w$ est
de degré 1 on a $\widehat{\varrho}_{\left(e,y\right)}\Delta_{i,t}\left(t_{j}\right)=0$,
$\left(\partial_{i}t_{j}\right)\left(L_{e}\right)\left(y\right)=0$
si $j\neq i$, et $\widehat{\varrho}_{\left(e,y\right)}\Delta_{i,t}\left(t_{i}\right)=y=\left(\partial_{i}t_{i}\right)\left(L_{e}\right)\left(y\right)$.
Supposons le résultat vrai pour tous les monômes de degré $\leq n$,
soit $w\in\mathfrak{M}\left(\mathscr{\mathcal{T}}\right)$ de
degré $n+1$, le monôme $\omega$ s'écrit $w=uv$ avec $u,v\in\mathfrak{M}\left(\mathscr{\mathcal{T}}\right)$
de degrés $\leq n$, et d'après (\ref{eq:Delta_prod_uv}) et
l'hypothèse de récurrence on a 
\begin{align*}
\widehat{\varrho}_{\left(e,y\right)}\Delta_{i,t}\left(w\right) & =\widehat{\varrho}_{\left(e,y\right)}\left(\Delta_{i,t}\left(u\right)v\right)+\widehat{\varrho}_{\left(e,y\right)}\left(u\Delta_{i,t}\left(v\right)\right)=L_{e}\partial_{i}\left(u\right)\left(L_{e}\right)y+L_{e}\partial_{i}\left(v\right)\left(L_{e}\right)y,
\end{align*}
car $\widehat{\varrho}_{\left(e,y\right)}\left(u\right)=\widehat{\varrho}_{\left(e,y\right)}\left(v\right)=e$.
Enfin on a: 
\[
L_{e}\partial_{i}\left(u\right)\left(L_{e}\right)y+L_{e}\partial_{i}\left(v\right)\left(L_{e}\right)y=L_{e}\left(\partial_{i}u+\partial_{i}v\right)\left(L_{e}\right)\left(y\right)=\partial_{i}\left(uv\right)\left(L_{e}\right)\left(y\right).
\]

Si $f$ est une identité vérifiée par $A$, d'après la proposition
\ref{prop:Lin=000026Identity} on a $\widehat{\varrho}_{\left(e,y\right)}\Delta_{i,t}\left(f\right)=0$
pour tout $y\in\ker\omega$.
\end{proof}
Il résulte de cette proposition que pour tout $f\in Id\left(A\right)$,
les polynômes $\partial_{i}f$ sont annulateurs de l'opérateur
$L_{e}$ quel que soit l'idempotent $e$ de poids 1 de $A$. 
\begin{defn}
Soit $f\in K\left(\mathscr{\mathcal{T}}\right)$, pour $i\geq1$,
le polynôme $\partial_{i}f$ est appelé le polynôme de Peirce
en $t_{i}$ de $f$.

Le polynôme $f$ est dit Peirce-évanescent si $f\neq0$ et si
tous ses polynômes de Peirce $\partial_{i}f$, ($i\geq1$) sont
nuls.

Le polynôme $f$ est une identité Peirce-évanescente (en abrégé,
une identité évanescente) si $f\left(\mathds{1}\right)=0$ et
si $f$ est Peirce-évanescent.

Une $K$-algèbre $\left(A,\omega\right)$ admettant un idempotent
$e\in H_{\omega}$ et vérifiant une identité $f\in K\left(\mathscr{\mathcal{T}}\right)$
est dite Peirce-évanescente pour $f$ si le polynôme $f$ est
une identité évanescente.
\end{defn}

\begin{example}
Soit $\left(A,\omega\right)$ une algèbre vérifiant l'identité
\[
f\left(x,y\right)=x^{2}\left(xy^{2}\right)-x\left(xy^{2}\right)-x^{2}y+xy.
\]
Par rapport à un idempotent de $A$ on trouve $\partial_{x}f\left(t\right)=3t^{2}-\left(t^{2}+t\right)-2t^{2}+t$
et $\partial_{y}f\left(t\right)=2t^{3}-2t^{3}-t+t$, donc l'algèbre
$A$ est évanescente.
\end{example}

On note $Ev\left(\mathscr{\mathcal{T}}\right)$ le sous-ensemble
de $K\left(\mathscr{\mathcal{T}}\right)$ dont les éléments sont
des polynômes évanescents, alors pour toute $K$-algèbre $\left(A,\omega\right)$
admettant un idempotent $e$ de poids 1, l'ensemble $Ev\left(A\right)=Ev\left(\mathscr{\mathcal{T}}\right)\cap Id\left(A\right)$
désigne l'ensemble des identités évanescentes relativement à
$e$ vérifiées par $A$. 
\begin{prop}
L'ensemble $Ev\left(\mathscr{\mathcal{T}}\right)$ est un idéal
de $K\left(\mathscr{\mathcal{T}}\right)$.
\end{prop}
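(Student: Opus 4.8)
The plan is to reduce the whole statement to one product formula for the Peirce operators together with the multiplicativity of evaluation at $\mathds{1}$. Recall that an element of $Ev\left(\mathscr{\mathcal{T}}\right)$ is, by definition, a polynomial $f$ with $\partial_{i}f=0$ for every $i\geq1$ and $f\left(\mathds{1}\right)=0$ (and $0\in Ev\left(\mathscr{\mathcal{T}}\right)$). Since each $\partial_{i}$ is $K$-linear and the map $h\mapsto h\left(\mathds{1}\right)$ is $K$-linear, $Ev\left(\mathscr{\mathcal{T}}\right)$ is at once an additive subgroup of $K\left(\mathscr{\mathcal{T}}\right)$: if $\partial_{i}f=\partial_{i}g=0$ and $f\left(\mathds{1}\right)=g\left(\mathds{1}\right)=0$, then $\partial_{i}\left(f-g\right)=\partial_{i}f-\partial_{i}g=0$ and $\left(f-g\right)\left(\mathds{1}\right)=0$, so $f-g\in Ev\left(\mathscr{\mathcal{T}}\right)$.

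The substantive point is absorption of products, and it rests on the identity
\begin{equation}
\partial_{i}\left(fg\right)=g\left(\mathds{1}\right)\,t\,\partial_{i}f+f\left(\mathds{1}\right)\,t\,\partial_{i}g,\qquad\left(f,g\in K\left(\mathscr{\mathcal{T}}\right),\ i\geq1\right),\label{eq:prod_Peirce}
\end{equation}
together with the elementary fact that $\left(fg\right)\left(\mathds{1}\right)=f\left(\mathds{1}\right)g\left(\mathds{1}\right)$ (the map $h\mapsto h\left(\mathds{1}\right)$ is an algebra homomorphism $K\left(\mathscr{\mathcal{T}}\right)\rightarrow K$ since every monomial takes the value $1$ at $\mathds{1}$). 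To prove (\ref{eq:prod_Peirce}), write $f=\sum_{k}\alpha_{k}w_{k}$ and $g=\sum_{l}\beta_{l}w'_{l}$ with $w_{k},w'_{l}\in\mathfrak{M}\left(\mathcal{T}\right)$; then $fg=\sum_{k,l}\alpha_{k}\beta_{l}\left(w_{k}w'_{l}\right)$, and using the $K$-linearity of $\partial_{i}$ together with the defining relation $\partial_{i}\left(uv\right)=t\left(\partial_{i}\left(u\right)+\partial_{i}\left(v\right)\right)$ one gets $\partial_{i}\left(fg\right)=\sum_{k,l}\alpha_{k}\beta_{l}\,t\left(\partial_{i}w_{k}+\partial_{i}w'_{l}\right)$. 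Regrouping this double sum yields $t\bigl(\sum_{l}\beta_{l}\bigr)\bigl(\sum_{k}\alpha_{k}\partial_{i}w_{k}\bigr)+t\bigl(\sum_{k}\alpha_{k}\bigr)\bigl(\sum_{l}\beta_{l}\partial_{i}w'_{l}\bigr)$, and since $\sum_{l}\beta_{l}=g\left(\mathds{1}\right)$, $\sum_{k}\alpha_{k}=f\left(\mathds{1}\right)$, $\sum_{k}\alpha_{k}\partial_{i}w_{k}=\partial_{i}f$ and $\sum_{l}\beta_{l}\partial_{i}w'_{l}=\partial_{i}g$, this is exactly (\ref{eq:prod_Peirce}).

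Granting (\ref{eq:prod_Peirce}), the ideal property is immediate: for $f\in Ev\left(\mathscr{\mathcal{T}}\right)$ and an arbitrary $g\in K\left(\mathscr{\mathcal{T}}\right)$ we have $\partial_{i}f=0$ and $f\left(\mathds{1}\right)=0$, whence $\partial_{i}\left(fg\right)=g\left(\mathds{1}\right)\,t\cdot0+0\cdot t\,\partial_{i}g=0$ for all $i\geq1$ and $\left(fg\right)\left(\mathds{1}\right)=f\left(\mathds{1}\right)g\left(\mathds{1}\right)=0$; thus $fg=gf\in Ev\left(\mathscr{\mathcal{T}}\right)$. Combined with the additive-subgroup step, this shows $Ev\left(\mathscr{\mathcal{T}}\right)$ is a (two-sided) ideal of $K\left(\mathscr{\mathcal{T}}\right)$. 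The only delicate point is the bookkeeping in (\ref{eq:prod_Peirce}): one must keep in mind that $\partial_{i}$ is \emph{not} a derivation in the polynomial variables but the linear extension of the monomial rule, so that expanding a product of two polynomials forces the \og weight\fg{} factors $g\left(\mathds{1}\right)$ and $f\left(\mathds{1}\right)$ in front of $t\,\partial_{i}f$ and $t\,\partial_{i}g$ — precisely what makes the condition $f\left(\mathds{1}\right)=0$ in the definition of $Ev\left(\mathscr{\mathcal{T}}\right)$ indispensable for absorption to go through.
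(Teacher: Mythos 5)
Votre preuve est correcte et suit essentiellement la même démarche que celle de l'article : tout repose sur la formule produit $\partial_{i}\left(fg\right)=t\left(f\left(\mathds{1}\right)\partial_{i}g+g\left(\mathds{1}\right)\partial_{i}f\right)$, établie de la même façon en développant $f$ et $g$ en monômes et en regroupant la double somme. La seule nuance est que vous obtenez $f\left(\mathds{1}\right)=0$ directement comme partie de la définition d'une identité évanescente (et vérifiez explicitement $\left(fg\right)\left(\mathds{1}\right)=0$), là où l'article l'invoque via la proposition \ref{prop:H(A)} ; les deux lectures aboutissent au même argument.
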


\begin{proof}
Il est immédiat que $Ev\left(\mathscr{\mathcal{T}}\right)$ est
un sous-espace de $K\left(\mathscr{\mathcal{T}}\right)$. Montrons
que pour tout $f,g\in K\left(\mathscr{\mathcal{T}}\right)$ et
pour tout $i\geq1$ on a 
\begin{equation}
\partial_{i}\left(fg\right)=t\left(f\left(\mathds{1}\right)\partial_{i}g+g\left(\mathds{1}\right)\partial_{i}f\right).\label{eq:di(fg)}
\end{equation}

Soient $f=\sum_{p\geq1}\alpha_{p}u_{p}$ et $g=\sum_{q\geq1}\beta_{q}v_{q}$
où $\alpha_{p},\beta_{q}\in K$ et $u_{p},v_{q}\in\mathfrak{M}\left(\mathscr{\mathcal{T}}\right)$,
on a 
\[
\partial_{i}\left(fg\right)=\sum_{p,q\geq1}\alpha_{p}\beta_{q}\partial_{i}\left(u_{p}v_{q}\right)=t\biggl(\Bigl(\sum_{p\geq1}\alpha_{p}\Bigr)\partial_{i}g+\Bigl(\sum_{q\geq1}\beta_{q}\Bigr)\partial_{i}u\biggr),
\]
or on a $\sum_{p\geq1}\alpha_{p}=f\left(\mathds{1}\right)$ et
$\sum_{q\geq1}\beta_{q}=g\left(\mathds{1}\right)$.

En particulier, si on prend $f\in Ev\left(\mathscr{\mathcal{T}}\right)$
et $g\in K\left(\mathscr{\mathcal{T}}\right)$ on a $\partial_{i}f=0$
et d'après la proposition \ref{prop:H(A)} on a $f\left(\mathds{1}\right)=0$
d'où $\partial_{i}\left(fg\right)=0$.
\end{proof}
En revanche l'idéal $Ev\left(A\right)$ n'est pas un $T$-idéal
de $\left(K\left(\mathscr{\mathcal{T}}\right),\cdot,\star\right)$
ni un $T$-idéal stochastique (cf. remarque \ref{rem:T-ideal-stoch})
comme le montre l'exemple suivant.
\begin{example}
Partant de l'identité évanescente caractérisant les algèbres
de rétrocroisement $f\left(x\right)=x^{2}x^{2}-2x^{3}+x^{2}$,
on considère l'identité $g\left(x\right)=f\left(\frac{1}{2}\left(x^{2}+x\right)\right)$. 

On a $g\left(x\right)=\frac{1}{16}\Bigl[\left(x^{2}+x\right)^{2}\left(x^{2}+x\right)^{2}-4\left(x^{2}+x\right)^{3}+4\left(x^{2}+x\right)^{2}\Bigr]$. 

De $\partial_{x}\bigl(\left(x^{2}+x\right)^{2}\bigr)=4t\left(2t+1\right)$
on déduit:
\begin{align*}
\partial_{x}\bigl(\left(x^{2}+x\right)^{2}\left(x^{2}+x\right)^{2}\bigr) & =2t\partial_{x}\bigl(\left(x^{2}+x\right)^{2}\bigr)=8t^{2}\left(2t+1\right)\\
\partial_{x}\bigl(\left(x^{2}+x\right)^{3}\bigr) & =t\Bigl(\partial_{x}\bigl(\left(x^{2}+x\right)^{2}\bigr)+\partial_{x}\bigl(x^{2}+x\bigr)\Bigr)=t\left(2t+1\right)\left(4t+1\right)
\end{align*}
 finalement on a $\partial_{x}g\left(x\right)=\frac{1}{16}\left(2t-\left(4t+1\right)+4\right)4t\left(2t+1\right)=\frac{1}{4}t\left(2t+1\right)\left(3-2t\right)\neq0$.
\end{example}

\medskip{}

La relation (\ref{eq:di(fg)}) donne une méthode simple pour
construire des identités évanescentes.
\begin{prop}
Soit $\left(A,\omega\right)$ une algèbre admettant un idempotent
$e\in H_{\omega}$ et vérifiant une identité de la forme $fg$
où $f,g\in K\left(\mathscr{\mathcal{T}}\right)$. Si on a $f\left(\mathds{1}\right)=g\left(\mathds{1}\right)=0$
alors l'identité $fg$ est évanescente.
\end{prop}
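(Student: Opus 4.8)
The plan is to reduce everything to the product formula (\ref{eq:di(fg)}) for Peirce polynomials. By definition, saying that the polynomial $fg$ is an evanescent identity means three things: that $fg\neq0$, that $(fg)(\mathds{1})=0$, and that $fg$ is Peirce-evanescent, i.e. $\partial_i(fg)=0$ for every $i\geq1$. So I would simply verify these three points in turn and then read off the statement.

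First, $fg\neq0$ is part of the hypothesis: $A$ is assumed to satisfy the identity $fg$, and by Definition \ref{def:A_verifie_f} an identity is by convention a nonzero polynomial. Next, $(fg)(\mathds{1})=0$: writing $f=\sum_p\alpha_p u_p$ and $g=\sum_q\beta_q v_q$ with $u_p,v_q\in\mathfrak{M}(\mathcal{T})$, and using that $w(\mathds{1})=1$ for every monomial $w$ (hence also $(u_pv_q)(\mathds{1})=1$), one gets
\[
(fg)(\mathds{1})=\sum_{p,q}\alpha_p\beta_q\,(u_pv_q)(\mathds{1})=\Bigl(\sum_p\alpha_p\Bigr)\Bigl(\sum_q\beta_q\Bigr)=f(\mathds{1})\,g(\mathds{1})=0 .
\]
Finally, for each $i\geq1$, relation (\ref{eq:di(fg)}) gives
\[
\partial_i(fg)=t\bigl(f(\mathds{1})\,\partial_i g+g(\mathds{1})\,\partial_i f\bigr)=t(0+0)=0,
\]
so all Peirce polynomials of $fg$ vanish and $fg$ is Peirce-evanescent. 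Combining the three facts, $fg$ is an evanescent identity; and since moreover $A$ admits the idempotent $e\in H_{\omega}$ and satisfies $fg$, the algebra $A$ is Peirce-evanescent for $fg$.

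I do not expect any genuine obstacle: the whole content of the statement is already packaged in formula (\ref{eq:di(fg)}), established in the proof that the set of evanescent polynomials is an ideal of $K(\mathcal{T})$. The only point deserving a word of justification is the multiplicativity $(fg)(\mathds{1})=f(\mathds{1})g(\mathds{1})$, which follows either from the elementary monomial computation above or, more conceptually, from the fact that the substitution $t_i\mapsto1$ extends to a $K$-algebra homomorphism $K(\mathcal{T})\to K$ because $K$ is commutative and associative.
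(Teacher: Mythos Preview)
Your proof is correct and follows exactly the approach the paper intends: the proposition is stated immediately after the sentence ``La relation (\ref{eq:di(fg)}) donne une m\'ethode simple pour construire des identit\'es \'evanescentes'' and is left without explicit proof, being an immediate consequence of (\ref{eq:di(fg)}). You have simply spelled out the three verifications ($fg\neq0$, $(fg)(\mathds{1})=0$, $\partial_i(fg)=0$) that make this consequence explicit.
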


Etant donnée $\left(A,\omega\right)$ une $K$-algèbre vérifiant
une identité $f$, le spectre de Peirce est l'ensemble des racines
des polynômes de Peirce $\partial_{i}f$, relativement à un idempotent
$e\in H_{\omega}$ ce sont les valeurs propres de l'opérateur
$L_{e}$ qui interviennent dans la décomposition de Peirce de
la $K$-algèbre $\left(A,\omega\right)$ vérifiant l'identité
$f$. Il est évident que si l'algèbre $\left(A,\omega\right)$
vérifie une identité évanescente, en l'abscence de polynômes
de Peirce, le spectre de l'opérateur $L_{e}$ est indéterminé.
Dans ce qui suit on précise cela en montrant que le spectre de
$L_{e}$ peut être n'importe quelle partie de $K$ contenant
1, pour cela on utilise les algèbres de mutation.\medskip{}

Une $K$-algèbre de mutation $\left(A,M,\omega\right)$ est définie
par la donnée d'un $K$-espace vectoriel $A$, d'une application
linéaire $M:A\rightarrow A$, d'une forme linéaire $\omega:A\rightarrow K$
telle que $\omega\neq0$, $\omega\circ M=\omega$ et du produit
$xy=\frac{1}{2}\left(\omega\left(y\right)M\left(x\right)+\omega\left(x\right)M\left(y\right)\right)$
où $x,y\in A$. Il résulte de la définition que $\omega\left(xy\right)=\omega\left(x\right)\omega\left(y\right)$
donc $\omega$ est une pondération.
\begin{example}
Les algèbres de mutation vérifient une multitude d'identités.
La construction de ces identités s'appuie sur la propriété que
pour une algèbre de mutation $\left(A,M,\omega\right)$ on a
$\left(\ker\omega\right)^{2}=0$, alors en prenant $x,y,x',y'$
dans $H_{\omega}$ tels que $x-y\neq0$ et $x'-y'\neq0$ on a
$\left(x-y\right)\left(x'-y'\right)=0$. Avec ce procédé on construit
ad libitum des identités vérifiées par toutes les algèbres de
mutation, par exemple $\left(t_{1}-t_{2}\right)^{2}$, $\left(t_{1}^{2}-t_{2}\right)^{2}$,
$\left(t_{1}^{2}-t_{1}\right)\left(t_{2}^{2}-t_{2}\right)$,
$\left(t_{1}+t_{2}-t_{3}-t_{4}\right)\left(t_{1}-t_{2}+t_{3}-t_{4}\right)$,
$t_{1}^{2}t_{2}^{2}-\left(t_{1}t_{2}\right)^{2}$ et cetera … 
\end{example}

Les algèbres de mutation vérifient toutes les identités évanescentes.
\begin{prop}
Soit $\left(A,M,\omega\right)$ une algèbre de mutation, quel
que soit $f\in Ev\left(\mathscr{\mathcal{T}}\right)$ l'algèbre
$A$ vérifie l'identité $f$.
\end{prop}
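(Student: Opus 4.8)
The plan is to use Proposition~\ref{prop:f*_et_f} to reduce the claim to the statement that $\widehat{\varrho}(f)=0$ for every substitution $\varrho:\mathcal{T}\to H_{\omega}$, and to establish the latter from an explicit formula expressing $\widehat{\varrho}(w)$, for a monomial $w$, through the Peirce polynomials $\partial_{i}w$.

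First I would record the two elementary features of a mutation algebra $(A,M,\omega)$ that carry the argument. From the defining product, for $x,y\in H_{\omega}$ one has $xy=\tfrac{1}{2}(M(x)+M(y))$, since $\omega(x)=\omega(y)=1$. And from $\omega\circ M=\omega$ it follows that $M(H_{\omega})\subseteq H_{\omega}$, hence $M^{k}(H_{\omega})\subseteq H_{\omega}$ for every $k\geq0$; in particular, when $\varrho:\mathcal{T}\to H_{\omega}$ we have $\widehat{\varrho}(w)\in H_{\omega}$, so $\omega(\widehat{\varrho}(w))=1$, for every $w\in\mathfrak{M}(\mathcal{T})$.

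The heart of the proof is the identity
\[
\widehat{\varrho}(f)=\sum_{i\geq1}\left(\partial_{i}f\right)\!\left(\tfrac{1}{2}M\right)\!\left(\varrho(t_{i})\right),\qquad\varrho:\mathcal{T}\to H_{\omega},
\]
where, for $p(t)=\sum_{k\geq0}a_{k}t^{k}\in K\left\langle t\right\rangle$, I write $p\!\left(\tfrac{1}{2}M\right)=\sum_{k\geq0}a_{k}2^{-k}M^{k}\in\mathrm{End}(A)$ with the convention $M^{0}=\mathrm{id}$ (the sum over $i$ is finite, since $\partial_{i}f=0$ as soon as $t_{i}$ does not occur in $f$). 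By linearity of $\widehat{\varrho}$ and of each $\partial_{i}$ it suffices to prove this for a single monomial $w$, by induction on $|w|$. If $|w|=1$, say $w=t_{j}$, both sides equal $\varrho(t_{j})$. For the inductive step, Proposition~\ref{prop:dec_ds_M(t)} provides a decomposition $w=uv$ with $u,v\in\mathfrak{M}(\mathcal{T})$ of strictly smaller length, and the first feature above gives $\widehat{\varrho}(w)=\widehat{\varrho}(u)\widehat{\varrho}(v)=\tfrac{1}{2}\!\left(M\widehat{\varrho}(u)+M\widehat{\varrho}(v)\right)$. Since $M\circ p\!\left(\tfrac{1}{2}M\right)=2\,(t\,p(t))\!\left(\tfrac{1}{2}M\right)$ for every $p\in K\left\langle t\right\rangle$, feeding the induction hypothesis for $u$ and for $v$ into the previous equality yields $\sum_{i\geq1}\left(t\partial_{i}u+t\partial_{i}v\right)\!\left(\tfrac{1}{2}M\right)\!\left(\varrho(t_{i})\right)$, which equals $\sum_{i\geq1}\left(\partial_{i}w\right)\!\left(\tfrac{1}{2}M\right)\!\left(\varrho(t_{i})\right)$ by the recursion $\partial_{i}(uv)=t(\partial_{i}u+\partial_{i}v)$ defining $\partial_{i}$. (Unwinding the recursion, one gets the equivalent tree form $\widehat{\varrho}(w)=\sum_{s\in L(\Psi(w))}2^{-\hslash(s)}M^{\hslash(s)}\!\left(\varrho(\Lambda(s))\right)$, which is Proposition~\ref{prop:Diw=000026arbre} read through $\widehat{\varrho}$.)

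The conclusion is then immediate: if $f\in Ev(\mathcal{T})$, then $\partial_{i}f=0$ for every $i\geq1$ by definition, so the displayed identity gives $\widehat{\varrho}(f)=0$ for every $\varrho:\mathcal{T}\to H_{\omega}$; by Proposition~\ref{prop:f*_et_f} (implication iii)$\Rightarrow$i)) and Definition~\ref{def:Id ds K(T)}, the algebra $A$ satisfies every homogenisation of $f$, that is, $A$ satisfies the identity $f$. The only substantive step is the displayed identity, and the point there that needs care is the bookkeeping of the scaling factor $\tfrac{1}{2}$ produced at each grafting — equivalently, of the height exponents in the tree form — together with the convention $M^{0}=\mathrm{id}$ that handles the constant terms of the $\partial_{i}f$; once the recursion for $\partial_{i}$ is matched with $\widehat{\varrho}(uv)=\tfrac{1}{2}(M\widehat{\varrho}(u)+M\widehat{\varrho}(v))$, the induction is mechanical.
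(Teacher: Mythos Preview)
Your proof is correct and follows essentially the same approach as the paper: both establish by induction on $|w|$ the formula $\widehat{\varrho}(w)=\sum_{i\geq1}(\partial_{i}w)\bigl(\tfrac{1}{2}M\bigr)\bigl(\varrho(t_{i})\bigr)$ for $\varrho:\mathcal{T}\to H_{\omega}$ (the paper packages this via maps $\varphi_{i}(t^{n})=2^{-n}M^{n}(x_{i})$, which is the same object), and then conclude from $\partial_{i}f=0$. The only cosmetic difference is that the paper invokes Proposition~\ref{prop:Bar-Id_pond1} at the end whereas you route through Proposition~\ref{prop:f*_et_f}, which amounts to the same thing.
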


\begin{proof}
Soit $f\in Ev\left(\mathscr{\mathcal{T}}\right)$ une identité
évanescente, $f=\sum_{k\geq1}\alpha_{k}w_{k}$ où $\alpha_{k}\in K$
et $w_{k}\in\mathfrak{M}\left(\mathscr{\mathcal{T}}\right)$.
Soit $\left(A,M,\omega\right)$ une algèbre de mutation, pour
toute famille $\left(x_{n}\right)_{n\geq1}$ d'éléments de $H_{\omega}$
on définit l'application $\varrho:\mathscr{\mathcal{T}}\rightarrow H_{\omega}$
par $\varrho\left(t_{i}\right)=x_{i}$, ($i\geq1$) et les morphismes
d'algèbres $\varphi_{i}:K\left\langle t\right\rangle \rightarrow A$,
définis par: 
\[
\varphi_{i}\left(t^{n}\right)=\frac{1}{2^{n}}M^{n}\left(x_{i}\right).
\]
Montrons par récurrence sur le degré que pour tout $w\in\mathfrak{M}\left(\mathscr{\mathcal{T}}\right)$
on a:
\[
\widehat{\varrho}\left(w\right)=\sum_{i\geq1}\varphi_{i}\left(\partial_{i}w\right).
\]
Le résultat est immédiat si $w$ est de degré 1. Supposons le
résultat vrai pour les monômes de degré $\leq n$. Soit $w$
un monôme de degré $n+1$, il existe $u,v\in\mathfrak{M}\left(\mathscr{\mathcal{T}}\right)$
de degrés $\leq n$ tels que $w=uv$, alors $\widehat{\varrho}\left(w\right)=\widehat{\varrho}\left(uv\right)=\widehat{\varrho}\left(u\right)\widehat{\varrho}\left(v\right)$,
en utilisant la structure d'algèbre de mutation de $A$ on obtient
$\widehat{\varrho}\left(u\right)\widehat{\varrho}\left(v\right)=\frac{1}{2}M\left(\widehat{\varrho}\left(u\right)\right)+\frac{1}{2}M\left(\widehat{\varrho}\left(v\right)\right)$,
avec l'hypothèse de récurrence ceci devient $\frac{1}{2}M\left(\widehat{\varrho}\left(u\right)\right)+\frac{1}{2}M\left(\widehat{\varrho}\left(v\right)\right)=\frac{1}{2}M\left(\sum_{i\geq1}\varphi_{i}\left(\partial_{i}u\right)+\varphi_{i}\left(\partial_{i}v\right)\right)$,
or on a $\varphi_{i}\left(t\partial_{i}u\right)=\frac{1}{2}M\varphi_{i}\left(\partial_{i}u\right)$,
par conséquent $\widehat{\varrho}\left(u\right)\widehat{\varrho}\left(v\right)=\sum_{i\geq1}\varphi_{i}\left(t\left(\partial_{i}u+\partial_{i}v\right)\right)=\sum_{i\geq1}\varphi_{i}\left(\partial_{i}uv\right)$
d'où le résultat. On en déduit que
\[
\widehat{\varrho}\left(f\right)=\sum_{k\geq1}\alpha_{k}\widehat{\varrho}\left(w_{k}\right)=\sum_{k\geq1}\alpha_{k}\sum_{i\geq1}\varphi_{i}\left(\partial_{i}w_{k}\right)=\sum_{i\geq1}\varphi_{i}\Bigl(\sum_{k\geq1}\alpha_{k}\partial_{i}w_{k}\Bigr)=\sum_{i\ge1}\varphi_{i}\left(\partial_{i}f\right),
\]
et comme $f$ est évanescente on a $\partial_{i}f=0$ pour tout
$i\geq1$ par conséquent $\widehat{\varrho}\left(f\right)=0$,
et d'après la proposition \ref{prop:Bar-Id_pond1} on a montré
que l'algèbre $\left(A,M,\omega\right)$ vérifie l'identité $f$.
\end{proof}
\begin{prop}
Pour toute partie $P$ de $K$ contenant $\left\{ 1\right\} $,
il existe une algèbre de mutation $\left(A,M,\omega\right)$
admettant un idempotent $e$ dont le spectre de l'opérateur $L_{e}$
est $P$.
\end{prop}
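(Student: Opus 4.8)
The plan is to exhibit such an algebra explicitly. Fix a subset $P\subseteq K$ with $1\in P$, and let $A$ be the $K$-vector space with basis $\{e\}\cup\{u_{\lambda}:\lambda\in P\}$. Define the linear form $\omega:A\rightarrow K$ by $\omega(e)=1$ and $\omega(u_{\lambda})=0$ for all $\lambda\in P$, and the linear map $M:A\rightarrow A$ by $M(e)=e$ and $M(u_{\lambda})=2\lambda u_{\lambda}$ for all $\lambda\in P$. Then $\omega\neq0$, and $\omega\circ M=\omega$ (checked on the basis: $\omega(M(e))=\omega(e)$ and $\omega(M(u_{\lambda}))=\omega(2\lambda u_{\lambda})=0=\omega(u_{\lambda})$), so $(A,M,\omega)$ is a mutation algebra with product $xy=\frac{1}{2}\bigl(\omega(y)M(x)+\omega(x)M(y)\bigr)$.

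First I would check that $e$ is an idempotent of weight $1$: since $\omega(e)=1$ and $M(e)=e$, one gets $e^{2}=\frac{1}{2}\bigl(\omega(e)M(e)+\omega(e)M(e)\bigr)=\omega(e)M(e)=e$, and $e\neq0$, so $e\in H_{\omega}$ is an idempotent. (Incidentally, in any mutation algebra an idempotent automatically has weight $1$, because $\omega(e)=0$ would force $e^{2}=0\neq e$.)

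Next I would compute $L_{e}$ and read off its spectrum. For every $x\in A$, $L_{e}(x)=ex=\frac{1}{2}\bigl(\omega(x)M(e)+\omega(e)M(x)\bigr)=\frac{1}{2}\bigl(\omega(x)e+M(x)\bigr)$; hence $L_{e}(e)=\frac{1}{2}(e+e)=e$ and, since $\omega(u_{\lambda})=0$, $L_{e}(u_{\lambda})=\frac{1}{2}M(u_{\lambda})=\lambda u_{\lambda}$ for every $\lambda\in P$. Thus $A$ has a basis of eigenvectors of $L_{e}$ with eigenvalues $1$ (for $e$) and $\lambda$ (for $u_{\lambda}$), so every element of $P$ is an eigenvalue of $L_{e}$ (using $1\in P$). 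Conversely, if $\mu\in K$ is an eigenvalue, choose an eigenvector $x=c\,e+\sum_{\lambda\in P}c_{\lambda}u_{\lambda}\neq0$; expanding $L_{e}(x)=\mu x$ over the basis yields $c(1-\mu)=0$ and $c_{\lambda}(\lambda-\mu)=0$ for all $\lambda$, and as some coefficient is nonzero we get $\mu=1$ or $\mu=\lambda$ for some $\lambda\in P$, that is $\mu\in P$. Therefore the spectrum of $L_{e}$ is exactly $P$.

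Since the construction is completely explicit, there is no real obstacle; the only points needing a little care are verifying the mutation-algebra axioms on the chosen basis (in particular that $\omega$ is indeed a weight, which follows from $\omega\circ M=\omega$) and, in the possibly infinite-dimensional setting, ruling out eigenvalues outside $P$, which is exactly the last computation above.
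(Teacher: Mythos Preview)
Your proof is correct and follows essentially the same construction as the paper's second case: a basis indexed by the desired eigenvalues, with $M$ acting by $2\lambda$ on each $u_{\lambda}$ and fixing $e$, so that $L_e$ is diagonal with spectrum $P$. Two small differences are worth noting: the paper treats $P=\{1\}$ separately via an infinite-dimensional shift construction (with $e_0 e_i=e_{i+1}$ so that $L_{e_0}$ has no eigenvalue on $\ker\omega$), whereas your uniform construction already covers this case with a two-dimensional algebra; and you explicitly verify the converse inclusion (any eigenvalue must lie in $P$), which the paper leaves implicit.
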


\begin{proof}
Considérons le $K$-espace $A$ de base $\left(e_{n}\right)_{n\in\mathbb{N}}$,
muni de la structure d'algèbre de mutation par $M:A\rightarrow A$
telle que $M\left(e_{0}\right)=e_{0}$, $M\left(e_{i}\right)=2e_{i+1}$
pour tout $i\geq1$ et $\omega:A\rightarrow K$ telle que $\omega\left(e_{0}\right)=1$,
$\omega\left(e_{i}\right)=0$ pour tout $i\geq1$, alors on a
$e_{0}^{2}=e_{0}$ et $e_{0}e_{i}=e_{i+1}$ par conséquent l'élément
$e_{0}$ est un idempotent de $A$ et le spectre de $L_{e_{0}}$
est $P=\left\{ 1\right\} $.

Soient $I$ un ensemble non vide et $P=\left\{ 1\right\} \cup\left\{ \lambda_{i};i\in I\right\} $
une partie de $K$. On considère le $K$-espace vectoriel $A$
de base $\left\{ e\right\} \cup\left\{ e_{i};i\in I\right\} $
muni de la structure d'algèbre de mutation par les applications
$M:A\rightarrow A$ définie par $M\left(e\right)=e$, $M\left(e_{i}\right)=2\lambda_{i}e_{i}$
et $\omega:A\rightarrow K$ telle que $\omega\left(e\right)=1$,
$\omega\left(e_{i}\right)=0$. Cette algèbre $\left(A,M,\omega\right)$
admet $e$ pour élément idempotent et pour tout $i\in I$ on
a $ee_{i}=\frac{1}{2}M\left(e_{i}\right)=\lambda_{i}e_{i}$ par
conséquent pour cette algèbre, le spectre de $L_{e}$ est $P$. 
\end{proof}

\section{Identités évanescentes de type $\left[n\right]$, $\left[n,1\right]$,
$\left[n,2\right]$, $\left[n,1,1\right]$.}

\subsection{Méthodes d'obtention des générateurs des polynômes évanescents
homogènes et non homogènes.}

\textcompwordmark{}

On recherche des générateurs des identités évanescentes sous
la forme de polynômes non homogènes définis comme suit.
\begin{defn}
Un polynôme non homogène $f\in K\left(t_{1},\ldots,t_{n}\right)$
est appelé une train polynôme de degré $\left(d_{1},\ldots,d_{n}\right)$
si $f=g-\sum_{i=1}^{r}h_{i}$, avec $g,h_{1},\ldots,h_{r}\in K\left(t_{1},\ldots,t_{n}\right)$
vérifiant les conditions suivantes:

a) $f\left(\mathds{1}\right)=0$,

b) le polynôme $g$ est homogène de type $\left[d_{1},\ldots,d_{n}\right]$,

c) pour tout $1\leq i\leq r$, le polynôme $h_{i}$ est homogène
de type $\left[\delta_{i},d_{2},\ldots,d_{n}\right]$, 

d) on a $0\leq\delta_{1}<\ldots<\delta_{r}<d_{1}$.
\end{defn}

\begin{rem}
Pour $n=1$, si les polynômes $g_{i}$ et $h_{j}$ sont pris
dans l'ensemble $\left\{ x^{k};k\geq1\right\} $, on retrouve
la définition des train polynômes aux puissances principales
introduits par Etherington \cite{Ether-39b} et pour $g_{i}$
et $h_{j}$ dans $\left\{ x^{\left[k\right]};k\geq1\right\} $
où $x^{\left[n+1\right]}=x^{\left[n\right]}x^{\left[n\right]}$,
$x^{\left[1\right]}=x$ on obtient les train polynômes aux puissances
plénières étudiées dans \cite{Gut-00}. 
\end{rem}

Dans les cas étudiés dans la suite on utilise la méthode suivante
pour obtenir les générateurs des polynômes évanescents sous la
forme de train polynômes.\medskip{}

Pour un $n$-uplet $\left(d_{1},\ldots,d_{n}\right)$ donné et
pour $w\in\mathfrak{M}\left(t_{1},\ldots,t_{n}\right)$ de type
$\left[d_{1},\ldots,d_{n}\right]$, on cherche un polynôme $P_{w}\in K\left(t_{1},\ldots,t_{n}\right)$
tel que $w-P_{w}$ soit un train polynôme de degré $\left(d_{1},\ldots,d_{n}\right)$
vérifiant $\partial_{i}\left(w-P_{w}\right)=0$ pour tout $1\leq i\leq n$.
Pour cela on choisit un ensemble $\mathscr{F}=\left\{ w_{1,k},\ldots,w_{m,k};k\geq0\right\} $
où 
\begin{itemize}
\item pour chaque $1\leq j\leq m$ et tout $k\geq0$ on a $w_{j,k}\in\mathfrak{M}\left(t_{1},\ldots,t_{n}\right)$
et $w_{j,k}$ est de type $\left[k,d_{2},\ldots,d_{n}\right]$
ou $\left[k+1,d_{2},\ldots,d_{n}\right]$, 
\item pour chaque $1\leq i\leq n$ il existe $1\leq j\leq m$ tel que
la suite d'entiers $\left(\bigl|\partial_{i}\left(w_{j,k}\right)\bigr|\right)_{k\geq0}$
est strictement croissante et l'ensemble des entiers $\left\{ \bigl|\partial_{i}\left(w_{j,k}\right)\bigr|;k\geq0\right\} =\mathbb{N}\text{ ou }\mathbb{N}^{*}$. 
\end{itemize}
Alors pour $w\in\mathfrak{M}\left(t_{1},\ldots,t_{n}\right)$,
$w\notin\mathscr{F}$, on pose $P_{w}=\sum_{j=1}^{m}\left(\sum_{k=0}^{\delta_{j}}\alpha_{j,k}w_{j,k}\right)$
où $\delta_{j}=\bigl|\partial_{i}w\bigr|$, pour chaque $1\leq i\leq n$
on a $\partial_{i}\left(w-P_{w}\right)\in K\left\langle t\right\rangle $,
par conséquent la recherche du polynôme $P_{w}$ vérifiant $\partial_{i}\left(w-P_{w}\right)=0$
($1\leq i\leq n$) est équivalente à la résolution d'un système
d'équations linéaires d'inconnues $\left(\alpha_{j,k}\right)_{\begin{subarray}{c}
1\leq j\leq m\\
0\leq k\leq\delta_{j}
\end{subarray}}$.

\medskip{}

Pour ce qui concerne les générateurs des polynômes évanescents
homogènes de type $\left[d_{1},\ldots,d_{n}\right]$. On note
$N$ le cardinal de $\mathfrak{M}\left(t_{1},\ldots,t_{n}\right)_{\left[d_{1},\ldots,d_{n}\right]}$
et $\left(w_{k}\right)_{1\leq k\leq N}$ les éléments de cet
ensemble. Soit $f=\sum_{k=1}^{N}\alpha_{k}w_{k}$, on cherche
$\left(\alpha_{k}\right)_{1\leq k\leq N}$ tels que $\partial_{i}f=0$
pour tout $1\leq i\leq n$ et $\sum_{k=1}^{N}\alpha_{k}=0$,
on a $\partial_{i}f\in K\left\langle t\right\rangle $ et d'après
le corollaire \ref{cor:Prop_de_Di}, $\left|\partial_{i}f\right|\leq\sum_{j=1}^{n}d_{j}-1$,
par conséquent les conditions $\sum_{k=1}^{N}\alpha_{k}\left(\partial_{i}w_{k}\right)=0$
et $\sum_{k=1}^{N}\alpha_{k}=0$ se traduisent par au plus $k\left(\sum_{j=1}^{n}d_{j}-1\right)+1$
équations linéaires d'inconnues $\left(\alpha_{k}\right)_{1\leq k\leq N}$.

\subsection{Identités évanescentes train de degré $\left(n\right)$ et homogènes
de type $\left[n\right]$.}

\textcompwordmark{}\medskip{}

Dans cette section pour simplifier les notations on écrira $\frak{M}\left(x\right)$
au lieu de $\frak{M}\left(\left\{ x\right\} \right)$ et $K\left(x\right)$
au lieu de $K\left(\left\{ x\right\} \right)$.\medskip{}

Pour tout $n\geq1$ on note $\frak{M}\left(x\right)_{\left[n\right]}$
le sous-ensemble de $\frak{M}\left(x\right)$ formé par les monômes
de type $\left[n\right]$. Les nombres $W_{\left[n\right]}=\mbox{card }\frak{M}\left(x\right)_{\left[n\right]}$
sont les nombres de Wedderburn-Etherington, ils vérifient les
relations de récurrence suivantes dépendant de la parité de $n$.
En partant de $W_{\left[1\right]}=1$, on a :
\begin{eqnarray*}
\begin{aligned}W_{\left[2p\right]} & =\sum_{i=1}^{p-1}W_{\left[i\right]}W_{\left[2p-i\right]}+\dbinom{W_{\left[p\right]}+1}{2},\end{aligned}
 &  & \begin{aligned}W_{\left[2p+1\right]} & ={\displaystyle \sum_{i=1}^{p}}W_{\left[i\right]}W_{\left[2p+1-i\right]},\quad\left(p\geq1\right).\end{aligned}
\end{eqnarray*}

Les premières valeurs de $W_{\left[n\right]}$ sont:
\begin{center}
\begin{tabular}{c|c|c|c|c|c|c|c|c|c|c|c}
$n$ & 0 & 1 & 2 & 3 & 4 & 5 & 6 & 7 & 8 & 9 & 10\tabularnewline
\hline 
$W_{\left[n\right]}$ & 0 & 1 & 1 & 1 & 2 & 3 & 6 & 11 & 23 & 46 & 98\tabularnewline
\end{tabular}
\par\end{center}

\medskip{}

\subsubsection{Train identités évanescentes de degré $\left(n\right)$.}

\textcompwordmark{}

\medskip{}

Dans ce qui suit on note $\mathbb{Q}\left\langle x\right\rangle $
le $\mathbb{Q}$-espace vectoriel engendré par l'ensemble $\left\{ x^{n};n\geq1\right\} $.
\begin{prop}
\label{prop:Evanes_type_=00005Bn=00005D}Il n'existe pas de train
identité évanescente de degré $\left(2\right)$ et $\left(3\right)$.

Pour tout $n\geq4$ et tout $w\in\frak{M}\left(x\right)_{\left[n\right]}$
vérifiant $w\neq x^{n}$, il existe un unique polynôme $P_{w}\in\mathbb{Q}\left\langle x\right\rangle $
de degré $<n$ tel que le polynôme $w-P_{w}$ soit une train
identité évanescente.
\end{prop}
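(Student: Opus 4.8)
The plan is to reduce everything to linear algebra over $\mathbb{Q}$, using the operator $\partial_{x}$ from Corollary \ref{cor:Prop_de_Di} as the only tool. First I would dispose of the small cases $n=2,3$. Since $\mathfrak{M}(x)_{[2]}=\{x^{2}\}$ and $\mathfrak{M}(x)_{[3]}=\{x^{3}\}$ are singletons (from the Wedderburn–Etherington table, $W_{[2]}=W_{[3]}=1$), a train identity of degree $(2)$ or $(3)$ would have to be of the form $x^{d}-P$ with $P\in\mathbb{Q}\langle x\rangle$ of degree $<d$; but $\partial_{x}(x^{2})=t(1+1)=2t$ and $\partial_{x}(x^{3})=t(2t+1)=2t^{2}+t$, while any $P=\sum_{k<d}\alpha_{k}x^{k}$ has $\partial_{x}P=\sum_{k}\alpha_{k}k\,t^{k-1}$ which is constant or has degree $<d-1$; matching degrees forces $\partial_{x}(w-P)\neq0$. (Concretely: the $t$ in $\partial_{x}x^{2}=2t$ cannot be cancelled, since $\partial_{x}P$ for $\deg P<2$ is a constant; similarly the leading $2t^{2}$ in $\partial_{x}x^{3}$ cannot be cancelled.) So no such identity exists.

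For $n\geq4$ and $w\in\mathfrak{M}(x)_{[n]}$ with $w\neq x^{n}$, I would set up the system directly. Put $P_{w}=\sum_{k=1}^{n-1}\alpha_{k}x^{k}$ with unknowns $\alpha_{1},\dots,\alpha_{n-1}\in\mathbb{Q}$. By Corollary \ref{cor:Prop_de_Di}(c) we have $|\partial_{x}w|\leq n-1$, and by part (a) its coefficients are nonnegative integers; meanwhile $\partial_{x}x^{k}$ is computed recursively and one checks $\partial_{x}x^{k}=(k-1)t^{k-1}+(\text{lower order in }t)$ — in fact $\partial_{x}x^{k}$ has degree exactly $k-1$ with leading coefficient $k-1\neq0$ for $k\geq2$ (the arbre of $x^{k}$ is the ``left comb'', so its leaves sit at heights $k-1,k-1,k-2,\dots,1$, giving $\partial_{x}x^{k}=2t^{k-1}+t^{k-2}+\cdots+t$). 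The condition $\partial_{x}(w-P_{w})=0$ is then a system of at most $n$ linear equations (one per power of $t$, degrees $0$ through $n-1$) in the $n-1$ unknowns $\alpha_{k}$. The key observation making this triangular is that $\partial_{x}x^{k}$ has a nonzero coefficient in degree $k-1$ and zero coefficients in all degrees $\geq k$: writing $\partial_{x}w=\sum_{j}c_{j}t^{j}$ with $c_{j}\in\mathbb{N}$, one solves for $\alpha_{n-1}$ from the degree-$(n-2)$ equation, then $\alpha_{n-2}$ from the degree-$(n-3)$ equation, and so on down to $\alpha_{2}$; each $\alpha_{k}$ is uniquely determined once the higher ones are fixed. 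This yields $\alpha_{2},\dots,\alpha_{n-1}$ uniquely; then $\alpha_{1}$ is forced by the requirement that $w-P_{w}$ be a train \emph{identity}, i.e. by condition $(w-P_{w})(\mathds{1})=0$, which reads $1-\sum_{k=1}^{n-1}\alpha_{k}=0$, determining $\alpha_{1}$. Uniqueness of $P_{w}$ is then immediate from the uniqueness at each step.

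The one point needing care — and the main obstacle — is verifying that the back-substitution actually \emph{closes}, i.e. that after choosing $\alpha_{n-1},\dots,\alpha_{2}$ to kill the coefficients of $t^{n-2},\dots,t^{1}$, the remaining degree-$0$ coefficient of $\partial_{x}(w-P_{w})$ also vanishes automatically. Here I would invoke Corollary \ref{cor:Prop_de_Di}(d): $\partial_{x}v(1)=|v|_{x}$ for every monomial $v$, hence $\partial_{x}w(1)=n$ and $\partial_{x}x^{k}(1)=k$. Evaluating $\partial_{x}(w-P_{w})$ at $t=1$ gives $n-\sum_{k=1}^{n-1}k\alpha_{k}$; but also $\partial_{x}(w-P_{w})$ has valuation $\geq1$ (every monomial $\partial_{x}v$ for $|v|\geq2$ is divisible by $t$, and for $|v|=1$ it is a constant — here $w$ and the $x^{k}$ with $k\geq2$ contribute no constant term, and I must track the $x^{1}$ term separately). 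In fact it is cleaner to note that once $\partial_{x}(w-P_{w})$ has all coefficients in degrees $1$ through $n-2$ equal to zero and degree $\leq n-1$ is already known, what remains is a multiple of $t^{0}$ alone (the degree-$n-1$ coefficient: by Cor.~\ref{cor:Prop_de_Di}(b)–(c) and the leading-term analysis, choosing $\alpha_{n-1}$ kills it since $\deg\partial_{x}w\leq n-1$ with the top coefficient matchable), hence $\partial_{x}(w-P_{w})=\gamma$ a constant; and a constant polynomial in $\partial_{x}(K\langle x\rangle+K x)$ of this form, being $\partial_{x}$ of something of positive $x$-degree minus $\alpha_{1}\partial_{x}x=\alpha_{1}$, forces $\gamma=0$ precisely because we can \emph{freely choose} $\alpha_{1}$ to absorb it — and then separately impose $(w-P_{w})(\mathds{1})=0$ to pin $\alpha_{1}$ down, the two conditions being compatible thanks to part (d). Assembling these observations gives existence and uniqueness of $P_{w}$, completing the proof. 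The subtlety, and the only real work, is bookkeeping the interplay between the degree-in-$t$ filtration (controlled by parts (a)–(c) of the corollary) and the evaluation-at-$1$ condition (controlled by part (d)), and checking this for all $n\geq4$ uniformly.
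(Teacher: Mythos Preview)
Your approach matches the paper's: both reduce to a triangular linear system via the explicit formula $\partial_x x^k = 2t^{k-1} + t^{k-2} + \cdots + t$ (for $k \geq 2$). The paper writes $P_w = \sum_{k=1}^{p+1}\beta_k x^k$ with $p = |\partial_x w|$, imposes $\partial_x P_w = \partial_x w$ together with $\sum_k\beta_k = 1$, and rewrites the whole thing as the triangular recursion $\beta_k - \sum_{i<k}\beta_i = \alpha_{k-1} - 1$.

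Two slips to clean up: $\partial_x$ is not the usual derivative, so ``$\partial_x P = \sum_k\alpha_k\, k\, t^{k-1}$'' is simply wrong; and the leading coefficient of $\partial_x x^k$ is $2$, not $k-1$. You self-correct both, but they muddy the small-$n$ discussion.

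The genuine gap is your closing argument. After $\alpha_2,\ldots,\alpha_{n-1}$ kill the coefficients of $t^1,\ldots,t^{n-2}$, the constant term of $\partial_x(w-P_w)$ equals $-\alpha_1$ (since $\partial_x x = 1$, while $\partial_x w$ and $\partial_x x^k$ for $k\geq 2$ have valuation $\geq 1$); so the evanescence condition forces $\alpha_1 = 0$, and the train-identity condition then reads $\sum_{k\geq 2}\alpha_k = 1$. You must show this holds automatically. You invoke part~(d) of Corollary~\ref{cor:Prop_de_Di}, but evaluation at $t=1$ gives only $\sum_k k\,\alpha_k = n$, which is a \emph{different} relation and does not yield $\sum_k\alpha_k = 1$. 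What does work is evaluation at $t=\tfrac12$: the recursion $\partial_x(uv) = t(\partial_x u + \partial_x v)$ gives $\partial_x v(\tfrac12) = 1$ for every monomial $v$ by an immediate induction (Kraft equality for the associated binary tree), and substituting into $\partial_x\bigl(w - \sum_{k\geq2}\alpha_k x^k\bigr)=0$ yields $\sum_{k\geq 2}\alpha_k = 1$ directly. The paper hides this same consistency issue inside its asserted ``\'equivalence'' of the two linear systems.

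A smaller point you also gloss over: to guarantee $\deg P_w < n$ you need $|\partial_x w|\leq n-2$ when $w\neq x^n$, not merely $\leq n-1$. Your parenthetical about ``choosing $\alpha_{n-1}$'' to kill the degree-$(n-1)$ coefficient is confused, since $\partial_x x^{n-1}$ has degree $n-2$. The needed bound follows from Corollary~\ref{cor:Prop_de_Di}(b): only the caterpillar tree $x^n$ has a leaf at height $n-1$.
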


\begin{proof}
Soit $f\left(x\right)=\alpha x^{2}+\beta x$, on a $\partial_{x}f\left(t\right)=2\alpha t+\beta$
donc $\partial_{x}f=0$ si $\alpha=\beta=0$. Soit $f\left(x\right)=\alpha x^{3}+\beta x^{2}+\gamma x$,
on a $\partial_{x}f\left(t\right)=2\alpha t^{2}+\left(\alpha+2\beta\right)t+\gamma$
et par suite $\partial_{x}f=0$ seulement si $f=0$. 

Soit $w\in\frak{M}\left(x\right)_{\left[n\right]}$ tel que $w\neq x^{n}$,
d'après le résultat c) du corollaire \ref{cor:Prop_de_Di}on
a $\left|\partial_{x}w\right|\leq n-1$ et pour tout $k\geq3$
on a 
\begin{equation}
\partial_{x}\left(x^{k}\right)=2t^{k-1}+\sum_{i=1}^{k-2}t^{i}.\label{eq:Dx(x^k)}
\end{equation}
Soit $p=\left|\partial_{x}w\right|$, on a $\partial_{x}w\left(t\right)=\sum_{k=0}^{p}\alpha_{k}t^{k}$
et on cherche $P_{w}\left(x\right)=\sum_{k=1}^{p+1}\beta_{k}x^{k}$
tel que $\partial_{x}\left(w-P_{w}\right)=0$ et $P_{w}\left(1\right)=w\left(1\right)=1$.
Un simple calcul donne $\partial_{x}P_{w}\left(t\right)=2\beta_{p+1}t^{p}+\sum_{k=1}^{p-1}\left(2\beta_{k+1}+\sum_{i=k+2}^{p+1}\beta_{i}\right)t^{k}+2\beta_{1}$
et on a $\partial_{x}w=\partial_{x}P_{w}$ si et seulement si
$\left(\beta_{i}\right)_{1\leq i\leq p+1}$ est solution du système
linéaire: 
\[
2\beta_{p+1}=\alpha_{p},\;2\beta_{k+1}+\sum_{i=k+2}^{p+1}\beta_{i}=\alpha_{k},(1\leq k\leq p-1),\;2\beta_{1}=\alpha_{0},\;\sum_{k=1}^{p+1}\beta_{k}=1,
\]
qui est équivalent au système linéaire triangulaire: $2\beta_{1}=\alpha_{0}$,
$\beta_{k}-\sum_{i=1}^{k-1}\beta_{i}=\alpha_{k}-1$ ($2\leq k\leq p+1$)
comme d'après le corollaire \ref{cor:Prop_de_Di} on a $\alpha_{k}\in\mathbb{N}$
pour tout $0\leq k\leq p$, la solution $\left(\beta_{i}\right)_{1\leq i\leq p+1}$
de ce système vérifie $\beta_{i}\in\mathbb{Q}$ pour tout $1\leq i\leq p+1$.
\end{proof}
On en déduit immédiatement le corollaire qui suit.
\begin{cor}
Pour tout $n\geq4$, l'espace vectoriel des train identités évanescentes
de degré $\left(n\right)$ est de dimension $W_{\left[n\right]}-1$.
\end{cor}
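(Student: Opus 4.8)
The plan is to produce an explicit basis of the space $V_{n}$ of train \'evanescent identities of degree $(n)$, indexed by $\mathfrak{M}(x)_{[n]}\setminus\{x^{n}\}$. The essential input is already Proposition~\ref{prop:Evanes_type_=00005Bn=00005D}: for each $w\in\mathfrak{M}(x)_{[n]}$ with $w\neq x^{n}$ it provides $P_{w}\in\mathbb{Q}\langle x\rangle$ of degree $<n$ such that $f_{w}:=w-P_{w}$ is a train \'evanescent identity, hence $f_{w}\in V_{n}$. So the first step is just to observe that these $W_{[n]}-1$ elements are linearly independent: in a relation $\sum\lambda_{w}f_{w}=0$, the homogeneous component of degree $n$ equals $\sum\lambda_{w}w$, and the $w\in\mathfrak{M}(x)_{[n]}$ form a basis of the space of homogeneous polynomials of type $[n]$, so all $\lambda_{w}=0$. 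This already yields $\dim V_{n}\geq W_{[n]}-1$.

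The second step — showing the $f_{w}$ span $V_{n}$ — is where I would use $n\geq4$ and the shape of the Peirce polynomials of principal powers. Given $f\in V_{n}$, write its degree-$n$ part as $g=\sum_{w\in\mathfrak{M}(x)_{[n]}}a_{w}w$, so $f=g-P$ with $P\in\mathbb{Q}\langle x\rangle$, $\deg P<n$, and set $h:=f-\sum_{w\neq x^{n}}a_{w}f_{w}$. Collecting terms, $h=a_{x^{n}}x^{n}+\bigl(\sum_{w\neq x^{n}}a_{w}P_{w}-P\bigr)\in\mathbb{Q}\langle x\rangle$ with $\deg h\leq n$, while $\partial_{x}h=0$ and $h(\mathds{1})=0$ by linearity (both $\partial_{x}$ and evaluation at $\mathds{1}$ annihilate $f$ and every $f_{w}$). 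Writing $h=\sum_{k=1}^{n}c_{k}x^{k}$ and using (\ref{eq:Dx(x^k)}) together with $\partial_{x}x^{2}=2t$, $\partial_{x}x=1$, the relation $\partial_{x}h=0$ becomes, coefficient by coefficient, the triangular linear system already met in the proof of Proposition~\ref{prop:Evanes_type_=00005Bn=00005D}: the coefficient of $t^{n-1}$ forces $c_{n}=0$, then that of $t^{n-2}$ forces $c_{n-1}=0$, and so on down to $c_{1}=0$. Hence $h=0$, i.e. $f=\sum_{w\neq x^{n}}a_{w}f_{w}$, so $\{f_{w}:w\in\mathfrak{M}(x)_{[n]},\,w\neq x^{n}\}$ is a basis and $\dim V_{n}=W_{[n]}-1$.

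The main difficulty is not analytic but definitional: one must read $V_{n}$ as the train \'evanescent identities of degree $(n)$ whose lower-degree part is a principal-power polynomial (the class delivered by Proposition~\ref{prop:Evanes_type_=00005Bn=00005D}), since otherwise one could add to any such identity an \'evanescent identity of degree $m$ with $4\leq m<n$ and enlarge the space. Once this convention is fixed, everything reduces to two facts already established — the existence/uniqueness part of Proposition~\ref{prop:Evanes_type_=00005Bn=00005D} and the injectivity of $\partial_{x}$ on $\mathbb{Q}\langle x\rangle$ (the triangular system) — so the corollary is indeed immediate.
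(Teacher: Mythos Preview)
Your argument is correct and matches the paper's approach: the paper states this corollary as immediate, and for the analogous $(n,1)$ case proves it in one line by saying the space is generated by the $w-P_{w}$, which is exactly your independence-plus-spanning argument. Your closing remark on the definitional convention is a genuine point the paper glosses over --- under the literal definition of train polynomial (arbitrary homogeneous lower-degree parts) one could add, say, $x^{2}x^{2}-2x^{3}+x^{2}$ to any $f_{w}$ and enlarge the space for $n\geq5$, so the corollary tacitly assumes the lower-degree part lies in $\mathbb{Q}\langle x\rangle$, as you correctly note.
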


La démonstration de la proposition \ref{prop:Evanes_type_=00005Bn=00005D}
donne une méthode basée sur la résolution de systèmes linéaires
triangulaires pour obtenir des polynômes évanescents, malheureusement
elle est difficile à appliquer pour les grandes valeurs de $n$,
heureusement le résultat suivant donne un algorithme plus facile
à mettre en oeuvre. 
\begin{thm}
\label{thm:Evanesc_type=00005Bn=00005D_id=0000E9al}Pour tout
entier $p,q\geq1$ on pose:
\[
E_{p,q}\left(x\right)=x^{p}x^{q}-x^{p+1}-x^{q+1}+x^{2}.
\]
 Soient $\mathscr{E}$ l'idéal engendré par la famille $\left\{ E_{p,q};p,q\geq1\right\} $
et $\pi:K\left(x\right)\rightarrow{}^{K\left(x\right)}\!/_{\mathscr{E}}$
la surjection canonique. Alors pour tout $n\geq4$ et $w\in\frak{M}\left(x\right)_{\left[n\right]}$,
$w\neq x^{n}$ on a $\pi\left(w\right)=P_{w}$ et pour tout $f\in K\left(x\right)$
de degré $\geq4$, le polynôme $f-\pi\left(f\right)$ est une
train identité évanescente.
\end{thm}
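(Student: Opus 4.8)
The plan is to show two things: first that each generator $E_{p,q}$ is a Peirce-evanescent identity, i.e. $E_{p,q}(\mathds{1})=0$ and $\partial_x E_{p,q}=0$; second that modulo the ideal $\mathscr{E}$ every monomial $w\in\frak{M}(x)_{[n]}$ with $w\neq x^n$ is congruent to the unique polynomial $P_w\in\mathbb{Q}\langle x\rangle$ furnished by Proposition \ref{prop:Evanes_type_=00005Bn=00005D}. First I would record that $E_{p,q}(\mathds{1})=1-1-1+1=0$, and that, using \eqref{eq:Dx(x^k)} together with the additivity $\partial_x(x^p x^q)=t(\partial_x(x^p)+\partial_x(x^q))$ from \eqref{eq:di(fg)}, one computes $\partial_x E_{p,q}=t\bigl(\partial_x(x^p)+\partial_x(x^q)\bigr)-\partial_x(x^{p+1})-\partial_x(x^{q+1})+\partial_x(x^2)$; expanding each term via \eqref{eq:Dx(x^k)} (with the convention $\partial_x(x^2)=2t$, $\partial_x x=0$) the contributions telescope and cancel, giving $\partial_x E_{p,q}=0$. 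Hence $E_{p,q}\in Ev(x)$, and since $Ev(x)$ is an ideal of $K(x)$ (proven in the proposition preceding the mutation algebras), the whole ideal $\mathscr{E}$ is contained in $Ev(x)$; in particular $\partial_x$ kills every element of $\mathscr{E}$ and every element of $\mathscr{E}$ vanishes at $\mathds{1}$.

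Next I would prove $\pi(w)=P_w$ for $w\in\frak{M}(x)_{[n]}$, $w\neq x^n$, by strong induction on $n\geq 4$ (with the monomials $x^n$ themselves treated as the "base" objects that are already in $\mathbb{Q}\langle x\rangle$). By Proposition \ref{prop:dec_ds_M(t)}, write $w=uv$ with $u\in\frak{M}(x)_{[p]}$, $v\in\frak{M}(x)_{[q]}$, $p+q=n$, $p,q\geq1$. By the induction hypothesis (or trivially when $u=x^p$ resp. $v=x^q$) there are $a,b\in\mathbb{Q}\langle x\rangle$ with $u\equiv a$, $v\equiv b\pmod{\mathscr{E}}$; since $\mathscr{E}$ is an ideal, $w=uv\equiv ab\pmod{\mathscr{E}}$, and moreover $a(\mathds{1})=u(\mathds{1})=1$, $b(\mathds{1})=v(\mathds{1})=1$ because $\mathscr{E}\subset Ev(x)$ and evanescent polynomials vanish at $\mathds{1}$ (one must be slightly careful: $u\equiv a$ means $u-a\in\mathscr{E}$, hence $(u-a)(\mathds{1})=0$, so indeed $a(\mathds{1})=1$). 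Writing $a=\sum_i \alpha_i x^i$, $b=\sum_j\beta_j x^j$ with $\sum\alpha_i=\sum\beta_j=1$, one has $ab=\sum_{i,j}\alpha_i\beta_j\, x^i x^j$, and each product $x^i x^j$ with $i,j\geq1$ is, via the relation $x^i x^j=x^{i+1}+x^{j+1}-x^2+E_{i,j}$, congruent modulo $\mathscr{E}$ to $x^{i+1}+x^{j+1}-x^2\in\mathbb{Q}\langle x\rangle$; summing, $ab\equiv\sum_{i,j}\alpha_i\beta_j(x^{i+1}+x^{j+1}-x^2)=:c\in\mathbb{Q}\langle x\rangle$. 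Thus $w\equiv c\pmod{\mathscr{E}}$ with $c\in\mathbb{Q}\langle x\rangle$. It remains to identify $c$ with $P_w$: since $w-c\in\mathscr{E}\subset Ev(x)$ we have $\partial_x(w-c)=0$, i.e. $\partial_x c=\partial_x w$, and $c(\mathds{1})=w(\mathds{1})=1$; but Proposition \ref{prop:Evanes_type_=00005Bn=00005D} asserts that the polynomial in $\mathbb{Q}\langle x\rangle$ satisfying exactly these two conditions — $\partial_x(\cdot)=\partial_x w$ and value $1$ at $\mathds{1}$ — is unique and equals $P_w$. Hence $c=P_w$ and $\pi(w)=P_w$.

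Finally, for arbitrary $f\in K(x)$ of degree $\geq4$, write $f=\sum_k\gamma_k w_k$ as a linear combination of monomials $w_k\in\frak{M}(x)_{[d_k]}$. For each $w_k$ with $w_k\neq x^{d_k}$ and $d_k\geq 4$, $\pi(w_k)=P_{w_k}\in\mathbb{Q}\langle x\rangle$ by the above; the monomials of the form $x^{d_k}$ are already in $\mathbb{Q}\langle x\rangle$; and any $w_k$ of degree $<4$ is likewise a polynomial in $\mathbb{Q}\langle x\rangle$ modulo $\mathscr{E}$ by direct inspection (in degrees $\leq 3$ there is nothing to do since $\frak{M}(x)_{[n]}=\{x^n\}$ for $n\leq3$). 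Therefore $\pi(f)=\sum_k\gamma_k\pi(w_k)\in\mathbb{Q}\langle x\rangle$, and $f-\pi(f)\in\mathscr{E}\subset Ev(x)$ is a Peirce-evanescent polynomial; since $f-\pi(f)\in\mathscr{E}$ we also have $(f-\pi(f))(\mathds{1})=0$, so $f-\pi(f)$ is a train identity évanescente, as claimed. The main obstacle, and the only place that requires care, is the uniqueness step: one must make sure that the linear system in Proposition \ref{prop:Evanes_type_=00005Bn=00005D} genuinely pins down $P_w$ from the pair $(\partial_x w, w(\mathds{1}))$ alone — which it does, since that system is triangular with invertible diagonal — so that the element $c$ produced by the ideal computation has no choice but to coincide with $P_w$.
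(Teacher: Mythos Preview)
Your proposal is correct and follows the same approach as the paper: verify that each $E_{p,q}$ is evanescent, reduce every monomial $w$ modulo $\mathscr{E}$ to an element of $\mathbb{Q}\langle x\rangle$ by induction on $|w|$, and invoke the uniqueness of $P_w$ from Proposition~\ref{prop:Evanes_type_=00005Bn=00005D}. One harmless slip: $\partial_x x = 1$, not $0$ (but $E_{1,q}=E_{p,1}=0$ identically since $x\cdot x^q=x^{q+1}$, so this has no effect); otherwise your explicit reduction of $ab$ via $x^ix^j\equiv x^{i+1}+x^{j+1}-x^2$ is actually clearer than the paper's somewhat terse handling of that step.
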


\begin{proof}
En utilisant la relation (\ref{eq:Dx(x^k)}) on montre par un
simple calcul que les polynômes $E_{p,q}$ sont évanescents.
Montrons par récurrence sur le degré $n\geq4$ que $w-\pi\left(w\right)$
est évanescent et que $\pi\left(w\right)\in\mathbb{Z}\left\langle x\right\rangle $
pour tout $w\in\frak{M}\left(x\right)_{\left[n\right]}$, $w\neq x^{n}$.
On a $\pi\left(x^{2}x^{2}\right)=2x^{3}-x^{2}$ et on sait que
le polynôme $x^{2}x^{2}-\left(2x^{3}-x^{2}\right)$ est évanescent.
Si le résultat est vrai pour tout $u\in\frak{M}\left(x\right)_{\left[k\right]}$,
$u\neq x^{k}$ où $4\leq k\leq n$, soit $w\in\frak{M}\left(x\right)_{\left[n+1\right]}$,
$w\neq x^{n+1}$, il existe $u\in\frak{M}\left(x\right)_{\left[p\right]}$
et $v\in\frak{M}\left(x\right)_{\left[q\right]}$ tels que $w=uv$
où $1\leq p\leq q$ et $p+q=n+1$. Si $u=x^{p}$ ou $v=x^{q}$
on a $\pi\left(u\right)=u$ ou $\pi\left(v\right)=v$ alors on
a $\partial_{x}\left(w-\pi\left(w\right)\right)=\partial_{x}\left(uv-\pi\left(u\right)\pi\left(v\right)\right)=t\left(\partial_{x}\left(u-\pi\left(u\right)\right)+\partial_{x}\left(v-\pi\left(v\right)\right)\right)=\text{0}$.
De plus si $\pi\left(u\right),\pi\left(v\right)\in\mathbb{Z}\left\langle x\right\rangle $
alors $\pi\left(w\right)=\pi\left(u\right)\pi\left(v\right)\in\mathbb{Z}\left\langle x\right\rangle $.

Pour tout $w\in\frak{M}\left(x\right)_{\left[n\right]}$ on a
$\pi\left(w\right)\in\mathbb{Z}\left\langle x\right\rangle $
et $w-\pi\left(w\right)$ est évanescent, alors d'après la proposition
\ref{eq:Dx(x^k)}, par unicité du polynôme $P_{w}$ on a $\pi\left(w\right)=P_{w}$.
Enfin pour tout $f\in K\left(x\right)$, $f=\sum_{k\geq1}\alpha_{k}w_{k}$
on a $f-\pi\left(f\right)=\sum_{k\geq1}\alpha_{k}\left(w_{k}-\pi\left(w_{k}\right)\right)$
donc $f-\pi\left(f\right)\in\mathscr{E}$ autrement dit, le polynôme
$f-\pi\left(f\right)$ est évanescent. 
\end{proof}
\begin{rem}
Ce théorème permet de préciser une propriété énoncée à la proposition
\ref{prop:Evanes_type_=00005Bn=00005D}: pour tout $w\in\frak{M}\left(x\right)_{\left[n\right]}$
tel que $w\neq x^{n}$, le polynôme $P_{w}$ vérifie $P_{w}\in\mathbb{Z}\left\langle x\right\rangle $.
\end{rem}

Le théorème \ref{thm:Evanesc_type=00005Bn=00005D_id=0000E9al}
donne un moyen très pratique et très rapide pour obtenir des
polynômes évanescents.
\begin{example}
Soit $w=\left(\left(x^{3}x^{3}\right)x^{2}\right)\left(\left(x^{2}x^{4}\right)x^{3}\right)$,
on a:
\begin{align*}
\pi\left(w\right) & =\left(x^{2}\left(2x^{4}-x^{2}\right)\right)\left(x^{3}\left(x^{3}+x^{5}-x^{2}\right)\right)=\left(2x^{5}-x^{2}\right)\left(x^{6}+2x^{4}-x^{3}-x^{2}\right)\\
 & =x^{7}+2x^{6}+2x^{5}-x^{4}-2x^{3}-x^{2}
\end{align*}
on obtient ainsi la train identité évanescente de degré $\left(17\right)$:
\[
\left(\left(x^{3}x^{3}\right)x^{2}\right)\left(\left(x^{2}x^{4}\right)x^{3}\right)-x^{7}-2x^{6}-2x^{5}+x^{4}+2x^{3}+x.
\]
\end{example}

En utilisant cette méthode on obtient les train identités évanescentes:%
\begin{longtable}{lll}
– de degré (4):$\medskip$ &  & \tabularnewline
$x^{2}x^{2}-2x^{3}+x^{2}.$ &  & \tabularnewline
 &  & \tabularnewline
– de degré (5):$\medskip$ &  & \tabularnewline
$\left(x^{2}x^{2}\right)x-2x^{4}+x^{3};$ &  & $\;x^{3}x^{2}-x^{4}-x^{3}+x^{2}.$\tabularnewline
 &  & \tabularnewline
– de degré (6):$\medskip$ &  & \tabularnewline
$\left(x^{2}x^{2}\right)x^{2}-2x^{4}+x^{2};$ &  & $\left(x^{3}x^{2}\right)x-x^{5}-x^{4}+x^{3};$$\medskip$\tabularnewline
$\;x^{3}x^{3}-2x^{4}+x^{2};$ &  & $\;x^{4}x^{2}-x^{5}-x^{3}+x^{2}.$$\medskip$\tabularnewline
$\left(\left(x^{2}x^{2}\right)x\right)x-2x^{5}+x^{4};$ &  & \tabularnewline
 &  & \tabularnewline
– de degré (7):$\medskip$ &  & \tabularnewline
$\left(x^{2}x^{2}\right)x^{3}-3x^{4}+x^{3}+x^{2};$ &  & $\;x^{5}x^{2}-x^{6}-x^{3}+x^{2};$$\medskip$\tabularnewline
$\left(x^{3}x^{3}\right)x-2x^{5}+x^{3};$ &  & $\left(x^{4}x^{2}\right)x-x^{6}-x^{4}+x^{3};$$\medskip$\tabularnewline
$\;x^{4}x^{3}-x^{5}-x^{4}+x^{2};$ &  & $\left(\left(x^{3}x^{2}\right)x\right)x-x^{6}-x^{5}+x^{4};$$\medskip$\tabularnewline
$\left(x^{3}x^{2}\right)x^{2}-x^{5}-x^{4}+x^{2};$ &  & $\left(\left(\left(x^{2}x^{2}\right)x\right)x\right)x-2x^{6}+x^{5}.$$\medskip$\tabularnewline
$\left(\left(x^{2}x^{2}\right)x\right)x^{2}-2x^{5}+x^{4}-x^{3}+x^{2};$ &  & \tabularnewline
 &  & \tabularnewline
– de degré (8):$\medskip$ &  & \tabularnewline
$\;x^{4}x^{4}-2x^{5}+x^{2};$ &  & $\left(\left(\left(x^{2}x^{2}\right)x^{2}\right)x\right)x-2x^{6}+x^{4};$$\medskip$\tabularnewline
$\left(x^{3}x^{3}\right)x^{2}-2x^{5}+x^{2};$ &  & $\left(x^{4}x^{3}\right)x-x^{6}-x^{5}+x^{3};$$\medskip$\tabularnewline
$\left(\left(x^{2}x^{2}\right)x^{2}\right)x^{2}-2x^{5}+x^{2};$ &  & $\left(\left(x^{3}x^{2}\right)x\right)x^{2}-x^{6}-x^{5}+x^{4}-x^{3}+x^{2};$$\medskip$\tabularnewline
$\left(\left(x^{2}x^{2}\right)x\right)x^{3}-2x^{5}+x^{2};$ &  & $\left(\left(\left(x^{2}x^{2}\right)x\right)x\right)x^{2}-2x^{6}+x^{5}-x^{3}+x^{2};$$\medskip$\tabularnewline
$\left(x^{3}x^{2}\right)x^{3}-x^{5}-2x^{4}+x^{3}+x^{2};$ &  & $\left(\left(\left(x^{2}x^{2}\right)x\right)x^{2}\right)x-2x^{6}+x^{5}-x^{4}+x^{3};$$\medskip$\tabularnewline
$\left(\left(x^{2}x^{2}\right)x^{3}\right)x-3x^{5}+x^{4}+x^{3};$ &  & $\left(x^{5}x^{2}\right)x-x^{7}-x^{4}+x^{3};$$\medskip$\tabularnewline
$\left(x^{2}x^{2}\right)x^{4}-x^{5}-2x^{4}+x^{3}+x^{2};$ &  & $\left(\left(x^{3}x^{2}\right)x^{2}\right)x-x^{6}-x^{5}+x^{3};$$\medskip$\tabularnewline
$\left(x^{2}x^{2}\right)\left(x^{2}x^{2}\right)-4x^{4}+2x^{3}+x^{2};$ &  & $\left(\left(\left(x^{2}x^{2}\right)x\right)x^{2}\right)x-2x^{6}+x^{5}-x^{4}+x^{3};$$\medskip$\tabularnewline
$\;x^{5}x^{3}-x^{6}-x^{4}+x^{2};$ &  & $\;x^{6}x^{2}-x^{7}-x^{3}+x^{2};$$\medskip$\tabularnewline
$\left(\left(x^{3}x^{3}\right)x\right)x-2x^{6}+x^{4};$ &  & $\left(\left(x^{4}x^{2}\right)x\right)x-x^{7}-x^{5}+x^{4};$$\medskip$\tabularnewline
$\left(x^{4}x^{2}\right)x^{2}-x^{6}-x^{4}+x^{2};$ &  & $\left(\left(\left(x^{3}x^{2}\right)x\right)x\right)x-x^{7}-x^{6}+x^{5}.$\tabularnewline
\end{longtable}

\subsubsection{Identités homogènes évanescentes de type $\left[n\right]$.}
\begin{prop}
Il n'existe pas d'identité homogène évanescente de type $\left[n\right]$
pour $n\leq5$. Pour $n\geq6$, l'espace des identités homogènes
évanescentes de degré $n$ est engendré par au moins $W_{\left[n\right]}-n+2$
polynômes homogènes évanescents.
\end{prop}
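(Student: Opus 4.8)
Write $K(x)_{[n]}$ for the space of homogeneous polynomials of type $[n]$; it has dimension $W_{[n]}=\#\,\mathfrak{M}(x)_{[n]}$, a basis being the monomials $w\in\mathfrak{M}(x)_{[n]}$. The homogeneous evanescent identities of type $[n]$ are exactly the $f\in K(x)_{[n]}$ with $\partial_x f=0$ and $f(\mathds{1})=0$. Throughout I use that $\partial_x(uv)=t(\partial_x u+\partial_x v)$, that $\partial_x(x^k)=2t^{k-1}+\sum_{i=1}^{k-2}t^i$ for $k\ge3$, and Corollary~\ref{cor:Prop_de_Di}: $\partial_x w$ has nonnegative integer coefficients, $|\partial_x w|\le n-1$, and $(\partial_x w)(1)=|w|_x=n$; moreover for $n\ge2$ every leaf of $\Psi(w)$ has height $\ge1$, so $\partial_x w\in\langle t,\dots,t^{n-1}\rangle$ (no constant term).

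\textbf{First part ($n\le5$).} Here $W_{[n]}\le3$, so one writes $f=\sum_{w\in\mathfrak{M}(x)_{[n]}}\alpha_w w$ over the at most three monomials of type $[n]$ and computes each $\partial_x w$ explicitly (directly or via the tree representation of Proposition~\ref{prop:Diw=000026arbre}). By Corollary~\ref{cor:Prop_de_Di}(b) the monomial $x^n$ is the unique element of $\mathfrak{M}(x)_{[n]}$ with $\deg(\partial_x w)=n-1$ — the left comb is the only binary tree with $n$ leaves whose deepest leaf has height $n-1$ — so the coefficient of $t^{n-1}$ in $\partial_x f$ is $2\alpha_{x^n}$, and $\partial_x f=0$ forces $\alpha_{x^n}=0$. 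It then remains to check that the $W_{[n]}-1\le2$ polynomials $\partial_x w$ with $w\ne x^n$ are linearly independent in $K\langle t\rangle$, which is a one-line verification in each case $n=2,3,4,5$ (for $n=2,3$ there is nothing left; for $n=4$ one has $\partial_x(x^2x^2)=4t^2\ne0$; for $n=5$ one checks $\partial_x(x^2x^3)$ and $\partial_x((x^2x^2)x)$ are independent). Hence $f=0$.

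\textbf{Second part ($n\ge6$).} On $K(x)_{[n]}$ consider the $n$ linear forms $c_j(f)=[\text{coefficient of }t^j\text{ in }\partial_x f]$ for $1\le j\le n-1$ and $c_0(f)=f(\mathds{1})$; the space of homogeneous evanescent identities is their common kernel, so it suffices to exhibit two independent linear relations among $c_0,\dots,c_{n-1}$, which then forces $\dim\ker\ge W_{[n]}-(n-2)=W_{[n]}-n+2$. The first relation is $\sum_{j=1}^{n-1}c_j=n\,c_0$: indeed $\sum_{j}c_j(f)=(\partial_x f)(1)=\sum_w\alpha_w(\partial_x w)(1)=\sum_w\alpha_w|w|_x=n\sum_w\alpha_w=nf(\mathds{1})$ by Corollary~\ref{cor:Prop_de_Di}(d). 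The second is $\sum_{j=1}^{n-1}2^{-j}c_j=c_0$, and it rests on the \emph{Kraft-type} identity $(\partial_x w)(1/2)=1$ for every $w\in\mathfrak{M}(x)$: this is clear when $\deg w=1$ (then $\partial_x w=1$), and if $w=uv$ then $(\partial_x w)(1/2)=\tfrac12\bigl((\partial_x u)(1/2)+(\partial_x v)(1/2)\bigr)=\tfrac12(1+1)=1$ by induction; consequently $\sum_{j}2^{-j}c_j(f)=(\partial_x f)(1/2)=\sum_w\alpha_w=f(\mathds{1})$. The two relations are independent since the coefficient vectors $(1,1,\dots,1,-n)$ and $(2^{-1},2^{-2},\dots,2^{-(n-1)},-1)$ are not proportional — compare their first two entries, using $\mathrm{char}\,K\ne2$ and $n\ge3$ — which completes the proof.

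\textbf{Remarks.} The only substantive step is the identity $(\partial_x w)(1/2)=1$; the rest is bookkeeping with Corollary~\ref{cor:Prop_de_Di}. Note the argument of the second part actually goes through for every $n\ge3$, but gives a nontrivial bound only from $n=6$ on (for $n\le5$ one has $W_{[n]}-n+2\le0$, consistent with the first part); and it yields only a lower bound — for $n=6$ it is sharp (the space is $2$-dimensional), but in general the polynomials $\partial_x w$ need not span the whole codimension-$2$ subspace cut out by the two relations.
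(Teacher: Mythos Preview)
Your argument is correct and follows the same linear-algebra outline as the paper's: both bound the rank of the map $f\mapsto(\partial_x f,\,f(\mathds 1))$ on $K(x)_{[n]}$. The difference is that the paper merely asserts ``en tenant compte de $\sum_k\alpha_k=0$, le syst\`eme $(S)$ est de rang $\le n-2$'' without justification; the single relation $(\partial_x w)(1)=n$ from Corollary~\ref{cor:Prop_de_Di}(d) by itself only yields rank $\le n-1$, hence only $\dim\ge W_{[n]}-n+1$. You supply the missing ingredient, namely the Kraft-type identity $(\partial_x w)(\tfrac12)=1$ (equivalently $\sum_{s\in L(\Psi(w))}2^{-\hslash(s)}=1$ for a full binary tree), proved by a one-line induction from $\partial_x(uv)=t(\partial_x u+\partial_x v)$. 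This gives a second, independent relation among $c_0,\dots,c_{n-1}$ and is exactly what is needed to push the rank down to $n-2$ and obtain the stated bound $W_{[n]}-n+2$. Your treatment of the small cases $n\le5$ is also correct and slightly more structured than the paper's (you first eliminate $x^n$ via the uniqueness of the caterpillar tree, then check at most two residual vectors), whereas the paper handles $n=5$ by a direct rank computation.
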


\begin{proof}
Le résultat est immédiat pour les types $\left[2\right]$ et
$\left[3\right]$ où il n'y a pas de polynômes évanescents, pour
le type $\left[4\right]$ il n'y a qu'un unique polynôme évanescent
qui n'est pas homogène. Soit $n\geq5$, pour simplifier les notations
on pose $N=W_{\left[n\right]}$, on note $w_{1},\ldots,w_{N}$
les éléments de $\frak{M}\left(x\right)_{\left[n\right]}$. Dire
qu'il existe un polynôme homogène évanescent de type $\left[n\right]$
est équivalent à dire qu'il existe $\alpha_{1},\ldots,\alpha_{N}$
dans $K$ non tous nuls tels que $\sum_{k=1}^{N}\alpha_{k}w_{k}=0$,
$\sum_{k=1}^{N}\alpha_{k}=0$ et $\sum_{k=1}^{N}\alpha_{k}\partial_{x}\left(w_{k}\right)=0$.
Or du résultat c) du corollaire \ref{cor:Prop_de_Di} on a $\left|\partial_{x}w\right|\leq n-1$
pour tout $w\in\frak{M}_{\left[n\right]}\left(x\right)$, donc
pour tout $1\leq k\leq N$ on a $\partial_{x}\left(w_{k}\right)=\sum_{i=1}^{n-1}\lambda_{k,i}t^{i}$
avec $\lambda_{k,i}=0$ pour $i>\left|\partial_{x}w_{k}\right|$
et $\sum_{k=1}^{N}\alpha_{k}\partial_{x}\left(w_{k}\right)=\sum_{i=1}^{n-1}\left(\sum_{k=1}^{N}\lambda_{k,i}\alpha_{k}\right)t^{i}$
soit à résoudre le système linéaire $S:\sum_{k=1}^{N}\lambda_{k,i}\alpha_{k}=0,(1\leq i\leq n-1)$,
à $n-1$ équations d'inconnues $\alpha_{1},\ldots,\alpha_{N}$.
En tenant compte de $\sum_{k=1}^{N}\alpha_{k}=0$, le système
$\left(S\right)$ est de rang $\leq n-2$ et ses solutions forment
un espace vectoriel de dimension $\geq N-\left(n-2\right)$.
Si $n=5$, on a $\partial_{x}\left(x^{5}\right)=2t^{4}+t^{3}+t^{2}+t$,
$\partial_{x}\left(\left(x^{2}x^{2}\right)x\right)=4t^{3}+t$,
$\partial_{x}\left(x^{3}x^{2}\right)=2t^{3}+3t^{2}$, le système
$\left(S\right)$ est de rang 3 et il a $\left(0,0,0\right)$
pour unique solution.
\end{proof}
En utilisant la méthode utilisée dans la démonstration on obtient
les générateurs des identités homogènes évanescentes: 
\begin{align*}
 & \text{-- de type \ensuremath{\left[6\right]}}\\
 & \;x^{3}x^{3}+\left(\left(x^{2}x^{2}\right)x\right)x-x^{4}x^{2}-\left(x^{3}x^{2}\right)x;\\
 & \left(x^{2}x^{2}\right)x^{2}+\left(\left(x^{2}x^{2}\right)x\right)x-x^{4}x^{2}-\left(x^{3}x^{2}\right)x.\\
\\
 & \text{-- de type \ensuremath{\left[7\right]}}\\
 & \;x^{4}x^{3}-\left(x^{3}x^{2}\right)x^{2};\\
 & \left(x^{2}x^{2}\right)x^{3}+\left(\left(x^{2}x^{2}\right)x\right)x^{2}-2x^{4}x^{3};\\
 & \;x^{4}x^{3}+\left(x^{4}x^{2}\right)x-\left(\left(x^{3}x^{2}\right)x\right)x-\left(x^{2}x^{2}\right)x^{3};\\
 & \;x^{5}x^{2}+\left(x^{2}x^{2}\right)x^{3}+\left(\left(x^{3}x^{2}\right)x\right)x-\left(\left(\left(x^{2}x^{2}\right)x\right)x\right)x-2x^{4}x^{3};\\
 & \;x^{4}x^{3}+\left(x^{3}x^{3}\right)t+\left(\left(\left(x^{2}x^{2}\right)x\right)x\right)x-2\left(\left(x^{3}x^{2}\right)x\right)x-\left(x^{2}x^{2}\right)x^{3};\\
 & \;x^{4}x^{3}+\left(\left(x^{2}x^{2}\right)x^{2}\right)t+\left(\left(\left(x^{2}x^{2}\right)x\right)x\right)x-2\left(\left(x^{3}x^{2}\right)x\right)x-\left(x^{2}x^{2}\right)x^{3}.\\
\\
\text{} & \text{-- de type \ensuremath{\left[8\right]}}\\
 & \;x^{5}x^{3}-\left(x^{4}x^{2}\right)x^{2};\\
 & \left(x^{2}x^{2}\right)x^{4}-\left(x^{3}x^{2}\right)x^{3};\\
 & \left(\left(x^{3}x^{2}\right)x^{2}\right)x-\left(x^{4}x^{3}\right)x;\\
 & \left(x^{4}x^{2}\right)x^{2}+\left(\left(x^{4}x^{2}\right)x\right)x-\left(x^{4}x^{3}\right)x-x^{6}x^{2};\\
 & \left(x^{3}x^{2}\right)x^{3}+\left(\left(\left(x^{2}x^{2}\right)x\right)x\right)x^{2}-2\left(x^{4}x^{2}\right)x^{2};\\
 & \left(\left(x^{2}x^{2}\right)x^{3}\right)x+\left(\left(\left(x^{2}x^{2}\right)x\right)x^{2}\right)x-2\left(x^{4}x^{3}\right)x;\\
 & \;x^{4}x^{4}+\left(x^{4}x^{3}\right)x-\left(\left(x^{2}x^{2}\right)x^{3}\right)x-\left(x^{4}x^{2}\right)x^{2};\\
 & \left(x^{3}x^{2}\right)x^{3}+\left(\left(x^{3}x^{3}\right)x\right)x-\left(x^{4}x^{3}\right)x-\left(x^{4}x^{2}\right)x^{2};\\
 & \left(x^{3}x^{3}\right)x^{2}+\left(x^{4}x^{3}\right)x-\left(\left(x^{2}x^{2}\right)x^{3}\right)x-\left(x^{4}x^{2}\right)x^{2};\\
 & \left(x^{4}x^{3}\right)x+\left(\left(x^{2}x^{2}\right)x\right)x^{3}-\left(\left(x^{2}x^{2}\right)x^{3}\right)x-\left(x^{4}x^{2}\right)x^{2};\\
 & \left(x^{3}x^{2}\right)x^{3}+\left(\left(\left(x^{2}x^{2}\right)x^{2}\right)x\right)x-\left(x^{4}x^{3}\right)x-\left(x^{4}x^{2}\right)x^{2};\\
 & \left(\left(x^{2}x^{2}\right)x^{2}\right)x^{2}+\left(x^{4}x^{3}\right)x-\left(\left(x^{2}x^{2}\right)x^{3}\right)x-\left(x^{4}x^{2}\right)x^{2};\\
 & \left(x^{4}x^{2}\right)x^{2}+\left(\left(x^{2}x^{2}\right)x^{3}\right)x+\left(\left(\left(x^{3}x^{2}\right)x\right)x\right)x-2\left(x^{4}x^{3}\right)x-x^{6}x^{2};\\
 & \left(x^{3}x^{2}\right)x^{3}+\left(x^{4}x^{3}\right)x+\left(\left(x^{3}x^{2}\right)x\right)x^{2}-\left(\left(x^{2}x^{2}\right)x^{3}\right)x-2\left(x^{4}x^{2}\right)x^{2};\\
 & \left(x^{2}x^{2}\right)\left(x^{2}x^{2}\right)+\left(\left(x^{2}x^{2}\right)x^{3}\right)x+\left(x^{4}x^{2}\right)x^{2}-\left(x^{4}x^{3}\right)x-2\left(x^{3}x^{2}\right)x^{3};\\
 & \left(x^{5}x^{2}\right)x+\left(\left(x^{2}x^{2}\right)x^{3}\right)x+2\left(x^{4}x^{2}\right)x^{2}-2\left(x^{4}x^{3}\right)x-x^{6}x^{2}-\left(x^{3}x^{2}\right)x^{3};\\
 & \left(\left(x^{2}x^{2}\right)x^{3}\right)x+\left(\left(\left(\left(x^{2}x^{2}\right)x\right)x\right)x\right)x+3\left(x^{4}x^{2}\right)x^{2}-2\left(x^{4}x^{3}\right)x-2x^{6}x^{2}-\left(x^{3}x^{2}\right)x^{3}.
\end{align*}

\medskip{}

\subsection{Identités évanescentes train de degré $\left(n,1\right)$ et
homogènes de type $\left[n,1\right]$.}

\textcompwordmark{}\medskip{}

Soit $W_{\left[n,1\right]}$ le cardinal de l'ensemble $\frak{M}\left(x,y\right)_{\left[n,1\right]}$
des monômes de type $\left[n,1\right]$, du fait que l'on peut
écrire tout $w\in\frak{M}\left(x,y\right)_{\left[n,1\right]}$
sous la forme $w=w_{1}w_{2}$ avec $w_{1}\in\frak{M}\left(x,y\right)_{\left[n-i\right]}$
et $w_{2}\in\frak{M}\left(x,y\right)_{\left[i,1\right]}$ où
$0\leq i\leq n-1$ on déduit immédiatement que
\[
W_{\left[n,1\right]}=\sum_{i=0}^{n-1}W_{\left[n-i\right]}W_{\left[i,1\right]},\quad W_{\left[0,1\right]}=1.
\]

Et les premières valeurs de $W_{\left[n,1\right]}$ sont:
\begin{center}
\begin{tabular}{c|c|c|c|c|c|c|c|c|c|c|c}
$n$ & 0 & 1 & 2 & 3 & 4 & 5 & 6 & 7 & 8 & 9 & 10\tabularnewline
\hline 
$W_{\left[n,1\right]}$ & 1 & 1 & 2 & 4 & 9 & 20 & 46 & 106 & 248 & 582 & 1376\tabularnewline
\end{tabular}\medskip{}
\par\end{center}

\subsubsection{Train identités évanescentes de degré $\left(n,1\right)$.}

\textcompwordmark{}

Pour tout entier $r\geq1$, on définit $x^{\left\{ r\right\} }y=x\left(x^{\left\{ r-1\right\} }y\right)$
où $x^{\left\{ 0\right\} }y=y$, on pose $\mathscr{F}=\left\{ x^{n+1}y,x^{\left\{ n\right\} }y;n\geq0\right\} $
et $\mathbb{Q}\left\langle \mathscr{F}\right\rangle $ dénote
le $\mathbb{Q}$-espace vectoriel engendré par l'ensemble $\mathscr{F}$. 
\begin{lem}
\label{lem:Dx_ds_M=00005Bn,1=00005D}Pour tout entier $p\geq2$
et $r\geq0$ on a:
\begin{align}
\partial_{x}\left(x^{p}y\right) & \;=\;2t^{p}+\sum_{i=2}^{p-1}t^{i}, & \partial_{y}\left(x^{p}y\right) & \;=\;t;\label{eq:Dx(x^p*y)}\\
\partial_{x}\bigl(x^{\left\{ r\right\} }y\bigr) & \;=\;\sum_{i=1}^{r}t^{i}, & \partial_{y}\bigl(x^{\left\{ r\right\} }y\bigr) & \;=\;t^{r}.\label{eq:Dx(x^=00007Bp=00007D*y)}
\end{align}
\end{lem}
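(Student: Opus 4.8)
Le plan est de tout ramener \`a la d\'efinition r\'ecursive (\ref{eq:Def_de_di}) de l'op\'erateur $\partial_{i}$ et \`a la formule (\ref{eq:Dx(x^k)}) d\'ej\`a \'etablie pour $\partial_{x}\left(x^{k}\right)$. On commence par relever les faits \'el\'ementaires $\partial_{x}\left(y\right)=0$, $\partial_{y}\left(x\right)=0$, $\partial_{y}\left(y\right)=1$, ainsi que le fait (r\'ecurrence imm\'ediate sur le degr\'e via (\ref{eq:Def_de_di})) que $\partial_{y}$ annule tout mon\^ome ne comportant aucune occurrence de $y$, en particulier $\partial_{y}\left(x^{p}\right)=0$.

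Pour les \'egalit\'es (\ref{eq:Dx(x^p*y)}), il suffit d'\'ecrire $x^{p}y=\left(x^{p}\right)\cdot y$ puis d'appliquer (\ref{eq:Def_de_di}) : on obtient d'un c\^ot\'e $\partial_{y}\left(x^{p}y\right)=t\bigl(\partial_{y}\left(x^{p}\right)+\partial_{y}\left(y\right)\bigr)=t$, et de l'autre $\partial_{x}\left(x^{p}y\right)=t\bigl(\partial_{x}\left(x^{p}\right)+\partial_{x}\left(y\right)\bigr)=t\,\partial_{x}\left(x^{p}\right)$. Il ne reste plus qu'\`a reporter (\ref{eq:Dx(x^k)}) : pour $p\geq3$ cela donne $t\bigl(2t^{p-1}+\sum_{i=1}^{p-2}t^{i}\bigr)=2t^{p}+\sum_{i=2}^{p-1}t^{i}$ apr\`es d\'ecalage de l'indice de sommation ; pour $p=2$ on calcule directement $\partial_{x}\left(x^{2}\right)=2t$, donc $\partial_{x}\left(x^{2}y\right)=2t^{2}$, ce qui est bien la valeur obtenue en convenant que la somme $\sum_{i=2}^{p-1}$ est vide.

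Pour les \'egalit\'es (\ref{eq:Dx(x^=00007Bp=00007D*y)}), je proc\`ede par r\'ecurrence sur $r$. Le cas $r=0$ est imm\'ediat : $x^{\left\{ 0\right\} }y=y$, d'o\`u $\partial_{x}\left(x^{\left\{ 0\right\} }y\right)=0$ (somme vide) et $\partial_{y}\left(x^{\left\{ 0\right\} }y\right)=1=t^{0}$. Pour l'h\'er\'edit\'e, on part de $x^{\left\{ r\right\} }y=x\cdot\left(x^{\left\{ r-1\right\} }y\right)$ et on applique (\ref{eq:Def_de_di}) : $\partial_{x}\left(x^{\left\{ r\right\} }y\right)=t\bigl(\partial_{x}\left(x\right)+\partial_{x}\left(x^{\left\{ r-1\right\} }y\right)\bigr)=t\bigl(1+\sum_{i=1}^{r-1}t^{i}\bigr)=\sum_{i=1}^{r}t^{i}$ gr\^ace \`a l'hypoth\`ese de r\'ecurrence, et de m\^eme $\partial_{y}\left(x^{\left\{ r\right\} }y\right)=t\bigl(\partial_{y}\left(x\right)+\partial_{y}\left(x^{\left\{ r-1\right\} }y\right)\bigr)=t\cdot t^{r-1}=t^{r}$.

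Ce lemme ne pr\'esente pas de v\'eritable obstacle : chaque \'etape est une application directe de (\ref{eq:Def_de_di}). Le seul point demandant un minimum d'attention est le cas $p=2$ de (\ref{eq:Dx(x^p*y)}), puisque la formule (\ref{eq:Dx(x^k)}) n'est \'enonc\'ee que pour $k\geq3$ et que la somme indiqu\'ee y devient vide ; on pourrait d'ailleurs d\'emontrer les deux paires d'\'egalit\'es simultan\'ement \`a l'aide de la proposition \ref{prop:Diw=000026arbre}, en constatant que les arbres associ\'es \`a $x^{p}y$ et \`a $x^{\left\{ r\right\} }y$ sont des peignes et en comptant les hauteurs de leurs feuilles \'etiquet\'ees $x$, resp. $y$.
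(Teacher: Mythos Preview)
Your proof is correct and follows essentially the same approach as the paper: apply the recursive definition (\ref{eq:Def_de_di}) to the decompositions $x^{p}y=(x^{p})\cdot y$ and $x^{\{r\}}y=x\cdot(x^{\{r-1\}}y)$, then invoke (\ref{eq:Dx(x^k)}) and a straightforward recursion. You are in fact slightly more careful than the paper in treating the boundary cases $p=2$ and $r=0$ explicitly.
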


\begin{proof}
On a $\partial_{x}\left(x^{p}y\right)=t\left(\partial_{x}\left(x^{p}\right)+\partial_{x}\left(y\right)\right)=t\partial_{x}\left(x^{p}\right)$
d'où le résultat en utilisant la relation (\ref{eq:Dx(x^k)}),
on a aussi $\partial_{y}\left(x^{p}y\right)=t\left(\partial_{y}\left(x^{p}\right)+\partial_{y}\left(y\right)\right)=t$.
Pour $r\geq1$ on a $\partial_{x}\bigl(x^{\left\{ r\right\} }y\bigr)=t\left(1+\partial_{x}\bigl(x^{\left\{ r-1\right\} }y\bigr)\right)$
et $\partial_{y}\bigl(x^{\left\{ r\right\} }y\bigr)=t\partial_{y}\bigl(x^{\left\{ r-1\right\} }y\bigr)$,
on en déduit les résultats par récursivité.
\end{proof}
\begin{prop}
Il n'existe pas de train identité évanescente de degré $\left(2,1\right)$.

Pour tout $n\geq3$ et tout $w\in\frak{M}\left(x,y\right)_{\left[n,1\right]}$
vérifiant $w\neq x^{n}y$ et $w\neq x^{\left\{ n\right\} }y$,
il existe un unique polynôme $P_{w}\in\mathbb{Q}\left\langle \mathscr{F}\right\rangle $
avec $\left|P_{w}\right|_{x}<n$ tel que le polynôme $w-P_{w}$
soit une train identité évanescente.
\end{prop}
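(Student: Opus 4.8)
The plan is to follow the proof of Proposition~\ref{prop:Evanes_type_=00005Bn=00005D}, replacing the one-variable computation by a two-variable one and using Lemma~\ref{lem:Dx_ds_M=00005Bn,1=00005D} throughout.

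For the non-existence statement at degree $(2,1)$ I would simply enumerate: $\mathfrak{M}(x,y)_{[2,1]}=\{x^{2}y,\,x^{\{2\}}y\}$, so a train polynomial of degree $(2,1)$ is $f=\alpha x^{2}y+\beta x^{\{2\}}y-\gamma\,xy-\delta\,y$ with $f(\mathds 1)=\alpha+\beta-\gamma-\delta=0$. Using Lemma~\ref{lem:Dx_ds_M=00005Bn,1=00005D} together with $\partial_x(xy)=\partial_y(xy)=t$, $\partial_x y=0$ and $\partial_y y=1$, I would write out $\partial_x f$ and $\partial_y f$; imposing $\partial_x f=\partial_y f=0$ and matching coefficients of the powers of $t$ forces $\alpha=\beta=\gamma=\delta=0$, so no nonzero train polynomial of degree $(2,1)$ is evanescent.

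Now fix $n\ge 3$ and $w\in\mathfrak{M}(x,y)_{[n,1]}$ with $w\ne x^{n}y$, $w\ne x^{\{n\}}y$. By Proposition~\ref{prop:Diw=000026arbre}, $\partial_y w=t^{h}$ is a single power of $t$, where $h$ is the height of the unique $y$-leaf of $\Psi(w)$; a short caterpillar-tree argument shows that $h=n$ forces $w=x^{\{n\}}y$, hence here $1\le h\le n-1$. I would then look for $P_w=a_{1}\,xy+\sum_{r\ge 2}a_{r}x^{\{r\}}y+\sum_{p\ge 2}b_{p}x^{p}y$ --- with no term $y$, since $\partial_y w$ has no constant term --- subject to $\partial_x P_w=\partial_x w$, $\partial_y P_w=\partial_y w$ and $P_w(\mathds 1)=1$. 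Since $\partial_y(x^{\{r\}}y)=t^{r}$ while $\partial_y(x^{p}y)=\partial_y(xy)=t$, the equation $\partial_y P_w=t^{h}$ pins down $a_{h}=1$, $a_{r}=0$ for the other $r\ge 2$, and leaves one linear relation between $a_{1}$ and $\sum_{p}b_{p}$. Substituting this into $\partial_x P_w=\partial_x w$ and using that $\partial_x(x^{p}y)=2t^{p}+(t^{2}+\cdots+t^{p-1})$ has leading term $2t^{p}$, one reads off the $b_{p}$ by back-substitution from the highest power of $t$ downward, then $a_{1}$ from the coefficient of $t$, while $P_w(\mathds 1)=1$ becomes automatic. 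The resulting system is triangular over $\mathbb{Q}$, so $P_w$ exists and is unique, with coefficients in $\mathbb{Q}$ (in fact in $\mathbb{Z}$, as for type $[n]$) because all coefficients of $\partial_x w$ and $\partial_y w$ are nonnegative integers by Corollary~\ref{cor:Prop_de_Di}; uniqueness is clear since $\partial_x$ and $\partial_y$ together separate the monomials $xy$, $x^{\{r\}}y$, $x^{p}y$.

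The step I expect to be the main obstacle is controlling $\deg\partial_x w$. Unlike the type $[n]$ case, one can have $\deg\partial_x w=n$: this happens precisely for the caterpillars $w=x^{\{k\}}(x^{n-k}y)$ with $1\le k\le n-2$, which are not among the two excluded monomials. For such $w$ the two deepest leaves of $\Psi(w)$ are siblings, both labelled $x$ (their parent's other child cannot be $y$, otherwise $w=x^{\{n\}}y$), so the coefficient of $t^{n}$ in $\partial_x w$ is exactly $2$; since $\partial_x(x^{n}y)=2t^{n}+(t^{2}+\cdots+t^{n-1})$, one is forced to admit the block $x^{n}y$ into $P_w$ with coefficient $1$, and after removing it $\partial_x w-\partial_x(x^{n}y)$ has degree $\le n-1$, so the triangular argument above goes through. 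The care needed is to verify that $x^{n}y$ is the only monomial of $x$-degree $n$ ever required, and that it always enters with coefficient $1$, so that $w-P_w$ is genuinely a train polynomial of degree $(n,1)$, its homogeneous part of top type being $w-x^{n}y$.
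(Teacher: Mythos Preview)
Your approach is essentially the paper's: set up an ansatz $P_w$ in $\mathbb{Q}\langle\mathscr{F}\rangle$, impose $\partial_x P_w=\partial_x w$, $\partial_y P_w=\partial_y w$, and solve the resulting triangular linear system. You add one genuine simplification the paper does not exploit: since $|w|_y=1$, Proposition~\ref{prop:Diw=000026arbre} gives $\partial_y w=t^{h}$ for a single height $h$, which immediately fixes all the $a_r$ with $r\ge 2$ and reduces the $\partial_y$-equation to a single linear relation. The paper instead writes $\partial_y w=\sum_k\beta_k t^k$ and solves for all $\mu_i=\beta_i$ simultaneously; your shortcut is cleaner but leads to the same system.

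More interestingly, you have caught a genuine slip in the statement. The bound $|P_w|_x<n$ is not what is proved, nor what the examples exhibit: for $w=x(x^{2}y)$ with $n=3$ the paper's own list gives $P_w=x^{\{2\}}y+x^{3}y-x^{2}y$, which has $|P_w|_x=3$. In the paper's proof the ansatz runs over $\lambda_k x^{k}y$ and $\mu_k x^{\{k\}}y$ for $1\le k\le n$, and the solution has $\lambda_n=\tfrac12(\alpha_n-\beta_n)$, which equals $1$ precisely on the caterpillars $x^{\{k\}}(x^{n-k}y)$ you identify (and $\mu_n=\beta_n=0$ since $w\ne x^{\{n\}}y$). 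So your last paragraph is not an ``obstacle'' but the correct analysis of the top coefficient: the statement should read $|P_w|_x\le n$, in line with the analogous propositions for types $[n,2]$ and $[n,1,1]$, and $w-P_w$ is a train polynomial of degree $(n,1)$ whose top homogeneous part is $w-\lambda_n x^{n}y$. One small caveat: your parenthetical ``in fact in $\mathbb{Z}$'' is true, but it is not delivered by the linear algebra here---it comes from the $\pi(w)=P_w$ identification in the subsequent theorem.
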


\begin{proof}
On a $\partial_{x}\left(x^{2}y\right)=2t^{2}$, $\partial_{y}\left(x^{2}y\right)=t$,
$\partial_{x}\left(x\left(xy\right)\right)=t^{2}+t$ et $\partial_{y}\left(x\left(xy\right)\right)=t^{2}$,
or $\partial_{x}\left(xy\right)=\partial_{y}\left(xy\right)=t$,
$\partial_{x}\left(y\right)=0$ et $\partial_{y}\left(y\right)=1$,
avec ceci on montre sans difficulté qu'on ne peut pas trouver
$\left(\alpha,\beta,\gamma,\delta\right)\neq\left(0,0,0,0\right)$
tel que le polynôme $f=\alpha x^{2}y+\beta x\left(xy\right)+\gamma xy+\delta y$
vérifie $\partial_{x}f=\partial_{y}f=0$.

Etant donné $w\in\frak{M}\left(x,y\right)_{\left[n,1\right]}$
tel que $w\neq x^{n}y$ et $w\neq x^{\left\{ n\right\} }y$ avec
$n\geq3$. Soient $p=\left|\partial_{x}w\right|$ et $q=\left|\partial_{y}w\right|$,
du résultat c) du corollaire \ref{cor:Prop_de_Di} il vient $p,q\leq n$.
Du résultat \emph{d}) du corollaire \ref{cor:Prop_de_Di} on
déduit que $\partial_{x}w\left(0\right)=\partial_{y}w\left(0\right)=0$
et donc les valuations de $\partial_{x}w$ et $\partial_{y}w$
sont supérieures à 1, posons $\partial_{x}w=\sum_{k=1}^{n}\alpha_{k}t^{k}$
et $\partial_{y}w=\sum_{k=1}^{n}\beta_{k}t^{k}$ avec $\alpha_{k}=0$
si $k>p$ et $\beta_{k}=0$ dés que $k>q$. On cherche $P_{w}=\sum_{k=1}^{n}\lambda_{k}x^{k}y+\sum_{k=1}^{n}\mu_{k}x^{\left\{ k\right\} }y$
vérifiant $\partial_{x}P_{w}=\partial_{x}w$, $\partial_{y}P_{w}=\partial_{y}w$
et $P_{w}\left(1,1\right)=1$. En utilisant le lemme \ref{lem:Dx_ds_M=00005Bn,1=00005D}
on obtient 
\begin{align*}
\partial_{x}P_{w} & =\left(2\lambda_{n}+\mu_{n}\right)t^{n}+\sum_{i=1}^{n-1}\Bigl(2\lambda_{i}+\sum_{k=i+1}^{n}\lambda_{k}+\sum_{k=i}^{n}\mu_{k}\Bigr)t^{i}\\
\partial_{y}P_{w} & =\sum_{i=2}^{n}\mu_{i}t^{i}+\Bigl(\sum_{k=1}^{n}\lambda_{k}+\mu_{1}\Bigr)t.
\end{align*}

De l'équation $\partial_{y}P_{w}=\partial_{y}w$ il résulte $\mu_{i}=\beta_{i}$
pour $2\leq i\leq n$ et $\sum_{k=1}^{n}\lambda_{k}+\mu_{1}=\beta_{1}$.
De l'équation $\partial_{x}P_{w}=\partial_{x}w$ on déduit $2\lambda_{n}+\mu_{n}=\alpha_{n}$
et $2\lambda_{i}+\sum_{k=i+1}^{n}\lambda_{k}+\sum_{k=i}^{n}\mu_{k}=\alpha_{i}$
pour $1\leq i\leq n-1$, on a donc $\lambda_{n}=\frac{1}{2}\left(\alpha_{n}-\beta_{n}\right)$
et pour tout $2\leq i\leq n-1$ on trouve $\lambda_{i}=\frac{1}{2}\left(\alpha_{i}-\beta_{i}\right)+\sum_{k=i+1}^{n}\frac{1}{2^{k-i+1}}\left(\alpha_{k}+\beta_{k}\right)$,
enfin en écrivant $\alpha_{1}=2\lambda_{1}+\sum_{k=2}^{n}\lambda_{k}+\sum_{k=1}^{n}\mu_{k}$
sous la forme $\alpha_{1}=\lambda_{1}+\left(\sum_{k=1}^{n}\lambda_{k}+\mu_{1}\right)+\sum_{k=2}^{n}\beta_{k}$,
on obtient $\lambda_{1}=\alpha_{1}-\sum_{k=1}^{n}\beta_{k}=\alpha_{1}-1$,
tout ceci permet de déterminer $\mu_{1}=\beta_{1}-\sum_{k=1}^{n}\lambda_{k}$.
Et on peut vérifier que $P_{w}\left(1,1\right)=\sum_{i=1}^{n}\lambda_{i}+\sum_{i=1}^{n}\mu_{i}=\sum_{i=1}^{n}\lambda_{i}+\sum_{i=2}^{n}\beta_{i}+\left(\beta_{1}-\sum_{k=1}^{n}\lambda_{k}\right)=\sum_{i=1}^{n}\beta_{i}=\partial_{y}w\left(1\right)=1$
d'après le résultat (d) du corollaire \ref{cor:Prop_de_Di}.
On a montré que le système d'équations $\partial_{x}P_{w}=\partial_{x}w$,
$\partial_{y}P_{w}=\partial_{y}w$, $P_{w}\left(1,1\right)=1$
admet une unique solution $P_{w}$, de plus d'après le corollaire
\ref{cor:Prop_de_Di} on a $\alpha_{k},\beta_{k}\in\mathbb{N}$
et ce qui précède permet d'affirmer que $\lambda_{k},\mu_{k}\in\mathbb{Q}$
pour tout $1\leq k\leq n$ et donc $P_{w}\in\mathbb{Q}\left\langle \mathscr{F}\right\rangle $.
\end{proof}
\begin{cor}
Pour tout $n\geq3$, l'espace vectoriel des train polynômes évanescents
de degré $\left(n,1\right)$ est de dimension $W_{\left[n,1\right]}-2$.
\end{cor}

\begin{proof}
D'après la proposition précédente l'espace des polynômes évanescents
de type $\left[n,1\right]$ est engendré par les polynômes $w-P_{w}$
pour tout $w\in\frak{M}\left(x,y\right)_{\left[n,1\right]}$
tels que $w\notin\mathscr{F}$.
\end{proof}
\begin{thm}
\label{thm:Evanesc_type=00005Bn,1=00005D_id=0000E9al}Pour tout
entier $p,q\geq2$ et $r\geq0$ on pose:
\begin{eqnarray*}
E_{p,q}\left(x\right) & = & x^{p}x^{q}-x^{p+1}-x^{q+1}+x^{2},\\
F_{p,q}\left(x,y\right) & = & x^{p}\left(x^{q}y\right)-x\left(xy\right)-x^{p}y-x^{q+1}y+x^{2}y+xy,\\
F_{p,\left\{ r\right\} }\left(x,y\right) & = & x^{p}\bigl(x^{\left\{ r\right\} }y\bigr)-x^{p}y-x^{\left\{ r+1\right\} }y+xy,\\
F_{\left\{ r\right\} ,p}\left(x,y\right) & = & x^{\left\{ r\right\} }\left(x^{p}y\right)-x^{\left\{ r+1\right\} }y-x^{p+r}y+x^{r+1}y.
\end{eqnarray*}
 Soient $\mathscr{I}$ l'idéal engendré par la famille de polynômes
$\left(E_{p,q},F_{p,q},F_{p,\left\{ r\right\} },F_{\left\{ r\right\} ,p}\right)_{\substack{p,q\geq2\\
r\geq0
}
}$ et $\pi:K\left(x\right)\rightarrow{}^{K\left(x\right)}\!/_{\mathscr{\mathscr{I}}}$
la surjection canonique. Alors pour tout $n\geq3$ et tout monôme
$w\in\frak{M}\left(x,y\right)_{\left[n,1\right]}$ tel que $w\notin\mathscr{F}$
on a $\pi\left(w\right)=P_{w}$ et pour tout $f\in\bigoplus_{n\geq3}K\left(x,y\right)_{\left[n,1\right]}$,
le polynôme $f-\pi\left(f\right)$ est une train identité évanescente.
\end{thm}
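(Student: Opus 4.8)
The plan is to follow the scheme of the proof of Theorem~\ref{thm:Evanesc_type=00005Bn=00005D_id=0000E9al}, replacing the single family $E_{p,q}$ by the four families of the statement. First I would check that every generator is an evanescent identity. For $E_{p,q}$ this was already done; for the three $F$-families it is a direct computation from Lemma~\ref{lem:Dx_ds_M=00005Bn,1=00005D} together with relation~(\ref{eq:Dx(x^k)}), using $\partial_x(uv)=t(\partial_xu+\partial_xv)$ and the analogous rule for $\partial_y$: in each case the terms telescope to $\partial_xF=\partial_yF=0$, while the coefficient sum is visibly $0$, so also $F(\mathds{1})=0$ (the instances $F_{p,\{0\}}$ and $F_{\{0\},p}$ being moreover identically $0$). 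Since the evanescent polynomials form an ideal of $K(\mathscr{T})$ (shown above) and $\{g:g(\mathds{1})=0\}$ is an ideal too, it follows that $\mathscr{I}\subseteq Ev(\mathscr{T})\cap\{g:g(\mathds{1})=0\}$, so $f-\pi(f)\in\mathscr{I}$ is an evanescent identity for every $f$; applied to $f=\sum_k\alpha_kw_k\in\bigoplus_{n\geq3}K(x,y)_{[n,1]}$, this already gives the second assertion, $f-\pi(f)=\sum_k\alpha_k(w_k-\pi(w_k))$ being evanescent and of train shape.

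The core is then to prove, by induction on $n\geq3$, that for each $w\in\mathfrak{M}(x,y)_{[n,1]}$ with $w\notin\mathscr{F}$ the class $\pi(w)$ has a representative in $\mathbb{Q}\langle\mathscr{F}\rangle$ (and, trivially, $\pi(w)$ is represented by $w$ when $w\in\mathscr{F}$). For $n=3$ the only monomials outside $\mathscr{F}$ are $x^2(xy)$ and $x(x^2y)$, and $F_{2,\{1\}}$ and $F_{\{1\},2}$ yield $\pi(x^2(xy))=x^2y+x(xy)-xy$ and $\pi(x(x^2y))=x(xy)+x^3y-x^2y$. For the inductive step, write $w=w_1w_2$ (unique decomposition, Proposition~\ref{prop:dec_ds_M(t)}); exactly one factor carries the $y$, so by commutativity we may take $w_1\in\mathfrak{M}(x,y)_{[c,1]}$ and $w_2\in\mathfrak{M}(x)_{[d]}$ with $c+d=n+1$ and $d\geq1$. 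If $c\leq2$ then $w_1\in\mathscr{F}$; if $c\geq3$, either $w_1\in\mathscr{F}$ or the induction hypothesis provides a representative of $\pi(w_1)$ in $\mathbb{Q}\langle\mathscr{F}\rangle$. Because $E_{1,q}=0$, the purely $x$-relations among the generators coincide with the family of Theorem~\ref{thm:Evanesc_type=00005Bn=00005D_id=0000E9al}, hence $\pi(w_2)$ is represented by an element of $\mathbb{Z}\langle x\rangle$. As $\pi$ is an algebra morphism, $\pi(w)=\pi(w_1)\pi(w_2)$, and it remains to reduce each product monomial $x^c\cdot(x^ay)$ or $x^c\cdot(x^{\{b\}}y)$ ($c\geq1$) appearing after expansion: for $c\geq2$, apply $F_{c,a}$ when $a\geq2$, $F_{c,\{1\}}$ when $a=1$ and $F_{c,\{b\}}$ for the $x^{\{b\}}$-terms, and note $x^c\cdot y=x^cy\in\mathscr{F}$; for $c=1$, apply $F_{\{1\},a}$ when $a\geq2$, while $x\cdot(xy)=x^{\{2\}}y$, $x\cdot y=xy$ and $x\cdot(x^{\{b\}}y)=x^{\{b+1\}}y$ are already in $\mathscr{F}$. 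In every case the monomial rewrites modulo $\mathscr{I}$ into $\mathbb{Q}\langle\mathscr{F}\rangle$, so $\pi(w)$ does too.

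To finish, for $w\notin\mathscr{F}$ the polynomial $w-\pi(w)$ lies in $\mathscr{I}$, hence is an evanescent train identity, and $\pi(w)$ has a representative in $\mathbb{Q}\langle\mathscr{F}\rangle$; by the uniqueness of $P_w$ established above this forces $\pi(w)=P_w$. The last assertion, for an arbitrary $f\in\bigoplus_{n\geq3}K(x,y)_{[n,1]}$, then follows exactly as in the first paragraph. The step I expect to be the main obstacle is this inductive reduction of $\pi(w_1)\pi(w_2)$: one must verify that the four $F$-families, together with the $E$-family acting inside the pure-$x$ factor $w_2$, really do cover every product monomial $x^c\cdot(x^ay)$ and $x^c\cdot(x^{\{b\}}y)$ that can occur — including the boundary cases $c=1$ and $a,b\in\{0,1\}$, which fall outside the ranges $p,q\geq2$ of the generators but land directly in $\mathscr{F}$ — and that the successive rewritings terminate inside $\mathbb{Q}\langle\mathscr{F}\rangle$.
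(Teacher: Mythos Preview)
Your proposal is correct and follows essentially the same scheme as the paper: verify the generators are evanescent, then induct on $n$ using the factorisation $w=w_1w_2$ and conclude via the uniqueness of $P_w$. The one organisational difference worth noting is that you obtain evanescence of $w-\pi(w)$ for free from the ideal property of $Ev(\mathscr{T})$, whereas the paper re-proves it in the inductive step via the identity $\partial_x\bigl(uv-\pi(u)\pi(v)\bigr)=t\bigl(\partial_x(u-\pi(u))+\partial_x(v-\pi(v))\bigr)$ (using that $\pi(u)(\mathds{1})=\pi(v)(\mathds{1})=1$); conversely, your explicit case analysis on the products $x^c\cdot(x^ay)$ and $x^c\cdot(x^{\{b\}}y)$ is precisely what the paper dismisses with ``on montre ais\'ement par r\'ecurrence que $\pi(w)\in\mathbb{Z}\langle\mathscr{F}\rangle$''. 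Two small remarks: the total $x$-degree gives $c+d=n$, not $n+1$; and your reduction via $F_{\{1\},a}$ can produce the term $x^ny$ of $x$-degree exactly $n$ (as in the paper's listed identity $x(x^3y)-x(xy)-x^4y+x^2y$), so the relevant bound is $|\pi(w)|_x\leq n$ rather than $<n$ --- uniqueness of $P_w$ still applies since the linear system in the preceding proposition determines the coefficients of $x^ky$ and $x^{\{k\}}y$ for all $1\leq k\leq n$.
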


\begin{proof}
On a vu pour le théorème \ref{thm:Evanesc_type=00005Bn=00005D_id=0000E9al}
que les polynômes $E_{p,q}$ sont évanescents, montrons qu'il
en est de même pour les polynômes $F_{p,q}$, $F_{p,\left\{ r\right\} }$
et $F_{\left\{ r\right\} ,q}$. 

Pour $p,q\geq2$, on a $\partial_{x}\left(x^{p}\left(x^{q}y\right)\right)=t\left(\partial_{x}\left(x^{p}\right)+\partial_{x}\left(x^{q}y\right)\right)=t\partial_{x}\left(x^{p}\right)+t^{2}\partial_{x}\left(x^{q}\right)$
et $\partial_{y}\left(x^{p}\left(x^{q}y\right)\right)=t\partial_{y}\left(x^{q}y\right)$,
avec la relation (\ref{eq:Dx(x^k)}) et le lemme \ref{lem:Dx_ds_M=00005Bn,1=00005D}
on obtient:
\begin{align}
\partial_{x}\left(x^{p}\left(x^{q}y\right)\right) & =2t^{p}+2t^{q+1}+\sum_{i=2}^{p-1}t^{i}+\sum_{i=3}^{q}t^{i}, & \partial_{y}\left(x^{p}\left(x^{q}y\right)\right) & =t^{2}.\label{eq:Dx(x^p*(x^q*y))}
\end{align}

Pour $p\geq2$ et $r\geq0$ on a $\partial_{x}\left(x^{p}\left(x^{\left\{ r\right\} }y\right)\right)=t\left(\partial_{x}\left(x^{p}\right)+\partial_{x}\left(x^{\left\{ r\right\} }y\right)\right)$
et $\partial_{y}\left(x^{p}\left(x^{\left\{ r\right\} }y\right)\right)=t\partial_{y}\left(x^{\left\{ r\right\} }y\right)$,
avec la relation (\ref{eq:Dx(x^k)}) et le lemme \ref{lem:Dx_ds_M=00005Bn,1=00005D}
on en déduit
\begin{align}
\partial_{x}\left(x^{p}\left(x^{\left\{ r\right\} }y\right)\right) & =2t^{p}+\sum_{i=2}^{p-1}t^{i}+\sum_{i=2}^{r+1}t^{i}, & \partial_{y}\left(x^{p}\left(x^{\left\{ r\right\} }y\right)\right) & =t^{r+1}.\label{eq:Dx(x^p*(x^=00007Br=00007D*y))}
\end{align}

Pour tout $r\geq1$ on a $\partial_{x}\left(x^{\left\{ r\right\} }\left(x^{p}y\right)\right)=t\left(1+\partial_{x}\left(x^{\left\{ r-1\right\} }\left(x^{p}y\right)\right)\right)$
par conséquent $\partial_{x}\left(x^{\left\{ r\right\} }\left(x^{p}y\right)\right)=\sum_{i=1}^{r}t^{i}+t^{r}\partial_{x}\left(x^{p}y\right)$
et $\partial_{y}\left(x^{\left\{ r\right\} }\left(x^{p}y\right)\right)=t\partial_{y}\left(x^{\left\{ r-1\right\} }\left(x^{p}y\right)\right)$
il en résulte $\partial_{y}\left(x^{\left\{ r\right\} }\left(x^{p}y\right)\right)=t^{r}\partial_{y}\left(x^{p}y\right)$,
et avec le lemme \ref{lem:Dx_ds_M=00005Bn,1=00005D} on a
\begin{align}
\partial_{x}\left(x^{\left\{ r\right\} }\left(x^{p}y\right)\right) & =2t^{r+p}+\sum_{i=r+1}^{r+p-1}t^{i}+\sum_{i=1}^{r}t^{i}, & \partial_{y}\left(x^{\left\{ r\right\} }\left(x^{p}y\right)\right) & =t^{r+1}.\label{eq:Dx(x^=00007Br=00007D*(x^p*y))}
\end{align}

Un simple calcul utilisant les relations obtenues ci-dessus et
celles du lemme \ref{lem:Dx_ds_M=00005Bn,1=00005D} montre que
les polynômes $F_{p,q}$, $F_{p,\left\{ r\right\} }$ et $F_{\left\{ r\right\} ,q}$
sont évanescents.

Montrons que pour tout $w\in\frak{M}\left(x,y\right)_{\left[n,1\right]}$
tel que $w\notin\mathscr{F}$ , le polynôme $w-\pi\left(w\right)$
est évanescent. C'est immédiat pour tout $w\in\frak{M}\left(x,y\right)_{\left[3,1\right]}$
tel que $w\neq x^{3}y$ et $w\neq x^{\left\{ 3\right\} }y$ d'après
la relation (\ref{eq:Dx(x^p*(x^q*y))}). Supposons la propriété
vraie pour tous les monômes de $\frak{M}\left(x,y\right)_{\left[p,1\right]}\setminus\left\{ x^{p}y,x^{\left\{ p\right\} }y\right\} $
avec $3\leq p\leq n$, soit $w\in\frak{M}\left(x,y\right)_{\left[n+1,1\right]}$
vérifiant $w\neq x^{n+1}y$ et $w\neq x^{\left\{ n+1\right\} }y$.
On a deux cas:

– il existe $u\in\frak{M}\left(x,y\right)_{\left[p\right]}$
et $v\in\frak{M}\left(x,y\right)_{\left[q,1\right]}$ tels que
$w=uv$ avec $u\neq x$, $1\leq p,q$ et $p+q=n+1$, alors on
a $\partial_{x}\left(w-\pi\left(w\right)\right)=\partial_{x}\left(uv-\pi\left(u\right)\pi\left(v\right)\right)=t\left(\partial_{x}\left(u-\pi\left(u\right)\right)+\partial_{x}\left(v-\pi\left(v\right)\right)\right)=0$
et de même $\partial_{y}\left(w-\pi\left(w\right)\right)=0$;

– il existe $u\in\frak{M}\left(x,y\right)_{\left[p\right]}$
et $v\in\frak{M}\left(x,y\right)_{\left[q\right]}$ tels que
$w=\left(uv\right)y$ avec $uv\neq x^{n+1}$, $1\leq p,q$ et
$p+q=n+1$, alors $\partial_{x}\left(w-\pi\left(w\right)\right)=\partial_{x}\left(\left(\left(uv\right)-\pi\left(u\right)\pi\left(v\right)\right)y\right)$,
il en résulte $\partial_{x}\left(w-\pi\left(w\right)\right)=t^{2}\left(\partial_{x}\left(u-\pi\left(u\right)\right)+\partial_{x}\left(v-\pi\left(v\right)\right)\right)=0$,
de manière analogue on a $\partial_{y}\left(w-\pi\left(w\right)\right)=0$.

On montre aisément par récurrence que pour tout $w\in\frak{M}\left(x,y\right)_{\left[n,1\right]}$
tel que $w\neq x^{n}y$ et $w\neq x^{\left\{ n\right\} }y$ on
a $\left|\pi\left(w\right)\right|_{x}<n$ et $\pi\left(w\right)\in\mathbb{Z}\left\langle \mathscr{F}\right\rangle $,
alors par unicité du polynôme $P_{w}$ tel que $w-P_{w}$ est
évanescent on a $\pi\left(w\right)=P_{w}$.
\end{proof}
Ce théorème donne un algorithme efficace pour construire les
train identités évanescentes de degré $\left(n,1\right)$, illustrons-le
par un exemple.
\begin{example}
Soit $w=x^{5}\left(x\left(x\left(x\left(x^{4}\left(\left(x^{2}x^{3}\right)y\right)\right)\right)\right)\right)$,
dans l'algèbre $^{K\left(x\right)}\!/_{\mathscr{F}}$ on trouve
modulo $E_{p,q}$: $\left(x^{2}x^{3}\right)y=x^{4}y+x^{3}y-x^{2}y$,
ensuite modulo $F_{p,q}$ on a: $x^{4}\left(\left(x^{2}x^{3}\right)y\right)=x\left(xy\right)+x^{5}y+2x^{4}y-x^{3}y-x^{2}y-xy$.
Modulo $F_{\left\{ r\right\} ,p}$ on obtient $x^{\left\{ 3\right\} }\left(x^{4}\left(\left(x^{2}x^{3}\right)y\right)\right)=x^{\left\{ 5\right\} }y+x^{8}y+2x^{7}y-x^{6}y-x^{5}y-x^{4}y$,
enfin modulo $F_{p,\left\{ r\right\} }$ et $F_{p,q}$ on obtient
finalement $\pi\left(w\right)=x^{\left\{ 6\right\} }y+x^{9}y+2x^{8}y-x^{7}y-x^{6}y-xy$,
on peut donc affirmer que le polynôme de type $\left[17,1\right]$:
\[
x^{5}\left(x\left(x\left(x\left(x^{4}\left(\left(x^{2}x^{3}\right)y\right)\right)\right)\right)\right)-x\left(x\left(x\left(x\left(x\left(xy\right)\right)\right)\right)\right)-x^{9}y-2x^{8}y+x^{7}y+x^{6}y+xy
\]
est une identité évanescente.
\end{example}

En appliquant cet algorithme on obtient aisément les générateurs
des train identités évanescentes

\begin{longtable}{lll}
– de degré $\left(3,1\right)$:$\medskip$ &  & \tabularnewline
{\small{}$\;x^{2}\left(xy\right)-x\left(xy\right)-x^{2}y+xy;$} &  & {\small{}$\;x\left(x^{2}y\right)-x\left(xy\right)-x^{3}y+x^{2}y.$}\tabularnewline
 &  & \tabularnewline
– de degré $\left(4,1\right)$:$\medskip$ &  & \tabularnewline
{\small{}$\left(x^{2}x^{2}\right)y-2x^{3}y+x^{2}y;$} &  & {\small{}$\;x^{2}\left(x\left(xy\right)\right)-x\left(x\left(xy\right)\right)-x^{2}y+xy;$}$\medskip$\tabularnewline
{\small{}$\;x^{2}\left(x^{2}y\right)-x\left(xy\right)-x^{3}y+xy;$} &  & {\small{}$\;x\left(x^{2}\left(xy\right)\right)-x\left(x\left(xy\right)\right)-x^{3}y+x^{2}y;$}$\medskip$\tabularnewline
{\small{}$\;x^{3}\left(xy\right)-x\left(xy\right)-x^{3}y+xy;$} &  & {\small{}$\;x\left(x\left(x^{2}y\right)\right)-x\left(x\left(xy\right)\right)-x^{4}y+x^{3}y.$}$\medskip$\tabularnewline
{\small{}$\;x\left(x^{3}y\right)-x\left(xy\right)-x^{4}y+x^{2}y;$} &  & \tabularnewline
 &  & \tabularnewline
– de degré $\left(5,1\right)$:$\medskip$ &  & \tabularnewline
{\small{}$\;x^{2}\left(x^{2}\left(xy\right)\right)-x\left(x\left(xy\right)\right)-x^{3}y+xy;$} &  & {\small{}$\;x^{3}\left(x^{2}y\right)-x\left(xy\right)-2x^{3}y+x^{2}y+xy;$}$\medskip$\tabularnewline
{\small{}$\;x\left(x\left(x^{3}y\right)\right)-x\left(x\left(xy\right)\right)-x^{5}y+x^{3}y;$} &  & {\small{}$\;x\left(x^{2}\left(x^{2}y\right)\right)-x\left(x\left(xy\right)\right)-x^{4}y+x^{2}y;$}$\medskip$\tabularnewline
{\small{}$\;x^{3}\left(x\left(xy\right)\right)-x\left(x\left(xy\right)\right)-x^{3}y+xy;$} &  & {\small{}$\left(x^{2}x^{2}\right)\left(xy\right)-x\left(xy\right)-2x^{3}y+x^{2}y+xy;$}$\medskip$\tabularnewline
{\small{}$\;x\left(x^{3}\left(xy\right)\right)-x\left(x\left(xy\right)\right)-x^{4}y+x^{2}y;$} &  & {\small{}$\;x\left(\left(x^{2}x^{2}\right)y\right)-x\left(xy\right)-2x^{4}y+x^{3}y+x^{2}y;$}$\medskip$\tabularnewline
{\small{}$\;x^{2}\left(x^{3}y\right)-x\left(xy\right)-x^{4}y+xy;$} &  & {\small{}$\;x^{2}\left(x\left(x\left(xy\right)\right)\right)-x\left(x\left(x\left(xy\right)\right)\right)-x^{2}y+xy;$}$\medskip$\tabularnewline
{\small{}$\left(x^{3}x^{2}\right)y-x^{4}y-x^{3}y+x^{2}y;$} &  & {\small{}$\;x\left(x^{2}\left(x\left(xy\right)\right)\right)-x\left(x\left(x\left(xy\right)\right)\right)-x^{3}y+x^{2}y;$}$\medskip$\tabularnewline
{\small{}$\;x^{4}\left(xy\right)-x\left(xy\right)-x^{4}y+xy;$} &  & {\small{}$\;x\left(x\left(x^{2}\left(xy\right)\right)\right)-x\left(x\left(x\left(xy\right)\right)\right)-x^{4}y+x^{3}y;$}$\medskip$\tabularnewline
{\small{}$\;x\left(x^{4}y\right)-x\left(xy\right)-x^{5}y+x^{2}y;$} &  & {\small{}$\;x\left(x\left(x\left(x^{2}y\right)\right)\right)-x\left(x\left(x\left(xy\right)\right)\right)-x^{5}y+x^{4}y;$}$\medskip$\tabularnewline
{\small{}$\left(\left(x^{2}x^{2}\right)x\right)y-2x^{4}y+x^{3}y;$} &  & {\small{}$\;x^{2}\left(x\left(x^{2}y\right)\right)-x\left(x\left(xy\right)\right)-x^{4}y+x^{3}y-x^{2}y+xy.$}\tabularnewline
\end{longtable}

\subsubsection{Identités homogènes évanescentes de type $\left[n,1\right]$.}
\begin{prop}
Il n'existe pas de d'identité homogène évanescente de type $\left[n,1\right]$
avec $n\leq3$. Pour $n\geq4$, l'espace des identités homogènes
évanescentes de type $\left[n,1\right]$ est engendré par au
moins $W_{\left[n,1\right]}-2\left(n-1\right)$ identités homogènes
évanescentes.
\end{prop}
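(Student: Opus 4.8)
Pour $n\leq 3$ il n'y a que tr\`es peu de mon\^omes de type $[n,1]$ ($W_{[1,1]}=1$, $W_{[2,1]}=2$, $W_{[3,1]}=4$) et la non-existence s'obtient par un calcul direct : on \'ecrit un polyn\^ome homog\`ene arbitraire $f=\sum_{k}\alpha_{k}w_{k}$ de type $[n,1]$, on calcule les polyn\^omes de Peirce $\partial_{x}w_{k}$ et $\partial_{y}w_{k}$ gr\^ace au lemme \ref{lem:Dx_ds_M=00005Bn,1=00005D} et \`a la relation (\ref{eq:Def_de_di}), et l'on constate que le syst\`eme lin\'eaire $\partial_{x}f=0$, $\partial_{y}f=0$, d'inconnues $(\alpha_{k})$, n'a que la solution nulle. (Pour $n=3$ par exemple, $\partial_{y}f=0$ force $\alpha_{x^{3}y}=\alpha_{x^{\{3\}}y}=0$ et $\alpha_{x^{2}(xy)}+\alpha_{x(x^{2}y)}=0$, apr\`es quoi $\partial_{x}f=0$ entra\^ine $\alpha_{x^{2}(xy)}=\alpha_{x(x^{2}y)}=0$.) Il n'existe donc pas d'identit\'e homog\`ene \'evanescente de type $[n,1]$ pour $n\leq 3$.

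Pour $n\geq 4$, posons $N=W_{[n,1]}$ et notons $w_{1},\ldots,w_{N}$ les mon\^omes de type $[n,1]$. Le plan est de ramener la question \`a l'\'etude d'un syst\`eme lin\'eaire. Les $w_{k}$ \'etant lin\'eairement ind\'ependants dans $K(x,y)$, un polyn\^ome homog\`ene $f=\sum_{k=1}^{N}\alpha_{k}w_{k}$ de type $[n,1]$ est une identit\'e \'evanescente si et seulement si $(\alpha_{k})\neq 0$, $\partial_{x}f=0$ et $\partial_{y}f=0$ : la condition $f(\mathds{1})=0$ est alors automatique, puisque d'apr\`es le corollaire \ref{cor:Prop_de_Di} on a $f(\mathds{1})=\sum_{k}\alpha_{k}=\sum_{k}|w_{k}|_{y}\,\alpha_{k}=\partial_{y}f(1)$. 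Comme $|w_{k}|=n+1\geq 2$, chaque $w_{k}$ s'\'ecrit $w_{k}=uv$, et la relation (\ref{eq:Def_de_di}) montre que $\partial_{x}w_{k}$ et $\partial_{y}w_{k}$ sont sans terme constant ; ils sont de degr\'e $\leq n$ par le corollaire \ref{cor:Prop_de_Di}. \'Ecrivant $\partial_{x}w_{k}=\sum_{i=1}^{n}\lambda_{k,i}t^{i}$ et $\partial_{y}w_{k}=\sum_{i=1}^{n}\mu_{k,i}t^{i}$, les conditions $\partial_{x}f=\partial_{y}f=0$ \'equivalent au syst\`eme lin\'eaire $\Sigma$ form\'e des $2n$ \'equations $\sum_{k}\lambda_{k,i}\alpha_{k}=0$ et $\sum_{k}\mu_{k,i}\alpha_{k}=0$ ($1\leq i\leq n$), d'inconnues $\alpha_{1},\ldots,\alpha_{N}$. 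L'espace des identit\'es homog\`enes \'evanescentes de type $[n,1]$ est donc l'espace des solutions de $\Sigma$, de dimension $N-\mathrm{rg}(\Sigma)$, et il suffira d'\'etablir $\mathrm{rg}(\Sigma)\leq 2(n-1)$.

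Pour majorer ce rang, le plan est d'exhiber deux relations lin\'eaires ind\'ependantes entre les $2n$ \'equations de $\Sigma$. La premi\`ere, d\'ej\`a utilis\'ee pour le type $[n]$, provient de l'\'evaluation des polyn\^omes de Peirce en $t=1$ : d'apr\`es le corollaire \ref{cor:Prop_de_Di} on a $\partial_{x}w_{k}(1)=|w_{k}|_{x}=n$ et $\partial_{y}w_{k}(1)=|w_{k}|_{y}=1$ pour tout $k$, de sorte que la somme des $n$ \'equations issues de $\partial_{x}f$ \'egale $n$ fois la somme des $n$ \'equations issues de $\partial_{y}f$. La seconde repose sur l'analogue pour les arbres binaires de l'\'egalit\'e de Kraft : par r\'ecurrence sur $|w|$, en partant de $\partial_{x}w+\partial_{y}w=1$ pour $|w|=1$ et en utilisant (\ref{eq:Def_de_di}), on obtient pour tout mon\^ome $w=uv$
\[
\bigl(\partial_{x}w+\partial_{y}w\bigr)\bigl(\tfrac{1}{2}\bigr)=\tfrac{1}{2}\Bigl[\bigl(\partial_{x}u+\partial_{y}u\bigr)\bigl(\tfrac{1}{2}\bigr)+\bigl(\partial_{x}v+\partial_{y}v\bigr)\bigl(\tfrac{1}{2}\bigr)\Bigr]=\tfrac{1}{2}(1+1)=1 .
\]
En \'evaluant $\partial_{x}f$ et $\partial_{y}f$ en $t=\tfrac{1}{2}$ et en tenant compte de $\partial_{y}f(1)=\sum_{k}\alpha_{k}$, on en tire une relation exprimant la combinaison \`a coefficients $2^{-i}$ des \'equations issues de $\partial_{x}f$ comme combinaison \`a coefficients $1-2^{-i}$ des \'equations issues de $\partial_{y}f$. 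Les vecteurs de coefficients de ces deux relations, $(1,\dots,1\mid -n,\dots,-n)$ et $(2^{-1},\dots,2^{-n}\mid 2^{-1}-1,\dots,2^{-n}-1)$, ne sont pas colin\'eaires car $2^{-1}\neq 2^{-2}$ (la caract\'eristique de $K$ n'\'etant pas $2$, et $n\geq 2$). Il existe donc au moins deux relations ind\'ependantes entre les \'equations de $\Sigma$, d'o\`u $\mathrm{rg}(\Sigma)\leq 2n-2$ et, finalement, une dimension $\geq N-2(n-1)=W_{[n,1]}-2(n-1)$. Comme la r\'ecurrence d\'efinissant les $W_{[n,1]}$ donne $W_{[n,1]}>2(n-1)$ pour $n\geq 4$, cet espace est non nul, et il est engendr\'e par au moins $W_{[n,1]}-2(n-1)$ identit\'es homog\`enes \'evanescentes.

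Le point le plus d\'elicat sera la mise en place soign\'ee de la seconde relation (l'identit\'e de type Kraft $\partial_{x}w(\tfrac{1}{2})+\partial_{y}w(\tfrac{1}{2})=1$ dans le formalisme des polyn\^omes de Peirce, puis la v\'erification de son ind\'ependance avec la premi\`ere) ; il faudra aussi prendre garde, dans les petits cas $n\leq 3$, \`a ne pas se contenter de la majoration $\mathrm{rg}(\Sigma)\leq 2(n-1)$, qui n'y fournit qu'une in\'egalit\'e triviale, et \`a conclure par le calcul direct indiqu\'e ci-dessus.
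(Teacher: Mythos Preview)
Your proof is correct, and your route differs genuinely from the paper's. Both set up the same linear system $\Sigma$ in the unknowns $(\alpha_k)$ given by the $2n$ coefficients of $\partial_x f$ and $\partial_y f$, and both aim at $\mathrm{rg}(\Sigma)\leq 2(n-1)$. The paper argues that, since $f(1,1)=0$ forces $\sum_k\alpha_k=0$, each of the two blocks $\partial_x f=0$ and $\partial_y f=0$ separately has rank $\leq n-1$, and then adds the two bounds. This step is not justified as written, and is in fact false: for $n=4$ the $4\times 9$ coefficient matrix of the $\partial_x$-block alone has full row rank~$4$ (columns $(x^2x^2)y$, $x^2(x(xy))$, $x^4y$, $x^{\{4\}}y$ already give an invertible $4\times4$ minor). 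What the paper's reasoning actually yields is only the single relation $\partial_x f(1)=n\,\partial_y f(1)$ among the $2n$ rows of $\Sigma$, hence merely $\mathrm{rg}(\Sigma)\leq 2n-1$, one short of the stated bound.

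Your Kraft-type identity $\partial_x w(\tfrac12)+\partial_y w(\tfrac12)=1$, obtained by a one-line induction on $|w|$ from (\ref{eq:Def_de_di}), supplies precisely the missing second relation, and your check of its independence from the first (non-proportionality of the $p$-components $(1,\dots,1)$ and $(2^{-1},\dots,2^{-n})$) is sound. So your approach is not merely an alternative but a repair of the paper's argument; the same idea would equally clarify the analogous rank bounds the paper states for types $[n]$, $[n,2]$ and $[n,1,1]$. Your handling of the cases $n\leq 3$ is essentially the same as the paper's.
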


\begin{proof}
Soient $\frak{M}\left(x,y\right)_{\left[n,1\right]}=\left\{ w_{k};1\leq k\leq N\right\} $
où $N=W_{\left[n,1\right]}$ et $f=\sum_{i=1}^{N}\alpha_{i}w_{i}$,
on cherche $\left(\alpha_{i}\right)_{1\leq i\leq N}\in K^{N}$
tel que $\partial_{x}f=\partial_{y}f=0$. avec $\partial_{x}f=\sum_{i=1}^{N}\alpha_{i}\partial_{x}w_{i}$
et $\partial_{y}f=\sum_{i=1}^{N}\alpha_{i}\partial_{y}w_{i}$.
Pour tout $1\leq i\leq N$ on a $\partial_{x}w_{i},\partial_{y}w_{i}\in K\left[t\right]$
avec $\left|\partial_{x}w_{i}\right|\leq n$ et $\left|\partial_{y}w_{i}\right|\leq n$
par conséquent les équations $\partial_{x}f=0$ et $\partial_{y}f=0$
se traduisent chacun par deux systèmes linéaires de $n$ équations
à $N$ inconnues.

Ainsi le cas $n=2$, avec $f\left(x,y\right)=\alpha x^{2}y+\beta x\left(xy\right)$
de $\partial_{y}\left(f\right)=\alpha t+\beta t^{2}$ on déduit
$\alpha=\beta=0$, il n'y donc pas de polynôme homogène évanescent
de type $\left[2,2\right]$. 

Si $n=3$, en prenant $f\left(x,y\right)=\alpha x^{3}y+\beta x^{2}\left(xy\right)+\gamma x\left(x^{2}y\right)+\delta x\left(x\left(xy\right)\right)$,
on a $\partial_{y}\left(f\right)=\alpha t+\beta t^{2}+\gamma t^{2}+\delta t^{3}$
donc de l'équation $\partial_{y}f=0$ on déduit $\alpha=\delta=0$
et $\beta$$+\gamma=0$, alors $\partial_{y}\left(\beta x^{2}\left(xy\right)+\gamma x\left(x^{2}y\right)\right)=\beta\left(3t^{2}\right)+\gamma\left(2t^{3}+t\right)=0$
on déduit $\beta=\gamma=0$, il n'existe pas de polynôme homogène
évanescent de type $\left[3,2\right]$.

Supposons $n\geq4$, de $f\left(1,1\right)=0$ on déduit que
$\sum_{i=1}^{N}\alpha_{i}=0$, par conséquent le système d'équations
$\partial_{x}f=0$ est de rang $\leq n-1$, il en est de même
du système $\partial_{y}f=0$, donc le système d'équations $\partial_{x}f=\partial_{y}f=0$
est de rang $\leq2\left(n-1\right)$ par conséquent l'espace
des solutions est de dimension $\geq W_{\left[n,1\right]}-2\left(n-1\right)$.
\end{proof}
La méthode utilisée dans la démonstration permet de donner les
générateurs des identités homogènes évanescentes:
\begin{align*}
 & \text{-- de type }\left[4,1\right]\\
 & \left(x^{2}x^{2}\right)y-x^{3}\left(xy\right);\\
 & \;x^{4}y+x\left(x^{2}\left(xy\right)\right)-\left(x^{2}x^{2}\right)y-x\left(x\left(x^{2}y\right)\right);\\
 & \;x\left(x^{3}y\right)+x^{2}\left(x\left(xy\right)\right)-x^{3}\left(xy\right)-x\left(x\left(x^{2}y\right)\right).\\
\\
 & \text{-- de type \ensuremath{\left[5,1\right]}}\\
 & \;x\left(x^{2}\left(x^{2}y\right)\right)-x\left(x^{3}\left(xy\right)\right);\\
 & \left(x^{3}x^{2}\right)y+x\left(x\left(x^{3}y\right)\right)-x^{5}y-x\left(x^{3}\left(xy\right)\right);\\
 & \left(\left(x^{2}x^{2}\right)x\right)y+x\left(x^{4}y\right)-x^{5}y-x\left(\left(x^{2}x^{2}\right)y\right);\\
 & \;x^{4}\left(xy\right)+x\left(x\left(x^{3}y\right)\right)-x^{2}\left(x\left(x^{2}y\right)\right)-x\left(x^{4}y\right);\\
 & \;x^{2}\left(x^{3}y\right)+x\left(x\left(x^{3}y\right)\right)-x^{2}\left(x\left(x^{2}y\right)\right)-x\left(x^{4}y\right);\\
 & \;x^{2}\left(x\left(x^{2}y\right)\right)+x\left(x^{2}\left(x\left(xy\right)\right)\right)-x\left(x^{3}\left(xy\right)\right)-x^{2}\left(x\left(x\left(xy\right)\right)\right);\\
 & \;x^{2}\left(x\left(x^{2}y\right)\right)+x\left(x\left(x\left(x^{2}y\right)\right)\right)-x^{2}\left(x\left(x\left(xy\right)\right)\right)-x\left(x\left(x^{3}y\right)\right);\\
 & \;x^{3}\left(x^{2}y\right)+x\left(\left(x^{2}x^{2}\right)y\right)+2x\left(x\left(x^{3}y\right)\right)-x^{2}\left(x\left(x^{2}y\right)\right)-2x\left(x^{4}y\right)-x\left(x^{3}\left(xy\right)\right);\\
 & \;x^{3}\left(x\left(xy\right)\right)+x\left(\left(x^{2}x^{2}\right)y\right)+x\left(x\left(x^{3}y\right)\right)-x^{2}\left(x\left(x^{2}y\right)\right)-x\left(x^{4}y\right)-x\left(x^{3}\left(xy\right)\right);\\
 & \;x\left(\left(x^{2}x^{2}\right)y\right)+x\left(x\left(x^{3}y\right)\right)+x^{2}\left(x^{2}(xy\right)-x\left(x^{4}y\right)-x\left(x^{3}\left(xy\right)\right)-x^{2}\left(x\left(x^{2}y\right)\right);\\
 & \left(x^{2}x^{2}\right)\left(xy\right)+x\left(\left(x^{2}x^{2}\right)y\right)+2x\left(x\left(x^{3}y\right)\right)-x^{2}\left(x\left(x^{2}y\right)\right)-2x\left(x^{4}y\right)-x\left(x^{3}\left(xy\right)\right);\\
 & \;x^{2}\left(x\left(x^{2}y\right)\right)+x\left(x^{4}y\right)+x\left(x\left(x^{2}\left(xy\right)\right)\right)-x^{2}\left(x\left(x\left(xy\right)\right)\right)-x\left(\left(x^{2}x^{2}\right)y\right)-x\left(x\left(x^{3}y\right)\right).
\end{align*}

\medskip{}

\subsection{Identités évanescentes train de degré $\left(n,2\right)$ et
homogènes de type $\left[n,2\right]$.}

\textcompwordmark{}\medskip{}

Soit $W_{\left[n,2\right]}$ le cardinal de l'ensemble $\frak{M}\left(x,y\right)_{\left[n,2\right]}$
des monômes de type $\left[n,2\right]$. Pour $w\in\frak{M}\left(x,y\right)_{\left[n,2\right]}$
il y a deux façons de décomposer $w$ comme produit de deux monômes
$w=w_{1}w_{2}$. Soit en prenant $w_{1}\in\frak{M}\left(x,y\right)_{\left[n-i\right]}$
et $w_{2}\in\frak{M}\left(x,y\right)_{\left[i,2\right]}$ pour
$0\leq i\leq n-1$ et dans ce cas on a $\sum_{i=0}^{n-1}W_{\left[n-i\right]}W_{\left[i,2\right]}$
écritures possibles. Soit avec $w_{1}\in\frak{M}\left(x,y\right)_{\left[i,1\right]}$
et $w_{2}\in\frak{M}\left(x,y\right)_{\left[j,1\right]}$ pour
$0\leq i\leq j\leq n$ tels que $i+j=n$, on en déduit que $2i\leq n$
et selon la parité de $n$ on a deux cas. Si $n$ est impair,
$n=2p+1$ pour tout $0\leq i\leq p$ les mots $w_{1}$ et $w_{2}$
sont de degrés en $x$ distincts il y donc $\sum_{i=0}^{p}W_{\left[2p+1-i,1\right]}W_{\left[i,1\right]}$
décompositions possibles de $w$ en produit de deux monômes.
Si $n$ est pair, $n=2p$ pour tout $0\leq i<p$ les monômes
$w_{1}$ et $w_{2}$ ne sont pas de même degré en $x$ on a donc
$\sum_{i=0}^{p-1}W_{\left[2p-i,1\right]}W_{\left[i,1\right]}$
décompositions de cette forme; pour $i=p$ on a $w_{1}$ et $w_{2}$
dans $\frak{M}\left(x,y\right)_{\left[p,1\right]}$ d'où $\tbinom{W_{\left[p,1\right]}+1}{2}$
décompositions de $w$.

En résumé on a obtenu:
\begin{align*}
W_{\left[0,2\right]} & =1,\\
W_{\left[n,2\right]} & =\sum_{i=0}^{n-1}W_{\left[2p+1-i\right]}W_{\left[i,2\right]}+\sum_{i=0}^{\left\lfloor \nicefrac{n}{2}\right\rfloor }W_{\left[n-i,1\right]}W_{\left[i,1\right]}+\begin{cases}
\qquad0 & \text{si }n\text{ est impair}\\
\dbinom{W_{\left[\nicefrac{n}{2},1\right]}}{2} & \text{si }n\text{ est pair}
\end{cases},\left(n\geq1\right).
\end{align*}

Les premières valeurs de $W_{\left[n,2\right]}$ sont
\begin{center}
\begin{tabular}{c|c|c|c|c|c|c|c|c|c|c|c}
$n$ & 0 & 1 & 2 & 3 & 4 & 5 & 6 & 7 & 8 & 9 & 10\tabularnewline
\hline 
$W_{\left[n,2\right]}$ & 1 & 2 & 6 & 15 & 41 & 106 & 280 & 726 & 1891 & 4886 & 12622\tabularnewline
\end{tabular}
\par\end{center}

\begin{center}
\medskip{}
\par\end{center}

\subsubsection{Train identités évanescentes de degré $\left(n,2\right)$.}

\textcompwordmark{}\medskip{}

Pour tout $f\in K\left(x,y\right)$ et tout entier $r\geq1$,
on définit $x^{\left\{ r\right\} }f=x\left(x^{\left\{ r-1\right\} }f\right)$
où $x^{\left\{ 0\right\} }f=f$. 
\begin{lem}
\label{lem:Dx_ds_M=00005Bn,2=00005D}Pour tout entier $p\geq2$
et $r\geq0$ on a:
\begin{align}
\partial_{x}\bigl(\bigl(x^{\left\{ r\right\} }y\bigr)y\bigr) & \;=\;\sum_{i=2}^{r+1}t^{i}, & \partial_{y}\bigl(\bigl(x^{\left\{ r\right\} }y\bigr)y\bigr) & \;=\;t^{r+1}+t;\label{eq:Dx(x^=00007Br=00007D*y*y)}\\
\partial_{x}\bigl(x^{\left\{ r\right\} }y^{2}\bigr) & \;=\;\sum_{i=1}^{r}t^{i}, & \partial_{y}\bigl(x^{\left\{ r\right\} }y^{2}\bigr) & \;=\;2t^{r+1}.\label{eq:Dx(x^=00007Br=00007D*y2)}
\end{align}
\end{lem}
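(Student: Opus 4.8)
The plan is to reduce all four formulas to a single application of the product rule in the definition (\ref{eq:Def_de_di}) of $\partial_{x}$ and $\partial_{y}$, combined with the expressions for $\partial_{x}\bigl(x^{\{r\}}y\bigr)$ and $\partial_{y}\bigl(x^{\{r\}}y\bigr)$ already obtained in Lemma \ref{lem:Dx_ds_M=00005Bn,1=00005D}, plus one elementary recursion describing how $\partial_{x}$ and $\partial_{y}$ behave under the left-comb operation $f\mapsto x^{\{r\}}f$. First I would record the base computations that feed everything: since $y^{2}=yy$, the product rule gives $\partial_{x}\bigl(y^{2}\bigr)=t\,(\partial_{x}y+\partial_{x}y)=0$ and $\partial_{y}\bigl(y^{2}\bigr)=t\,(\partial_{y}y+\partial_{y}y)=2t$, while from the definition $\partial_{x}x=1$ and $\partial_{y}x=0$.

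Next I would establish, for every monomial (hence by linearity every polynomial) $f\in K\left(x,y\right)$ and every $r\geq 1$, the recursion obtained by writing $x^{\{r\}}f=x\,(x^{\{r-1\}}f)$ and applying the product rule:
\[
\partial_{x}\bigl(x^{\{r\}}f\bigr)=t\bigl(1+\partial_{x}\bigl(x^{\{r-1\}}f\bigr)\bigr),\qquad \partial_{y}\bigl(x^{\{r\}}f\bigr)=t\,\partial_{y}\bigl(x^{\{r-1\}}f\bigr).
\]
A straightforward induction on $r$ (the base case $r=1$ being the definition applied to $xf$) then yields the closed forms
\[
\partial_{x}\bigl(x^{\{r\}}f\bigr)=\sum_{i=1}^{r}t^{i}+t^{r}\,\partial_{x}f,\qquad \partial_{y}\bigl(x^{\{r\}}f\bigr)=t^{r}\,\partial_{y}f,\qquad (r\geq 0),
\]
where the $r=0$ case is read off directly (empty sum, $t^{0}=1$).

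Finally the four identities follow by specialization. Taking $f=y^{2}$ in the last display and inserting $\partial_{x}\bigl(y^{2}\bigr)=0$, $\partial_{y}\bigl(y^{2}\bigr)=2t$ gives $\partial_{x}\bigl(x^{\{r\}}y^{2}\bigr)=\sum_{i=1}^{r}t^{i}$ and $\partial_{y}\bigl(x^{\{r\}}y^{2}\bigr)=2t^{r+1}$, which is (\ref{eq:Dx(x^=00007Br=00007D*y2)}). For (\ref{eq:Dx(x^=00007Br=00007D*y*y)}) I would write $\bigl(x^{\{r\}}y\bigr)y$ as a product and apply the product rule once, obtaining $\partial_{x}\bigl(\bigl(x^{\{r\}}y\bigr)y\bigr)=t\bigl(\partial_{x}\bigl(x^{\{r\}}y\bigr)+\partial_{x}y\bigr)=t\,\partial_{x}\bigl(x^{\{r\}}y\bigr)$ and $\partial_{y}\bigl(\bigl(x^{\{r\}}y\bigr)y\bigr)=t\bigl(\partial_{y}\bigl(x^{\{r\}}y\bigr)+1\bigr)$; substituting $\partial_{x}\bigl(x^{\{r\}}y\bigr)=\sum_{i=1}^{r}t^{i}$ and $\partial_{y}\bigl(x^{\{r\}}y\bigr)=t^{r}$ from Lemma \ref{lem:Dx_ds_M=00005Bn,1=00005D} yields $\sum_{i=2}^{r+1}t^{i}$ and $t^{r+1}+t$. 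There is essentially no obstacle here: the computation is purely mechanical once the comb recursion is in place, and the only point worth a word is the consistency of the degenerate case $r=0$ (there $\bigl(x^{\{0\}}y\bigr)y=x^{\{0\}}y^{2}=y^{2}$, and the right-hand sides specialize correctly, the $x$-sums being empty and $t^{0+1}+t=2t$).
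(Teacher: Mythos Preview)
Your argument is correct and follows essentially the same approach as the paper: apply the product rule from (\ref{eq:Def_de_di}) once to $\bigl(x^{\{r\}}y\bigr)y$ and use the comb recursion $\partial_{x}\bigl(x^{\{r\}}f\bigr)=t\bigl(1+\partial_{x}\bigl(x^{\{r-1\}}f\bigr)\bigr)$, $\partial_{y}\bigl(x^{\{r\}}f\bigr)=t\,\partial_{y}\bigl(x^{\{r-1\}}f\bigr)$ together with the values from Lemma~\ref{lem:Dx_ds_M=00005Bn,1=00005D}. The only cosmetic difference is that you first package the recursion into a closed form $\partial_{x}\bigl(x^{\{r\}}f\bigr)=\sum_{i=1}^{r}t^{i}+t^{r}\partial_{x}f$ for general $f$ before specializing, whereas the paper unwinds the recursion separately in each case.
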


\begin{proof}
Partant de $\partial_{x}\bigl(\bigl(x^{\left\{ r\right\} }y\bigr)y\bigr)=t\partial_{x}\bigl(x^{\left\{ r\right\} }y\bigr)=t\left(1+\partial_{x}\bigl(x^{\left\{ r-1\right\} }y\bigr)\right)$
et $\partial_{y}\left(\bigl(x^{\left\{ r\right\} }y\bigr)y\right)=t\left(\partial_{y}\bigl(x^{\left\{ r\right\} }y\bigr)+1\right)=t^{2}\partial_{y}\bigl(x^{\left\{ r-1\right\} }y\bigr)+t$,
et de $\partial_{x}\bigl(x^{\left\{ r\right\} }y^{2}\bigr)=t\left(1+\partial_{x}\bigl(x^{\left\{ r-1\right\} }y^{2}\bigr)\right)$
et $\partial_{y}\bigl(x^{\left\{ r\right\} }y^{2}\bigr)=t\partial_{y}\bigl(x^{\left\{ r-1\right\} }y^{2}\bigr)$
on obtient récursivement les résultats énoncés.
\end{proof}
On note $\mathscr{F}=\left\{ \left(x^{\left\{ r\right\} }y\right)y,x^{\left\{ r\right\} }y^{2};r\geq0\right\} $
et $\mathbb{Q}\left\langle \mathscr{F}\right\rangle $ le $\mathbb{Q}$-espace
vectoriel engendré par l'ensemble $\mathscr{F}$. 
\begin{prop}
Il n'existe pas de train identité évanescente de type $\left(1,2\right)$.

Pour tout $n\geq2$ et tout $w\in\frak{M}\left(x,y\right)_{\left[n,2\right]}$
où $w\neq\left(x^{\left\{ n\right\} }y\right)y$ et $w\neq x^{\left\{ n\right\} }y^{2}$,
il existe un unique polynôme $P_{w}\in\mathbb{Q}\left\langle \mathscr{F}\right\rangle $
avec $\left|P_{w}\right|_{x}\leq n$ tel que le polynôme $w-P_{w}$
soit une train identité évanescente.
\end{prop}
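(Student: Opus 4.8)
L'idée est de reproduire exactement le schéma de la proposition analogue du cas $\left(n,1\right)$ : pour un monôme $w\in\frak{M}\left(x,y\right)_{\left[n,2\right]}$ on pose $p=\left|\partial_{x}w\right|$, $q=\left|\partial_{y}w\right|$ et on cherche $P_{w}$ comme combinaison linéaire des générateurs de $\mathbb{Q}\left\langle \mathscr{F}\right\rangle $, à savoir $P_{w}=\sum_{r}\lambda_{r}\left(x^{\left\{ r\right\} }y\right)y+\sum_{r}\mu_{r}x^{\left\{ r\right\} }y^{2}$, puis on traduit les conditions $\partial_{x}P_{w}=\partial_{x}w$, $\partial_{y}P_{w}=\partial_{y}w$ et $P_{w}\left(1,1\right)=1$ en un système linéaire sur les $\lambda_{r},\mu_{r}$, qu'on résout par triangularisation. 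D'abord je traiterais le cas $n=1$ : il n'y a que les monômes $\left(xy\right)y$ et $xy^{2}$, et un calcul direct de $\partial_{x}$ et $\partial_{y}$ sur $f=\alpha\left(xy\right)y+\beta xy^{2}$ (en tenant compte de $\partial_{x}\left(xy\right)=\partial_{y}\left(xy\right)=t$, $\partial_{y}\left(y^{2}\right)=2t$) montre que $\partial_{x}f=\partial_{y}f=0$ force $\alpha=\beta=0$, donc aucune train identité évanescente de type $\left(1,2\right)$.

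Pour $n\geq2$, soit $w\in\frak{M}\left(x,y\right)_{\left[n,2\right]}$, $w\notin\mathscr{F}$. D'après le point c) du corollaire \ref{cor:Prop_de_Di} on a $p,q\leq n+1$, et d'après le point d) (qui donne $\partial_{i}w\left(0\right)=0$ dès que $\left|w\right|_{i}\geq1$) les valuations de $\partial_{x}w$ et de $\partial_{y}w$ sont $\geq1$ ; j'écris donc $\partial_{x}w=\sum_{k=1}^{n}\alpha_{k}t^{k}$ et $\partial_{y}w=\sum_{k=1}^{n+1}\beta_{k}t^{k}$ avec $\alpha_{k},\beta_{k}\in\mathbb{N}$. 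Le lemme \ref{lem:Dx_ds_M=00005Bn,2=00005D} fournit $\partial_{x}\bigl(\bigl(x^{\left\{ r\right\} }y\bigr)y\bigr)=\sum_{i=2}^{r+1}t^{i}$, $\partial_{y}\bigl(\bigl(x^{\left\{ r\right\} }y\bigr)y\bigr)=t^{r+1}+t$, $\partial_{x}\bigl(x^{\left\{ r\right\} }y^{2}\bigr)=\sum_{i=1}^{r}t^{i}$, $\partial_{y}\bigl(x^{\left\{ r\right\} }y^{2}\bigr)=2t^{r+1}$. En reportant $P_{w}=\sum_{r=1}^{n}\lambda_{r}\left(x^{\left\{ r\right\} }y\right)y+\sum_{r=1}^{n}\mu_{r}x^{\left\{ r\right\} }y^{2}$ on obtient, pour $\partial_{y}$, une contribution $\sum_{r}\lambda_{r}t^{r+1}+\left(\sum_{r}\lambda_{r}\right)t+2\sum_{r}\mu_{r}t^{r+1}$ : l'identification des coefficients de $t^{k}$ pour $k\geq2$ donne $\lambda_{k-1}+2\mu_{k-1}=\beta_{k}$, et celui de $t$ donne $\sum_{r}\lambda_{r}=\beta_{1}$. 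Pour $\partial_{x}$, la contribution est $\sum_{r}\mu_{r}\bigl(\sum_{i=1}^{r}t^{i}\bigr)+\sum_{r}\lambda_{r}\bigl(\sum_{i=2}^{r+1}t^{i}\bigr)$ : le coefficient de $t^{k}$ (pour $k\geq1$) est $\sum_{r\geq k}\mu_{r}+\sum_{r\geq k-1}\lambda_{r}$ (avec la convention $\lambda_{0}=0$), d'où en prenant la différence entre le coefficient de $t^{k}$ et celui de $t^{k+1}$ on isole $\mu_{k}+\lambda_{k-1}=\alpha_{k}-\alpha_{k+1}$ pour $k\geq2$. Combiné au système précédent, ceci détermine récursivement $\mu_{k-1}$ et $\lambda_{k-1}$ (du plus haut indice vers le plus bas), et l'équation de normalisation $P_{w}\left(1,1\right)=\sum_{r}\lambda_{r}+\sum_{r}\mu_{r}=1$ se vérifie a posteriori grâce à $\partial_{y}w\left(1\right)=\left|w\right|_{y}=2$ (point d du corollaire) : en effet $\sum_{k}\beta_{k}=2$ force, après les relations obtenues, la somme voulue. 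Toutes les opérations ne font intervenir que des additions, des soustractions et des divisions par $2$, donc les $\lambda_{r},\mu_{r}$ sont dans $\mathbb{Q}$ et $P_{w}\in\mathbb{Q}\left\langle \mathscr{F}\right\rangle $ ; l'unicité résulte du fait que les $2n$ générateurs utilisés ont des images par $\left(\partial_{x},\partial_{y}\right)$ linéairement indépendantes, ce que le calcul ci-dessus établit puisqu'il permet de remonter de $\left(\partial_{x}P_{w},\partial_{y}P_{w}\right)$ aux $\lambda_{r},\mu_{r}$.

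Le point délicat est la vérification que le système est effectivement triangulaire et compatible : il faut s'assurer qu'aucune équation ne vient surdéterminer le système, en particulier que l'équation de poids $P_{w}\left(1,1\right)=1$ est conséquence des autres (comme dans le cas $\left(n,1\right)$ où l'on utilisait $\sum\beta_{i}=\partial_{y}w\left(1\right)=1$, ici avec la valeur $2$ et la présence du terme $2\mu_{r}$ dans $\partial_{y}$ il faut faire attention au facteur). Je vérifierais aussi soigneusement les cas de bord : le coefficient de $t$ dans $\partial_{x}P_{w}$ ne reçoit aucune contribution des $\left(x^{\left\{ r\right\} }y\right)y$ (leur $\partial_{x}$ commence en $t^{2}$), ce qui explique pourquoi $x^{\left\{ n\right\} }y^{2}$ doit être exclu de $\mathscr{F}$ au même titre que $\left(x^{\left\{ n\right\} }y\right)y$ — il faudra expliciter quel monôme de $\mathscr{F}$ correspond à quelle "direction libre". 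Une fois ces vérifications faites, l'énoncé suit. Comme dans le cas précédent, je m'attends à ce que la démonstration se prolonge ensuite par un théorème donnant un idéal engendré par des $E_{p,q}$ et des familles $F$ explicites permettant de calculer $P_{w}=\pi\left(w\right)$ algorithmiquement, mais cela relève de l'énoncé suivant et non de celui-ci.
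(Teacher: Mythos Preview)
Your approach is the same as the paper's, but your ansatz for $P_{w}$ is missing one generator, and this makes the system inconsistent. You write
\[
P_{w}=\sum_{r=1}^{n}\lambda_{r}\bigl(x^{\left\{ r\right\} }y\bigr)y+\sum_{r=1}^{n}\mu_{r}\,x^{\left\{ r\right\} }y^{2},
\]
but $\mathscr{F}$ also contains $y^{2}=x^{\left\{ 0\right\} }y^{2}=\bigl(x^{\left\{ 0\right\} }y\bigr)y$, and the paper keeps this term (their sum over $\beta_{s}$ runs from $s=0$). Without it you have $2n$ unknowns for a system that, after accounting for the single redundancy $\partial_{y}P_{w}(1)=2P_{w}(1,1)$, still imposes $2n+1$ independent linear conditions. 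Concretely, take $n=2$ and $w=x^{2}y^{2}$: then $\partial_{x}w=2t^{2}$, $\partial_{y}w=2t^{2}$, and your equations force $\lambda_{2}=\mu_{2}=\lambda_{1}=0$, $\mu_{1}=1$, which contradicts $\mu_{1}+\mu_{2}=0$ coming from the coefficient of $t$ in $\partial_{x}$. The correct $P_{w}=2(xy)y-y^{2}$ (which appears in the list of degree~$(2,2)$ identities) uses the $y^{2}$ term you dropped.

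A second, smaller issue: you write $\partial_{x}w=\sum_{k=1}^{n}\alpha_{k}t^{k}$, but for $w\notin\mathscr{F}$ the degree of $\partial_{x}w$ can reach $n+1$ (for instance $w=x\bigl((xy)y\bigr)$ with $n=2$ gives $\partial_{x}w=t^{3}+t$). The paper takes $\partial_{x}w=\sum_{i=1}^{n+1}\lambda_{i}t^{i}$ accordingly. Once you restore the $y^{2}$ term and the correct upper index $n+1$, your triangular resolution goes through exactly as in the paper; the verification that $P_{w}(1,1)=1$ follows automatically from $\partial_{y}w(1)=|w|_{y}=2$ is correct. For the case $(1,2)$ your direct computation is fine in spirit but should also allow a $y^{2}$ term (the paper simply invokes symmetry with the already-treated case $(2,1)$).
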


\begin{proof}
On a vu qu'il n'existe pas de train identité évanescente de degré
$\left(2,1\right)$ donc par permutation des variables $x$ et
$y$ il n'en existe pas de degré $\left(1,2\right)$.

Soit $n\geq2$. Pour tout $w\in\frak{M}\left(x,y\right)_{\left[n,2\right]}$
on a $\left|\partial_{x}w\right|\leq n+1$ avec $\left|\partial_{x}\left(x^{\left\{ n-k\right\} }\left(\left(x^{\left\{ k\right\} }y\right)y\right)\right)\right|=n+1$
pour tout $1\leq k<n$. 

Soit $w\in\frak{M}\left(x,y\right)_{\left[n,2\right]}$ vérifiant
les conditions de la proposition, on cherche $P_{w}=\sum_{r=1}^{n}\alpha_{r}\left(x^{\left\{ r\right\} }y\right)y+\sum_{s=0}^{n}\beta_{s}x^{\left\{ s\right\} }y^{2}$
tel que le polynôme $f=w-P_{w}$ vérifie $\partial_{x}f=\partial_{y}f=0$
et $f\left(1,1\right)=0$. Soit $\partial_{x}w=\sum_{i=1}^{n+1}\lambda_{i}t^{i}$
et $\partial_{y}w=\sum_{i=1}^{n+1}\mu_{i}t^{i}$, on a:
\begin{align*}
\partial_{x}f & =\left(\lambda_{n+1}-\alpha_{n}\right)t^{n+1}+\sum_{i=2}^{n}\biggl(\lambda_{i}-\Bigl(\sum_{r=i-1}^{n}\alpha_{r}+\sum_{s=i}^{n}\beta_{s}\Bigr)\biggr)t^{i}+\Bigl(\lambda_{1}-\sum_{s=1}^{n}\beta_{s}\Bigr)t,\\
\partial_{y}f & =\sum_{i=2}^{n+1}\left(\mu_{i}-\Bigl(\alpha_{i-1}+2\beta_{i-1}\Bigr)\right)t^{i}+\biggl(\mu_{1}-\Bigl(\sum_{r=1}^{n}\alpha_{r}+2\beta_{0}\Bigr)\biggr)t.
\end{align*}
 Et la solution du système linéaire $\partial_{x}f=\partial_{y}f=0$
et $\sum_{r=1}^{n}\alpha_{r}+\sum_{s=0}^{n}\beta_{s}=1$ est:
\begin{align*}
\alpha_{i} & =\lambda_{i+1}-\sum_{k=i+2}^{n+1}\frac{1}{2^{k-i-1}}\left(\mu_{k}+\lambda_{k}\right), & \left(1\leq i\leq n\right)\\
\beta_{i} & =\frac{1}{2}\left(\mu_{i+1}-\lambda_{i+1}\right)+\sum_{k=i+2}^{n+1}\frac{1}{2^{k-i}}\left(\mu_{k}+\lambda_{k}\right), & \left(1\leq i\leq n\right)\\
\beta_{0} & =1-\sum_{k=2}^{n+1}\frac{1}{2^{k-1}}\left(\mu_{k}+\lambda_{k}\right).
\end{align*}
Or d'après le résultat \emph{a}) du corollaire \ref{cor:Prop_de_Di}
on a $\lambda_{i},\mu_{i}\in\mathbb{N}$ pour tout $1\leq i\leq n$,
par conséquent on a $\alpha_{i},\beta_{i}\in\mathbb{Q}$, ce
qui achève la démonstration. 
\end{proof}
On en déduit aussitôt que
\begin{cor}
Pour $n\geq2$, l'espace vectoriel des train polynômes évanescents
de degré $\left(n,2\right)$ est de dimension $W_{\left[n,2\right]}-2$.
\end{cor}

Le résultat qui suit donne une procédure pour construire rapidement
des identités évanescentes à partir d'éléments pris dans $\frak{M}\left(x,y\right)_{\left[n,2\right]}$.
\begin{thm}
\label{thm:Evanesc_type=00005Bn,2=00005D_id=0000E9al}Pour tout
entier $p,q\geq2$ et $r\geq0$ on pose:
\begin{eqnarray*}
E_{p,q}^{\left[n\right]}\left(x\right) & = & x^{p}x^{q}-x^{p+1}-x^{q+1}+x^{2};\\
E_{p,q}^{\left[n,1\right]}\left(x,y\right) & = & x^{p}\left(x^{q}y\right)-x\left(xy\right)-x^{p}y-x^{q+1}y+x^{2}y+xy;\\
E_{p,\left\{ r\right\} }^{\left[n,1\right]}\left(x,y\right) & = & x^{p}\bigl(x^{\left\{ r\right\} }y\bigr)-x^{p}y-x^{\left\{ r+1\right\} }y+xy;\\
E_{\left\{ r\right\} ,p}^{\left[n,1\right]}\left(x,y\right) & = & x^{\left\{ r\right\} }\left(x^{p}y\right)-x^{\left\{ r+1\right\} }y-x^{p+r}y+x^{r+1}y;\\
E_{p,\left\{ r\right\} }^{\left[n,2\right]}\left(x,y\right) & = & x^{p}\bigl(\bigl(x^{\left\{ r\right\} }y\bigr)y\bigr)-2\bigl(x^{\left\{ p-1\right\} }y\bigr)y-\bigl(x^{\left\{ r+1\right\} }y\bigr)y+x^{\left\{ p-1\right\} }y^{2}\\
 &  & +\left(xy\right)y)-xy^{2}+y^{2};\\
E_{\left\{ r\right\} ,\left\{ s\right\} }^{\left[n,2\right]}\left(x,y\right) & = & x^{\left\{ r\right\} }\bigl(\bigl(x^{\left\{ s\right\} }y\bigr)y\bigr)-\bigl(x^{\left\{ r+s\right\} }y\bigr)y+\bigl(x^{\left\{ r\right\} }y\bigr)y-x^{\left\{ r\right\} }y^{2};\\
F_{p,\left\{ r\right\} }^{\left[n,2\right]}\left(x,y\right) & = & x^{p}\bigl(x^{\left\{ r\right\} }y^{2}\bigr)-2\bigl(x^{\left\{ p-1\right\} }y\bigr)y+x^{\left\{ p-1\right\} }y^{2}-x^{\left\{ r+1\right\} }y^{2}+y^{2};\\
G_{p,q}^{\left[n,2\right]}\left(x,y\right) & = & \left(x^{p}y\right)\left(x^{q}y\right)-2\bigl(x^{\left\{ p\right\} }y\bigr)y-2\bigl(x^{\left\{ q\right\} }y\bigr)y+x^{\left\{ p\right\} }y^{2}+x^{\left\{ q\right\} }y^{2}\\
 &  & +2\left(xy\right)y-2xy^{2}+y^{2};\\
G_{p,\left\{ r\right\} }^{\left[n,2\right]}\left(x,y\right) & = & \left(x^{p}y\right)\bigl(x^{\left\{ r\right\} }y\bigr)-2\bigl(x^{\left\{ p\right\} }y\bigr)y-\bigl(x^{\left\{ r\right\} }y\bigr)y+x^{\left\{ p\right\} }y^{2}\\
 &  & +\left(xy\right)y-xy^{2}+y^{2};\\
G_{\left\{ r\right\} ,\left\{ s\right\} }^{\left[n,2\right]}\left(x,y\right) & = & \bigl(x^{\left\{ r\right\} }y\bigr)\bigl(x^{\left\{ s\right\} }y\bigr)-\bigl(x^{\left\{ r\right\} }y\bigr)y-\bigl(x^{\left\{ s\right\} }y\bigr)y+y^{2}.
\end{eqnarray*}

Soient $\mathscr{G}$ l'idéal engendré par la famille de polynômes
\[
\left(E_{p,q}^{\left[n\right]},E_{p,q}^{\left[n,1\right]},E_{p,\left\{ r\right\} }^{\left[n,1\right]},E_{\left\{ r\right\} ,p}^{\left[n,1\right]},E_{p,\left\{ r\right\} }^{\left[n,2\right]},E_{p,\left\{ r\right\} }^{\left[n,2\right]},E_{\left\{ r\right\} ,\left\{ s\right\} }^{\left[n,2\right]},F_{p,\left\{ r\right\} }^{\left[n,2\right]},G_{p,q}^{\left[n,2\right]},G_{p,\left\{ r\right\} }^{\left[n,2\right]},G_{\left\{ r\right\} ,\left\{ s\right\} }^{\left[n,2\right]}\right)_{\substack{p,q\geq2\\
r,s\geq0
}
}
\]
et $\pi:K\left(x\right)\rightarrow{}^{K\left(x\right)}\!/_{\mathscr{G}}$
la surjection canonique. 

Alors pour tout $n\geq2$ et tout monôme $w\in\frak{M}\left(x,y\right)_{\left[n,2\right]}$
tel que $w\neq\left(x^{\left\{ n\right\} }y\right)y$ et $w\neq x^{\left\{ n\right\} }y^{2}$
on a $\pi\left(w\right)=P_{w}$ et pour tout $f\in\bigoplus_{n\geq2}K\left(x,y\right)_{\left[n,2\right]}$,
le polynôme $f-\pi\left(f\right)$ est une train identité évanescente.
\end{thm}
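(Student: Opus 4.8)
Le plan est de reproduire le sch\'ema des d\'emonstrations des th\'eor\`emes \ref{thm:Evanesc_type=00005Bn=00005D_id=0000E9al} et \ref{thm:Evanesc_type=00005Bn,1=00005D_id=0000E9al}. On v\'erifie d'abord que chacune des familles de g\'en\'erateurs est \'evanescente: pour $E_{p,q}^{\left[n\right]}$, $E_{p,q}^{\left[n,1\right]}$, $E_{p,\left\{ r\right\} }^{\left[n,1\right]}$ et $E_{\left\{ r\right\} ,p}^{\left[n,1\right]}$ cela a \'et\'e fait dans les preuves des deux th\'eor\`emes pr\'ec\'edents (ce sont les polyn\^omes $E_{p,q}$, $F_{p,q}$, $F_{p,\left\{ r\right\} }$, $F_{\left\{ r\right\} ,p}$ de ces preuves); pour les familles restantes on calcule $\partial_{x}$ et $\partial_{y}$ \`a l'aide de la relation (\ref{eq:di(fg)}), de la relation (\ref{eq:Dx(x^k)}) et des lemmes \ref{lem:Dx_ds_M=00005Bn,1=00005D} et \ref{lem:Dx_ds_M=00005Bn,2=00005D}, en \'etablissant au pr\'ealable les expressions de $\partial_{x}$ et $\partial_{y}$ pour les mon\^omes $x^{p}((x^{\left\{ r\right\} }y)y)$, $x^{\left\{ r\right\} }((x^{\left\{ s\right\} }y)y)$, $x^{p}(x^{\left\{ r\right\} }y^{2})$, $(x^{p}y)(x^{q}y)$, $(x^{p}y)(x^{\left\{ r\right\} }y)$ et $(x^{\left\{ r\right\} }y)(x^{\left\{ s\right\} }y)$; une substitution directe montre alors que les deux polyn\^omes de Peirce sont nuls dans chaque cas, et l'annulation en $\mathds{1}$ est imm\'ediate puisque la somme des coefficients entiers vaut $0$. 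Comme $Ev(\mathscr{\mathcal{T}})$ est un id\'eal, on en d\'eduit $\mathscr{G}\subseteq Ev(\mathscr{\mathcal{T}})$: tout \'el\'ement de $\mathscr{G}$ est \'evanescent.

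On proc\`ede ensuite par r\'ecurrence sur $n\geq2$ pour montrer que, pour tout $w\in\frak{M}(x,y)_{[n,2]}$ avec $w\notin\mathscr{F}$, on a $\pi(w)\in\mathbb{Z}\langle\mathscr{F}\rangle$, $\left|\pi(w)\right|_{x}\leq n$ et $w-\pi(w)$ \'evanescent. Le cas $n=2$ se v\'erifie en r\'eduisant un \`a un les $W_{[2,2]}-2=4$ mon\^omes de $\frak{M}(x,y)_{[2,2]}$ qui ne sont pas dans $\mathscr{F}$. Pour l'\'etape de r\'ecurrence, on \'ecrit $w=w_{1}w_{2}$ d'apr\`es la proposition \ref{prop:dec_ds_M(t)}; deux situations se pr\'esentent selon la r\'epartition des deux occurrences de $y$. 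Si l'un des facteurs est un mon\^ome en $x$ seul, disons $w_{1}\in\frak{M}(x)_{[i]}$ et $w_{2}\in\frak{M}(x,y)_{[j,2]}$, on r\'eduit $w_{1}$ par le th\'eor\`eme \ref{thm:Evanesc_type=00005Bn=00005D_id=0000E9al} et $w_{2}$ par l'hypoth\`ese de r\'ecurrence, de sorte que $\pi(w)=\pi(w_{1})\pi(w_{2})$ devient une $\mathbb{Z}$-combinaison de produits $x^{k}f$ avec $x^{k}$ une puissance de $x$ et $f\in\mathscr{F}$; chacun de ces produits se r\'e\'ecrit dans $\mathbb{Z}\langle\mathscr{F}\rangle$ gr\^ace aux relations $E_{p,\left\{ r\right\} }^{\left[n,2\right]}$, $E_{\left\{ r\right\} ,\left\{ s\right\} }^{\left[n,2\right]}$ et $F_{p,\left\{ r\right\} }^{\left[n,2\right]}$ (le cas $k=1$ utilisant les relations index\'ees par $\left\{ r\right\} $, et l'\'egalit\'e $x\cdot(x^{\left\{ r\right\} }y^{2})=x^{\left\{ r+1\right\} }y^{2}\in\mathscr{F}$). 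Si les deux facteurs portent chacun un $y$, $w_{1}\in\frak{M}(x,y)_{[i,1]}$ et $w_{2}\in\frak{M}(x,y)_{[j,1]}$, on r\'eduit chacun par le th\'eor\`eme \ref{thm:Evanesc_type=00005Bn,1=00005D_id=0000E9al}, d'o\`u $\pi(w)=\pi(w_{1})\pi(w_{2})$ est une $\mathbb{Z}$-combinaison de produits de deux \'el\'ements de $\left\{ x^{p}y,x^{\left\{ r\right\} }y\right\} $, et chaque produit se r\'e\'ecrit dans $\mathbb{Z}\langle\mathscr{F}\rangle$ par l'une des relations $G_{p,q}^{\left[n,2\right]}$, $G_{p,\left\{ r\right\} }^{\left[n,2\right]}$, $G_{\left\{ r\right\} ,\left\{ s\right\} }^{\left[n,2\right]}$ (en traitant $xy$ comme $x^{\left\{ 1\right\} }y$ et $y$ comme $x^{\left\{ 0\right\} }y$). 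Dans les deux situations les r\`egles de r\'e\'ecriture n'augmentent pas le degr\'e en $x$, d'o\`u $\left|\pi(w)\right|_{x}\leq n$; et $w-\pi(w)\in\mathscr{G}$ car chaque \'etape remplace un sous-terme par un terme congru modulo $\mathscr{G}$. Ainsi $w-\pi(w)$ est \'evanescent, c'est une train identit\'e \'evanescente de degr\'e $(n,2)$, et par unicit\'e du polyn\^ome $P_{w}$ de la proposition pr\'ec\'edente on obtient $\pi(w)=P_{w}$.

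Enfin, pour $f=\sum_{k\geq1}\alpha_{k}w_{k}\in\bigoplus_{n\geq2}K(x,y)_{[n,2]}$ on a $f-\pi(f)=\sum_{k\geq1}\alpha_{k}(w_{k}-\pi(w_{k}))\in\mathscr{G}$, donc \'evanescent, et comme tous les g\'en\'erateurs s'annulent en $\mathds{1}$ on a $(f-\pi(f))(\mathds{1})=0$: le polyn\^ome $f-\pi(f)$ est une train identit\'e \'evanescente. Le principal obstacle sera d'\'etablir la compl\'etude du syst\`eme de r\'e\'ecriture dans l'\'etape de r\'ecurrence --- s'assurer que les familles de relations couvrent effectivement tout produit de deux formes normales r\'eduites et que la r\'eduction reste dans $\mathbb{Z}\langle\mathscr{F}\rangle$ avec un degr\'e en $x$ au plus $n$ --- en prenant garde aux cas d\'eg\'en\'er\'es \`a petits indices ($p$, $q$, $r$ ou $s$ \'egaux \`a $0$ ou $1$), o\`u certains mon\^omes co\"{\i}ncident ou appartiennent d\'ej\`a \`a $\mathscr{F}$. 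La v\'erification de l'\'evanescence des familles nouvelles est longue mais purement m\'ecanique.
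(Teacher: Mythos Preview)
Your proposal is correct and follows essentially the same plan as the paper's own proof: first verify that the new generator families are evanescent by computing $\partial_x$ and $\partial_y$ of the six relevant monomial shapes, then run an induction on $n$ using the factorisation $w=w_1w_2$ and the multiplicativity of $\pi$, splitting into the two cases (one factor in $x$ only, or one $y$ in each factor) and invoking Theorems \ref{thm:Evanesc_type=00005Bn=00005D_id=0000E9al} and \ref{thm:Evanesc_type=00005Bn,1=00005D_id=0000E9al} respectively. Your account is in fact slightly more explicit than the paper's about \emph{why} the reduced products land in $\mathbb{Z}\langle\mathscr{F}\rangle$ (matching each product type to the $E$, $F$, or $G$ relation that handles it), whereas the paper compresses this into the single remark that $\pi(w)\in\mathbb{Z}\langle\mathscr{F}\rangle$ is clear; your caution about the small-index boundary cases is well placed.
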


\begin{proof}
On a montré aux théorèmes \ref{thm:Evanesc_type=00005Bn=00005D_id=0000E9al}
et \ref{thm:Evanesc_type=00005Bn,1=00005D_id=0000E9al} que les
polynômes $E_{p,q}^{\left[n\right]}$, $E_{p,q}^{\left[n,1\right]}$,
$E_{p,\left\{ r\right\} }^{\left[n,1\right]}$, $E_{\left\{ r\right\} ,p}^{\left[n,1\right]}$
et $E_{p,\left\{ r\right\} }^{\left[n,2\right]}$ sont évanescents,
montrons que c'est aussi le cas pour les autres polynômes de
l'énoncé.

Pour $p\geq2$ et $r\geq0$ on a $\partial_{x}\left(x^{p}\bigl(\bigl(x^{\left\{ r\right\} }y\bigr)y\bigr)\right)=t\left(\partial_{x}\left(x^{p}\right)+\partial_{x}\bigl(x^{\left\{ r\right\} }y\bigr)\right)$
et $\partial_{y}\left(x^{p}\bigl(\bigl(x^{\left\{ r\right\} }y\bigr)y\bigr)\right)=t\partial_{y}\left(\bigl(x^{\left\{ r\right\} }y\bigr)y\right)$
et avec les relations (\ref{eq:Dx(x^p*y)}), (\ref{eq:Dx(x^=00007Bp=00007D*y)})
et (\ref{eq:Dx(x^=00007Br=00007D*y*y)}) on obtient:
\begin{align*}
\partial_{x}\left(x^{p}\bigl(\bigl(x^{\left\{ r\right\} }y\bigr)y\bigr)\right) & =2t^{p}+\sum_{i=2}^{p-1}t^{i}+\sum_{i=3}^{r+2}t^{i}, & \partial_{y}\left(x^{p}\bigl(\bigl(x^{\left\{ r\right\} }y\bigr)y\bigr)\right) & =\sum_{i=2}^{r+2}t^{i}.
\end{align*}

Pour tout $r,s\geq0$ on a $\partial_{x}\left(x^{\left\{ r\right\} }\bigl(\bigl(x^{\left\{ s\right\} }y\bigr)y\bigr)\right)=t\left(1+\partial_{x}\left(x^{\left\{ r-1\right\} }\bigl(\bigl(x^{\left\{ s\right\} }y\bigr)y\bigr)\right)\right)$
on en déduit que $\partial_{x}\left(x^{\left\{ r\right\} }\bigl(\bigl(x^{\left\{ s\right\} }y\bigr)y\bigr)\right)=\sum_{i=1}^{r}t^{i}+t^{r}\partial_{x}\left(\bigl(x^{\left\{ s\right\} }y\bigr)y\right)$
et $\partial_{y}\left(x^{\left\{ r\right\} }\bigl(\bigl(x^{\left\{ s\right\} }y\bigr)y\bigr)\right)=t\partial_{y}\left(x^{\left\{ r-1\right\} }\bigl(\bigl(x^{\left\{ s\right\} }y\bigr)y\bigr)\right)$
d'où $\partial_{y}\left(x^{\left\{ r\right\} }\bigl(\bigl(x^{\left\{ s\right\} }y\bigr)y\bigr)\right)=t^{r}\partial_{y}\left(\bigl(x^{\left\{ s\right\} }y\bigr)y\right)$,
en se servant des relations (\ref{eq:Dx(x^=00007Br=00007D*y*y)})
on a:
\begin{align*}
\partial_{x}\left(x^{\left\{ r\right\} }\bigl(\bigl(x^{\left\{ s\right\} }y\bigr)y\bigr)\right) & =\sum_{i=1}^{r+s+1}t^{i}-t^{r+1}, & \partial_{y}\left(x^{\left\{ r\right\} }\bigl(\bigl(x^{\left\{ s\right\} }y\bigr)y\bigr)\right) & =t^{r+s+1}+t^{r+1}.
\end{align*}

Pour $p\geq2$ et $r\geq0$ on a $\partial_{x}\left(x^{p}\bigl(x^{\left\{ r\right\} }y^{2}\bigr)\right)=t\left(\partial_{x}\left(x^{p}\right)+\partial_{x}\left(x^{\left\{ r\right\} }y^{2}\right)\right)$
et $\partial_{y}\left(x^{p}\bigl(x^{\left\{ r\right\} }y^{2}\bigr)\right)=t\partial_{y}\left(x^{\left\{ r\right\} }y^{2}\right)$
on en déduit avec les relations (\ref{eq:Dx(x^=00007Br=00007D*y2)}):
\begin{align*}
\partial_{x}\left(x^{p}\bigl(\bigl(x^{\left\{ r\right\} }y\bigr)y\bigr)\right) & =\sum_{i=1}^{r+s+1}t^{i}-t^{r+1}, & \partial_{y}\left(x^{p}\bigl(\bigl(x^{\left\{ r\right\} }y\bigr)y\bigr)\right) & =t^{r+s+1}+t^{r+1}.
\end{align*}

Pour tout entier $p,q\geq2$ on a $\partial_{x}\left(\left(x^{p}y\right)\left(x^{q}y\right)\right)=t\left(\partial_{x}\left(x^{p}y\right)+\partial_{x}\left(x^{q}y\right)\right)=t^{2}\left(\partial_{x}\left(x^{p}\right)+\partial_{x}\left(x^{q}\right)\right)$
et $\partial_{y}\left(\left(x^{p}y\right)\left(x^{q}y\right)\right)=t\left(\partial_{y}\left(x^{p}y\right)+\partial_{y}\left(x^{q}y\right)\right)=2t^{2}\partial_{y}\left(y\right)$
on en déduit avec la relation (\ref{eq:Dx(x^k)}) que 
\begin{align*}
\partial_{x}\left(\left(x^{p}y\right)\left(x^{q}y\right)\right) & =2t^{p+1}+2t^{q+1}+\sum_{i=3}^{p}t^{i}+\sum_{i=3}^{q}t^{i}, & \partial_{y}\left(\left(x^{p}y\right)\left(x^{q}y\right)\right) & =2t^{2}.
\end{align*}

Soient $p\geq2$ et $r\geq0$, on a $\partial_{x}\left(\left(x^{p}y\right)\left(x^{\left\{ r\right\} }y\right)\right)=t\left(\partial_{x}\left(x^{p}y\right)+\partial_{x}\left(x^{\left\{ r\right\} }y\right)\right)$
et $\partial_{y}\left(\left(x^{p}y\right)\left(x^{\left\{ r\right\} }y\right)\right)=t\left(\partial_{y}\left(x^{p}y\right)+\partial_{y}\left(x^{\left\{ r\right\} }y\right)\right)$,
en utilisant les relations (\ref{eq:Dx(x^p*y)}) et (\ref{eq:Dx(x^=00007Bp=00007D*y)})
on obtient:
\begin{align*}
\partial_{x}\left(\left(x^{p}y\right)\left(x^{\left\{ r\right\} }y\right)\right) & =2t^{p+1}+\sum_{i=3}^{p}t^{i}+\sum_{i=2}^{r+1}t^{i}, & \partial_{y}\left(\left(x^{p}y\right)\left(x^{\left\{ r\right\} }y\right)\right) & =t^{r+1}+t^{2}.
\end{align*}

Si $r,s\geq0$, de $\partial_{x}\left(\bigl(x^{\left\{ r\right\} }y\bigr)\bigl(x^{\left\{ s\right\} }y\bigr)\right)=t\left(\partial_{x}\left(x^{\left\{ r\right\} }y\right)+\partial_{x}\left(x^{\left\{ r\right\} }y\right)\right)$
et $\partial_{y}\left(\bigl(x^{\left\{ r\right\} }y\bigr)\bigl(x^{\left\{ s\right\} }y\bigr)\right)=t\left(\partial_{y}\left(x^{\left\{ r\right\} }y\right)+\partial_{y}\left(x^{\left\{ r\right\} }y\right)\right)$
et de la relation (\ref{eq:Dx(x^p*(x^=00007Br=00007D*y))}) on
déduit
\begin{align*}
\partial_{x}\left(\left(x^{\left\{ r\right\} }y\right)\left(x^{\left\{ s\right\} }y\right)\right) & =\sum_{i=2}^{r+1}t^{i}+\sum_{i=2}^{s+1}t^{i}, & \partial_{y}\left(\left(x^{\left\{ r\right\} }y\right)\left(x^{\left\{ s\right\} }y\right)\right) & =t^{r+1}+t^{s+1}.
\end{align*}

En utilisant ces résultats et les relations du lemme \ref{lem:Dx_ds_M=00005Bn,2=00005D}
on établit que les polynômes $E_{p,\left\{ r\right\} }^{\left[n,2\right]}$,
$E_{\left\{ r\right\} ,\left\{ s\right\} }^{\left[n,2\right]}$,
$F_{p,\left\{ r\right\} }^{\left[n,2\right]}$, $G_{p,q}^{\left[n,2\right]}$,
$G_{p,\left\{ r\right\} }^{\left[n,2\right]}$ et $G_{\left\{ r\right\} ,\left\{ s\right\} }^{\left[n,2\right]}$
son évanescents.

Soit $w\in\frak{M}\left(x,y\right)_{\left[n,2\right]}$ tel que
$w\neq\left(x^{\left\{ n\right\} }y\right)y$ et $w\neq x^{\left\{ n\right\} }y^{2}$,
montrons que le polynôme $w-\pi\left(w\right)$ est évanescent.
Par récurrence sur le degré $n$ en $x$ de $w$, ce résultat
est vrai pour $n=2$ comme on le voit sur les générateurs des
train identités évanescentes de degré $\left(2,2\right)$ donnés
ci-dessous. Supposons le résultat vrai pour tout $v\in\frak{M}\left(x,y\right)_{\left[p,2\right]}$
et tout $2\leq p<n$. Il existe $u,v\in\mathfrak{M}\left(x,y\right)$
tels que $w=uv$ avec $u\in\frak{M}\left(x,y\right)_{\left[n-k,1\right]}$,
$v\in\frak{M}\left(x,y\right)_{\left[k,1\right]}$ ou $u\in\frak{M}\left(x,y\right)_{\left[n-k\right]}$
et $v\in\frak{M}\left(x,y\right)_{\left[k,2\right]}$ avec $1<k<n$.
On a $\partial_{x}\left(w-\pi\left(w\right)\right)=\partial_{x}\left(uv-\pi\left(u\right)\pi\left(v\right)\right)=t\left(\partial_{x}\left(u-\pi\left(u\right)\right)+\partial_{x}\left(v-\pi\left(v\right)\right)\right)$,
de même on a $\partial_{y}\left(w-\pi\left(w\right)\right)=t\partial_{y}\left(u-\pi\left(u\right)\right)+t\partial_{y}\left(v-\pi\left(v\right)\right)$. 

Par conséquent si $u\in\frak{M}\left(x,y\right)_{\left[n-k,1\right]}$,
$v\in\frak{M}\left(x,y\right)_{\left[k,1\right]}$, il résulte
du théorème \ref{thm:Evanesc_type=00005Bn,1=00005D_id=0000E9al}
que le polynôme $w-\pi\left(w\right)$ est évanescent. 

Dans le cas $u\in\frak{M}\left(x,y\right)_{\left[n-k\right]}$
et $v\in\frak{M}\left(x,y\right)_{\left[k,2\right]}$, avec le
théorème \ref{thm:Evanesc_type=00005Bn=00005D_id=0000E9al} on
a que $u-\pi\left(u\right)$ est évanescent et par hypothèse
de récurrence il en est de même du polynôme $v-\pi\left(v\right)$. 

Il est clair que pour tout $w\in\frak{M}\left(x,y\right)_{\left[n,2\right]}$
on a $\pi\left(w\right)\in\mathbb{Z}\left\langle \mathscr{F}\right\rangle $
alors par unicité du polynôme $P_{w}$ on a $\pi\left(w\right)=P_{w}$.
\end{proof}
En appliquant ce théorème on obtient les générateurs des train
identités évanescentes:
\begin{align*}
 & \text{-- de degré }\left(2,2\right)\\
 & \;x^{2}y^{2}-2\left(xy\right)y+y^{2};\\
 & \left(xy\right)^{2}-2\left(xy\right)y+y^{2};\\
 & \;x\left(\left(xy\right)y\right)-\left(x\left(xy\right)\right)y+\left(xy\right)y-xy^{2};\\
 & \left(x^{2}y\right)y-2\left(x\left(xy\right)\right)y+\left(xy\right)y+x\left(xy^{2}\right)-xy^{2}.\\
\\
 & \text{-- de degré }\left(3,2\right)\\
 & \;x^{2}\left(\left(xy\right)y\right)-\left(x\left(xy\right)\right)y-\left(xy\right)y+y^{2};\\
 & \;x\left(xy\right)^{2}-2\left(x\left(xy\right)\right)y+2\left(xy\right)y-xy^{2};\\
 & \;x\left(x^{2}y^{2}\right)-2\left(x\left(xy\right)\right)y+2\left(xy\right)y-xy^{2};\\
 & \left(x\left(xy\right)\right)\left(xy\right)-\left(x\left(xy\right)\right)y-\left(xy\right)y+y^{2};\\
 & \;x^{3}y^{2}-2\left(x\left(xy\right)\right)y+x\left(xy^{2}\right)-xy^{2}+y^{2};\\
 & \;x^{2}\left(xy^{2}\right)-2\left(xy\right)y-x\left(xy^{2}\right)+xy^{2}+y^{2};\\
 & \;x\left(\left(x\left(xy\right)\right)y\right)-x\left(x\left(xy\right)\right)y+\left(xy\right)y-xy^{2};\\
 & \left(xy\right)\left(x^{2}y\right)-2\left(x\left(xy\right)\right)y+x\left(xy^{2}\right)-xy^{2}+y^{2};\\
 & \left(x^{2}\left(xy\right)\right)y-3\left(x\left(xy\right)\right)y+2\left(xy\right)y+x\left(xy^{2}\right)-xy^{2};\\
 & \;x\left(x\left(\left(xy\right)y\right)\right)-\left(x\left(x\left(xy\right)\right)\right)y+\left(x\left(xy\right)\right)y-x\left(xy^{2}\right);\\
 & \left(x^{3}y\right)y-2\left(x\left(x\left(xy\right)\right)\right)y+\left(xy\right)y+x\left(x\left(xy^{2}\right)\right)-xy^{2};\\
 & \left(x\left(x^{2}y\right)\right)y-2\left(x\left(x\left(xy\right)\right)\right)y+\left(x\left(xy\right)\right)y+x\left(x\left(xy^{2}\right)\right)-x\left(xy^{2}\right);\\
 & \;x\left(\left(x^{2}y\right)y\right)-2x\left(x\left(xy\right)\right)y+\left(x\left(xy\right)\right)y+\left(xy\right)y+x\left(x\left(xy^{2}\right)\right)-x\left(xy^{2}\right)-xy^{2}.\\
\\
 & \text{-- de degré }\left(4,2\right)\\
 & \;x^{2}\left(xy\right)^{2}-2\left(x\left(xy\right)\right)y+y^{2};\\
 & \;x^{3}\left(xy^{2}\right)-2\left(x\left(xy\right)\right)y+y^{2};\\
 & \left(x\left(xy\right)\right)^{2}-2\left(x\left(xy\right)\right)y+y^{2};\\
 & \;x^{2}\left(x^{2}y^{2}\right)-2\left(x\left(xy\right)\right)y+y^{2};\\
 & \;x^{2}(\left(x\left(xy\right)y\right)-\left(x\left(x\left(xy\right)\right)\right)y-\left(xy\right)y+y^{2};\\
 & \;x^{4}y^{2}-2\left(x\left(x\left(xy\right)\right)\right)y+x\left(x\left(xy^{2}\right)\right)-xy^{2}+y^{2};\\
 & \;x^{2}\left(x\left(xy^{2}\right)\right)-2\left(xy\right)y-x\left(x\left(xy^{2}\right)\right)+xy^{2}+y^{2};\\
 & \left(x^{3}y\right)\left(xy\right)-2\left(x\left(x\left(xy\right)\right)\right)y+x\left(x\left(xy^{2}\right)\right)-xy^{2}+y^{2};\\
 & \;x^{3}\left(\left(xy\right)y\right)-3\left(x\left(xy\right)\right)y+\left(xy\right)y+x\left(xy^{2}\right)-xy^{2}+y^{2};\\
 & \left(x^{2}y\right)^{2}-4\left(x\left(xy\right)\right)y+2\left(xy\right)y+2x\left(xy^{2}\right)-2xy^{2}+y^{2};\\
 & \left(x^{2}x^{2}\right)y^{2}-4\left(x\left(xy\right)\right)y+2x\left(xy\right)+2x\left(xy^{2}\right)-2xy^{2}+y^{2};\\
 & \left(x^{2}y\right)\left(x\left(xy\right)\right)-3\left(x\left(xy\right)\right)y+\left(xy\right)y+x\left(xy^{2}\right)-xy^{2}+y^{2};\\
 & \left(x^{2}\left(xy\right)\right)\left(xy\right)-3\left(x\left(xy\right)\right)y+\left(xy\right)y+x\left(xy^{2}\right)-xy^{2}+y^{2};\\
 & \left(x^{4}y\right)y-2\left(x\left(x\left(x\left(xy\right)\right)\right)\right)y+x\left(x\left(x\left(xy^{2}\right)\right)\right)+\left(xy\right)y-xy^{2};\\
 & \left(x\left(x\left(xy\right)\right)\right)\left(xy\right)-\left(x\left(x\left(xy\right)\right)\right)y-\left(xy\right)y+y^{2};\\
 & \;x\left(y\left(x\left(x\left(xy\right)\right)\right)\right)-\left(x\left(x\left(x\left(xy\right)\right)\right)\right)y+\left(xy\right)y-xy^{2};\\
 & \;x\left(x\left(xy\right)^{2}\right)-2\left(x\left(x\left(xy\right)\right)\right)y+2\left(x\left(xy\right)\right)y-x\left(xy^{2}\right);\\
 & \;x\left(x\left(x^{2}y^{2}\right)\right)-2\left(x\left(x\left(xy\right)\right)\right)y+2\left(x\left(xy\right)\right)y-x\left(xy^{2}\right);\\
 & \;x\left(x\left(y\left(x\left(xy\right)\right)\right)\right)-\left(x\left(x\left(x\left(xy\right)\right)\right)\right)y+\left(x\left(xy\right)\right)y-x\left(xy^{2}\right);\\
 & \;x\left(x^{2}\left(\left(xy\right)y\right)\right)-\left(x\left(x\left(xy\right)\right)\right)y-\left(x\left(xy\right)\right)y+2\left(xy\right)y-xy^{2};\\
 & \;x\left(\left(xy\right)\left(x\left(xy\right)\right)\right)-\left(x\left(x\left(xy\right)\right)\right)y-\left(x\left(xy\right)\right)y+2\left(xy\right)y-xy^{2};\\
 & \;x\left(x\left(x\left(\left(xy\right)y\right)\right)\right)-\left(x\left(x\left(x\left(xy\right)\right)\right)\right)y+\left(x\left(x\left(xy\right)\right)\right)y-x\left(x\left(xy^{2}\right)\right);\\
 & \;x\left(x^{3}y^{2}\right)-2\left(x\left(x\left(xy\right)\right)\right)y+2\left(xy\right)y+x\left(x\left(xy^{2}\right)\right)-x\left(xy^{2}\right)-xy^{2};\\
 & \;x\left(x^{2}\left(xy^{2}\right)\right)-2\left(x\left(xy\right)\right)y+2\left(xy\right)y-x\left(x\left(xy^{2}\right)\right)+x\left(xy^{2}\right)-xy^{2};\\
 & \left(x\left(x^{2}\left(xy\right)\right)\right)y-3\left(x\left(x\left(xy\right)\right)\right)y+2\left(x\left(xy\right)\right)y+x\left(x\left(xy^{2}\right)\right)-x\left(xy^{2}\right);\\
 & \left(x^{3}\left(xy\right)\right)y-2\left(x\left(x\left(xy\right)\right)\right)y-\left(x\left(xy\right)\right)y+2\left(xy\right)y+x\left(x\left(xy^{2}\right)\right)-xy^{2};\\
 & \left(x^{2}\left(x\left(xy\right)\right)\right)y-\left(x\left(x\left(xy\right)\right)\right)y-2\left(x\left(xy\right)\right)y+2\left(xy\right)y+x\left(xy^{2}\right)-xy^{2};\\
 & \left(\left(x^{2}x^{2}\right)y\right)y-2\left(x\left(x\left(xy\right)\right)\right)y-\left(x\left(xy\right)\right)y+2\left(xy\right)y+x\left(x\left(xy^{2}\right)\right)-xy^{2};\\
 & \left(x\left(x^{3}y\right)\right)y-2\left(x\left(x\left(x\left(xy\right)\right)\right)\right)y+x\left(x\left(x\left(xy^{2}\right)\right)\right)+\left(x\left(xy\right)\right)y-x\left(xy^{2}\right);\\
 & \;x\left(\left(xy\right)\left(x^{2}y\right)\right)-2\left(x\left(x\left(xy\right)\right)\right)y+2\left(xy\right)y+x\left(x\left(xy^{2}\right)\right)-x\left(xy^{2}\right)-xy^{2};\\
 & \left(\left(x^{2}x^{2}\right)y\right)y-2\left(x\left(x\left(xy\right)\right)\right)y-\left(x\left(xy\right)\right)y+2\left(xy\right)y+x\left(x\left(xy^{2}\right)\right)-xy^{2};\\
 & \;x^{2}\left(x\left(\left(xy\right)y\right)\right)-\left(x\left(x\left(xy\right)\right)\right)y+\left(x\left(xy\right)\right)y-2\left(xy\right)y-x\left(xy^{2}\right)+xy^{2}+y^{2};\\
 & \;x^{2}\left(\left(x^{2}y\right)y\right)-2\left(x\left(x\left(xy\right)\right)\right)y+\left(x\left(xy\right)\right)y-\left(xy\right)y+x\left(x\left(xy^{2}\right)\right)-x\left(xy^{2}\right)+y^{2};\\
 & \left(xy\right)\left(x\left(x^{2}y\right)\right)-2\left(x\left(x\left(xy\right)\right)\right)y+\left(x\left(xy\right)\right)y-\left(xy\right)y+x\left(x\left(xy^{2}\right)\right)-x\left(xy^{2}\right)+y^{2};\\
 & \left(x\left(x\left(x^{2}y\right)\right)\right)y-2\left(x\left(x\left(x\left(xy\right)\right)\right)\right)y+\left(x\left(x\left(xy\right)\right)\right)y+x\left(x\left(x\left(xy^{2}\right)\right)\right)-x\left(x\left(xy^{2}\right)\right);\\
 & \;x\left(\left(x^{2}\left(xy\right)\right)y\right)-3\left(x\left(x\left(xy\right)\right)\right)y+2\left(x\left(xy\right)\right)y+\left(xy\right)y+x\left(x\left(xy^{2}\right)\right)-x\left(xy^{2}\right)-xy^{2};\\
 & \;x\left(\left(x^{3}y\right)y\right)-2\left(x\left(x\left(x\left(xy\right)\right)\right)\right)y+\left(x\left(xy\right)\right)y+\left(xy\right)y+x\left(x\left(x\left(xy^{2}\right)\right)\right)-x\left(xy^{2}\right)-xy^{2};\\
 & \;x\left(y\left(x\left(x^{2}y\right)\right)\right)-2\left(x\left(x\left(x\left(xy\right)\right)\right)\right)y+\left(x\left(x\left(xy\right)\right)\right)y+\left(xy\right)y+x\left(x\left(x\left(xy^{2}\right)\right)\right)\\
 & \qquad-x\left(x\left(xy^{2}\right)\right)-xy^{2};\\
 & \;x\left(x\left(\left(x^{2}y\right)y\right)\right)-2\left(x\left(x\left(x\left(xy\right)\right)\right)\right)y+\left(x\left(x\left(xy\right)\right)\right)y+\left(x\left(xy\right)\right)y+x\left(x\left(x\left(xy^{2}\right)\right)\right)\\
 & \qquad-x\left(x\left(xy^{2}\right)\right)-x\left(xy^{2}\right).
\end{align*}

\medskip{}

\subsubsection{Identités homogènes évanescentes de type $\left[n,2\right]$.}
\begin{prop}
Pour $n\geq2$, l'espace des identités homogènes évanescentes
de type $\left[n,2\right]$ est engendré par au moins $W_{\left[n,2\right]}-2n$
identités homogènes évanescentes.
\end{prop}

\begin{proof}
Soit $n\geq2$, on note $N=W_{\left[n,2\right]}$. Soient $\frak{M}\left(x,y\right)_{\left[n,2\right]}=\left\{ w_{k};1\leq k\leq N\right\} $
et $f=\sum_{i=1}^{N}\alpha_{i}w_{i}$, on cherche $\left(\alpha_{i}\right)_{1\leq i\leq N}\in K^{N}$
tel que $f\left(1,1\right)=1$ et $\partial_{x}f=\partial_{y}f=0$
où $\partial_{x}f=\sum_{i=1}^{N}\alpha_{i}\partial_{x}w_{i}$
et $\partial_{y}f=\sum_{i=1}^{N}\alpha_{i}\partial_{y}w_{i}$.
Pour tout $1\leq i\leq N$ on a $\partial_{x}w_{i},\partial_{y}w_{i}\in K\left[t\right]$
avec $\left|\partial_{x}w_{i}\right|\leq n+1$ et $\left|\partial_{y}w_{i}\right|\leq n+1$,
de plus il existe $1\leq i,j\leq N$ tels que $\left|\partial_{x}w_{i}\right|=n+1$
et $\left|\partial_{y}w_{j}\right|=n+1$, par conséquent les
solutions $\left(\alpha_{i}\right)_{1\leq i\leq N}$ des équations
$\partial_{x}f=0$ et $\partial_{y}f=0$ sont solutions de deux
systèmes linéaires de $n+1$ équations à $N$ inconnues. De $f\left(1,1\right)=1$
on déduit que $\sum_{i=1}^{N}\alpha_{i}=0$, par conséquent le
système d'équations $\partial_{x}f=0$ est de rang $\leq n$,
il en est de même du système $\partial_{y}f=0$, donc le système
d'équations $\partial_{x}f=\partial_{y}f=0$ est de rang $\leq2n$
par conséquent l'espace des solutions est de dimension $\geq W_{\left[n,2\right]}-2n$.
\end{proof}
En utilisant la méthode utilisée dans la démonstration on obtient
les générateurs des polynômes homogènes évanescents:
\begin{align*}
 & \text{-- de type }\left[2,2\right]\\
 & \;x^{2}y^{2}-\left(xy\right)^{2};\\
 & \left(x^{2}y\right)y+x\left(xy^{2}\right)-x\left(\left(xy\right)y\right)-\left(x\left(xy\right)\right)y;\\
\\
 & \text{-- de type }\left[3,2\right]\\
 & \;x^{3}y^{2}-\left(xy\right)\left(x^{2}y\right);\\
 & \;x\left(x^{2}y^{2}\right)-x\left(xy\right)^{2};\\
 & \;x^{2}\left(\left(xy\right)y\right)-\left(xy\right)\left(x\left(xy\right)\right);\\
 & \;x^{3}y^{2}+x^{2}\left(xy^{2}\right)-2\left(xy\right)\left(x\left(xy\right)\right);\\
 & \;x^{3}y^{2}-\left(x^{3}y\right)y+\left(x\left(x^{2}y\right)\right)y-\left(xy\right)\left(x\left(xy\right)\right);\\
 & \left(x^{3}y\right)y-\left(x^{2}\left(xy\right)\right)y+x\left(x^{2}y^{2}\right)-x\left(\left(x^{2}y\right)y\right);\\
 & \left(x^{3}y\right)y+x\left(x\left(xy^{2}\right)\right)-x\left(\left(x\left(xy\right)\right)y\right)-\left(x\left(x\left(xy\right)\right)\right)y;\\
 & \left(x^{2}\left(xy\right)\right)y-\left(x\left(x\left(xy\right)\right)\right)y-x\left(x^{2}y^{2}\right)+x\left(x\left(\left(xy\right)y\right)\right);\\
 & \;x^{3}y^{2}+x\left(x^{2}y^{2}\right)-\left(x^{2}\left(xy\right)\right)y-\left(x\left(xy\right)\right)\left(xy\right)-x\left(y\left(x\left(xy\right)\right)\right)+\left(x\left(x\left(xy\right)\right)\right)y;\\
\\
 & \text{-- de type }\left[4,2\right]\\
 & \left(x\left(xy\right)\right)^{2}-x^{2}\left(x^{2}y^{2}\right);\\
 & \;x^{3}\left(xy^{2}\right)-x^{2}\left(x^{2}y^{2}\right);\\
 & \;x^{2}\left(xy\right)^{2}-x^{2}\left(x^{2}y^{2}\right);\\
 & \;x\left(x^{3}y^{2}\right)-x\left(\left(xy\right)\left(x^{2}y\right)\right);\\
 & \left(x^{3}\left(xy\right)\right)y-\left(x^{2}\left(x^{2}y\right)\right)y;\\
 & \;x\left(x\left(x^{2}y^{2}\right)\right)-x\bigl(x\left(xy\right)^{2}\bigr);\\
 & \left(x^{2}y\right)\left(x\left(xy\right)\right)-\left(xy\right)\left(x^{2}\left(xy\right)\right);\\
 & \;x^{2}\left(\left(x\left(xy\right)\right)y\right)-\left(xy\right)\left(x\left(x\left(xy\right)\right)\right);\\
 & \;x\left(x^{2}\left(\left(xy\right)y\right)\right)-x\left(\left(xy\right)\left(x\left(xy\right)\right)\right);\\
 & \;x\left(x^{3}y^{2}\right)-2x\left(\left(xy\right)\left(x\left(xy\right)\right)\right)+x\left(x^{2}\left(xy^{2}\right)\right);\\
 & \;x^{2}\left(x^{2}y^{2}\right)-x\left(x^{3}y^{2}\right)-x^{2}\left(x\left(xy^{2}\right)\right)+x\left(x\left(x^{2}y^{2}\right)\right);\\
 & \;x\left(x^{3}y^{2}\right)-x^{2}\left(x^{2}y^{2}\right)-x\left(\left(x^{2}\left(xy\right)\right)y\right)+x^{2}\left(\left(x\left(xy\right)\right)y\right);\\
 & \left(x^{4}y\right)y+x\left(\left(xy\right)\left(x\left(xy\right)\right)\right)-\left(x^{2}\left(x\left(xy\right)\right)\right)y-x\left(\left(x^{3}y\right)y\right);\\
 & \left(x\left(x^{3}y\right)\right)y+x\left(\left(xy\right)\left(x\left(xy\right)\right)\right)-x\left(x^{3}y^{2}\right)-\left(x\left(x\left(x^{2}y\right)\right)\right)y;\\
 & \;x\left(x^{3}y^{2}\right)+x\left(\left(x\left(x^{2}y\right)\right)y\right)-x\left(\left(x^{3}y\right)y\right)-x\left(\left(xy\right)\left(x\left(xy\right)\right)\right);\\
 & \;x\left(x^{3}y^{2}\right)+x^{2}\left(x\left(xy^{2}\right)\right)-x^{2}\left(x\left(\left(xy\right)y\right)\right)-x\left(\left(xy\right)\left(x\left(xy\right)\right)\right);\\
 & \;x^{2}\left(x^{2}y^{2}\right)-x^{2}\left(\left(x^{2}y\right)y\right)-x\left(\left(xy\right)\left(x\left(xy\right)\right)\right)+x\left(\left(x^{2}\left(xy\right)\right)y\right);\\
 & \left(\left(x^{2}x^{2}\right)y\right)y-\left(x^{3}\left(xy\right)\right)y-x\left(y\left(x^{2}\left(xy\right)\right)\right)+x\left(\left(xy\right)\left(x\left(xy\right)\right)\right);\\
 & \;x^{2}\left(x^{2}y^{2}\right)-\left(xy\right)\left(x\left(x^{2}y\right)\right)-x\left(\left(xy\right)\left(x\left(xy\right)\right)\right)+x\left(y\left(x^{2}\left(xy\right)\right)\right);\\
 & \left(x\left(x^{3}y\right)\right)y-x\left(\left(x^{3}y\right)y\right)+x\left(\left(x\left(x\left(xy\right)\right)\right)y\right)-\left(x\left(x\left(x\left(xy\right)\right)\right)\right)y;\\
 & \;x^{4}y^{2}+2x\left(x^{3}y^{2}\right)-2x^{2}\left(x^{2}y^{2}\right)-2x\left(\left(x^{2}\left(xy\right)\right)y\right)+x^{2}\left(x\left(xy^{2}\right)\right);\\
 & \;x\left(\left(x^{3}y\right)y\right)+x\left(x\left(x\left(xy^{2}\right)\right)\right)-x\left(x\left(y\left(x\left(xy\right)\right)\right)\right)-x\left(y\left(x\left(x\left(xy\right)\right)\right)\right);\\
 & \left(x^{3}y\right)\left(xy\right)+2x\left(x^{3}y^{2}\right)+x^{2}\left(x\left(xy^{2}\right)\right)-2x^{2}\left(x^{2}y^{2}\right)-2x\left(\left(x^{2}\left(xy\right)\right)y\right);\\
 & \;x\left(x^{3}y^{2}\right)+\left(x^{2}\left(x\left(xy\right)\right)\right)y+x\left(\left(x^{2}\left(xy\right)\right)y\right)-\left(\left(x^{2}x^{2}\right)y\right)y-2x\left(\left(xy\right)\left(x\left(xy\right)\right)\right);\\
 & \;x\left(x^{3}y^{2}\right)+x^{2}\left(x\left(xy^{2}\right)\right)+x\left(\left(x^{3}y\right)y\right)-x^{2}\left(x^{2}y^{2}\right)-x\left(x\left(\left(x^{2}y\right)y\right)\right)-x\left(\left(x^{2}\left(xy\right)\right)y\right);\\
 & \;x^{2}\left(x^{2}y^{2}\right)+\left(x\left(x^{2}\left(xy\right)\right)\right)y+x\left(\left(x^{2}\left(xy\right)\right)y\right)-x\left(x^{3}y^{2}\right)-x^{2}\left(x\left(xy^{2}\right)\right)-\left(\left(x^{2}x^{2}\right)y\right)y;\\
 & \;2x\left(x^{3}y^{2}\right)+x^{3}\left(\left(xy\right)y\right)+x^{2}\left(x\left(xy^{2}\right)\right)-2x^{2}\left(x^{2}y^{2}\right)-x\left(\left(x^{2}\left(xy\right)\right)y\right)-x\left(\left(xy\right)\left(x(xy\right)\right);\\
 & \left(x^{2}y\right)^{2}+4x\left(x^{3}y^{2}\right)+2x^{2}\left(x\left(xy^{2}\right)\right)-3x^{2}\left(x^{2}y^{2}\right)-2x\left(\left(x^{2}\left(xy\right)\right)y\right)-2x\left(\left(xy\right)\left(x\left(xy\right)\right)\right);\\
 & \;x^{2}\left(x^{2}y^{2}\right)+x\left(\left(x^{2}\left(xy\right)\right)y\right)+x\left(x\left(x\left(\left(xy\right)y\right)\right)\right)-x\left(x^{3}y^{2}\right)-x^{2}\left(x\left(xy^{2}\right)\right)-x\left(\left(x\left(x\left(xy\right)\right)\right)y\right);\\
 & \left(x^{2}x^{2}\right)y^{2}+4x\left(x^{3}y^{2}\right)+2x^{2}\left(x\left(xy^{2}\right)\right)-3x^{2}\left(x^{2}y^{2}\right)-2x\left(\left(x^{2}\left(xy\right)\right)y\right)-2x\left(\left(xy\right)\left(x\left(xy\right)\right)\right);\\
 & \;2x\left(x^{3}y^{2}\right)+x^{2}\left(x\left(xy^{2}\right)\right)+\left(x^{2}y\right)\left(x\left(xy\right)\right)-2x^{2}\left(x^{2}y^{2}\right)-x\left(\left(x^{2}\left(xy\right)\right)y\right)-x\left(\left(xy\right)\left(x\left(xy\right)\right)\right);\\
 & \;x^{2}\left(x^{2}y^{2}\right)+\left(x\left(x^{3}y\right)\right)y+2x\left(\left(x^{2}\left(xy\right)\right)y\right)-x\left(x^{3}y^{2}\right)-x^{2}\left(x\left(xy^{2}\right)\right)-\left(\left(x^{2}x^{2}\right)y\right)y\\
 & \qquad-x\left(\left(x^{3}y\right)y\right);\\
 & \;x^{2}\left(x^{2}y^{2}\right)+x\left(\left(x^{2}\left(xy\right)\right)y\right)+x\left(\left(xy\right)\left(x\left(xy\right)\right)\right)+x\left(x\left(y\left(x\left(xy\right)\right)\right)\right)-2x\left(x^{3}y^{2}\right)\\
 & \qquad-x^{2}\left(x\left(xy^{2}\right)\right)-x\left(y\left(x\left(x\left(xy\right)\right)\right)\right);
\end{align*}

\subsection{Identités évanescentes train de degré $\left(n,1,1\right)$
et homogènes de type $\left[n,1,1\right]$.}

\textcompwordmark{}

On peut écrire tout $w\in\frak{M}\left(x,y,z\right)_{\left[n,1,1\right]}$
sous la forme $w=w_{1}w_{2}$ avec $\left(w_{1},w_{2}\right)\in\frak{M}\left(x\right)_{\left[n,-i\right]}\times\frak{M}\left(x,y,z\right)_{\left[i,1,1\right]}$
pour $0\leq i\leq n-1$, ou $\left(w_{1},w_{2}\right)\in\frak{M}\left(x,y\right)_{\left[n-i,1\right]}\times\frak{M}\left(x,y,z\right)_{\left[i,0,1\right]}$
avec $0\leq i\leq n$, comme $\sharp\frak{M}\left(x,y,z\right)_{\left[i,0,1\right]}=\sharp\frak{M}\left(x,z\right)_{\left[i,1\right]}$
on a donc
\begin{align*}
W_{\left[n,1,1\right]} & =\sum_{i=0}^{n-1}W_{\left[n-i\right]}W_{\left[i,1,1\right]}+\sum_{i=0}^{n}W_{\left[n-i,1\right]}W_{\left[i,1\right]},\quad W_{\left[0,1,1\right]}=1.
\end{align*}

Les premières valeurs de $W_{\left[n,1,1\right]}$ sont
\begin{center}
\begin{tabular}{c|c|c|c|c|c|c|c|c|c|c|c}
$n$ & 0 & 1 & 2 & 3 & 4 & 5 & 6 & 7 & 8 & 9 & 10\tabularnewline
\hline 
$W_{\left[n,1,1\right]}$ & 1 & 3 & 9 & 25 & 69 & 186 & 497 & 1314 & 3453 & 9019 & 23454\tabularnewline
\end{tabular}
\par\end{center}

\begin{center}
\medskip{}
\par\end{center}

\subsubsection{Train identités évanescentes de degré $\left(n,1,1\right)$.}

\textcompwordmark{}\medskip{}

Pour tout $f\in K\left(x,y,z\right)$ et tout entier $r\geq1$,
on définit $x^{\left\{ r\right\} }f=x\left(x^{\left\{ r-1\right\} }f\right)$
où $x^{\left\{ 0\right\} }f=f$. 
\begin{lem}
\label{lem:Dx_ds_M=00005Bn,1,1=00005D}Pour tout entier $r\geq0$
on a:\medskip{}

\hspace{3cm}%
\begin{tabular}{c|c|c|cc}
\diagbox{$w$\ }{} & $\partial_{x}\left(w\right)$ & $\partial_{y}\left(w\right)$ & $\partial_{z}\left(w\right)$ & \tabularnewline
\cline{1-4} 
$x^{\left\{ r\right\} }\left(\left(xy\right)z\right)$ & $\overset{r}{\underset{i=1}{\sum}}t^{i}+t^{r+2},$ & $t^{r+2},$ & $t^{r+1};$ & %
\begin{tabular}{c}
\tabularnewline
\tabularnewline
\tabularnewline
\end{tabular}\tabularnewline
\cline{1-4} 
$x^{\left\{ r\right\} }\left(\left(xz\right)y\right)$ & $\overset{r}{\underset{i=1}{\sum}}t^{i}+t^{r+2},$ & $t^{r+1},$ & $t^{r+2};$ & %
\begin{tabular}{c}
\tabularnewline
\tabularnewline
\tabularnewline
\end{tabular}\tabularnewline
\cline{1-4} 
$x^{\left\{ r\right\} }\left(yz\right)$ & $\overset{r}{\underset{i=1}{\sum}}t^{i},$ & $t^{r+1},$ & $t^{r+1}.$ & %
\begin{tabular}{c}
\tabularnewline
\tabularnewline
\tabularnewline
\end{tabular}\tabularnewline
\cline{1-4} 
\end{tabular}

\medskip{}
\end{lem}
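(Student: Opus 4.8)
The plan is to prove all nine identities simultaneously by induction on $r\geq 0$, using only the recursive definition $(\ref{eq:Def_de_di})$ of $\partial_x,\partial_y,\partial_z$ and the product rule $\partial_i(uv)=t\bigl(\partial_i u+\partial_i v\bigr)$. The one structural observation that organizes the whole argument is the following: for every $f\in K(x,y,z)$ and every $r\geq 1$, since $x^{\{r\}}f=x\bigl(x^{\{r-1\}}f\bigr)$ and $\partial_x x=1$, $\partial_y x=\partial_z x=0$, one has
\[
\partial_x\bigl(x^{\{r\}}f\bigr)=t\Bigl(1+\partial_x\bigl(x^{\{r-1\}}f\bigr)\Bigr),\qquad
\partial_y\bigl(x^{\{r\}}f\bigr)=t\,\partial_y\bigl(x^{\{r-1\}}f\bigr),\qquad
\partial_z\bigl(x^{\{r\}}f\bigr)=t\,\partial_z\bigl(x^{\{r-1\}}f\bigr).
\]

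First I would settle the base case $r=0$ directly. For $w=(xy)z$ one computes $\partial_x w=t\bigl(\partial_x(xy)+\partial_x z\bigr)=t\cdot t=t^{2}$, $\partial_y w=t\bigl(\partial_y(xy)+0\bigr)=t\cdot t=t^{2}$ and $\partial_z w=t(0+1)=t$; for $w=(xz)y$ the same computation with $y$ and $z$ swapped gives $\partial_x w=t^{2}$, $\partial_y w=t$, $\partial_z w=t^{2}$; and for $w=yz$ one gets $\partial_x w=t(0+0)=0$, $\partial_y w=\partial_z w=t(1+0)=t$. These match the three rows of the table at $r=0$ under the convention that the empty sum $\sum_{i=1}^{0}t^{i}$ equals $0$. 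For the inductive step, assuming the table at $r-1$, the displayed recursion yields for the first row
\[
\partial_x\bigl(x^{\{r\}}((xy)z)\bigr)=t\Bigl(1+\sum_{i=1}^{r-1}t^{i}+t^{r+1}\Bigr)=\sum_{i=1}^{r}t^{i}+t^{r+2},
\]
\[
\partial_y\bigl(x^{\{r\}}((xy)z)\bigr)=t\cdot t^{r+1}=t^{r+2},\qquad
\partial_z\bigl(x^{\{r\}}((xy)z)\bigr)=t\cdot t^{r}=t^{r+1},
\]
the only manipulation being the shift $t\sum_{i=1}^{r-1}t^{i}=\sum_{i=2}^{r}t^{i}$. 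The second row is the image of the first under the exchange $y\leftrightarrow z$ and so needs no separate treatment; the third row follows from the identical computation, starting from $\partial_x(yz)=0$ instead of $\partial_x((xy)z)=t^{2}$.

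An alternative, computation-free route is to invoke Proposition $\ref{prop:Diw=000026arbre}$: the labelled tree $\Psi\bigl(x^{\{r\}}w_0\bigr)$ is obtained from $\Psi(w_0)$ by grafting $r$ successive $x$-leaves along a left spine, so its leaves labelled $x$ sit at heights $1,\dots,r$ together with the $x$-leaves of $w_0$ with heights raised by $r$, and likewise for the $y$- and $z$-leaves; for $w_0\in\{(xy)z,\ (xz)y,\ yz\}$, formula $(\ref{eq:diw=000026arbre})$ then reproduces the table at a glance. Either way, there is no genuine obstacle here: the statement is pure bookkeeping, and the only points deserving a moment's attention are the empty-sum convention at $r=0$ and the index shift in the inductive step.
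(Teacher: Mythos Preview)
Your proof is correct and follows essentially the same approach as the paper: both exploit the recursion $x^{\{r\}}f=x\bigl(x^{\{r-1\}}f\bigr)$ together with the product rule $\partial_i(uv)=t(\partial_i u+\partial_i v)$ to reduce to the base case, then read off the closed forms. Your presentation is slightly more explicit about the base case and the index shift, and you add the tree-based alternative via Proposition~\ref{prop:Diw=000026arbre}, which the paper does not invoke here but which is a perfectly valid shortcut.
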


\begin{proof}
En effet, on a $\partial_{x}\bigl(x^{\left\{ r\right\} }\left(\left(xy\right)z\right)\bigr)=t\left(1+\partial_{x}\bigl(x^{\left\{ r-1\right\} }\left(\left(xy\right)z\right)\bigr)\right)$,
on en déduit récursivement que $\partial_{x}\bigl(x^{\left\{ r\right\} }\left(\left(xy\right)z\right)\bigr)=\sum_{i=1}^{r}t^{i}+t^{r}\partial_{x}\left(\left(xy\right)z\right)$
or $\partial_{x}\left(\left(xy\right)z\right)=t^{2}$. Ensuite,
$\partial_{y}\bigl(x^{\left\{ r\right\} }\left(\left(xy\right)z\right)\bigr)=t\partial_{y}\bigl(x^{\left\{ r-1\right\} }\left(\left(xy\right)z\right)\bigr)$
et $\partial_{z}\bigl(x^{\left\{ r\right\} }\left(\left(xy\right)z\right)\bigr)=t\partial_{z}\bigl(x^{\left\{ r-1\right\} }\left(\left(xy\right)z\right)\bigr)$
d'où l'on déduit que $\partial_{y}\bigl(x^{\left\{ r\right\} }\left(\left(xy\right)z\right)\bigr)=t^{r}\partial_{y}\bigl(\left(xy\right)z\bigr)$
et $\partial_{z}\bigl(x^{\left\{ r\right\} }\left(\left(xy\right)z\right)\bigr)=t^{r}\partial_{z}\bigl(\left(xy\right)z\bigr)$
avec $\partial_{y}\bigl(\left(xy\right)z\bigr)=t^{2}$ et $\partial_{z}\bigl(\left(xy\right)z\bigr)=t$.
On en déduit les résultats concernant les monômes $x^{\left\{ r\right\} }\left(\left(xz\right)y\right)$
par échange des rôles de $y$ et $z$. De $\partial_{x}\left(x^{\left\{ r\right\} }\left(yz\right)\right)=t\left(1+\partial_{x}\left(x^{\left\{ r-1\right\} }\left(yz\right)\right)\right)$
on déduit $\partial_{x}\left(x^{\left\{ r\right\} }\left(yz\right)\right)=\sum_{i=1}^{r}t^{i}$
et de $\partial_{z}\left(x^{\left\{ r\right\} }\left(yz\right)\right)=t\partial_{y}\left(x^{\left\{ r-1\right\} }\left(yz\right)\right)$
il résulte $\partial_{y}\left(x^{\left\{ r\right\} }\left(yz\right)\right)=t^{r}\partial_{y}\left(yz\right)=t^{r+1}$,
on en déduit en échangeant $y$ et $z$ que $\partial_{z}\left(x^{\left\{ r\right\} }\left(yz\right)\right)=t^{r+1}$.
\end{proof}
On considère l'ensemble $\mathscr{F}=\left\{ x^{\left\{ r\right\} }\left(\left(xy\right)z\right),x^{\left\{ r\right\} }\left(\left(xz\right)y\right),x^{\left\{ r\right\} }\left(yz\right);r\geq0\right\} $
et on note $\mathbb{Q}\left\langle \mathscr{F}\right\rangle $
le $\mathbb{Q}$-espace vectoriel engendré par l'ensemble $\mathscr{F}$. 
\begin{prop}
Il n'existe pas de train identité évanescente de degré $\left(1,1,1\right)$.

Pour tout $n\geq2$ et tout $w\in\frak{M}\left(x,y,z\right)_{\left[n,1,1\right]}$
tel que $w\notin\mathscr{F}$, il existe un unique polynôme $P_{w}\in\mathbb{Q}\left\langle \mathscr{F}\right\rangle $
avec $\left|P_{w}\right|_{x}\leq n$ tel que le polynôme $w-P_{w}$
soit une train identité évanescente.
\end{prop}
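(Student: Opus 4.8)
The plan is to follow the pattern already used for types $\left[n\right]$, $\left[n,1\right]$ and $\left[n,2\right]$: prove the two assertions separately, the first by a finite computation and the second by reducing the existence and uniqueness of $P_{w}$ to the resolution of a triangular linear system over $\mathbb{Q}$.

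First, for degree $\left(1,1,1\right)$: the set $\frak{M}\left(x,y,z\right)_{\left[1,1,1\right]}$ consists of the three monomials $\left(xy\right)z$, $\left(xz\right)y$ and $\left(yz\right)x$, so a polynomial of this type is $f=\alpha\left(xy\right)z+\beta\left(xz\right)y+\gamma\left(yz\right)x$. Using the defining relation (\ref{eq:Def_de_di}) I would compute $\partial_{x}f=\left(\alpha+\beta\right)t^{2}+\gamma t$, $\partial_{y}f=\left(\alpha+\gamma\right)t^{2}+\beta t$ and $\partial_{z}f=\left(\beta+\gamma\right)t^{2}+\alpha t$; requiring these three polynomials to vanish forces $\gamma=\beta=\alpha=0$, hence $f=0$. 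So there is no nonzero Peirce-évanescent polynomial — a fortiori no train identity — of degree $\left(1,1,1\right)$.

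Second, fix $n\geq2$ and $w\in\frak{M}\left(x,y,z\right)_{\left[n,1,1\right]}$ with $w\notin\mathscr{F}$. By part c) of Corollary \ref{cor:Prop_de_Di} the polynomials $\partial_{x}w,\partial_{y}w,\partial_{z}w$ have degree $\leq n+1$, and since $\left|w\right|\geq4$ none of them has a constant term by (\ref{eq:diw=000026arbre}); moreover, as $\left|w\right|_{y}=\left|w\right|_{z}=1$, formula (\ref{eq:diw=000026arbre}) shows that $\partial_{y}w=t^{h_{y}}$ and $\partial_{z}w=t^{h_{z}}$ are single monomials, with $1\leq h_{y},h_{z}\leq n+1$. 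I would then look for
\[
P_{w}=\sum_{r=0}^{n-1}\lambda_{r}\,x^{\left\{ r\right\} }\left(\left(xy\right)z\right)+\sum_{r=0}^{n-1}\mu_{r}\,x^{\left\{ r\right\} }\left(\left(xz\right)y\right)+\sum_{r=0}^{n}\nu_{r}\,x^{\left\{ r\right\} }\left(yz\right),
\]
which automatically lies in $\mathbb{Q}\left\langle \mathscr{F}\right\rangle $ with $\left|P_{w}\right|_{x}\leq n$ and involves exactly $3n+1$ unknowns. Plugging Lemma \ref{lem:Dx_ds_M=00005Bn,1,1=00005D} into the linearity of $\partial_{x},\partial_{y},\partial_{z}$ gives explicit expressions for $\partial_{x}P_{w},\partial_{y}P_{w},\partial_{z}P_{w}$ whose coefficients are $\mathbb{Z}$-linear forms in the $\lambda_{r},\mu_{r},\nu_{r}$, so the conditions $\partial_{x}\left(w-P_{w}\right)=\partial_{y}\left(w-P_{w}\right)=\partial_{z}\left(w-P_{w}\right)=0$ together with $P_{w}\left(\mathds{1}\right)=1$ become a linear system. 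I would solve it by descending recursion on the degree in $t$, exploiting the decoupling the three families provide: $\partial_{y}P_{w}-\partial_{z}P_{w}=\sum_{r}\left(\lambda_{r}-\mu_{r}\right)\left(t^{r+2}-t^{r+1}\right)$ depends only on the differences $\lambda_{r}-\mu_{r}$ and, matched against $t^{h_{y}}-t^{h_{z}}$, determines them one after another; then $\partial_{y}P_{w}+\partial_{z}P_{w}$ and $\partial_{x}P_{w}$, matched against $t^{h_{y}}+t^{h_{z}}$ and $\partial_{x}w$, successively fix the sums $\lambda_{r}+\mu_{r}$ and the $\nu_{r}$, again triangularly. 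Finally, evaluating $P_{w}$ at $t=1$ and invoking part d) of Corollary \ref{cor:Prop_de_Di} ($\partial_{y}w\left(1\right)=1$) I would check that $P_{w}\left(\mathds{1}\right)=1$ holds automatically, so the system is consistent and its solution unique. Since by Corollary \ref{cor:Prop_de_Di} the coefficients of $\partial_{x}w,\partial_{y}w,\partial_{z}w$ lie in $\mathbb{N}$ and the recursion only uses additions, subtractions and divisions by $2$, one gets $\lambda_{r},\mu_{r},\nu_{r}\in\mathbb{Q}$, hence $P_{w}\in\mathbb{Q}\left\langle \mathscr{F}\right\rangle $, and $w-P_{w}$ is by construction a train polynomial of degree $\left(n,1,1\right)$ with all Peirce polynomials zero, i.e. évanescente.

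The \emph{main obstacle} is purely the bookkeeping of this three-family system: one must verify that, with the ranges $0\leq r\leq n-1$ for the first two families and $0\leq r\leq n$ for the third, the equations coming from $\partial_{x},\partial_{y},\partial_{z}$ really are triangular and have a unique solution — this is more delicate than the two-family $\left[n,1\right]$ case because the three families interact in the $\partial_{x}$-equation, so the elimination order (antisymmetric combination $\lambda_{r}-\mu_{r}$ first, then symmetric combination $\lambda_{r}+\mu_{r}$ together with the $\nu_{r}$) has to be arranged carefully. One should also check that the hypothesis $w\notin\mathscr{F}$ is precisely what rules out a forced top-degree column that would otherwise break uniqueness or the bound $\left|P_{w}\right|_{x}\leq n$. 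Once the triangular structure is in place, the integrality and consistency arguments are the ones already carried out for types $\left[n,1\right]$ and $\left[n,2\right]$.
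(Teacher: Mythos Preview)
Your overall strategy matches the paper's: write $P_{w}$ as a $\mathbb{Q}$-linear combination of the three families in $\mathscr{F}$, compute its Peirce polynomials via Lemma \ref{lem:Dx_ds_M=00005Bn,1,1=00005D}, and solve the resulting linear system. The antisymmetric/symmetric splitting you propose ($\partial_{y}P_{w}-\partial_{z}P_{w}$ depends only on $\lambda_{r}-\mu_{r}$) is a genuine simplification the paper does not exploit --- the paper instead writes out the three systems $\partial_{x}P_{w}=\partial_{x}w$, $\partial_{y}P_{w}=\partial_{y}w$, $\partial_{z}P_{w}=\partial_{z}w$ directly and gives explicit closed-form solutions for $\lambda_{k},\mu_{k},\nu_{k}$ as finite sums in the coefficients of $\partial_{x}w,\partial_{y}w,\partial_{z}w$. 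Your route is conceptually cleaner; the paper's is more concrete and sidesteps the need to argue triangularity.

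There is however a gap in your treatment of the degree $\left(1,1,1\right)$ case. A train polynomial is by definition \emph{non-homogeneous}: for degree $\left(1,1,1\right)$ you must allow, besides the three monomials of type $\left[1,1,1\right]$, at least the lower term $yz$ of type $\left[0,1,1\right]$. The paper in fact checks the larger ansatz $f=\lambda_{1}x\left(yz\right)+\lambda_{2}\left(xy\right)z+\lambda_{3}\left(xz\right)y+\mu_{1}xy+\mu_{2}xz+\mu_{3}yz$ and shows all six coefficients must vanish. Your computation, restricted to the homogeneous part, does not yet rule out a nonzero train identity; you need to add the lower-degree term(s) and redo the vanishing argument. (It goes through easily: adding $\delta\, yz$ contributes $\delta t$ to $\partial_{y}f$ and $\partial_{z}f$ only, and one still gets $\alpha=\beta=\gamma=\delta=0$.)

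A smaller point: your remark that ``$w\notin\mathscr{F}$ rules out a forced top-degree column'' is not how the hypothesis is used. The linear system for $P_{w}$ has a unique solution regardless; the exclusion $w\notin\mathscr{F}$ simply guarantees $w-P_{w}\neq0$, so that one actually obtains a (non-homogeneous) train identity rather than the zero polynomial.
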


\begin{proof}
Soit $f=\lambda_{1}x\left(yz\right)+\lambda_{2}\left(xy\right)z+\lambda_{3}\left(xz\right)y+\mu_{1}xy+\mu_{2}xz+\mu_{3}yz$,
on a $\partial_{x}f=\left(\lambda_{2}+\lambda_{3}\right)t^{2}+\left(\lambda_{1}+\mu_{1}+\mu_{2}\right)t$,
$\partial_{y}f=\left(\lambda_{1}+\lambda_{2}\right)t^{2}+\left(\lambda_{3}+\mu_{1}+\mu_{3}\right)t$
et $\partial_{z}f=\left(\lambda_{1}+\lambda_{3}\right)t^{2}+\left(\lambda_{2}+\mu_{2}+\mu_{3}\right)t$,
on en déduit sans difficulté que $\partial_{x}f=\partial_{y}f=\partial_{z}f=0$
si et seulement si on a $f=0$, il n'existe donc pas de train
polynôme évanescent de degré $\left(1,1,1\right)$.

On prend $n\geq2$, soit $w\in\frak{M}\left(x,y,z\right)_{\left[n,1,1\right]}$
tel que $w\notin\mathscr{F}$, d'après le résultat \emph{c})
du corollaire \ref{cor:Prop_de_Di} les degrés en $x$ de $\partial_{x}w$,
$\partial_{y}w$ et $\partial_{z}w$ sont $\leq n+1$, soient
$\partial_{x}w=\sum_{k=1}^{n+1}\alpha_{k}t^{k}$, $\partial_{y}w=\sum_{k=1}^{n+1}\beta_{k}t^{k}$
et $\partial_{z}w=\sum_{k=1}^{n+1}\gamma_{k}t^{k}$. On pose
\[
P_{w}=\sum_{k=0}^{n-1}\lambda_{k}x^{\left\{ k\right\} }\left(\left(xy\right)z\right)+\sum_{k=0}^{n-1}\mu_{k}x^{\left\{ k\right\} }\left(\left(xz\right)y\right)+\sum_{k=0}^{n}\nu_{k}x^{\left\{ k\right\} }\left(yz\right)
\]
en appliquant les relations du lemme \ref{lem:Dx_ds_M=00005Bn,1,1=00005D}
on obtient 
\begin{align*}
\partial_{x}P_{w}= & \;\left(\lambda_{n-1}+\mu_{n-1}\right)t^{n+1}+\left(\lambda_{n-2}+\mu_{n-2}+\nu_{n}\right)t^{n}\\
 & +\sum_{i=2}^{n-1}\left(\sum_{k=i}^{n-1}\left(\lambda_{k}+\mu_{k}+\nu_{k}\right)+\left(\lambda_{i-2}+\mu_{i-2}\right)+\nu_{n}\right)t^{i}+\left(\sum_{k=1}^{n}\left(\lambda_{k}+\mu_{k}+\nu_{k}\right)+\nu_{n}\right)t,\\
\partial_{y}P_{w}= & \;\left(\lambda_{n-1}+\nu_{n}\right)t^{n+1}+\sum_{k=2}^{n}\left(\lambda_{k-2}+\mu_{k-1}+\nu_{k-1}\right)t^{k}+\left(\mu_{0}+\nu_{0}\right)t,\\
\partial_{z}P_{w}= & \;\left(\mu_{n-1}+\nu_{n}\right)t^{n+1}+\sum_{k=2}^{n}\left(\lambda_{k-1}+\mu_{k-2}+\nu_{k-1}\right)t^{k}+\left(\lambda_{0}+\nu_{0}\right)t.
\end{align*}

On a donc $\partial_{x}w=\partial_{x}P_{w}$ si et seulement
si 
\[
\begin{cases}
\lambda_{n-1}+\mu_{n-1} & =\alpha_{n+1}\\
\lambda_{n-2}+\mu_{n-2}+\nu_{n} & =\alpha_{n}\\
\left(\lambda_{k-2}+\mu_{k-2}\right)+\sum_{i=k}^{n-1}\left(\lambda_{i}+\mu_{i}+\nu_{i}\right)+\nu_{n} & =\alpha_{k},\quad\left(2\leq k\leq n-1\right)\\
\sum_{i=1}^{n-1}\left(\lambda_{i}+\mu_{i}+\nu_{i}\right)+\nu_{n} & =\alpha_{1}
\end{cases}
\]
et on a $\partial_{y}w=\partial_{y}P_{w}$, $\partial_{z}w=\partial_{z}P_{w}$
si et seulement si on a respectivement
\begin{align*}
\begin{cases}
\lambda_{n-1}+\nu_{n} & =\beta_{n+1}\\
\lambda_{k-2}+\mu_{k-1}+\nu_{k-1} & =\beta_{k},\quad\left(2\leq k\leq n\right),\\
\mu_{0}+\nu_{0} & =\beta_{1},
\end{cases}
\end{align*}
et 
\[
\begin{cases}
\mu_{n-1}+\nu_{n} & =\gamma_{n+1}\\
\lambda_{k-1}+\mu_{k-2}+\nu_{k-1} & =\gamma_{k},\quad\left(2\leq k\leq n\right)\\
\lambda_{0}+\nu_{0} & =\gamma_{1}.
\end{cases}
\]
On peut noter que dans ces deux systèmes on a $\sum_{k=0}^{n-1}\left(\lambda_{k}+\mu_{k}\right)+\sum_{k=0}^{n}\nu_{k}=\sum_{k=1}^{n+1}\beta_{k}=\partial_{y}P_{w}\left(1\right)=1$
d'après le résultat \emph{d}) du corollaire \ref{cor:Prop_de_Di},
par conséquent $P_{w}\left(1,1,1\right)=1.$

La solution de ces systèmes d'équations sont:
\begin{align*}
\lambda_{k} & =\sum_{i=k+2}^{n+1}\frac{1}{2^{i-k-1}}\left(\alpha_{i}+\beta_{i}-\left(2^{i-k-1}-1\right)\gamma_{i}\right),\quad\left(0\leq k\leq n-1\right)\\
\mu_{k} & =\sum_{i=k+2}^{n+1}\frac{1}{2^{i-k-1}}\left(\alpha_{i}-\left(2^{i-k-1}-1\right)\beta_{i}+\gamma_{i}\right),\quad\left(0\leq k\leq n-1\right)\\
\nu_{k} & =\frac{1}{2}\left(-\alpha_{k+1}+\beta_{k+1}+\gamma_{k+1}\right)\\
 & +\sum_{i=k+2}^{n+1}\frac{1}{2^{i-k}}\left(-3\alpha_{i}+\left(2^{i-k}-3\right)\beta_{i}+\left(2^{i-k}-3\right)\gamma_{i}\right),\quad\left(1\leq k\leq n-1\right)\\
\nu_{0} & =1-\sum_{i=2}^{n+1}\frac{1}{2^{i-1}}\left(\alpha_{i}+\beta_{i}+\gamma_{i}\right),\\
\nu_{n} & =\frac{1}{2}\left(-\alpha_{n+1}+\beta_{n+1}+\gamma_{n+1}\right).
\end{align*}
\end{proof}
\begin{cor}
Pour $n\geq2$, l'espace vectoriel des train identités évanescentes
de degré $\left(n,1,1\right)$ est de dimension $W_{\left[n,1,1\right]}-3$.
\end{cor}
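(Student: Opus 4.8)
The plan is to read off the dimension directly from the preceding proposition, exactly as was done for the corresponding corollaries in types $(n)$, $(n,1)$ and $(n,2)$. First I would pin down $\mathscr{F}\cap\mathfrak{M}(x,y,z)_{[n,1,1]}$: since the $x$-degree of $x^{\{r\}}((xy)z)$ and of $x^{\{r\}}((xz)y)$ is $r+1$ while the $x$-degree of $x^{\{r\}}(yz)$ is $r$, the set $\mathscr{F}$ meets $\mathfrak{M}(x,y,z)_{[n,1,1]}$ in precisely the three pairwise distinct monomials $x^{\{n-1\}}((xy)z)$, $x^{\{n-1\}}((xz)y)$ and $x^{\{n\}}(yz)$ (here $n\geq2$ guarantees $n-1\geq1$, so these are genuine elements of $\mathscr{F}$). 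Consequently $\mathfrak{M}(x,y,z)_{[n,1,1]}\setminus\mathscr{F}$ has exactly $W_{[n,1,1]}-3$ elements.

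Next, by the preceding proposition each such $w$ yields a train evanescent identity $w-P_w$ of degree $(n,1,1)$, with $P_w\in\mathbb{Q}\langle\mathscr{F}\rangle$; these are the generators of the space under consideration. It remains to show they are linearly independent. This is immediate from the fact that $P_w$ is, by construction, a $\mathbb{Q}$-linear combination of monomials belonging to $\mathscr{F}$, whereas $w\notin\mathscr{F}$: in any relation $\sum_w a_w(w-P_w)=0$ the monomial $w$ occurs only in the summand $a_w(w-P_w)$, forcing $a_w=0$ for every $w$. Hence the $W_{[n,1,1]}-3$ polynomials $w-P_w$ form a basis, which is the assertion.

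Since each of these steps is essentially bookkeeping, I do not expect a real obstacle; the point requiring care is the count $\#\bigl(\mathscr{F}\cap\mathfrak{M}(x,y,z)_{[n,1,1]}\bigr)=3$, i.e.\ checking that the three families $x^{\{r\}}((xy)z)$, $x^{\{r\}}((xz)y)$, $x^{\{r\}}(yz)$ each contribute exactly one monomial of type $[n,1,1]$ and that these three do not collide. If one also wishes to confirm that no evanescent train identity of degree $(n,1,1)$ has been missed, the relevant ingredient is that a nonzero linear combination of the three monomials $x^{\{n-1\}}((xy)z)$, $x^{\{n-1\}}((xz)y)$, $x^{\{n\}}(yz)$ (plus lower $x$-degree terms) cannot be evanescent: by Lemma~\ref{lem:Dx_ds_M=00005Bn,1,1=00005D} and part (c) of Corollary~\ref{cor:Prop_de_Di}, the $t^{n+1}$-coefficients of the three Peirce polynomials of such a combination equal $a+b$, $a+c$ and $b+c$ in the coefficients $a,b,c$ of those monomials, so evanescence forces $a=b=c=0$ since $\mathrm{char}\,K\neq2$.
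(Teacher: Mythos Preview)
Your proposal is correct and follows the same approach as the paper, which states the corollary without an explicit proof (as it does for the analogous corollaries in degrees $(n)$, $(n,2)$), treating it as immediate from the preceding proposition. Your write-up actually supplies more detail than the paper: the explicit count $\#(\mathscr{F}\cap\mathfrak{M}(x,y,z)_{[n,1,1]})=3$, the linear independence argument via the unique occurrence of each $w\notin\mathscr{F}$, and the $t^{n+1}$-coefficient computation showing that no nontrivial combination of the three $\mathscr{F}$-monomials can occur in the leading part of an evanescent train identity. One tiny slip: since $\mathscr{F}$ is defined for $r\geq 0$, the relevant condition is $n-1\geq 0$, not $n-1\geq 1$; this does not affect the argument.
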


\begin{thm}
Pour tout entier $p,q\geq2$, $r\geq0$ et $t,t_{1},t_{2}\in\left\{ y,z\right\} $,
$t_{1}\neq t_{2}$, on pose:
\begin{align*}
E_{p,q}^{\left[n\right]}\left(x\right) & =x^{p}x^{q}-x^{p+1}-x^{q+1}+x^{2};\\
E_{p,q}^{\left[n,1\right]}\left(x,t\right) & =x^{p}\left(x^{q}t\right)-x\left(xt\right)-x^{p}t-x^{q+1}t+x^{2}t+xt;\\
E_{p,\left\{ r\right\} }^{\left[n,1\right]}\left(x,t\right) & =x^{p}\bigl(x^{\left\{ r\right\} }t\bigr)-x^{p}t-x^{\left\{ r+1\right\} }t+xt;\\
E_{\left\{ r\right\} ,p}^{\left[n,1\right]}\left(x,t\right) & =x^{\left\{ r\right\} }\left(x^{p}t\right)-x^{\left\{ r+1\right\} }t-x^{p+r}t+x^{r+1}t;\\
E_{p,q}^{\left[n,1,1\right]}\left(x,y,z\right) & =\left(x^{p}y\right)\left(x^{q}z\right)+x^{\left\{ p\right\} }\left(yz\right)+x^{\left\{ q\right\} }\left(yz\right)-\sum_{i=0}^{p-1}x^{\left\{ i\right\} }\Bigl(\left(xy\right)z+\left(xz\right)y-2yz\Bigr)\\
 & \quad-\sum_{i=1}^{q-1}x^{\left\{ i\right\} }\Bigl(\left(xy\right)z+\left(xz\right)y-2yz\Bigr)-4x\left(yz\right)-yz;\\
E_{\left\{ r\right\} ,\left\{ s\right\} }^{\left[n,1,1\right]}\left(x,y,z\right) & =\bigl(x^{\left\{ r\right\} }y\bigr)\bigl(x^{\left\{ s\right\} }z\bigr)-\sum_{i=0}^{r-1}x^{\left\{ i\right\} }\Bigl(\left(xy\right)z-yz\Bigr)-\sum_{i=0}^{s-1}x^{\left\{ i\right\} }\Bigl(\left(xy\right)z-yz\Bigr)-yz;\\
E_{p}^{\left[n,1,1\right]}\left(x,t_{1},t_{2}\right) & =\left(x^{p}t_{1}\right)t_{2}-\sum_{i=1}^{p-1}x^{\left\{ i\right\} }\Bigl(\left(xt_{1}\right)t_{2}+\left(xt_{2}\right)t_{1}-2t_{1}t_{2}\Bigr)+x^{\left\{ p\right\} }\left(t_{1}t_{2}\right)\\
 & \quad-\left(xt_{1}\right)t_{2}-x\left(t_{1}t_{2}\right);\\
E_{p,\left\{ r\right\} }^{\left[n,1,1\right]}\left(x,t_{1},t_{2}\right) & =\left(x^{p}t_{1}\right)\left(x^{\left\{ r+1\right\} }t_{2}\right)-\sum_{i=0}^{^{p-1}}x^{\left\{ i\right\} }\Bigl(\left(xt_{1}\right)t_{2}+\left(xt_{2}\right)t_{1}-2t_{1}t_{2}\Bigr)\\
 & \quad-\sum_{i=1}^{r}x^{\left\{ i\right\} }\Bigl(\left(xt_{2}\right)t_{1}-t_{1}t_{2}\Bigr)+x^{\left\{ p\right\} }\left(t_{1}t_{2}\right)-x\left(t_{1}t_{2}\right)-t_{1}t_{2};\\
F_{p,\left\{ r\right\} }^{\left[n,1,1\right]}\left(x,y,z\right) & =x^{p}\left(x^{\left\{ r\right\} }\left(yz\right)\right)-\sum_{i=0}^{p-2}x^{\left\{ i\right\} }\Bigl(\left(xy\right)z+\left(xz\right)y-2yz\Bigr)\\
 & \quad+x^{\left\{ p-1\right\} }\left(yz\right)-x^{\left\{ r+1\right\} }\left(yz\right)-yz;\\
G_{p,\left\{ r\right\} }^{\left[n,1,1\right]}\left(x,t_{1},t_{2}\right) & =x^{p}\left(x^{\left\{ r\right\} }\left(\left(xt_{1}\right)t_{2}\right)\right)-\sum_{i=0}^{p-2}x^{\left\{ i\right\} }\Bigl(\left(xt_{1}\right)t_{2}+\left(xt_{2}\right)t_{1}-2t_{1}t_{2}\Bigr)\\
 & \quad-x^{\left\{ r+1\right\} }\left(\left(xt_{1}\right)t_{2}\right)+x^{\left\{ p-1\right\} }\left(t_{1}t_{2}\right)-t_{1}t_{2};
\end{align*}

\medskip{}

Soient $\mathscr{H}$ l'idéal engendré par la famille de polynômes
\[
\left(E_{p,q}^{\left[n\right]},E_{p,q}^{\left[n,1\right]},E_{p,\left\{ r\right\} }^{\left[n,1\right]},E_{\left\{ r\right\} ,p}^{\left[n,1\right]},E_{p,q}^{\left[n,1,1\right]},E_{\left\{ r\right\} ,\left\{ s\right\} }^{\left[n,1,1\right]},E_{p}^{\left[n,1,1\right]},E_{p,\left\{ r\right\} }^{\left[n,1,1\right]},F_{p,\left\{ r\right\} }^{\left[n,1,1\right]},G_{p,\left\{ r\right\} }^{\left[n,1,1\right]}\right)_{\substack{p,q\geq2\\
r,s\geq0
}
}
\]
et $\pi:K\left(x\right)\rightarrow{}^{K\left(x\right)}\!/_{\mathscr{H}}$
la surjection canonique. 

Alors pour tout $n\geq2$ et tout monôme $w\in\frak{M}_{\left[n,1,1\right]}\left(x,y,z\right)$
tel que $w\notin\mathscr{F}$ on a $\pi\left(w\right)=P_{w}$
et pour tout $f\in\bigoplus_{n\geq2}K\left(x,y,z\right)_{\left[n,1,1\right]}$,
le polynôme $f-\pi\left(f\right)$ est une train identité évanescente.
\end{thm}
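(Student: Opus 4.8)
The plan is to follow, almost verbatim, the three-part scheme of the proofs of Theorems~\ref{thm:Evanesc_type=00005Bn=00005D_id=0000E9al}, \ref{thm:Evanesc_type=00005Bn,1=00005D_id=0000E9al} and~\ref{thm:Evanesc_type=00005Bn,2=00005D_id=0000E9al}. First I would check that every generator of $\mathscr{H}$ is Peirce-evanescent. The families $E_{p,q}^{[n]}$, $E_{p,q}^{[n,1]}$, $E_{p,\{r\}}^{[n,1]}$, $E_{\{r\},p}^{[n,1]}$ were already shown to be evanescent in the proofs of the two preceding theorems, so only the six three-variable families $E_{p,q}^{[n,1,1]}$, $E_{\{r\},\{s\}}^{[n,1,1]}$, $E_{p}^{[n,1,1]}$, $E_{p,\{r\}}^{[n,1,1]}$, $F_{p,\{r\}}^{[n,1,1]}$, $G_{p,\{r\}}^{[n,1,1]}$ remain. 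For these I would first compute, exactly as in the proof of Theorem~\ref{thm:Evanesc_type=00005Bn,2=00005D_id=0000E9al}, the Peirce polynomials $\partial_{x}$, $\partial_{y}$, $\partial_{z}$ of the auxiliary monomials $(x^{p}t_{1})(x^{q}t_{2})$, $\bigl(x^{\{r\}}t_{1}\bigr)\bigl(x^{\{s\}}t_{2}\bigr)$, $x^{p}\bigl(x^{\{r\}}(yz)\bigr)$, $x^{p}\bigl(x^{\{r\}}((xt_{1})t_{2})\bigr)$ and the like, using the relation~(\ref{eq:Dx(x^k)}), Lemmas~\ref{lem:Dx_ds_M=00005Bn,1=00005D} and~\ref{lem:Dx_ds_M=00005Bn,1,1=00005D}, the product rule $\partial_{i}(uv)=t(\partial_{i}u+\partial_{i}v)$ and its polynomial version~(\ref{eq:di(fg)}); then a direct substitution shows that each of the ten families has all three Peirce polynomials equal to $0$. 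Since $Ev(\mathscr{T})$ is an ideal and each generator moreover vanishes at $\mathds{1}$, it follows that $\mathscr{H}\subseteq Ev(\mathscr{T})$ and that every element of $\mathscr{H}$ vanishes at $\mathds{1}$.

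Next I would prove, by induction on the $x$-degree $n$, that for every $w\in\frak{M}(x,y,z)_{[n,1,1]}$ with $w\notin\mathscr{F}$ the polynomial $w-\pi(w)$ is evanescent and $\pi(w)\in\mathbb{Z}\langle\mathscr{F}\rangle$; the base case $n=2$ is read off from the explicit generators of the train identities of degree $(2,1,1)$ listed below. For the inductive step, write $w=w_{1}w_{2}$ by Proposition~\ref{prop:dec_ds_M(t)}. Since $y$ and $z$ each occur exactly once in $w$, up to the commutative swap $w_{1}\leftrightarrow w_{2}$ and up to interchanging $y$ and $z$ there are two cases: either one factor is a pure power of $x$ and the other lies in $\frak{M}(x,y,z)_{[k,1,1]}$ with $k<n$, or $w_{1}\in\frak{M}(x,y)_{[\cdot,1]}$ and $w_{2}\in\frak{M}(x,z)_{[\cdot,1]}$. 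Putting $\pi(w)=\pi(w_{1})\pi(w_{2})$ and using $\pi(w_{1})(\mathds{1})=\pi(w_{2})(\mathds{1})=1$, the product rule~(\ref{eq:di(fg)}) gives $\partial_{i}(w-\pi(w))=t\bigl(\partial_{i}(w_{1}-\pi(w_{1}))+\partial_{i}(w_{2}-\pi(w_{2}))\bigr)$ for each $i\in\{x,y,z\}$, and both summands vanish by the induction hypothesis (for the factor of type $[k,1,1]$), by Theorem~\ref{thm:Evanesc_type=00005Bn=00005D_id=0000E9al} (for a pure power of $x$) or by Theorem~\ref{thm:Evanesc_type=00005Bn,1=00005D_id=0000E9al} applied to the variable pairs $(x,y)$ and $(x,z)$, whose relevant generators $E_{p,q}^{[n]}$, $E_{p,q}^{[n,1]}$, $E_{p,\{r\}}^{[n,1]}$, $E_{\{r\},p}^{[n,1]}$ reappear among those of $\mathscr{H}$. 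Hence $w-\pi(w)$ is evanescent; and the normal form $\pi(w)$, obtained by reducing the product $\pi(w_{1})\pi(w_{2})$ of the already reduced factors by means of $E_{p,q}^{[n,1,1]}$, $E_{\{r\},\{s\}}^{[n,1,1]}$, $E_{p}^{[n,1,1]}$, $E_{p,\{r\}}^{[n,1,1]}$, $F_{p,\{r\}}^{[n,1,1]}$, $G_{p,\{r\}}^{[n,1,1]}$, lies in $\mathbb{Z}\langle\mathscr{F}\rangle$. By the Proposition preceding this theorem, $P_{w}$ is the unique element of $\mathbb{Q}\langle\mathscr{F}\rangle$ with $w-P_{w}$ evanescent, so $\pi(w)=P_{w}$.

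Finally, for $f=\sum_{k}\alpha_{k}w_{k}\in\bigoplus_{n\geq2}K(x,y,z)_{[n,1,1]}$ one has $f-\pi(f)=\sum_{k}\alpha_{k}(w_{k}-\pi(w_{k}))$, and each summand equals $w_{k}-P_{w_{k}}\in\mathscr{H}$ when $w_{k}\notin\mathscr{F}$ and equals $0$ when $w_{k}\in\mathscr{F}$ (the generators never rewrite the monomials of $\mathscr{F}$, so $\pi$ fixes $\mathbb{Z}\langle\mathscr{F}\rangle$); hence $f-\pi(f)\in\mathscr{H}$, which by the first step is an evanescent polynomial vanishing at $\mathds{1}$, i.e. an evanescent train identity. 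The main obstacle will not be conceptual, the argument being an exact copy of those for the lower types, but computational: carrying out the ten families of Peirce-polynomial verifications in the first step (each built up from several product-rule computations in three variables) and, more delicately, checking that the case analysis in the inductive step is exhaustive, i.e. that the listed ten generator families are enough to rewrite every product $w_{1}w_{2}$ of reduced factors back into $\mathbb{Z}\langle\mathscr{F}\rangle$.
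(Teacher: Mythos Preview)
Your proposal is correct and follows essentially the same three-part scheme as the paper's proof: first verify evanescence of the generators by computing the Peirce polynomials of the relevant auxiliary monomials (the paper does exactly this, displaying $\partial_x,\partial_y,\partial_z$ for $(x^p y)(x^q z)$, $\bigl(x^{\{r\}}y\bigr)\bigl(x^{\{s\}}z\bigr)$, $(x^p y)\bigl(x^{\{r\}}z\bigr)$, $x^p\bigl(x^{\{r\}}(yz)\bigr)$, $x^p\bigl(x^{\{r\}}((xy)z)\bigr)$), then induct on the $x$-degree using the same two-case factorisation $w=uv$ with $u\in\frak{M}(x)_{[\cdot]}$, $v\in\frak{M}(x,y,z)_{[\cdot,1,1]}$ or $u\in\frak{M}(x,y)_{[\cdot,1]}$, $v\in\frak{M}(x,z)_{[\cdot,1]}$, and finally invoke uniqueness of $P_w$. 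Your write-up is in fact slightly more careful than the paper's in making explicit that $\pi$ fixes $\mathscr{F}$ and in spelling out the passage from monomials to general $f$.
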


\begin{proof}
On a montré aux théorèmes \ref{thm:Evanesc_type=00005Bn=00005D_id=0000E9al}
et \ref{thm:Evanesc_type=00005Bn,1=00005D_id=0000E9al} que les
polynômes $E_{p,q}^{\left[n\right]}$, $E_{p,q}^{\left[n,1\right]}$,
$E_{p,\left\{ r\right\} }^{\left[n,1\right]}$ et $E_{\left\{ r\right\} ,p}^{\left[n,1\right]}$
sont évanescents, montrons-le pour les autres polynômes de l'énoncé.

Pour tout $p,q\geq2$ et $r,s\geq0$, on a $\partial_{x}\left(\left(x^{p}y\right)\left(x^{q}z\right)\right)=t\left(\partial_{x}\left(x^{p}y\right)+\partial_{x}\left(x^{q}z\right)\right)$,
puis $\partial_{y}\left(\left(x^{p}y\right)\left(x^{q}z\right)\right)=t\partial_{y}\left(x^{p}y\right)=t^{2}$,
de même $\partial_{z}\left(\left(x^{p}y\right)\left(x^{q}z\right)\right)=t^{2}$
et en utilisant la relation (\ref{eq:Dx(x^p*y)}) on obtient:
\begin{align*}
\partial_{x}\left(\left(x^{p}y\right)\left(x^{q}z\right)\right) & =2\sum_{i=3}^{p}t^{i}+2t^{p+1}+2t^{q+1}, & \partial_{y}\left(\left(x^{p}y\right)\left(x^{q}z\right)\right) & =t^{2}, & \partial_{z}\left(\left(x^{p}y\right)\left(x^{q}z\right)\right) & =t^{2}.
\end{align*}

On a $\partial_{x}\left(\left(x^{\left\{ r\right\} }y\right)\left(x^{\left\{ s\right\} }z\right)\right)=t\left(\partial_{x}\left(x^{\left\{ r\right\} }y\right)+\partial_{x}\left(x^{\left\{ s\right\} }z\right)\right)$,
ensuite $\partial_{y}\left(\left(x^{\left\{ s\right\} }y\right)\left(x^{\left\{ s\right\} }z\right)\right)=t\partial_{y}\left(x^{\left\{ r\right\} }y\right)=t^{2}\partial_{y}\left(x^{\left\{ r-1\right\} }y\right)$
et de même $\partial_{z}\left(\left(x^{\left\{ s\right\} }y\right)\left(x^{\left\{ s\right\} }z\right)\right)=t^{2}\partial_{z}\left(x^{\left\{ s-1\right\} }z\right)$,
on en déduit avec la relation (\ref{eq:Dx(x^=00007Bp=00007D*y)})
et par récurrence que
\begin{align*}
\partial_{x}\bigl(\bigl(x^{\left\{ r\right\} }y\bigr)\bigl(x^{\left\{ s\right\} }z\bigr)\bigr) & =\sum_{i=2}^{r+1}t^{i}+\sum_{i=2}^{s+1}t^{i},\\
\partial_{y}\bigl(\bigl(x^{\left\{ r\right\} }y\bigr)\bigl(x^{\left\{ s\right\} }z\bigr)\bigr) & =t^{r+1},\hspace{10mm}\partial_{z}\bigl(\bigl(x^{\left\{ r\right\} }y\bigr)\bigl(x^{\left\{ s\right\} }z\bigr)\bigr)=t^{s+1}.
\end{align*}

De $\partial_{x}\left(\left(x^{p}y\right)\left(x^{\left\{ r\right\} }z\right)\right)=t\left(\partial_{x}\left(x^{p}y\right)+\partial_{x}\left(x^{\left\{ r\right\} }z\right)\right)$,
$\partial_{y}\left(\left(x^{p}y\right)\left(x^{\left\{ r\right\} }z\right)\right)=t\partial_{y}\left(x^{p}y\right)$,
$\partial_{z}\left(\left(x^{p}y\right)\left(x^{\left\{ r\right\} }z\right)\right)=t\partial_{z}\left(x^{\left\{ r\right\} }z\right)$
et des relations (\ref{eq:Dx(x^p*y)}) et (\ref{eq:Dx(x^=00007Bp=00007D*y)})
on déduit:
\begin{align*}
\partial_{x}\left(\left(x^{p}y\right)\left(x^{\left\{ r\right\} }z\right)\right) & =2t^{p+1}+\sum_{i=3}^{p}t^{i}+\sum_{i=2}^{r+1}t^{i},\\
\partial_{y}\left(\left(x^{p}y\right)\left(x^{\left\{ r\right\} }z\right)\right) & =t^{2},\hspace{10mm}\partial_{z}\left(\left(x^{p}y\right)\left(x^{\left\{ r\right\} }z\right)\right)=t^{r+1}.
\end{align*}

On a $\partial_{x}\left(x^{p}\left(x^{\left\{ r\right\} }\left(yz\right)\right)\right)=t\left(\partial_{x}\left(x^{p}\right)+\partial_{x}\left(x^{\left\{ r\right\} }\left(yz\right)\right)\right)=t\partial_{x}\left(x^{p}\right)+t^{2}\partial_{x}\left(x^{\left\{ r-1\right\} }\left(yz\right)\right)$,
$\partial_{y}\left(x^{p}\left(x^{\left\{ r\right\} }\left(yz\right)\right)\right)=t\partial_{y}\left(x^{\left\{ r\right\} }\left(yz\right)\right)=t^{2}\partial_{y}\left(x^{\left\{ r-1\right\} }\left(yz\right)\right)$
et de même $\partial_{z}\left(x^{p}\left(x^{\left\{ r\right\} }\left(yz\right)\right)\right)=t^{2}\partial_{z}\left(x^{\left\{ r-1\right\} }\left(yz\right)\right)$
. 

De manière analogue, on a $\partial_{x}\left(x^{p}\left(x^{\left\{ r\right\} }\left(\left(xy\right)z\right)\right)\right)=t\left(\partial_{x}\left(x^{p}\right)+\partial_{x}\left(x^{\left\{ r\right\} }\left(\left(xy\right)z\right)\right)\right)=t\partial_{x}\left(x^{p}\right)+t^{2}\partial_{x}\left(x^{\left\{ r-1\right\} }\left(\left(xy\right)z\right)\right)$,
ainsi que $\partial_{y}\left(x^{p}\left(x^{\left\{ r\right\} }\left(\left(xy\right)z\right)\right)\right)=t\partial_{y}\left(x^{\left\{ r\right\} }\left(\left(xy\right)z\right)\right)=t^{2}\partial_{y}\left(x^{\left\{ r-1\right\} }\left(\left(xy\right)z\right)\right)$
et $\partial_{z}\left(x^{p}\left(x^{\left\{ r\right\} }\left(\left(xy\right)z\right)\right)\right)=t^{2}\partial_{z}\left(x^{\left\{ r-1\right\} }\left(\left(xy\right)z\right)\right)$.
On en déduit en appliquant la relation (\ref{eq:Dx(x^k)}) et
par récurrence que
\begin{align*}
\partial_{x}\left(x^{p}\left(x^{\left\{ r\right\} }\left(yz\right)\right)\right) & =2t^{p}+\sum_{i=2}^{p-1}t^{i}+\sum_{i=2}^{r+1}t^{i},\\
\partial_{y}\left(x^{p}\left(x^{\left\{ r\right\} }\left(yz\right)\right)\right) & =t^{r+2},\hspace{10mm}\partial_{z}\left(x^{p}\left(x^{\left\{ r\right\} }\left(yz\right)\right)\right)=t^{r+2}.
\end{align*}

et
\begin{align*}
\partial_{x}\left(x^{p}\left(x^{\left\{ r\right\} }\left(\left(xy\right)z\right)\right)\right) & =2t^{p}+t^{r+3}+\sum_{i=2}^{p-1}t^{i}+\sum_{i=2}^{r+1}t^{i},\\
\partial_{y}\left(x^{p}\left(x^{\left\{ r\right\} }\left(\left(xy\right)z\right)\right)\right) & =t^{r+3},\hspace{10mm}\partial_{z}\left(x^{p}\left(x^{\left\{ r\right\} }\left(\left(xy\right)z\right)\right)\right)=t^{r+2}.
\end{align*}

Avec ces résultats et les relations du lemme \ref{lem:Dx_ds_M=00005Bn,1,1=00005D}
on montre par de simples calculs que les polynômes $E_{p,q}^{\left[n,1,1\right]}$,
$E_{\left\{ r\right\} ,\left\{ s\right\} }^{\left[n,1,1\right]}$,
$E_{p}^{\left[n,1,1\right]}$, $E_{p,\left\{ r\right\} }^{\left[n,1,1\right]}$,
$F_{p,\left\{ r\right\} }^{\left[n,1,1\right]}$ et $G_{p,\left\{ r\right\} }^{\left[n,1,1\right]}$
sont évanescents.

Soit $w\in\frak{M}\left(x,y,z\right)_{\left[n,1,1\right]}$ tel
que $w\notin\mathscr{F}$, montrons par récurrence sur le degré
$n$ en $x$ de $w$ que le polynôme $w-\pi\left(w\right)$ est
évanescent. Le résultat est vrai pour $n=2$ comme on peut le
vérifier sur les générateurs des train polynômes de degré $\left(2,1,1\right)$
donnés ci-dessous. Supposons le résultat vrai pour tous les monômes
de type $\left[p,1,1\right]$ avec $2\leq p<n$. Il existe $u,v\in\mathfrak{M}\left(x,y,z\right)$
tel que $w=uv$ avec $\bigl|u\bigr|_{x},\bigl|v\bigr|_{x}<n$
on a donc $u\in\frak{M}\left(x,y\right)_{\left[n-p,1\right]}$,
$v\in\frak{M}\left(x,z\right)_{\left[p,1\right]}$ ou bien $u\in\frak{M}\left(x\right)_{\left[n-p\right]}$,
$v\in\frak{M}\left(x,y,z\right)_{\left[p,1,1\right]}$. On a
$\partial_{x}\left(w-\pi\left(w\right)\right)=\partial_{x}\left(uv-\pi\left(u\right)\pi\left(v\right)\right)=t\partial_{x}\left(u-\pi\left(u\right)\right)+t\partial_{x}\left(v-\pi\left(v\right)\right)$
et de même $\partial_{y}\left(w-\pi\left(w\right)\right)=t\partial_{y}\left(u-\pi\left(u\right)\right)+t\partial_{y}\left(v-\pi\left(v\right)\right)$
et $\partial_{z}\left(w-\pi\left(w\right)\right)=t\partial_{z}\left(u-\pi\left(u\right)\right)+t\partial_{z}\left(v-\pi\left(v\right)\right)$. 

Dans le cas $u\in\frak{M}\left(x,y\right)_{\left[n-p,1\right]}$,
$v\in\frak{M}\left(x,z\right)_{\left[p,1\right]}$, d'après le
théorème \ref{lem:Dx_ds_M=00005Bn,1,1=00005D} les polynômes
$u-\pi\left(u\right)$ et $v-\pi\left(v\right)$ sont évanescents,
on a donc $\partial_{x}\left(w-\pi\left(w\right)\right)=\partial_{y}\left(w-\pi\left(w\right)\right)=\partial_{z}\left(w-\pi\left(w\right)\right)=0$.

Quand $u\in\frak{M}\left(x\right)_{\left[n-p\right]}$, $v\in\frak{M}\left(x,y,z\right)_{\left[p,1,1\right]}$,
d'après le théorème \ref{thm:Evanesc_type=00005Bn=00005D_id=0000E9al}
et l'hypothèse de récurrence les polynômes $u-\pi\left(u\right)$
et $v-\pi\left(v\right)$ sont évanescents. 

Il est clair que pour tout $w\in\frak{M}\left(x,y,z\right)_{\left[n,1,1\right]}$
tel que $w\notin\mathscr{F}$ on a $\pi\left(w\right)\in\mathbb{Z}\left[\mathscr{F}\right]$
et donc par unicité du polynôme $P_{w}$ on a $\pi\left(w\right)=P_{w}$.
\end{proof}
En utilisant ce théorème on peut donner les générateurs des train
identités évanescentes
\begin{align*}
 & \text{-- de degré }\left(2,1,1\right)\\
 & \;x^{2}\left(yz\right)-\left(xy\right)z-\left(xz\right)y+yz,\\
 & \left(xy\right)\left(xz\right)-\left(xy\right)z-\left(xz\right)y+yz,\\
 & \left(x\left(xy\right)\right)z-x\left(\left(xy\right)z\right)-\left(xy\right)z+x\left(yz\right),\\
 & \left(x\left(xz\right)\right)y-x\left(\left(xz\right)y\right)-\left(xz\right)y+x\left(yz\right),\\
 & \left(x^{2}y\right)z+x\left(x\left(yz\right)\right)-x\left(\left(xy\right)z\right)-x\left(\left(xz\right)y\right)-\left(xy\right)z+x\left(yz\right),\\
 & \left(x^{2}z\right)y+x\left(x\left(yz\right)\right)-x\left(\left(xy\right)z\right)-x\left(\left(xz\right)y\right)-\left(xz\right)y+x\left(yz\right).\\
\\
 & \text{-- de degré }\left(3,1,1\right)\\
 & \;x\left(x^{2}\left(yz\right)\right)-x\left(\left(xy\right)z\right)-x\left(\left(xz\right)y\right)+x\left(yz\right),\\
 & \;x\left(\left(xy\right)\left(xz\right)\right)-x\left(\left(xy\right)z\right)-x\left(\left(xz\right)y\right)+x\left(yz\right),\\
 & \;x\left(\left(x\left(xy\right)\right)z\right)-x\left(x\left(\left(xy\right)z\right)\right)-x\left(\left(xy\right)z\right)+x\left(x\left(yz\right)\right),\\
 & \;x\left(\left(x\left(xz\right)\right)y\right)-x\left(x\left(\left(xz\right)y\right)\right)-x\left(\left(xz\right)y\right)+x\left(x\left(yz\right)\right),\\
 & \;x^{2}\left(x\left(yz\right)\right)-x\left(x\left(yz\right)\right)-\left(xz\right)y-\left(xy\right)z+x\left(yz\right)+yz,\\
 & \;x^{2}\left(\left(xy\right)z\right)-x\left(\left(xy\right)z\right)-\left(xz\right)y-\left(xy\right)z+x\left(yz\right)+yz,\\
 & \;x^{2}\left(y\left(xz\right)\right)-x\left(y\left(xz\right)\right)-\left(xz\right)y-\left(xy\right)z+x\left(yz\right)+yz,\\
 & \left(x\left(xy\right)\right)\left(xz\right)-x\left(\left(xy\right)z\right)+x\left(yz\right)-\left(xy\right)z-y\left(xz\right)+yz,\\
 & \left(x\left(xz\right)\right)\left(xy\right)-x\left(\left(xz\right)y\right)+x\left(yz\right)-\left(xy\right)z-y\left(xz\right)+yz,\\
 & \left(x^{2}\left(xy\right)\right)z+x\left(x\left(yz\right)\right)-2x\left(\left(xy\right)z\right)-x\left(y\left(xz\right)\right)+2x\left(yz\right)-\left(xy\right)z,\\
 & \left(x^{2}\left(xz\right)\right)y+x\left(x\left(yz\right)\right)-2x\left(\left(xz\right)y\right)-x\left(\left(xy\right)z\right)+2x\left(yz\right)-\left(xz\right)y,\\
 & \left(x\left(x\left(xy\right)\right)\right)z-x\left(x\left(\left(xy\right)z\right)\right)+x\left(x\left(yz\right)\right)-x\left(\left(xy\right)z\right)-\left(xy\right)z+x\left(yz\right),\\
 & \left(x\left(x\left(xz\right)\right)\right)y-x\left(x\left(y\left(xz\right)\right)\right)+x\left(x\left(yz\right)\right)-x\left(y\left(xz\right)\right)-y\left(xz\right)+x\left(yz\right),\\
 & \;x^{3}\left(yz\right)+x\left(x\left(yz\right)\right)-x\left(\left(xy\right)z\right)-x\left(y\left(xz\right)\right)+x\left(yz\right)-y\left(xz\right)-z\left(xy\right)+yz,\\
 & \left(x^{2}y\right)\left(xz\right)+x\left(x\left(yz\right)\right)-x\left(\left(xy\right)z\right)-x\left(y\left(xz\right)\right)+x\left(yz\right)-y\left(xz\right)-\left(xy\right)z+yz,\\
 & \left(x^{2}z\right)\left(xy\right)+x\left(x\left(yz\right)\right)-x\left(\left(xy\right)z\right)-x\left(y\left(xz\right)\right)+x\left(yz\right)-y\left(xz\right)-\left(xy\right)z+yz,\\
 & \;x\left(\left(x^{2}y\right)z\right)+x\left(x\left(x\left(yz\right)\right)\right)-x\left(x\left(y\left(xz\right)\right)\right)-x\left(x\left(\left(xy\right)z\right)\right)+x\left(x\left(yz\right)\right)-x\left(\left(xy\right)z\right),\\
 & \;x\left(\left(x^{2}z\right)y\right)+x\left(x\left(x\left(yz\right)\right)\right)-x\left(x\left(y\left(xz\right)\right)\right)-x\left(x\left(\left(xy\right)z\right)\right)+x\left(x\left(yz\right)\right)-x\left(y\left(xz\right)\right),\\
 & \left(x\left(x^{2}y\right)\right)z+x\left(x\left(x\left(yz\right)\right)\right)-x\left(x\left(\left(xy\right)z\right)\right)-x\left(x\left(y\left(xz\right)\right)\right)+x\left(x\left(yz\right)\right)-x\left(\left(xy\right)z\right)\\
 & \;-\left(xy\right)z+x\left(yz\right),\\
 & \left(x\left(x^{2}z\right)\right)y+x\left(x\left(x\left(yz\right)\right)\right)-x\left(x\left(\left(xy\right)z\right)\right)-x\left(x\left(y\left(xz\right)\right)\right)+x\left(x\left(yz\right)\right)-x\left(y\left(xz\right)\right)\\
 & \;-y\left(xz\right)+x\left(yz\right),\\
 & \left(x^{3}y\right)z+x\left(x\left(x\left(yz\right)\right)\right)-x\left(x\left(y\left(xz\right)\right)\right)-x\left(x\left(\left(xy\right)z\right)\right)+2x\left(x\left(yz\right)\right)-x\left(\left(xy\right)z\right)\\
 & \;-x\left(y\left(xz\right)\right)-\left(xy\right)z+x\left(yz\right),\\
 & \left(x^{3}z\right)y+x\left(x\left(x\left(yz\right)\right)\right)-x\left(x\left(y\left(xz\right)\right)\right)-x\left(x\left(\left(xy\right)z\right)\right)+2x\left(x\left(yz\right)\right)-x\left(\left(xy\right)z\right)\\
 & \;-x\left(y\left(xz\right)\right)-\left(xz\right)y+x\left(yz\right).
\end{align*}

\subsubsection{Identités homogènes évanescentes de type $\left[n,1,1\right]$.}

\textcompwordmark{}
\begin{prop}
Pour tout $n\geq2$, l'espace des identités homogènes évanescentes
de type $\left[n,1,1\right]$ est engendré au moins $W_{\left[n,1,1\right]}-3n$
identités homogènes évanescentes.
\end{prop}

\begin{proof}
Pour simplifier les notations on pose $N=W_{\left[n,1,1\right]}$
et $\frak{M}\left(x,y,z\right)_{\left[n,1,1\right]}=\left\{ w_{k};1\leq k\leq N\right\} $.
Soit $f=\sum_{k=1}^{N}\alpha_{k}w_{k}$ on cherche $\left(\alpha_{k}\right)_{1\leq k\leq N}$
tel que $f\left(1,1,1\right)=0$, $\partial_{x}f=\partial_{y}f=\partial_{z}f=0$.
Comme pour tout $w\in\mathfrak{M}_{\left[n,1,1\right]}$ on a
$\left|\partial_{x}w\right|,\left|\partial_{y}w\right|,\left|\partial_{z}w\right|\leq n+1$
et que d'après le lemme \ref{lem:Dx_ds_M=00005Bn,1,1=00005D}
il existe dans $\frak{M}\left(x,y,z\right)_{\left[n,1,1\right]}$
des monômes $w$ tel que $\partial_{x}w$, $\partial_{y}w$ ou
$\partial_{z}w$ soit de degré $n+1$, on en déduit que les polynômes
$\partial_{x}f$, $\partial_{y}f$ et $\partial_{z}f$ sont de
degré $n+1$. Par conséquent les relations $\partial_{x}f=0$,
$\partial_{y}f=0$ et $\partial_{z}f=0$ sont équivalentes à
trois systèmes linéaires de $n+1$ équations d'inconnues $\left(\alpha_{k}\right)_{1\leq k\leq N}$,
la condition $f\left(1,1,1\right)=0$ implique que chacun de
ces systèmes est de rang $\leq n$, il en résulte que le système
d'équations $\partial_{x}f=\partial_{y}f=\partial_{z}f=0$ est
de rang $\leq3n$ et donc l'espace des solutions est de dimension
$\geq W_{\left[n,1,1\right]}-3n$.
\end{proof}
En employant la méthode suivie dans la preuve ci-dessus on explicite
les générateurs des identités homogènes évanescentes
\begin{align*}
 & \text{-- de type }\left[2,1,1\right]\\
 & \;x^{2}\left(yz\right)-\left(xy\right)\left(xz\right),\\
 & \left(x^{2}y\right)z-\left(x\left(xy\right)\right)z-x\left(y\left(xz\right)\right)+x\left(x\left(yz\right)\right),\\
 & \left(x^{2}z\right)y-\left(x\left(xz\right)\right)y-x\left(\left(xy\right)z\right)+x\left(x\left(yz\right)\right).\\
\\
 & \text{-- de type }\left[3,1,1\right]\\
 & \;x^{3}\left(yz\right)-\left(x^{2}y\right)\left(xz\right),\\
 & \;x^{3}\left(yz\right)-\left(xy\right)\left(x^{2}z\right),\\
 & \;x\left(x^{2}\left(yz\right)\right)-x\left(\left(xy\right)\left(xz\right)\right),\\
 & \;x^{2}\left(\left(xy\right)z\right)-\left(x\left(xy\right)\right)\left(xz\right),\\
 & \;x^{3}\left(yz\right)-\left(x^{3}y\right)z+\left(x\left(x^{2}y\right)\right)z-x^{2}\left(\left(xy\right)z\right),\\
 & \left(x^{3}y\right)z+x\left(\left(xy\right)\left(xz\right)\right)-x\left(\left(x^{2}z\right)y\right)-\left(x^{2}\left(xy\right)\right)z,\\
 & \left(x^{3}y\right)z-x\left(\left(x^{2}z\right)y\right)-\left(x\left(x\left(xy\right)\right)\right)z+x\left(x\left(z\left(xy\right)\right)\right),\\
 & \left(x^{3}y\right)z-\left(x\left(x^{2}y\right)\right)z-x\left(\left(x\left(xz\right)\right)y\right)+x\left(x\left(y\left(xz\right)\right)\right),\\
 & \left(x^{3}y\right)z-\left(x\left(x\left(xy\right)\right)\right)z-x\left(\left(x\left(xz\right)\right)y\right)+x\left(x\left(x\left(yz\right)\right)\right),\\
 & \;x\left(\left(x^{2}y\right)z\right)-\left(x\left(x^{2}y\right)\right)z-x\left(\left(x\left(xy\right)\right)z\right)+\left(x\left(x\left(xy\right)\right)\right)z,\\
 & \left(x^{3}y\right)z-\left(x^{3}z\right)y+x\left(\left(x\left(xy\right)\right)z\right)-\left(x\left(x\left(xy\right)\right)\right)z+\left(x\left(x^{2}z\right)\right)y-x\left(yx^{2}zy\right),\\
 & \;x^{3}\left(yz\right)-\left(x^{3}y\right)z+\left(x\left(x\left(xy\right)\right)\right)z-x\left(\left(x\left(xy\right)\right)z\right)+x\left(\left(x^{2}z\right)y\right)-x^{2}\left(\left(xz\right)y\right),\\
 & \;x^{3}\left(yz\right)-\left(x^{3}y\right)z+x\left(\left(x^{2}z\right)y\right)-x\left(\left(x\left(xy\right)\right)z\right)+\left(x\left(x\left(xy\right)\right)\right)z-\left(xy\right)\left(x\left(xz\right)\right),\\
 & \left(x^{3}y\right)z-\left(x^{3}z\right)y+x\left(\left(x\left(xy\right)\right)z\right)-\left(x\left(x\left(xy\right)\right)\right)z+\left(x\left(x\left(xz\right)\right)\right)y-x\left(\left(x\left(xz\right)\right)y\right),\\
 & \left(x^{3}z\right)y-\left(x^{2}\left(xz\right)\right)y+x\left(\left(xy\right)\left(xz\right)\right)-x\left(\left(x\left(xy\right)\right)z\right)+\left(x\left(x\left(xy\right)\right)\right)z-\left(x\left(x^{2}y\right)\right)z,\\
 & \;x^{3}\left(yz\right)-2\left(x^{3}y\right)z+\left(x\left(x\left(xy\right)\right)\right)z-x\left(\left(x\left(xy\right)\right)z\right)+x\left(\left(x^{2}z\right)y\right)+\left(x\left(x^{2}y\right)\right)z-x^{2}\left(x\left(yz\right)\right).
\end{align*}
\begin{center}
\medskip{}
\par\end{center}

\end{document}